\newtheorem{thm}{Theorem}[section]
\newtheorem{prop}[thm]{Proposition}
\newtheorem{lem}[thm]{Lemma}
\newtheorem{cor}[thm]{Corollary}
\newtheorem{claim}{Claim}[thm]
\theoremstyle{definition}
\newtheorem{defn}[thm]{Definition}
\newtheorem{rem}[thm]{Remark}
\newtheorem{conv}[thm]{Convention}
\newtheorem{ques}[thm]{Question}
\newtheorem{prob}[thm]{Problem}
\renewcommand{\bar}[1]{\overline{#1}}
\newcommand{\bigset}[2]{ \bigl\{ \, {#1} \bigm| {#2} \, \bigr\} }
\renewcommand{\emptyset}{\varnothing}
\renewcommand{\setminus}{-}
\newcommand{\field}[1]{\mathbb{#1}}
\newcommand{\Z}{\field{Z}}
\newcommand{\R}{\field{R}}
\renewcommand{\implies}{\Rightarrow}
\newcommand{\showcomments}{yes}
\newsavebox{\commentbox}
\title[\tiny Croke-Kleiner admissible groups: Property (QT) and quasiconvexity] {\footnotesize Croke-Kleiner admissible groups: Property (QT) and quasiconvexity}
\author{Hoang Thanh Nguyen}
\address{Beijing International Center for Mathematical Research\\
Peking University\\
 Beijing 100871, China
P.R.}
\email{nthoang.math@gmail.com}
\author{Wenyuan Yang}
\address{Beijing International Center for Mathematical Research\\
Peking University\\
 Beijing 100871, China
P.R.}
\email{wyang@bicmr.pku.edu.cn}
\date{\today}
\begin{document}

\begin{abstract}
Croke-Kleiner admissible groups firstly introduced by Croke-Kleiner in \cite{CK02} belong to a particular class of graph of groups which generalize fundamental groups of $3$--dimensional graph manifolds.
In this paper, we show that if $G$ is a Croke-Kleiner admissible group, acting geometrically on a CAT(0) space $X$, then a finitely generated subgroup of $G$ has finite height if and only if it is strongly quasi-convex. We also show that if $G \curvearrowright X$ is a flip CKA action then $G$ is quasi-isometric embedded into a finite product of quasi-trees. With further assumption on the vertex groups of the flip CKA action $G \curvearrowright X$, we show that $G$ satisfies property (QT) that is introduced by Bestvina-Bromberg-Fujiwara in \cite{BBF2}. 
\end{abstract}
\maketitle


\section{Introduction}
In \cite{CK02}, Croke and Kleiner study a particular class of graph of groups which they call {\it admissible groups}  and generalize fundamental groups of $3$--dimensional graph manifolds and torus complexes (see \cite{CK00}). If $G$ is an admissible group that acts geometrically on a Hadamard space $X$ then the action $G \curvearrowright X$ is called {\it Croke-Kleiner admissibe} (see Definition~\ref{defn:admissible})  termed by Guilbault-Mooney \cite{GM14}. The CKA action   is modeling on the JSJ structure of graph manifolds where the Seifert fibration is replaced by the following central extension of a general hyperbolic group:
\begin{equation}\label{centralExtEQ}
1\to Z(G_v)=\mathbb Z\to G_v\to H_v\to 1    
\end{equation}
However, CKA groups can encompass much more general class of groups and can actually serve as one of simplest algebraic means to produce interesting groups from any finite number of hyperbolic  groups. 

Let $\mathcal G$ be a finite graph  with $n$ vertices, each of which are associated with a hyperbolic group $H_i$. We then pick up an independent  set of primitive loxodromic elements in $H_i$ which   crossed with $\mathbb Z$ are the edge groups $\mathbb Z^2$. We identify $\mathbb Z^2$ in adjacent $H_i\times \mathbb Z$'s by flipping $\mathbb Z$ and loxodromic elements as did in flip
graph manifolds by Kapovich and Leeb \cite{KL98}. These are motivating examples of \textit{flip} CKA groups and actions, for the precise definition of flip CKA actions, we refer the reader to Section~\ref{subsection:qiehyp}.

The class of CKA actions has manifested a variety of interesting features in CAT(0) groups.  For instance, the equivariant visual boundaries  of  admissible actions are completely determined in  \cite{CK02}. Meanwhile, the non-homeomorphic  visual boundaries of torus complexes were constructed in \cite{CK00} and have sparked an intensive research on  boundaries of CAT(0) spaces. So far, the most of research on CKA groups is centered around the boundary problem (see \cite{GM14}, \cite{Gre16}).    In the rest of Introduction, we shall explain our results on the coarse geometry of Croke-Kleiner admissible groups and their subgroups.

\subsection{Proper actions on finite products of quasi-trees}
A \emph{quasi-tree} is a geodesic metric space quasi-isometric to a tree. 
Recently, Bestvina, Bromberg and Fujiwara \cite{BBF2} introduced a {\it (QT)} property for a finitely generated group:   $G$ acts properly on a finite product of quasi-trees so that the orbital map from  $G$ with word metrics is a quasi-isometric embedding. This is a stronger property of the finite asymptotic dimension by recalling that a quasi-isometric embedding implies finite asdim of $G$. It is known that Coxeter groups have property (QT) (see \cite{DJ99}), and thus every right-angled Artin group has property (QT) (see Induction~2.2 in \cite{BBF2}). Furthermore, the fundamental group of a compact special cube complex is undistorted in RAAGS (see \cite{HW08}) and then has property (QT). As a consequence, many 3-manifold groups have property (QT), among which we wish to mention  chargeless (including flip) graph manifolds \cite{HP15} and finite volume hyperbolic 3-manifolds \cite{Wise20}.  In \cite{BBF2}, residually finite hyperbolic groups and mapping class groups are proven to have property (QT). It is natural to ask which other groups have property (QT) rather than these groups above.

The main result of this paper adds flip CKA actions into the list of groups which have property (QT). The notion of  an \textit{omnipotent} group  is introduced by Wise in \cite{Wise00} and has found many applications in subgroup separability. We refer the reader to Definition \ref{defn:omnipotent} for its definition and note here that free groups \cite{Wise00}, surfaces groups \cite{B07}, and the more general class of virtually special hyperbolic groups \cite{Wise20} are omnipotent.


\begin{thm}
\label{thm:main4}
Let $G \curvearrowright X$ be a flip admissible action where for every vertex group the central extension  (\ref{centralExtEQ}) has omnipotent hyperbolic quotient group. Then $G$  acts properly on a finite product of quasi-trees so that the orbital map is a quasi-isometric embedding.
\end{thm}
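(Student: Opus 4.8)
The plan is to realize property (QT) through the Bestvina-Bromberg-Fujiwara projection-complex machinery, building the target finite product of quasi-trees from three geometrically distinct families of factors: the Bass-Serre tree direction, the \emph{horizontal} (hyperbolic-base) directions inside the vertex groups, and the \emph{vertical} (central $\Z$) directions. The metric bookkeeping needed to glue these together is already available: by the quasi-isometric embedding theorem for flip CKA actions, the word metric on $G$ is comparable to a sum of contributions, one from the Bass-Serre tree $T$ and one from each horizontal and central direction of the vertex groups. It therefore suffices to exhibit a \emph{proper, equivariant} action on a finite product of quasi-trees whose orbital map detects each of these contributions separately; and since $G$ is finitely generated and has finite balls, a quasi-isometrically embedded orbital map is automatically proper, so the only genuine task is the equivariant quasi-isometric embedding.

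First I would dispose of the tree direction and the horizontal directions. The group $G$ acts cocompactly on the Bass-Serre tree $T$ of the underlying graph of groups, and $T$ is itself a quasi-tree, giving one factor that records which vertex space a point lies in. For the horizontal directions, each base group $H_v$ is omnipotent hyperbolic, hence in particular residually finite, so by Bestvina-Bromberg-Fujiwara $H_v$ has property (QT): it acts properly on a finite product of quasi-trees with quasi-isometrically embedded orbital map, built from $H_v$-translates of quasi-axes of finitely many independent loxodromic elements together with a finite coloring that separates translates of the same axis. Choosing orbit representatives of the vertices of $\mathcal G$ and spreading these families over $T$, I would organize the translates into finitely many $G$-invariant collections of subspaces of $X$; the BBF projection axioms hold for these collections because distinct horizontal subspaces coarsely project to uniformly bounded sets, a standard consequence of the $\CAT(0)$ geometry of the CKA action. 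This produces finitely many quasi-trees of spaces on which $G$ acts and which jointly record all horizontal displacement.

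The heart of the argument, and the point where omnipotence is indispensable, is the \emph{vertical} directions. The centre $Z(G_v)=\Z$ is killed in the quotient $H_v$, so the horizontal factors above are blind to central displacement and the orbital map constructed so far fails to be proper along the $\Z$'s. The flip structure is exactly what rescues this: across an edge $\{v,w\}$ the central generator $z_v$ is identified with a primitive loxodromic $\ell_w$ of the \emph{neighbouring} base group $H_w$, so central displacement in $G_v$ is horizontal displacement in $H_w$ and can in principle be seen by the quasi-trees built at $w$. To make this precise and uniform one must choose the loxodromic elements and the colorings at all vertices \emph{simultaneously} and \emph{compatibly} across every flip edge, and this is precisely the simultaneous control of orders in a single finite quotient that omnipotence, rather than mere residual finiteness, supplies: for each $H_v$ it furnishes finite quotients in which the chosen independent loxodromic elements (including all edge/flip elements $\ell_v$) have prescribed orders sharing a uniform multiplier, so that the $\Z^2$ edge-group identifications are respected and each central element $z_v$ acts loxodromically in the assembled quasi-trees.

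Finally I would assemble the full orbital map of $G$ into the finite product of the tree factor, the horizontal quasi-trees of spaces, and the vertical quasi-trees, and check it is a quasi-isometric embedding by matching it, summand by summand, against the CKA distance formula: the tree factor controls the $d_T$ term, the horizontal factors control the base contributions of each $G_v$, and the vertical factors, via the flip and the omnipotence-matched orders, control the central contributions. Properness then comes for free from finite generation of $G$, which gives property (QT). I expect the main obstacle to be the equivariant and compatible organization of the vertical family: verifying the BBF projection axioms for the assembled, globally defined collections while keeping the colorings consistent across all flip edges is exactly what forces the omnipotence hypothesis beyond plain residual finiteness.
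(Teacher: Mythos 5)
Your architecture tracks the paper's actual proof quite closely --- a product of the Bass--Serre tree, horizontal quasi-tree factors built from quasi-axes in the base groups, extra factors carrying the central/vertical data via the flip, omnipotence used to produce finite quotients with a common multiplier so that finite-index subgroups of the vertex groups reassemble compatibly along the $\Z^2$ edge groups (this is exactly Lemma~\ref{FindexSubgrpsLem} and the Bass--Serre reassembly behind Corollary~\ref{FiniteIndexAdmissibleCor}), and properness for free from the quasi-isometrically embedded orbital map. But there is a genuine gap at the one step you dismiss as standard: verifying the BBF projection axioms, \emph{with the projection constant forced by the distance formula}, for the $G$-invariant horizontal collections. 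In the glued space, two quasi-lines lying in distinct pieces $\bar Y_v$ and $\bar Y_{v'}$ project to one another only through the shared boundary lines, and the resulting bound is merely the local-finiteness constant $L$ of Lemma~\ref{lem:Claim1}, in general far larger than the threshold $\theta$ required; worse, such pairs lie in a single $G$-orbit but in \emph{different} vertex groups, so the residual-finiteness/centralizer-separability trick of Bestvina--Bromberg--Fujiwara (Lemma~\ref{FirstPartitionLem}) cannot separate them, since it only controls translates by elements of one vertex group $K_v$. This is precisely why the paper introduces the cone-off spaces $\dot{\mathcal X}$: after coning off the boundary lines, every projection between quasi-lines in distinct pieces factors through a cone apex and is uniformly $\theta$-bounded (Lemma~\ref{CstThetaLem}), at the cost of losing the boundary-line terms of the distance formula, which are then recovered by the separate quasi-tree factor $\mathcal C_K(\mathbb L)$ of binding lines together with a ``thick'' distance formula on the coned space (Lemma~\ref{ConeofYDistFormulaLem}, Propositions~\ref{ConeoffDistFProp} and~\ref{QIembedProp}). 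Without this coning-off (or some substitute), your horizontal families simply do not satisfy the projection axioms, and the summand-by-summand comparison with the distance formula of Proposition~\ref{prop:distanceformulaX_1} cannot be carried out equivariantly --- local finiteness alone suffices for the non-equivariant Theorem~\ref{thm:main3}, but not for property (QT).

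A second omission: you never explain why $G$, or a finite-index subgroup, acts on the assembled spaces at all. A CKA vertex group is only a central extension of $H_v$ by $\Z$, and the equivariant constructions (Lemmas~\ref{ActOnHypspaceLem} and~\ref{ActOnConeoffHypspaceLem}) require the splittings $G_v = H_v \times Z(G_v)$ and $G_e = Z(G_v) \times Z(G_w)$; the paper's first use of omnipotence is exactly to pass to a finite-index admissible subgroup enjoying these product splittings (Corollary~\ref{FiniteIndexAdmissibleCor}). Relatedly, your gloss that omnipotence serves to make ``each central element $z_v$ act loxodromically'' slightly misreads its role: thanks to the flip, the central directions are already recorded by the binding-line factor and by the partner space built from the complementary vertex class, and omnipotence enters only through the uniform power $s$ that makes the collection $\{G_v',\, G_e' = s\Z \times s\Z\}$ compatible, so that the locally chosen finite-index subgroups $\ddot K_v$ glue into one global finite-index subgroup $G_0$ whose orbits of quasi-lines are sparse.
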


\begin{rem}
It is an open problem whether every hyperbolic group is residually finite. 
In \cite[Remark 3.4]{Wise00}, Wise noted that if every hyperbolic group is residually finite, then any hyperbolic group is omnipotent.
\end{rem}


\begin{rem}
As a corollary, Theorem \ref{thm:main4} gives another proof that flip graph manifold groups have property (QT).  This was indeed one of motivations of this study (without noticing \cite{HP15}).

In \cite{HS13}, Hume-Sisto prove that the universal cover of any flip graph manifold is quasi-isometrically embedded in the product of three metric trees. However, it does not follow from their proof that the fundamental group of a flip graph manifold has property (QT).

\end{rem}

We now give an outline of the proof of Theorem~\ref{thm:main4} and explain some intermediate results, which we believe are of independent interest. 

\begin{prop}
\label{prop:main4}
Let $G \curvearrowright X$ be a flip CKA action. Then there exists a quasi-isometric embedding from $X$ to a  product $\mathcal X_1\times \mathcal X_2$ of two hyperbolic spaces. 

If $G_v=H_v\times Z(G_v)$ for every vertex $v\in T^0$ and $G_e=Z(G_{e_-})\times Z(G_{e_+})$ for every edge $e\in T^1$, then there exists a subgroup $\dot G<G$ of finite index at most $2$ such that the above Q.I. embedding is $\dot G$-equivariant.
\end{prop}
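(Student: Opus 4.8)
The plan is to distribute the ``base'' and ``fibre'' directions of the vertex spaces into two separate hyperbolic spaces, using the flip identifications to glue the pieces coherently. First I would fix the canonical $2$-colouring $T^0 = A \sqcup B$ of the Bass--Serre tree $T$ and recall the coarse product structure of a CKA action: each vertex space $Y_v$ is quasi-isometric to $H_v \times \R$, where $H_v$ is hyperbolic and the $\R$-factor is the fibre (the centre $Z(G_v)$), while each edge space $Y_e$ is a flat $\R^2$. The flip hypothesis supplies the crucial compatibility: for an edge $e=(v,w)$, the fibre line of $v$ is identified with a loxodromic axis in $H_w$, and symmetrically the fibre line of $w$ with a loxodromic axis in $H_v$. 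In particular the fibre direction of a colour-$A$ vertex is already visible as a loxodromic line inside the $H$-factors of its colour-$B$ neighbours, and vice versa.

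Next I would build the two targets as collapses of $X$. Let $\pi_1\colon X\to\mathcal X_1$ collapse, in every vertex space, the fibre direction of the colour-$A$ vertices together with the $H$-direction of the colour-$B$ vertices; by the flip these are exactly the same subset of directions seen from the two sides of each edge, so $\pi_1$ is well defined. The quotient $\mathcal X_1$ is then quasi-isometric to the tree of hyperbolic spaces obtained by taking the spaces $\{H_v : v\in A\}$ and gluing $H_v$ to $H_{v'}$ along the common quasiline whenever $v,v'$ share a neighbour $w\in B$ (the loxodromic axes for $vw$ and $v'w$ both being identified with the fibre line of $w$). Define $\pi_2\colon X\to\mathcal X_2$ and $\mathcal X_2$ symmetrically from $\{H_v:v\in B\}$. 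To see that each $\mathcal X_i$ is hyperbolic I would apply a Bestvina--Feighn-type combination theorem for a tree of hyperbolic spaces whose edge spaces are uniformly quasi-isometrically embedded quasilines: the gluing lines are loxodromic axes, hence uniformly quasiconvex and undistorted in the $H_v$, the local model at a colour-$B$ vertex is a book of hyperbolic spaces along a single line (which is hyperbolic), and the required flaring condition follows from the contracting behaviour of loxodromic axes in a hyperbolic group.

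The desired map is $f=(\pi_1,\pi_2)\colon X\to\mathcal X_1\times\mathcal X_2$. Each $\pi_i$ is distance non-increasing, so $f$ is Lipschitz; the content is the lower bound, namely that $d_X$ is bounded above by a linear function of $d_{\mathcal X_1}+d_{\mathcal X_2}$ along the image. \textbf{This is the step I expect to be the main obstacle.} I would analyse a geodesic of $X$ as it passes through a sequence of vertex spaces and show that its horizontal progress (recorded in the $H$-factors of one colour) and its fibre progress (recorded, via the flip, in the $H$-factors of the other colour) are \emph{both} faithfully counted, with no cancellation between the two factors. The main tool would be the product structure on each vertex space together with the tree-of-spaces distance estimate for CKA actions, reducing the global bound to a sum of the per-vertex-space contributions, each of which splits as base plus fibre and is thereby captured by exactly one of $\mathcal X_1,\mathcal X_2$.

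Finally, for the equivariant refinement, the extra hypotheses $G_v=H_v\times Z(G_v)$ and $G_e=Z(G_{e_-})\times Z(G_{e_+})$ make the projections onto the base and fibre factors honest homomorphisms, so the collapsing maps $\pi_1,\pi_2$ are equivariant for the relevant actions. The subgroup $\dot G<G$ preserving the colouring $A\sqcup B$ of $T$ has index at most $2$ (the index $2$ accounting for elements of $G$ that interchange the two colours), and $\dot G$ acts on each factor $\mathcal X_i$; hence $f$ is $\dot G$-equivariant, which gives the stated conclusion.
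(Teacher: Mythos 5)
Your proposal follows essentially the same route as the paper: the bipartite colouring of the Bass--Serre tree, the two glued hyperbolic spaces (your direct gluing of the $\bar Y_v$'s along the fibre lines of opposite-colour vertices is quasi-isometric to the paper's flat-link construction), the map recording base coordinates in one factor and fibre coordinates in the other, and the colour-preserving subgroup $\dot G$ of index at most $2$ for equivariance. The step you flag as the main obstacle is exactly where the paper invests its work: it shows that \emph{special paths} (concatenations of geodesics through the strips $\mathcal{S}_{e_ie_{i+1}}$) are uniform quasi-geodesics via the Croke--Kleiner template machinery, and then matches their per-piece $L^1$-decomposition with quasi-geodesics in $\mathcal X_1$ and $\mathcal X_2$ --- precisely the per-vertex-space, no-cancellation argument you outline.
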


Let us describe briefly the construction of $\mathcal X\in \{\mathcal X_1, \mathcal X_2\}$. By Bass-Serre theory, $G$ acts on the Bass-Serre tree $T$ with vertex groups $G_v$ and edge groups $G_e$. Let $\mathcal V$ be one of the two sets of vertices in  $T$ with pairwise even distance. Note that  $G_v$ is the central extension of a hyperbolic group $H_v$ by $\mathbb Z$, so acts geometrically on a metric product $Y_v=\bar Y_v\times \mathbb R$ where $H_v$ acts geometrically on $\bar Y_v$ and $Z(G_v)$ acts by translation on $\mathbb R$-lines. Roughly, the space $\mathcal X$ is obtained by isometric gluing of  the boundary lines of $\bar Y_v$'s over vertices $v$ in the link of every $w \in T^0\setminus \mathcal V$. In proving Proposition \ref{prop:main4}, the main tool is the construction of a class of quasi-geodesic paths called \textit{special paths} between any two points in $X$.  See Section \ref{subsection:pathsystem} for the details and related discussion after  Theorem \ref{thm:generalization} below.   

To endow an action on $\mathcal X$, we pass to   an index at most 2 subgroup $\dot G$  preserving $\mathcal V$ and the stabilizer in $\dot G$ of $v\in \mathcal V$ is $G_v$ by Lemma \ref{lem:index2subgroup}. Under the  assumptions on $G_v$ and $G_e$'s, $\dot G$ acts by isometry on $\mathcal X$ and the Q.I. embedding is $\dot G$-equivariant.

To prove Theorem~\ref{thm:main4}, we exploit the  strategy  as \cite{BBF2} to produce a proper action on products of quasi-trees. By Lemma~\ref{YDistFormulaLem},  we first produce enough quasi-lines  $$\mathbb A=\cup_{v\in\mathcal V} \mathbb A_v$$ for the hyperbolic space $\mathcal X$ where $\mathbb A_v$ is a $H_v$-finite set of quasi-lines in $\bar Y_v$   so that the so-called distance formula follows in Proposition \ref{prop:distanceformulaX_1}. On the other hand, the ``crowd" quasi-lines in $\mathbb A$  may fail to satisfy the projection axioms in \cite{BBF}  with   projection constant required  from the distance formula. Thus,  we have to partition $\mathbb A$  into finite sub-collections of \textit{sparse} quasi-lines: $\forall \gamma\ne \gamma'$, $d(\gamma, \gamma')>\theta$ for a uniform constant $\theta$. 

Using local finiteness, we can partition  quasi-lines without respecting group action and prove the following general result. This generalize  the results of Hume-Sisto in \cite{HS13} to flip CKA actions.

\begin{thm}
\label{thm:main3}
Let $G \curvearrowright X$ be a flip CKA action. Then $G$ is quasi-isometric embedded into a finite product of quasi-trees. 
\end{thm}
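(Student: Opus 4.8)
The plan is to bootstrap from Proposition~\ref{prop:main4}, which already supplies a quasi-isometric embedding $X \hookrightarrow \mathcal{X}_1 \times \mathcal{X}_2$ into a product of two hyperbolic spaces. Since $G \curvearrowright X$ is geometric, the orbital map $G \to X$ is a quasi-isometry, so it suffices to embed each hyperbolic factor $\mathcal{X}_i$ quasi-isometrically into a finite product of quasi-trees: the product of the two resulting families, precomposed with the embedding of Proposition~\ref{prop:main4} and the orbital quasi-isometry, then exhibits $G$ inside a finite product of quasi-trees. Thus the whole argument reduces to a single statement about one hyperbolic space $\mathcal{X} \in \{\mathcal{X}_1, \mathcal{X}_2\}$.

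For $\mathcal{X}$ I would feed in the two ingredients prepared earlier: the collection of quasi-lines $\mathbb{A} = \cup_{v \in \mathcal{V}} \mathbb{A}_v$ from Lemma~\ref{YDistFormulaLem}, and the distance formula of Proposition~\ref{prop:distanceformulaX_1}, which expresses $d_{\mathcal{X}}(x,y)$, up to a fixed multiplicative and additive error, as the threshold-sum $\sum_{\gamma \in \mathbb{A}} [d_\gamma(x,y)]_\theta$ of projection distances to these quasi-lines. The target of the embedding is assembled from the quasi-trees of spaces of Bestvina--Bromberg--Fujiwara \cite{BBF}: once a sub-collection of quasi-lines satisfies the projection axioms, the associated space $\mathcal{C}(\cdot)$ is a quasi-tree, and the projection to it recovers precisely the threshold-sum of projection distances over that sub-collection.

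The obstacle, flagged in the discussion above, is that the full collection $\mathbb{A}$ is too crowded to satisfy the projection axioms with the projection constant $\theta$ dictated by the distance formula. To repair this I would partition $\mathbb{A}$ into finitely many \emph{sparse} sub-collections, where sparse means that any two distinct quasi-lines are more than $\theta$ apart. The crux, and what I expect to be the main technical step, is a local-finiteness estimate: only boundedly many quasi-lines of $\mathbb{A}$ come within distance $\theta$ of any fixed one. This should follow from the structure of $\mathbb{A}$, since each $\mathbb{A}_v$ is $H_v$-finite and the Bass-Serre tree together with the product geometry of the spaces $Y_v = \bar{Y}_v \times \mathbb{R}$ controls how quasi-lines attached to different vertices can approach one another. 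Granting a uniform bound $\Delta$, I form the graph on $\mathbb{A}$ joining two quasi-lines whenever they are within $\theta$; it has degree at most $\Delta$, hence is properly vertex-colourable with $N \leq \Delta + 1$ colours, and the colour classes $\mathbb{A}^1, \ldots, \mathbb{A}^N$ are the desired sparse sub-collections. I emphasize that this colouring is purely combinatorial and need not respect the $G$-action; this is exactly why the present theorem yields only a quasi-isometric embedding and not property (QT), whose equivariant refinement (requiring omnipotence) is deferred to Theorem~\ref{thm:main4}.

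Finally I would assemble the pieces. Each sparse $\mathbb{A}^j$ satisfies the projection axioms, so \cite{BBF} produces a quasi-tree $\mathcal{C}_j = \mathcal{C}(\mathbb{A}^j)$ together with a projection recovering $\sum_{\gamma \in \mathbb{A}^j} [d_\gamma(x,y)]_\theta$. Summing over $j = 1, \ldots, N$ reconstitutes the full threshold-sum, so the diagonal map $\mathcal{X} \to \prod_{j=1}^N \mathcal{C}_j$ is a quasi-isometric embedding by Proposition~\ref{prop:distanceformulaX_1}. Carrying this out for $\mathcal{X}_1$ and $\mathcal{X}_2$ and combining as in the first paragraph completes the proof. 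This mirrors the Hume-Sisto scheme \cite{HS13} for flip graph manifolds, with the quasi-lines $\mathbb{A}$ and the distance formula playing the role of their tree factors.
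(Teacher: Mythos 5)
Your overall scheme --- reduce via Proposition~\ref{prop:main4} to embedding each $\mathcal X_i$, then use the quasi-lines of Lemma~\ref{YDistFormulaLem}, the distance formula, and a partition into sparse families fed into the Bestvina--Bromberg--Fujiwara machinery --- is the paper's strategy, but there is a genuine gap at the final step: you misquote the distance formula. Proposition~\ref{prop:distanceformulaX_1} does not express $d_{\mathcal X_1}(x,y)$ coarsely as the threshold-sum $\sum_{\gamma\in\mathbb A}[d_\gamma(x,y)]_K$ alone; it says $d_{\mathcal X_1}(x,y)$ is comparable to $\sum_{\gamma\in\mathbb A}[d_\gamma(x,y)]_K + d_T(\rho(x),\rho(y))$, and the Bass--Serre tree term is not removable. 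Indeed, each crossing of a flat link costs at least $2$ in $\mathcal X_1$ even when every projection $d_\gamma(x,y)$ stays below the threshold $K$ (this is exactly Claim~\ref{claim:1}(2) in the paper's proof): take $x,y$ separated by $n$ flat links with negligible shift along each boundary line, so that $d_{\mathcal X_1}(x,y)\ge 2n$ while the full threshold-sum remains bounded. Since the BBF distance formula then bounds each $d_{\mathcal C_j}(\Phi_j(x),\Phi_j(y))$ by a constant depending only on $K$, your diagonal map $\mathcal X\to\prod_{j=1}^N\mathcal C_j$ collapses such pairs and is not a quasi-isometric embedding. The paper's map in the proof of Theorem~\ref{thm:main3} has target $T\times \prod_{i=1}^n\mathcal C_K(\mathbb A_i)$, with the Bass--Serre tree (itself a tree, so harmless for the conclusion) as an extra factor precisely to absorb this term; your proof needs the same factor.

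A second, smaller gap concerns the partition. Your bounded-degree colouring yields sparse classes, which suffices for the projection axioms, but it gives no control on where each colour class lives: a class $\mathbb A^j$ may entirely miss large regions of $\mathcal X$. To define $\Phi_j\colon \mathcal X\to \mathcal C_K(\mathbb A^j)$ and recover $\sum_{\gamma\in\mathbb A^j}[d_\gamma(x,y)]_K$ from $d_{\mathcal C_j}(\Phi_j(x),\Phi_j(y))$, you must choose for every $x$ a point on some quasi-line of $\mathbb A^j$ at uniformly bounded distance from $x$ and then invoke Corollary~\ref{BBFDistanceCor}; with an unboundedly distant representative the perturbation argument breaks down. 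This is why Lemma~\ref{lem:partitionofA} does not colour but builds each class greedily so that it is simultaneously $D$-separated and $(D+R)$-cobounded in $\mathcal X_1$ (properties (1) and (2)), using that cocompactness of the $H_v$-actions makes the union of each $\mathbb A_v$ coarsely dense in $\bar Y_v$. Replacing your colouring by such a construction, restoring the tree factor, and using the threshold $K$ from the distance formula (rather than $\theta$, which is the projection bound, not the cutoff) repairs the argument and lands you on the paper's proof.
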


However, the difficulty in establishing property (QT) is to partition all quasi-lines $\mathbb A=\cup_{i=1}^n\mathbb A_i$   so that each $\mathbb A_i$ is $\dot G$-invariant and sparse. In  \textsection \ref{SSConeoffSpace}, we cone  off the boundary lines of $Y_v$'s so that $\mathbb A_v$'s from different pieces $Y_v$ are ``isolated". The gives the   coned-off space $\dot {\mathcal X}$ with new distance formula in Proposition \ref{ConeoffDistFProp} so that $\mathcal X$ is quasi-isometric embedded into the product of $\dot {\mathcal X}$ with a quasi-tree from the boundary lines. See Proposition \ref{QIembedProp}.  

The goal is then to find a finite index subgroup $\ddot G<\dot G<G$ so that each orbit in $\mathbb A$ is sparse. This is done  in the following two steps:   

By residual finiteness of $H_v$, we first find a finite index subgroup $K_v<H_v$   whose orbit in $\mathbb A_v$  is  sparse. This follows the same argument in \cite{BBF2}.  Secondly, we need to reassemble those finite index subgroups $K_v$ as a finite index group $\ddot G$ so that the orbit in $\mathbb A$ is sparse. This step uses crucially   the omnipotence, with details   given in \textsection \ref{SSPartitonProperly}. The projection axioms thus fulfilled for each $\ddot G$-orbit produce a finite product of actions on quasi-trees, and finally, the   distance formula   finishes the proof of Theorem \ref{thm:main4}.


\subsection{Strongly quasi-convex subgroups}
The {\it height} of a finitely generated subgroup $H$ in a finitely generated group $G$ is the maximal $n \in \mathbb N$ such that there are distinct cosets $g_1 H,\dots,g_n H \in G/H$ such that the intersection
$g_1 H g_1^{-1} \cap \dots \cap g_n H g_n^{-1}$
is infinite. The subgroup $H$ is called {\it strongly quasi-convex} in $G$ if for any $L \ge 1$, $C \ge 0$ there exists $R = R(L,C)$ such that every $(L,C)$--quasi-geodesic in $G$ with endpoints in $H$ is contained in the $R$--neighborhood of $H$. We note that strong quasiconvexity does not depend on the choice of finite generating set of the ambient group and it agrees with quasiconvexity when the ambient group is hyperbolic. In \cite{GMRS}, the authors prove that quasi-convex subgroups in hyperbolic groups have finite height. It is a long-standing question asked by Swarup that whether or not the converse is
true (see Question 1.8 in \cite{Bes}). Tran in \cite{Tra19} generalizes the result of \cite{GMRS} by showing that strongly quasi-convex subgroups in any finitely generated group have finite height. It is natural to ask whether or not the converse is true in this setting (i.e, finite height implies strong quasiconvexity). If the converse is true, then we could characterize strongly quasi-convex subgroup of a finitely generated group purely in terms
of group theoretic notions.

In \cite{NTY19}, the authors prove that having finite height and strong quasiconvexity are  equivalent  for all finitely generated $3$--manifold groups  except the only ones containing the Sol command in its sphere-disk decomposition, and the graph manifold case  was an essential case treated there. More precisely, Theorem~1.7 in \cite{NTY19} states that finitely generated subgroups of the fundamental group of a graph manifold are strongly quasi-convex if and only if they have finite height. The second main result of this paper is to  generalize this result to Croke-Kleiner admissible action $G \curvearrowright X$.

\begin{thm}
\label{thm:generalization}
    Let $G \curvearrowright X$ be a CKA action. Let $K$ be a nontrivial, finitely generated infinite index subgroup of $G$. Then the following are equivalent.
\begin{enumerate}
    \item
     \label{thm1:item1}$K$ is strongly quasi-convex.
    \item
    \label{thm1:item2}$K$ has finite height in $G$.
    \item
    \label{thm1:item3} $K$ is virtually free and every infinite order elements are Morse.
    \item
    \label{thm1:item4}Let $G \curvearrowright T$ be the action of $G$ on the associated Bass-Serre tree. $K$ is virtually free and   the action of $K$ on the tree $T$ induces a quasi-isometric embedding of $K$ into $T$.
\end{enumerate}
\end{thm}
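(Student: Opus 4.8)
The plan is to prove the cycle of implications $(\ref{thm1:item1}) \Rightarrow (\ref{thm1:item2}) \Rightarrow (\ref{thm1:item3}) \Rightarrow (\ref{thm1:item4}) \Rightarrow (\ref{thm1:item1})$, organizing everything around a single geometric dichotomy for the action on the Bass--Serre tree $T$. The key lemma I would isolate first is a characterization of Morse elements: an infinite-order $g \in G$ is Morse in $X$ if and only if $g$ acts loxodromically on $T$, equivalently $g$ is not conjugate into any vertex group $G_v$. One direction is easy CAT(0) geometry: if $g$ is elliptic on $T$ it is conjugate into some $G_v$, and since $G_v$ acts geometrically on $Y_v = \bar Y_v \times \R$, the axis of $g$ lies inside a flat plane of the form $\gamma \times \R$, where $\gamma$ is a geodesic of the hyperbolic factor $\bar Y_v$; such a line bounds flat half-planes and hence is not Morse. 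The converse---loxodromic on $T$ forces Morse---is the substantive half, and here I would use the admissibility of the action (Definition~\ref{defn:admissible}): along the axis the fiber direction of each $G_v$ is transverse to (non-commensurable with) the fiber of the adjacent vertex group, so no flat half-plane can fellow-travel the axis. The special paths of Section~\ref{subsection:pathsystem} and the resulting contraction estimate are what I would use to make this precise.

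With the lemma in hand, three of the four arrows are short. For $(\ref{thm1:item1}) \Rightarrow (\ref{thm1:item2})$ I would simply cite Tran's theorem \cite{Tra19} that strongly quasi-convex subgroups of any finitely generated group have finite height. For $(\ref{thm1:item3}) \Rightarrow (\ref{thm1:item4})$: if every infinite-order element of $K$ is Morse then, by the lemma, none is elliptic on $T$, so every vertex stabilizer $K \cap gG_vg^{-1}$ is a torsion subgroup of a vertex group; as any torsion subgroup of the central extension $G_v$ injects into the hyperbolic group $H_v$ (the fiber being torsion-free) it is finite, so $K$ acts on $T$ with finite stabilizers. Since $K$ is finitely generated its minimal invariant subtree has finite quotient, so $K$ is virtually free and, as a subtree is convex hence isometrically embedded in $T$, the orbit map $K \to T$ is a quasi-isometric embedding. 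For $(\ref{thm1:item4}) \Rightarrow (\ref{thm1:item1})$ I would argue through the coarse projection $\pi : X \to T$ collapsing each vertex space $Y_v$ to $v$: given an $(L,C)$--quasi-geodesic with endpoints in $K$, the quasi-convexity of the orbit $K\cdot o$ in the tree controls its horizontal behaviour, while the finiteness of each intersection $K \cap gG_vg^{-1}$ (forced by the quasi-isometric embedding) together with CAT(0) convexity in the factors $Y_v = \bar Y_v \times \R$ bounds its vertical/fiber excursions; combining the two estimates confines the quasi-geodesic to a uniform neighborhood of $K$. This last arrow is technical but, I believe, routine given the tree-of-spaces structure.

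The heart of the theorem, and the step I expect to be the main obstacle, is $(\ref{thm1:item2}) \Rightarrow (\ref{thm1:item3})$. I would reduce it to the single assertion that finite height forbids infinite-order elements of $K$ that are elliptic on $T$: once this is known, the torsion-stabilizer argument above makes $K$ virtually free, and the converse half of the Morse lemma upgrades ``loxodromic on $T$'' to ``Morse'', yielding (\ref{thm1:item3}). So suppose toward a contradiction that $g \in K$ is infinite order and conjugate into $G_v$; its axis lies in a flat, and $g$ together with the fiber $Z(G_v) = \langle z \rangle$ generates a $\mathbb{Z}^2$. The plan is to manufacture, for every $n$, a family of $n$ distinct cosets whose conjugates share a fixed infinite subgroup. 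When $K \cap \langle z\rangle$ is trivial, conjugating by the central fiber already works: $z^{-n} g z^{n} = g \in K$ gives $\langle g\rangle \subseteq \bigcap_n z^{n}Kz^{-n}$ with the cosets $z^{n}K$ pairwise distinct, forcing infinite height.

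The genuinely delicate case is when $K$ contains fiber powers from both direct factors of an edge group $G_e \cong \mathbb{Z}^2$. Here I would exploit admissibility, which makes the fiber of $G_v$ a transverse, loxodromic-type direction inside the adjacent $G_w$, to pass to conjugators drawn from the hyperbolic factor $H_w$ and spread the shared flat over infinitely many independent translates produced by independent loxodromics of $H_w$. Making this case analysis exhaustive, and checking in each branch both that the resulting conjugators represent pairwise distinct cosets and that their conjugates of $K$ retain an infinite common intersection, is the technical crux: it is exactly where the geometry of flats, centralizers, and the admissible (fiber-transverse) gluing must be combined, generalizing the graph-manifold argument of \cite{NTY19}.
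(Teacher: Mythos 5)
Your skeleton (a Morse-dichotomy lemma for elements, Tran's theorem for $(\ref{thm1:item1})\Rightarrow(\ref{thm1:item2})$, the minimal-subtree argument for virtual freeness and the quasi-isometric embedding) matches the paper's Corollary~\ref{cor:Morse} and Lemma~\ref{lem:virtuallyfree}, but you have inverted where the real work sits, and the two steps you defer are genuine gaps. The arrow you dismiss as ``technical but routine,'' namely $(\ref{thm1:item4})\Rightarrow(\ref{thm1:item1})$, subsumes the implication $(\ref{thm1:item3})\Rightarrow(\ref{thm1:item1})$, which the paper explicitly identifies as the heart of the theorem. Your plan --- control the horizontal behaviour of an arbitrary $(L,C)$--quasi-geodesic via the tree and its fiber excursions via CAT(0) convexity --- does not work as stated: $X$ is not hyperbolic, an $(L,C)$--quasi-geodesic may spend length comparable to $d(x,y)$ wandering in the flat directions of a single block $Y_v=\bar Y_v\times\R$, and convexity alone provides no mechanism to pull it back toward $K$. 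The paper's resolution is Sisto's path-system criterion: it constructs the subspace $C_K$ (Remark~\ref{rem:constructC_K}), defines a projection $\pi\colon X\to C_K$ through the nearest-point projection to the minimal subtree, verifies the contraction conditions of Definition~\ref{defn:contracting} \emph{only against special paths} (Lemmas~\ref{lem:projectionbounded} and~\ref{lem:neighborhood}, Proposition~\ref{morseimpliesstronglyqc}), and then Lemma~\ref{lem:contractingtoquasiconvex} (Sisto) upgrades contraction along the path system to strong quasiconvexity against \emph{all} quasi-geodesics. You actually have this machinery in view --- you invoke special paths and a contraction estimate for cyclic subgroups in your Morse lemma --- but carrying it out for the free subgroup is the content of the theorem, not an afterthought; asserting it is routine leaves the main implication unproven.

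Conversely, the step you call the crux, $(\ref{thm1:item2})\Rightarrow(\ref{thm1:item3})$, is short in the paper, and your ``genuinely delicate case'' both remains unexecuted in your write-up and has a far simpler resolution than spreading flats over translates by independent loxodromics of $H_w$. If $K$ contains a power $z^k$ of the fiber of some vertex group $G_v$ (after conjugating), then $K\cap G_v$ is infinite; since distinct cosets $h_i(K\cap G_v)$ in $G_v$ yield distinct cosets $h_iK$ in $G$, finite height passes to $K\cap G_v$ in $G_v$, and your own central-conjugation trick, now run \emph{inside} $G_v$ (every $h\in G_v$ satisfies $hz^kh^{-1}=z^k$), forces $K\cap G_v$ to have finite index in $G_v$. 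Because every edge group is infinite, this propagates along the connected graph of groups and forces $K$ itself to have finite index in $G$, contradicting the hypothesis. This is precisely Propositions~2.3 and~2.6 of \cite{NTY19}, quoted in the paper as Proposition~\ref{prop:summarizeNTY19} and applied in Lemma~\ref{lem:virtuallyfree}; no case analysis over the two $\Z$--factors of an edge group and no omnipotence-style construction is needed. (Your geometric proof of the easy half of the Morse lemma, via flat half-planes rather than the paper's route through Tran plus Proposition~\ref{prop:summarizeNTY19}, is a legitimate variant.) In sum: the deferral in $(\ref{thm1:item2})\Rightarrow(\ref{thm1:item3})$ is repairable by the citation above, but $(\ref{thm1:item4})\Rightarrow(\ref{thm1:item1})$ cannot be waved through and requires the full contraction argument.
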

We prove Theorem~\ref{thm:generalization} by showing that $(\ref{thm1:item1}) \implies (\ref{thm1:item2}) \implies (\ref{thm1:item3}) \implies (\ref{thm1:item1})$ and $(\ref{thm1:item3}) \Leftrightarrow (\ref{thm1:item4})$. Similarly as in \cite{NTY19}, the heart part of Theorem~\ref{thm:generalization} is the implication $(\ref{thm1:item3}) \implies (\ref{thm1:item1})$. We briefly review ideas in the proof of Theorem~1.7 in \cite{NTY19}. Suppose that $K$ is a finitely generated finite height subgroup of $\pi_1(M)$ where $M$ is a graph manifold. Let $M_K \to M$ be the covering space of $M$ corresponding to $K$. The authors in \cite{NTY19} prove that $K$ is strongly quasi-convex in $\pi_1(M)$ by using Sisto's notion of path system $\mathcal{PS}(\tilde{M})$ in the universal cover $\tilde{M}$ of $M$, and prove that the preimage of the Scott core of $M_K$ in $\tilde{M}$ is $\mathcal{PS}(\tilde{M})$--contracting in the sense of Sisto. In this paper, the strategy of the proof of Theorem~\ref{thm:generalization} is similar to the proof of Theorem~1.7 in \cite{NTY19} where we still use Sisto's  path system in $X$ but details are different. Sisto's construction of \textit{special paths} are carried out only in flip graph manifolds. Our construction of $(X, \mathcal{PS}(X))$ relies on the work of Croke-Kleiner \cite{CK02} and applies to any  admissible space $X$ (so   any nonpostively curved graph manifold). We then    construct a subspace $C_K \subset X$ on which $K$ acts geometrically  and show that $C_K$ is contracting in $X$ with respect to the path system $(X, \mathcal{PS}(X))$. As a consequence, $K$ is strongly quasi-convex in $G$.

To conclude the introduction, we list a few questions and problems. 

Quasi-isometric classification of graph manifolds has been studied by Kapovich-Leeb \cite{KL98} and a complete quasi-isometric classification for fundamental
groups of graph manifolds is given by Behrstock-Neumann \cite{BN08}. Kapovich-Leeb prove that for any graph manifold $M$, there exists a flip graph manifold $N$ such that their fundamental groups are quasi-isometric. We would like to know that whether or not such a result holds for admissible groups.
\begin{ques}
Let $G$ be an admissible group such that each vertex group is the central extension of a omnipotent hyperbolic CAT(0) group by $\Z$. Does there exist flip CKA action $G' \curvearrowright X$ so that $G$ and $G'$ are quasi-isometric? 
\end{ques}

\begin{ques}[Quasi-isometry rigidity]
Let $G \curvearrowright X$ be a flip CKA action, and $Q$ be a finitely generated group which is quasi-isometric to $G$. Does there exist a finite index subgroup $Q'< Q$ such that $Q'$ is a flip CKA group?
\end{ques}

With a positive answer to the above questions, we hope one can try to follow the strategy described in \cite{BN08} to attack the following. 

\begin{prob}
Under the assumption of Theorem \ref{thm:main4}, give a quasi-isometric classification of admissible actions.   
\end{prob}

In \cite{Liu13},  Liu  showed that the fundamental group of a non-positively curved graph manifold $M$ is virtually special (the case $\partial M \neq \emptyset$ was also obtained independently by Przytycki–Wise \cite{PW14}). Thus, it is natural to ask the following.

\begin{ques}
Let $G \curvearrowright X$ be a CKA action where vertex groups are the central extension  of  a virtually special hyperbolic group by the integer group. Is $G$ virtually special?
\end{ques}

As above, a positive answer to the question (with virtual compact specialness) would give an other proof of  Theorem \ref{thm:main4} under the same assumption.


\subsection*{Overview}
In Section~\ref{section:preliminary}, we review some concepts and results about Croke-Kleiner admissible groups. In Section~\ref{subsection:pathsystem}, we construct special paths in admissible spaces and give some results that will be used in the later sections. The proof of Theorem~\ref{thm:main3} and Proposition~\ref{prop:main4} is given in Section~\ref{qiefinitequasitrees}. We prove Theorem~\ref{thm:main4} and Theorem~\ref{thm:generalization} in Section~\ref{sec:properaction} and Section~\ref{sec:finiteheightquasiconvex} respectively.

\subsection*{Acknowledgments}
We would like to thank Chris Hruska, Hongbin Sun and Dani Wise for helpful conversations. W. Y. is supported by the National Natural Science Foundation of China (No. 11771022).

\section{Preliminary}
\label{section:preliminary}
\emph{Admissible groups} firstly introduced in \cite{CK02}. This is a particular class of graph of groups that includes fundamental groups of $3$--dimensional graph manifolds (i.e, compact $3$--manifolds are obtained by gluing some circle bundles). In this section, we review  admissible groups and their properties that will used throughout in this paper.
\begin{defn}
\label{defn:admissible}
A graph of group $\mathcal{G}$ is \emph{admissible} if
\begin{enumerate}
    \item $\mathcal{G}$ is a finite graph with at least one edge.
    \item Each vertex group ${ G}_v$ has center $Z({ G}_v) \cong \Z$, ${ H}_v \colon = { G}_{v} / Z({ G}_v)$ is a non-elementary hyperbolic group, and every edge subgroup ${ G}_{e}$ is isomorphic to $\Z^2$.
    \item Let $e_1$ and $e_2$ be distinct directed edges entering a vertex $v$, and for $i = 1,2$, let $K_i \subset { G}_v$ be the image of the edge homomorphism ${G}_{e_i} \to {G}_v$. Then for every $g \in {\bar G}_v$, $gK_{1}g^{-1}$ is not commensurable with $K_2$, and for every $g \in  G_v \setminus K_i$, $gK_ig^{-1}$ is not commensurable with $K_i$. 
    \item For every edge group ${ G}_e$, if $\alpha_i \colon { G}_{e} \to { G}_{v_i}$ are the edge monomorphism, then the subgroup generated by $\alpha_{1}^{-1}(Z({ G}_{v_1}))$ and $\alpha_{2}^{-1}(Z({ G}_{v_1}))$ has finite index in ${ G}_e$.
\end{enumerate}
A group $G$ is \emph{admissible} if it is the fundamental group of an admissible graph of groups.
\end{defn}

\begin{defn}
\label{defn:admissibleaction}
We say that the action $G \curvearrowright X$ is an {\it Croke-Kleiner admissible} (CKA) if $G$ is an admissible group, and $X$ is a Hadamard space, and the action is geometrically (i.e, properly and cocompactly by isometries)
\end{defn}

Examples of admissible actions:
\begin{enumerate}
    \item Let $M$ be a nongeometric graph manifold that admits a nonpositively curve metric. Lift this metric to the universal cover $\tilde{M}$ of $M$, and we denote this metric by $d$. Then the action $\pi_1(M) \curvearrowright (\tilde{M}, d)$ is a CKA action.
    \item Let $T$ be the torus complexes constructed in \cite{CK00}. Then $\pi_1(T) \curvearrowright \tilde{T}$ is a CKA action.
    \item Let $H_1$ and $H_2$ be two torsion-free hyperbolic groups such that they act geometrically on $CAT(0)$ spaces $X_1$ and $X_2$ respectively. Let $G_i = H_i \times \Z$ (with $i =1,2$), then $G_i$ acts geometrically on the $CAT(0)$ space $Y_ i = X_{i} \times \R$. A primitive hyperbolic element in $H_i$ gives a totally geodesic torus $T_i$ in the quotient space $Y_{i}/G_i$. Choose a basis on each torus $T_i$. Let $f \colon T_1 \to T_2$ be a flip map.  Let $M$ be the space obtained by gluing $Y_1$ to $Y_2$ along the homemorphism $f$. We note that there exists a metric on $M$ such that with respect to this metric, $M$ is a locally $CAT(0)$ space. Then $G \curvearrowright \tilde M$ is a CKA action.
\end{enumerate}

Let $G \curvearrowright X$ be an admissible action, and let $G \curvearrowright T$ be the action of $G$ on the associated Bass-Serre tree. Let $T^0 = Vertex(T)$ and $T^1 = Edge(T)$ be the vertex and edge sets of $T$. For each $\sigma \in T^0 \cup T^1$, we let $G_{\sigma} \le G$ be the stabilizer of $\sigma$. For each vertex $v \in T^0$, let $Y_{v} := Minset(Z(G_v)) := \cap_{g \in Z(G_v)} Minset(g)$ and for every edge $e \in E$ we let $Y_{e} := Minset(Z(G_e)) := \cap_{g \in Z(G_e)} Minset(g)$.
We note that the assignments $v \to Y_v$ and $e \to Y_e$ are $G$--equivariant with respect to the natural $G$ actions.

 The following lemma is well-known.
 \begin{lem}
If $H=\mathbb Z^k$ for some $k\ge 1$ then $Minset(H)=\cap_{h\in H} Minset(h)$ splits isometrically as a metric product $C\times \mathbb E^k$ so that $H$ acts trivially on $C$ and as a translation lattice on $\mathbb E^k$. Moreover, $Z(H,G)$ acts cocompactly on $C\times \mathbb E^k$.
\end{lem}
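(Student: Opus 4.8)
The plan is to obtain the product decomposition directly from the Flat Torus Theorem, and then to prove the cocompactness clause by a properness argument that first establishes cocompactness of the normalizer $N(H,G)$ and then descends to $Z(H,G)$ through a finite-index step. First I would record that, since $G\curvearrowright X$ is geometric and $X$ is Hadamard, every element of $G$ acts as a semisimple isometry (cocompact actions on complete $\CAT(0)$ spaces have this property, by Bridson--Haefliger). Hence $H\cong\mathbb{Z}^k$ acts properly by semisimple isometries, and the Flat Torus Theorem applies verbatim: $Minset(H)=\cap_{h\in H}Minset(h)$ is nonempty, closed and convex, and splits isometrically as $C\times\mathbb{E}^k$ with $H$ acting trivially on $C$ and as a cocompact translation lattice $L$ on $\mathbb{E}^k$. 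This is the first assertion, so only the cocompactness of $Z(H,G)$ remains.

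Next I would show that $N:=N(H,G)$ preserves $Minset(H)$ together with its splitting and acts cocompactly on it. Preservation is immediate, since $n\cdot Minset(H)=Minset(nHn^{-1})=Minset(H)$ and $n$ carries the translation axes of $H$ to those of $nHn^{-1}=H$, hence preserves the Euclidean factor. For cocompactness, fix $x_0\in Minset(H)$ and $D_0$ with $X=G\cdot\cball{x_0}{D_0}$. Given $x\in Minset(H)$, pick $g$ with $d(gx,x_0)\le D_0$; then $gx\in Minset(gHg^{-1})$, so $gx$ lies in $Minset(a)$ for each generator $a$ of $gHg^{-1}$. As $a$ is a conjugate of a fixed generator $h_i$ of $H$ it has translation length $|h_i|$, whence $d(x_0,ax_0)\le 2D_0+\max_i|h_i|$. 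By properness these generators lie in a fixed finite subset $F\subset G$, so $gHg^{-1}$ takes only finitely many values; writing each such value as $a_jHa_j^{-1}$ for a fixed $a_j\in G$, the element $a_j^{-1}g$ lies in $N$ and moves $x$ to within $D_0$ of the fixed point $a_j^{-1}x_0$. Thus $Minset(H)$ lies in a bounded neighborhood of the $N$-orbit of the finite set $\{a_j^{-1}x_0\}$.

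Finally I would pass from $N$ to $Z:=Z(H,G)$. The conjugation homomorphism $N\to\mathrm{Aut}(H)\cong\GL_k(\mathbb{Z})$ has kernel exactly $Z$. Since $N$ acts on the factor $\mathbb{E}^k$ by affine isometries that preserve the translation lattice $L$, each such automorphism is realized by the linear part of an isometry of $\mathbb{E}^k$, and these linear parts lie in the finite point group $\{A\in O(k):AL=L\}$. Hence the image of $N\to\mathrm{Aut}(H)$ is finite, so $[N:Z]<\infty$; as a finite-index subgroup of a cocompactly acting group still acts cocompactly, $Z(H,G)$ acts cocompactly on $C\times\mathbb{E}^k$.

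I expect the main obstacle to be the cocompactness of the $N$-action in the second step. The delicate point is bounding the number of conjugates $gHg^{-1}$ whose minsets crowd near the basepoint $x_0$; this is exactly where properness of the geometric action combines with the invariance of translation length under conjugation, and it is the only place where both hypotheses on $G\curvearrowright X$ are genuinely used.
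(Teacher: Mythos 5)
Your proof is correct, but there is essentially nothing in the paper to compare it against: the paper states this lemma without proof, flagging it as well-known. What it implicitly invokes is the Flat Torus Theorem \cite[Theorem II.7.1]{BH99} together with the standard cocompactness argument for normalizers of abelian subgroups generalizing \cite[Proposition II.6.10(2)]{BH99}, and your three steps --- semisimplicity from the geometric action, the conjugate-counting argument (properness plus conjugation-invariance of translation length) giving cocompactness of $N(H,G)$ on the minset, and the descent to $Z(H,G)$ via the finite point group $\{A\in O(k): AL=L\}$ of the translation lattice --- are exactly that standard proof. The only cosmetic point worth patching: at the end of your second step the points $a_j^{-1}x_0$ need not lie in $Minset(H)$; they lie within $D_0$ of it (since $nx\in Minset(H)$), so replace them by their projections to the convex, $N(H,G)$-invariant set $Minset(H)$ to conclude cocompactness of the action on $Minset(H)$ itself rather than on a neighborhood of it.
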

As a corollary, we have  

\begin{enumerate}
    \item $G_v$ acts co-compactly on $Y_v=\bar Y_v\times \mathbb R$ and $Z(G_v)$ acts by translation on the $\mathbb R$--factor and trivially on $\bar Y_v$ where $\bar Y_v$ is a Hadamard space.
    \item $G_e=\mathbb Z^2$ acts co-compactly on $Y_e=\bar Y_e \times \mathbb R^2\subset Y_v$ where $\bar Y_e$ is a compact Hadamard space. 
    
    \item if $\langle t_1\rangle=Z(G_{v_1}), \langle t_2\rangle=Z(G_{v_2})$ then $\langle t_1, t_2\rangle$ generates a finite index subgroup of $G_e$.  
\end{enumerate}

We summarize results in Section~3.2 of \cite{CK02} that will be used in this paper. 
\begin{lem}
\label{defn:vertexedgespace}
Let $G \curvearrowright X$ be an CKA action. Then there exists a constant $D >0$ such that the following holds.
\begin{enumerate}
    \item $\cup_{v \in T^0} {N}_{D}(Y_v) = \cup_{e \in T^1} {N}_{D}(Y_e) = X$. We define $X_v := {N}_{D}(Y_v)$ and $X_e := {N}_{D}(Y_e)$ for all $v \in T^0$, $e \in T^1$.
    \item If $\sigma, \sigma' \in T^0 \cup T^1$ and $X_{\sigma} \cap X_{\sigma'} \neq \emptyset$ then $d_{T}(\sigma, \sigma') < D$.
\end{enumerate}
\end{lem}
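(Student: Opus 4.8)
The plan is to derive both statements from the two defining features of a CKA action --- that $G$ acts \emph{properly} and \emph{cocompactly} on $X$ --- together with the structural facts recorded above: that $G_v$ (resp.\ $G_e$) acts cocompactly on $Y_v$ (resp.\ $Y_e$), and that the graph $\mathcal G$, hence the set of $G$--orbits in $T^0$ and in $T^1$, is finite. Throughout I use that a Hadamard space carrying a geometric action is proper, so closed balls are compact and properness means $\{\, g \in G \mid gA \cap B \neq \emptyset \,\}$ is finite whenever $A,B \subseteq X$ are compact. For item (1) the key observation is that $S_V := \bigcup_{v \in T^0} Y_v$ is nonempty and $G$--invariant, by the $G$--equivariance of $v \mapsto Y_v$, and likewise $S_E := \bigcup_{e \in T^1} Y_e$. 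Hence $x \mapsto d(x, S_V)$ is $G$--invariant; since $G \curvearrowright X$ is cocompact there is a compact $K$ with $GK = X$, and as this function is continuous it is bounded on $K$, hence bounded on all of $X$ by $G$--invariance. The same applies to $S_E$. Choosing $D$ larger than both bounds gives $\bigcup_v N_D(Y_v) = \bigcup_e N_D(Y_e) = X$, which is (1).

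For item (2) I would first isolate the following finiteness statement: for every $r>0$ and every $x \in X$, only finitely many $\sigma \in T^0 \cup T^1$ satisfy $Y_\sigma \cap \overline{B(x,r)} \neq \emptyset$. Since there are finitely many $G$--orbits it suffices to bound the number of such $\sigma$ in a single orbit, say $\sigma = g v_0$ for a fixed vertex $v_0$ (the edge case being identical). Fix a compact $C_0 \subseteq Y_{v_0}$ with $G_{v_0} C_0 = Y_{v_0}$, available because $G_{v_0}$ acts cocompactly on $Y_{v_0}$. If $gY_{v_0} \cap \overline{B(x,r)} \neq \emptyset$, then writing the relevant point of $Y_{v_0}$ as $hc$ with $h \in G_{v_0}$ and $c \in C_0$ gives $(gh)\,C_0 \cap \overline{B(x,r)} \neq \emptyset$, so $gh$ lies in the finite set $F := \{\, \gamma \mid \gamma C_0 \cap \overline{B(x,r)} \neq \emptyset \,\}$ furnished by properness. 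As $h$ fixes $v_0$, the vertex $gv_0 = (gh)v_0$ ranges over the finite set $Fv_0$, proving the claim.

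With this in hand I would prove (2) by contradiction. If $d_T(\sigma,\sigma')$ were not uniformly bounded over pairs with $X_\sigma \cap X_{\sigma'} \neq \emptyset$, choose such pairs $(\sigma_n,\sigma_n')$ with witnesses $x_n \in X_{\sigma_n} \cap X_{\sigma_n'}$ and $d_T(\sigma_n,\sigma_n') \to \infty$. Using cocompactness, translate by $g_n \in G$ so that $g_n x_n \in K$ converges to some $x$; since $G$ acts on $T$ by isometries the translated pairs $(g_n\sigma_n, g_n\sigma_n')$ still have tree--distance tending to infinity, while both $Y_{g_n\sigma_n}$ and $Y_{g_n\sigma_n'}$ meet $\overline{B(x,D+1)}$ for large $n$. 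By the finiteness statement the elements $g_n\sigma_n$ and $g_n\sigma_n'$ take only finitely many values, so their pairwise tree--distances are bounded, a contradiction. This furnishes a bound $D'$ on $d_T(\sigma,\sigma')$ whenever $X_\sigma \cap X_{\sigma'} \neq \emptyset$; taking $D$ larger than both the covering constant and $D'$ then yields a single constant satisfying (1) and (2) (enlarging $D$ only strengthens the covering, and the contradiction argument applies verbatim at the larger radius).

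The main obstacle is item (2), and within it the finiteness step is where properness does the essential work: the whole point is to convert the geometric statement ``the $D$--neighborhoods of two minsets meet'' into the combinatorial statement ``the two tree vertices are close,'' and it is exactly proper discontinuity --- through the finiteness of $F$ --- that prevents infinitely many minsets from accumulating near a single point. The remaining bookkeeping, merging the covering constant with the tree--distance bound into one $D$, is routine.
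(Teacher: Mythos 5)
First, a point of comparison: the paper contains no proof of Lemma~\ref{defn:vertexedgespace} at all --- it is explicitly quoted as a summary of Section~3.2 of \cite{CK02} --- so your proposal can only be measured against the standard argument it reconstructs, and in substance you reconstruct it correctly. Part (1) via the $G$--invariance and $1$--Lipschitz continuity of $x \mapsto d(x, \bigcup_v Y_v)$ together with cocompactness is right (modulo the tacit standard facts that each $Y_\sigma$ is nonempty, i.e.\ that central elements are semisimple under a geometric action, and that $X$ is a proper metric space, which is part of the Hadamard/geometric-action convention you invoke). For part (2), your local finiteness claim --- only finitely many $\sigma$ with $Y_\sigma \cap \overline{B(x,r)} \neq \emptyset$, proved by combining the finiteness of $G$--orbits in $T^0 \cup T^1$, cocompactness of $G_{v_0} \curvearrowright Y_{v_0}$, and proper discontinuity via the set $F$ --- is exactly the right mechanism, and your diagnosis that this is where properness does the essential work is accurate.

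There is, however, a genuine gap in the final merging step. Your argument delivers, for each \emph{fixed} radius $r$, a finite bound $B(r)$ on $d_T(\sigma,\sigma')$ over pairs with $N_r(Y_\sigma) \cap N_r(Y_{\sigma'}) \neq \emptyset$; but the statement demands a single $D$ serving both as the neighborhood radius and as the tree-distance bound, i.e.\ some $D$ with $B(D) < D$. Enlarging $D$ strengthens the hypothesis of (2) at the same time as it weakens the conclusion, so ``the contradiction argument applies verbatim at the larger radius'' only yields $B(D) < \infty$, not $B(D) < D$; iterating $D \mapsto \max(D, B(D)+1)$ need not terminate, and nothing in your soft argument excludes $B(r) \ge r$ for all $r$. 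In fact $B(r)$ genuinely grows linearly in $r$ --- tree-distance and the distance between minsets are bi-Lipschitz related, as one sees from the template quasi-isometric embedding (Lemma~\ref{lem:qie}) --- with a slope depending on the metric normalization, so rescaling the metric on $X$ shows the literal one-constant statement is not even scale-invariant and cannot follow from properness and cocompactness alone. The honest output of your (correct) argument is the two-constant form: a covering constant $D_0$, with $X_\sigma := N_{D_0}(Y_\sigma)$, and then a constant $D \ge \max(D_0, B(D_0)+1)$ such that $X_\sigma \cap X_{\sigma'} \neq \emptyset$ implies $d_T(\sigma,\sigma') < D$. This is how the lemma is phrased in spirit in \cite{CK02} and how it is actually used everywhere downstream in this paper (e.g.\ in Lemma~\ref{HSlideLem} and the special-path constructions, which need only \emph{some} uniform bound), so the defect is one of bookkeeping in your last sentence rather than of method; but as written, that sentence papers over the fact that the self-improvement it appeals to is not available.
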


{\bf Strips in admissible spaces:} (see Section~4.2 in \cite{CK02}).
 We first choose, in a $G$--equivariant way, a plane $F_e \subset Y_e$ for each edge $e \in T^1$. 
Then for every pair of adjacent edges $e_1$, $e_2$. we choose, again equivariantly, a minimal geodesic from $F_{e_1}$ to $F_{e_2}$; by the convexity of $Y_v = \bar{Y}_{v} \times \R$, $v:= e_{1} \cap e_{2}$, this geodesic determines a Euclidean strip $\mathcal{S}_{e_1e_2} : = \gamma_{e_1e_2} \times \R$ (possibly of width zero) for some geodesic segment $\gamma_{e_1e_2} \subset \bar{Y}_v$. Note that $\mathcal{S}_{e_1e_2} \cap F_{e_i}$ is an axis of $Z(G_v)$. Hence if $e_1, e_2, e \in E$, $e_{i} \cap e = v_{i} \in V$ are distinct vertices, then the angle between the geodesics $\mathcal{S}_{e_1e} \cap F_{e}$ and $\mathcal{S}_{e_2e} \cap F_{e}$ is bounded away from zero.

\begin{rem}
\begin{enumerate}
\item We note that it is possible that  $\gamma_{e_1,e_2}$ is just a point. The lines  $S_{e_1, e_2} \cap F_{e_1}$ and $S_{e_1, e_2} \cap F_{e_2}$ are axes of $Z(G_v)$.
\item There exists a uniform constant such that for any edge $e$, the Hausdorff distance between two spaces $F_{e}$ and $X_{e}$ is no more than this constant.
\end{enumerate}
\end{rem}

\begin{rem}
\label{rem:indexfunction}
There exists a $G$--equivariant coarse $L$--Lipschitz map  $\rho \colon X \to T^0$  such that  $x \in X_{\rho(x)}$ for all $x \in X$. The map $\rho$ is called {\it indexed map}. We refer the reader to Section~3.3 in \cite{CK02} for existence of such a map $\rho$.

 \end{rem}

\begin{defn}[Templates, \cite{CK02}]
\label{defn:template}
A {\it template} is a connected Hadamard space $\mathcal{T}$ obtained from disjoint collection of Euclidean planes $\{W\}_{W \in Wall_{\mathcal{T}}}$ (called {\it walls}) and directed Euclidean strips $\{\mathcal{S}\}_{\mathcal{S} \in Strip_{\mathcal{T}}}$ (a direction for a strip $\mathcal{S}$ is a direction for its $R$--factor $\mathcal{S} \simeq I \times \R$) by isometric gluing  subject to the following conditions.
\begin{enumerate}
    \item The boundary geodesics of each strip $\mathcal{S} \in Strip_{\mathcal{T}}$ , which we will refer to as {\it singular geodesics}, are glued isometrically to distinct walls in $Wall_{\mathcal{T}}$.
    \item Each wall $W \in Wall_{\mathcal{T}}$ is glued to at most two strips, and the gluing lines are not parallel.
 
\end{enumerate}
\end{defn}

{\bf Notations: }
We use the notion $a \preceq_{K} b$ if the exists $C = C(K)$ such that $a \le Cb + C$, and we use the notion $a \sim_{K} b$ if $a \preceq_{K} b$ and $b \preceq_{K} a$. Also, when we write $a \asymp_{K} b$ we mean that $a/C \le b \le Ca$.

Denote   by $Len^1(\gamma)$ and  $Len(\gamma)$ the $L^1$--length  and $L^2$--length of a path $\gamma$ in a metric product space  $A \times B$. These two lengths are equal for a path  if it  is parallel to a factor; in general, they are bilipschitz.

\section{Special paths in CKA action $G \curvearrowright X$}
\label{subsection:pathsystem}
Let $G \curvearrowright X$ be a CKA action.
In this section, we are going to define \emph{special paths} (see Definition~\ref{SpecialPathDefn}) in $X$ that will be used on the latter sections. Roughly speaking, each special path in $X$ is a concatenation of geodesics in consecutive pieces $Y_v$'s of $X$ and they are uniform quasi-geodesic in the sense that there exists a constant $\mu = \mu(X)$ such that every special path is $(\mu, \mu)$--quasi-geodesic.

We first introduce the class of \emph{special paths} in a template which shall be mapped to \emph{special paths} in $X$   up to a finite Hausdorff distance. 

\subsection{Special paths in a template}

\begin{defn}
Let $\mathcal{T}$ be the template given by Definition~\ref{defn:template}.
A (connected) path $\gamma$ in $\mathcal T$ is called \emph{special path} if $\gamma$ is a concatenation $\gamma_0\gamma_1\cdots\gamma_n$ of geodesics $\gamma_i$ such that each $\gamma_i$ lies on the strip  $\mathcal S_i$ adjacent to  $W_{i}$ and $W_{i+1}$. 
\end{defn}

\begin{rem}
By the construction of the template, the endpoints of $\gamma_i$ $(1\le i<n)$ must be the intersection points of singular geodesics on walls $W_{i}, W_{i+1}$. 
\end{rem}

We use Lemma~\ref{TemplateSPathLem} in the proof of Proposition~\ref{prop:spepathisqg}.
\begin{lem}\label{TemplateSPathLem}
Assume that the angles between the singular geodesics on walls are   between   $\beta$ and $\pi-\beta$ for a universal constant $\beta\in (0, \pi)$. There exists a constant $\mu\ge 1$ such that any special path is a $(\mu, 0)$--quasi-geodesic. 
\end{lem}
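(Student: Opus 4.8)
The plan is to reduce the statement to a single length estimate and then exploit the Euclidean, tree-like structure of the template together with the angle hypothesis. Since any subpath of a special path is again a special path (the endpoints of interior geodesic segments are automatically intersection points of singular geodesics, and truncating the first or last segment keeps it inside a strip), and since a special path parametrized by arclength automatically satisfies $d(\gamma(s),\gamma(t)) \le |s-t|$, it suffices to produce a constant $\mu = \mu(\beta) \ge 1$ with $\mathrm{Len}(\gamma) \le \mu\, d(\gamma^-,\gamma^+)$ for every special path $\gamma = \gamma_0\gamma_1\cdots\gamma_n$. Here I write $W_0, W_1, \dots, W_{n+1}$ for the walls met in order, $\mathcal{S}_i$ for the strip carrying $\gamma_i$ between $W_i$ and $W_{i+1}$, and $p_i = \gamma_{i-1} \cap \gamma_i \in W_i$ for the junction points.

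First I would record the separation property. The template $\mathcal{T}$ is CAT(0) and, by Definition~\ref{defn:template}, each wall is glued to at most two strips; consequently its wall–strip incidence graph is a segment and each interior wall $W_i$ is a convex subset whose complement separates $\gamma^-$ from $\gamma^+$. Letting $\sigma = [\gamma^-,\gamma^+]$ be the CAT(0) geodesic, the fact that the $W_i$ are nested separators forces $\sigma$ to meet every $W_i$, say at $q_i$, with the $q_i$ occurring in the same order along $\sigma$; moreover the subgeodesic of $\sigma$ between $q_i$ and $q_{i+1}$ must traverse the strip $\mathcal{S}_i$ from one boundary line to the other, so $d(q_i,q_{i+1})$ is at least the width $w_i$ of $\mathcal{S}_i$. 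Summing, $d(\gamma^-,\gamma^+) \ge \sum_i w_i$. This already controls $\mathrm{Len}(\gamma)$ against $d(\gamma^-,\gamma^+)$ whenever the segments $\gamma_i$ are not too steep relative to the singular geodesics, i.e.\ whenever $|\gamma_i| \asymp w_i$.

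The heart of the argument is therefore to handle \emph{steep} segments, and this is where the angle hypothesis enters through a lower bound on the turning angle at each junction. In the link of a junction point $p_i$, the directions of $\gamma_{i-1}$ and $\gamma_i$ lie in the spherical links of the two strips $\mathcal{S}_{i-1}, \mathcal{S}_i$, which are glued to the plane $W_i$ along singular geodesics meeting at an angle in $[\beta,\pi-\beta]$. Any path in the link from a direction pointing into $\mathcal{S}_{i-1}$ to one pointing into $\mathcal{S}_i$ must cross the circle of directions tangent to $W_i$, and the angle hypothesis forces this crossing to have length bounded below; I would extract from this a constant $\phi_0 = \phi_0(\beta) > 0$ with $\angle_{p_i}(\gamma_{i-1},\gamma_i) \ge \phi_0$. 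By CAT(0) comparison (the law of cosines in the comparison triangle), a turning angle bounded away from $0$ prevents two consecutive segments from cancelling: $d(p_{i-1},p_{i+1}) \ge c(\phi_0)\,(|\gamma_{i-1}| + |\gamma_i|)$. In particular a steep segment cannot be immediately flanked by a nearly antiparallel steep segment along the singular direction.

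Finally I would combine the two mechanisms into a global estimate, for instance by a telescoping induction on $n$ comparing $\mathrm{Len}(\gamma_0\cdots\gamma_i)$ with $d(\gamma^-, p_{i+1})$ and using convexity of $t \mapsto d(\sigma(t), \gamma^+)$; the separation step bounds the horizontal contribution $\sum_i w_i$ while the junction-angle step bounds the steep contribution by forbidding sustained backtracking, yielding $\mathrm{Len}(\gamma) \le \mu\, d(\gamma^-,\gamma^+)$ with $\mu$ depending only on $\beta$. The main obstacle, and the step requiring the most care, is exactly the interaction of these two mechanisms: a turning angle bounded away from $0$ alone does \emph{not} make a piecewise geodesic a global quasi-geodesic (a Euclidean polygon is a counterexample), so the argument must genuinely use that the walls are separating convex sets crossed monotonically, ruling out the return trips that a naive angle bound would permit.
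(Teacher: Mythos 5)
Your reduction (subpaths of special paths are special, so it suffices to prove $Len(\gamma)\le \mu\, d(\gamma^-,\gamma^+)$) and your separation observation (the CAT(0) geodesic $\sigma=[\gamma^-,\gamma^+]$ must cross the walls $W_1,\dots,W_n$ in order, entering and leaving each wall on its two singular geodesics) are both correct, and both appear in the paper's proof. But your proposal stops exactly where a proof has to happen: the ``combination'' step is only gestured at, and by your own admission neither mechanism suffices alone. A junction angle bounded below by $\phi_0(\beta)$ does not globalize (one can traverse the boundary of a Euclidean square arbitrarily many times with every turning angle equal to $\pi/2$), and the separation bound $d(\gamma^-,\gamma^+)\ge\sum_i w_i$ says nothing about steep segments, whose length is dominated by travel in the $\R$--factor of the strips and can be arbitrarily large compared with $\sum_i w_i$. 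Your two-segment estimate $d(p_{i-1},p_{i+1})\ge c(\phi_0)\bigl(|\gamma_{i-1}|+|\gamma_i|\bigr)$ does not telescope --- summing it over $i$ yields no lower bound on $d(p_0,p_{n+1})$ --- and the appeal to convexity of $t\mapsto d(\sigma(t),\gamma^+)$ is never actually connected to the junction angles. Note also that extracting only a \emph{nonzero} turning angle discards the quantitative content of the hypothesis $\beta\le\alpha\le\pi-\beta$, which is precisely what must control how much vertical travel is forced when consecutive gluing lines are non-parallel. As written, this is a plan whose hardest step is missing, not a proof.

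The missing idea, and the way the paper sidesteps your obstacle entirely, is to run the comparison in the opposite direction: rather than bounding the special path from \emph{below} via its turning behavior, bound it from \emph{above} by straightening the geodesic. If $\sigma$ misses the corner $z_W$ of a wall $W$, then by your separation fact it enters $W$ at a point $x_W$ on one singular geodesic and leaves at $y_W$ on the other; since $W$ is a Euclidean plane and the angle at $z_W$ lies in $[\beta,\pi-\beta]$, the law of cosines gives $d(x_W,z_W)+d(y_W,z_W)\le \mu(\beta)\, d(x_W,y_W)$ with, say, $\mu(\beta)=\sqrt{2/(1-\cos\beta)}$. Replacing every chord $[x_W,y_W]$ by $[x_W,z_W][z_W,y_W]$ produces a path through the corners $z_{W_1},\dots,z_{W_n}$ in order whose length is at most $\mu\, d(\gamma^-,\gamma^+)$, and whose portion between consecutive corners lies in the closed strip $\mathcal{S}_i$ (two boundary-line segments plus the subsegment of $\sigma$ crossing the strip); since $\gamma_i$ is the geodesic of the convex strip $\mathcal{S}_i$ between those corners, $Len(\gamma)$ is at most the length of the straightened path. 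This gives $Len(\gamma)\le\mu\, d(\gamma^-,\gamma^+)$ in one step, with the angle hypothesis used exactly once inside a single Euclidean triangle, and with no turning angles, no link computation, and no induction. If you insist on your route you would need a genuine monotonicity argument pinning each $\gamma_i$ against the corresponding portion of $\sigma$; the straightening direction makes that unnecessary.
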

\begin{proof}
Let $\gamma$ be a special path with endpoints $x, y$. We are going to prove that $Len(\gamma)\le \mu d(x,y)$ for a constant $\mu\ge 1$. Since any subpath of a special path is special, this proves the conclusion.

Let $\alpha$ be the unique CAT$(0)$ geodesic between $x$ and $y$. By the construction of the template, if $\alpha$ does not pass through the intersection point $z_W$ of the singular geodesics on a wall $W$, then it passes through a point $x_W$ on one singular geodesic $L_-$ and then a point $y_W$ on the other singular geodesic $L_+$. Recall that the angle between the singular geodesics on walls are uniformly between $\beta$ and $\pi-\beta$. There exists a constant  $\mu\ge 1$   depending on $\beta$ only such that $ d(x_W, z_W)+d(y_W,z_W)\le \mu d(x_W, y_W)$. We thus replace $[x_W,y_W]$ by $[x_W,z][z,y_W]$ for every possible triangle $\Delta(x_Wy_Wz_W)$ on each wall $W$. The resulted path then connects consecutively the points $z_W$ on the walls $W$ in the order of their intersection with $\alpha$, so  it is the special path $\gamma$ from $x$ to $y$ satisfying the following inequality  
$$
Len(\gamma) \le \mu d(x, y)
$$
Thus, we proved that $\gamma$ is a $(\mu, 0)$--quasi-geodesic. 
\end{proof}

We are going to define a template associated to a geodesic in the Bass-Serre tree  as the following.

\begin{defn}[Standard template associated to a geodesic $\gamma \subset T$]
\label{defn:standardtemplate}
Let $\gamma$ be a geodesic segment in the Bass-Serre tree $T$. We begin with a collection of walls $W_e$ and an isometry $\phi_e \colon W_e \to F_e$ for each edge $e \subset \gamma$. For every pair $e$, $e'$ of adjacent edges of $\gamma$, we let $\mathcal{\hat S}_{e,e'}$ be a strip which is isometric to $\mathcal{S}_{e,e'}$ if the width of  $\mathcal{S}_{e,e'}$ is at least $1$, and isometric to $[0,1] \times \R$ otherwise; we let $\phi_{e,e'} \colon \mathcal{\hat S}_{e,e'} \to \mathcal{S}_{e,e'}$ be an affine map which respects product structure ($\phi_{e,e'}$ is an isometry if the width of $\mathcal{S}_{e,e'}$ is greater than or equal to $1$ and compresses the interval otherwise). We construct $\mathcal{T}_{\gamma}$ by gluing the strips and walls so that the maps $\phi_e$ and $\phi_{e,e'}$ descend to continuous maps on the quotient, we denote the map from $\mathcal{T}_{\gamma} \to X$ by $\phi_{\gamma}$.
\end{defn}

The following lemma is cited from Lemma~4.5 and Proposition~4.6 in \cite{CK02}.
\begin{lem}
\label{lem:qie}
\begin{enumerate}
    \item There exists $\beta = \beta(X) >0$ such that the following holds. For any geodesic segment $\gamma \in T$, the angle function $\alpha_{\gamma} \colon Wall^{o}_{\mathcal{T}} \to (0, \pi)$ satisfies $0 < \beta  \le \alpha_{\gamma} \le \pi - \beta < \pi$.
    \item There are constants $L, A >0$ such that the following holds. Let $\gamma$ be a geodesic segment in $T$, and let $\phi_{\gamma} \colon \mathcal{T}_{\gamma} \to X$ be the map given by Definition~\ref{defn:standardtemplate}. 
    Then $\phi_{\gamma}$ is a $(L,A)$--quasi-isometric embedding. Moreover, for any $x, y \in [\cup_{e \subset \gamma} X_{e}] \cup [\cup_{e,e' \subset \gamma} \mathcal{S}_{e,e'}]$, there exists a continuous map $\alpha \colon [x,y] \to \mathcal{T}$ such that $d(\phi_{\gamma} \circ \alpha , id|_{[x,y]}) \le L$.
\end{enumerate}
\end{lem}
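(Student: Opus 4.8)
The plan is to establish the two parts in order, since the uniform angle bound of part (1) is exactly the hypothesis that Lemma~\ref{TemplateSPathLem} needs in part (2). For part (1), fix $\gamma = e_1\cdots e_n$ and consider an interior wall $W_{e_i}$. Its two gluing lines are the images of $\mathcal S_{e_{i-1},e_i}\cap F_{e_i}$ and $\mathcal S_{e_i,e_{i+1}}\cap F_{e_i}$, which (as recorded in the discussion of strips above) are axes on the plane $F_{e_i}$ of the central subgroups $Z(G_{v_{i-1}})$ and $Z(G_{v_i})$, where $v_{i-1},v_i$ are the two endpoints of $e_i$. Admissibility condition (4) of Definition~\ref{defn:admissible} says these two central directions generate a finite-index subgroup of $G_{e_i}\cong\Z^2$, so they are linearly independent; hence the two axes are non-parallel and the angle $\alpha_\gamma(W_{e_i})$ lies in $(0,\pi)$. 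This angle depends only on the edge $e_i$ together with its two endpoints, the whole configuration is $G$-equivariant, and the action is cocompact, so there are finitely many $G$-orbits of edges and the angle takes only finitely many values, all in $(0,\pi)$. Choosing $\beta>0$ below the minimum (and $\pi-\beta$ above the maximum) gives part (1).

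For part (2) the coarse-Lipschitz bound $d_X(\phi_\gamma(p),\phi_\gamma(q))\le d_{\mathcal T_\gamma}(p,q)$ is immediate: by Definition~\ref{defn:standardtemplate}, $\phi_\gamma$ is an isometry on each wall and is $1$-Lipschitz on each strip (isometric when the width is $\ge 1$, compressing the $I$-factor otherwise), hence piecewise distance non-increasing and globally $1$-Lipschitz. The real content is the reverse inequality $d_{\mathcal T_\gamma}(p,q)\preceq d_X(\phi_\gamma(p),\phi_\gamma(q))$, which I would derive together with the \emph{moreover} clause by lifting a CAT(0) geodesic. Given $x,y$ in the edge spaces or strips, let $[x,y]$ be the geodesic in $X$. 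Using that the index map $\rho$ of Remark~\ref{rem:indexfunction} is coarsely Lipschitz and that edge spaces at large tree distance are separated (Lemma~\ref{defn:vertexedgespace}), $[x,y]$ meets the edge spaces $X_{e_i}$ in the tree order prescribed by $\gamma$, with the number of pieces $\preceq d_X(x,y)+1$.

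Inside each traversed vertex space $Y_v=\bar Y_v\times\R$ the geodesic decomposes as a geodesic in $\bar Y_v$ paired with an affine path in $\R$; since the $\bar Y_v$-component runs between the two relevant central axes, I would track it by the segment $\gamma_{e_i,e_{i+1}}$ defining the strip $\mathcal S_{e_i,e_{i+1}}$. Concatenating these strip pieces, and routing through the wall intersection points on the interior walls, produces a continuous lift $\alpha\colon[x,y]\to\mathcal T_\gamma$ with $\phi_\gamma\circ\alpha$ uniformly close to $[x,y]$; this is the \emph{moreover} statement. The lift is a special path, so by part (1) and Lemma~\ref{TemplateSPathLem} it is a $(\mu,0)$-quasi-geodesic of $\mathcal T_\gamma$. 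Therefore $d_{\mathcal T_\gamma}(p,q)\le \mathrm{Len}(\alpha)\preceq d_X(x,y)$, where the length comparison uses that $\phi_\gamma$ distorts length by at most an additive $O(1)$ on each strip and the number of strips is $\preceq d_X(x,y)$. Combined with the $1$-Lipschitz bound, this makes $\phi_\gamma$ an $(L,A)$-quasi-isometric embedding with $L,A$ independent of $\gamma$.

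The main obstacle is precisely the lift construction and its uniform closeness estimate: the geometric fact that a CAT(0) geodesic in $X$ genuinely \emph{tracks} the strip-and-wall skeleton rather than taking a diagonal shortcut the template cannot reproduce. The two ingredients that force this are the product structure $\bar Y_v\times\R$ of each vertex space, which makes the $\bar Y_v$-shadow of the geodesic a geodesic between the two central axes, and the hyperbolicity of $\bar Y_v$ (equivalently of $H_v$), which confines any such geodesic to a bounded neighborhood of $\gamma_{e_i,e_{i+1}}$; the transversality supplied by part (1) then keeps consecutive pieces from overlapping in a way that would inflate the template length. These are the estimates carried out in Croke--Kleiner's analysis of templates, and I would follow that route.
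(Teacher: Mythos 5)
The first thing to note is that the paper contains no proof of this statement at all: it is imported verbatim, with the sentence immediately preceding it reading that the lemma ``is cited from Lemma~4.5 and Proposition~4.6 in \cite{CK02}''. So there is no internal argument to compare yours against; the relevant benchmark is Croke--Kleiner's original proof, and your sketch is a faithful reconstruction of its outline. Your part (1) is in fact a complete argument: the two gluing lines on an interior wall $W_{e_i}$ are axes of $Z(G_{v_{i-1}})$ and $Z(G_{v_i})$ on the flat plane $F_{e_i}$; condition (4) of Definition~\ref{defn:admissible} makes the two central directions generate a finite-index, hence rank-two, subgroup of $G_{e_i}\cong\Z^2$, which acts on $F_{e_i}$ as a translation lattice, so the two translation vectors are linearly independent and the axes are non-parallel; and $G$-equivariance of the choices of planes and strips plus finitely many $G$-orbits of edges yields the uniform $\beta$. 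That is exactly the mechanism behind Lemma~4.5 of \cite{CK02}.

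For part (2), your $1$-Lipschitz upper bound is correct, and your strategy for the lower bound together with the ``moreover'' clause --- lift a CAT(0) geodesic through the ordered sequence of edge spaces (Lemma~\ref{ConnectivityLem}), use convexity of each $Y_v=\bar Y_v\times\R$ to split the intersection, and track the $\bar Y_v$-shadow along the bridge $\gamma_{e_ie_{i+1}}$ --- is the right one; but, as you yourself concede, the heart of the matter (the lift construction and the uniform closeness estimate) is named rather than executed, and is deferred to \cite{CK02}, which is precisely where the paper leaves it too. One point in your narrative should be corrected: the confinement of the $\bar Y_v$-shadow near the bridge is \emph{not} ``transversality supplied by part (1)''. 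The wall angles live on the Euclidean planes $F_e$ and control the flat triangles used in Lemma~\ref{TemplateSPathLem}; what confines the shadow in the hyperbolic factor $\bar Y_v$ is that the two boundary lines $\bar Y_v\cap F_{e_i}$ and $\bar Y_v\cap F_{e_{i+1}}$ have disjoint endpoints at infinity (equivalently, bounded mutual projection), and this comes from condition (3) of Definition~\ref{defn:admissible} --- non-commensurability of adjacent edge groups in $G_v$ --- combined with Lemma~\ref{lem:lemma2.9CK02}; uniformity of the resulting constants again requires cocompactness and finiteness of orbits, as in the paper's own use of this circle of ideas in Lemma~\ref{lem:projectionbounded}. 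With that ingredient made explicit, your outline matches the cited proof.
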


\subsection{Special paths in the admissible space $X$}
In this subsection, we are going to define special paths in $X$.

Recall  that we choose a $G$--equivariant family of Euclidean planes $\{F_e: F_e\subset Y_e\}_{e \in T^1}$. For every pair of planes $(F_e, F_{e'})$ so that  $v = e \cap e'$,  a minimal geodesic between $F_e, F_{e'}$   in $Y_v$  determines a strip $\mathcal{S}_{ee'} = \gamma_{ee'} \times \R$ for some geodesic $\gamma_{ee'}\subset \bar Y_v$. It is possible that $\gamma_{ee'}$ is trivial so the width of the strip is zero.
Let   $x\in X_v$ and $e$ an edge  with an endpoint $v$. The minimal geodesic from $x$ to $F_e$ (possibly not belong to $Y_v$)  also define a strip $\mathcal{S}_{xe} = \gamma_{xe} \times \R$   where the geodesic $\gamma_{xe}\subset \bar Y_v$ is the projection to $\bar Y_v$ of the intersection of this minimal geodesic with $Y_v$. Thus, $x$ is possibly not   in the strip $\mathcal{S}_{xe}$ but within its $D$-neighborhood by Lemma \ref{defn:vertexedgespace}. 






\begin{defn}[Special paths in $X$]
\label{SpecialPathDefn}
Let $\rho \colon X \to T^0$ be the indexed map
given by Remark~\ref{rem:indexfunction}.
Let $x$ and $y$ be two   points in $X$. If $\rho(x) = \rho(y)$ then we define a \emph{special path} in $X$ connecting $x$ to $y$ is the geodesic $[x,y]$. Otherwise, let $e_{1} \cdots e_{n}$ be the geodesic edge path connecting  $\rho(x)$ to $\rho(y)$ and let $p_i\in F_{e_i}$ be the intersection point of the strips $\mathcal S_{e_{i-1}e_i}$ and $\mathcal S_{e_{i}e_{i+1}}$, where $e_{0}:=x$ and $e_{n+1}:=y$.  The \textit{special path} connecting $x$ to $y$ is the concatenation of the geodesics $$[x, p_{1}][p_{1}, p_{2}]\cdots [p_{{n-1}}, p_{n}][p_{n}, y].$$
\end{defn}

\begin{rem}
\label{rem:independent}
By definition, the special path except the $[x, p_{1}]$ and $[p_{n}, y]$ depends only on the geodesic $e_{1} \cdots e_{n}$ in $T$ and the choice of planes $F_e$.
\end{rem}

\begin{figure}[htb] 
\centering \scalebox{0.6}{
\includegraphics{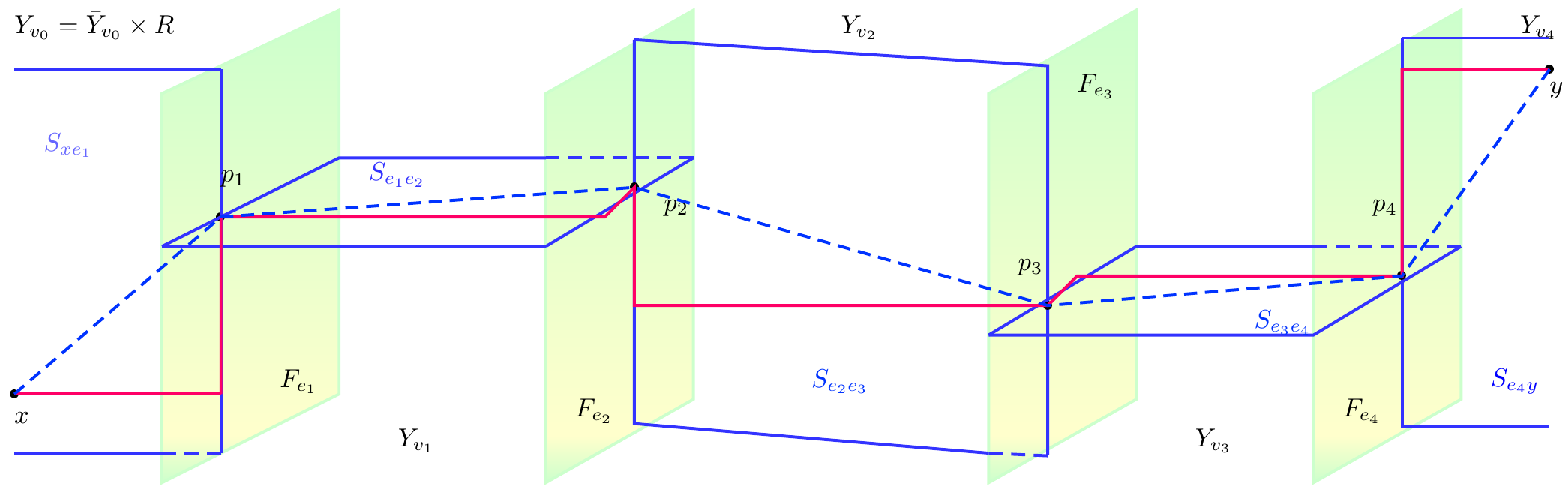} 
} \caption{Special path $\gamma$: the dotted and blue path from $x$ to $y$} \label{figure1}
\end{figure}

\subsection{Special paths in the admissible space $X$ are uniform quasi-geodesic}
In this section, we are going to prove the following proposition.
\begin{prop}
 \label{prop:spepathisqg}
There exists a constant $\mu >0$ such that every special path $\gamma$ in $X$ is a $(\mu, \mu)$--quasi-geodesic.
\end{prop}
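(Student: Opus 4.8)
The plan is to reduce the statement about special paths in the admissible space $X$ to the already-established quasi-geodesic property of special paths in a template, namely Lemma~\ref{TemplateSPathLem}, by using the template-to-space comparison in Lemma~\ref{lem:qie}. Fix two points $x, y \in X$. If $\rho(x)=\rho(y)$ the special path is a genuine geodesic and there is nothing to prove, so assume $\rho(x)\neq\rho(y)$ and let $e_1\cdots e_n$ be the geodesic edge path in $T$ from $\rho(x)$ to $\rho(y)$. Let $\gamma$ be the geodesic in $T$ containing this edge path and let $\phi_\gamma\colon \mathcal T_\gamma \to X$ be the associated standard template map of Definition~\ref{defn:standardtemplate}.

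First I would lift the situation to the template. The points $p_i\in F_{e_i}$ defining the special path are intersection points of consecutive strips $\mathcal S_{e_{i-1}e_i}$ and $\mathcal S_{e_ie_{i+1}}$, which under $\phi_\gamma$ correspond exactly to the intersection points of singular geodesics on the walls $W_{e_i}$ of $\mathcal T_\gamma$. Hence the piecewise-geodesic concatenation $[x,p_1][p_1,p_2]\cdots[p_n,y]$ is the $\phi_\gamma$-image (up to the bounded error controlled by the constant $L$ in Lemma~\ref{lem:qie}(2)) of a \emph{special path} $\hat\gamma$ in the template $\mathcal T_\gamma$ in the sense of the template definition, with endpoints $\hat x, \hat y$ mapping to within distance $L$ of $x,y$. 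The endpoints $x,y$ lie in the relevant vertex/edge spaces so that the ``moreover'' clause of Lemma~\ref{lem:qie}(2) applies, giving a preimage path staying within $L$.

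Next I would combine the two lemmas. By Lemma~\ref{lem:qie}(1) the angle function on every wall of $\mathcal T_\gamma$ is bounded in $[\beta,\pi-\beta]$, so Lemma~\ref{TemplateSPathLem} furnishes a uniform constant $\mu_0\ge 1$ with $Len(\hat\gamma)\le \mu_0\, d_{\mathcal T_\gamma}(\hat x,\hat y)$; that is, $\hat\gamma$ is a $(\mu_0,0)$-quasi-geodesic in the template. Since $\phi_\gamma$ is an $(L,A)$-quasi-isometric embedding, length and distance transport back to $X$ with controlled distortion: the $\phi_\gamma$-image of $\hat\gamma$ is a $(\mu_1,\mu_1)$-quasi-geodesic in $X$ for a uniform $\mu_1$ depending only on $\mu_0,L,A$. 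Finally, absorbing the two endpoint geodesics $[x,p_1]$ and $[p_n,y]$ and the uniformly bounded ($\le L$) discrepancy between $\phi_\gamma(\hat x),\phi_\gamma(\hat y)$ and the actual endpoints $x,y$ into the additive constant, I obtain a single $(\mu,\mu)$-quasi-geodesic estimate with $\mu$ independent of $x,y$, as required.

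The main obstacle I anticipate is the careful bookkeeping at the two endpoint segments and the initial/terminal strips $\mathcal S_{xe_1}$ and $\mathcal S_{e_ny}$, which involve the points $x,y$ themselves rather than only the tree-geodesic data; here one must use that $x\in X_{\rho(x)}$ and $y\in X_{\rho(y)}$ lie within the $D$-neighborhoods of the relevant pieces (Lemma~\ref{defn:vertexedgespace}) so that the ``moreover'' clause of Lemma~\ref{lem:qie}(2) genuinely applies and the endpoint matching is only off by a uniformly bounded amount. A secondary subtlety is verifying that the template special path $\hat\gamma$ produced in $\mathcal T_\gamma$ really is a special path in the template sense — i.e.\ that each segment lies on the correct strip between the correct pair of walls — which follows from Remark~\ref{rem:independent} and the construction in Definition~\ref{defn:standardtemplate}, but should be stated explicitly to justify invoking Lemma~\ref{TemplateSPathLem}. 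Once these matching issues are handled, the quasi-geodesic constant is uniform because $\beta, L, A, D$ are all uniform over geodesics in $T$.
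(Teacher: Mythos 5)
Your reduction works for the middle portion of the special path, and there it coincides with the paper's argument: the paper also lifts $\alpha=[p_1,p_2]\cdots[p_{n-1},p_n]$ to a special path in the standard template $\mathcal T_\gamma$ and invokes Lemma~\ref{lem:qie} together with Lemma~\ref{TemplateSPathLem}. But there is a genuine gap at the two endpoint segments, and your proposed fix does not close it. The template of Definition~\ref{defn:standardtemplate} is built only from the tree-geodesic data (cf.\ Remark~\ref{rem:independent}): its walls are the $W_{e_i}$ and its strips are the $\hat{\mathcal S}_{e_ie_{i+1}}$. The strips $\mathcal S_{xe_1}$ and $\mathcal S_{e_ny}$ of Definition~\ref{SpecialPathDefn}, which depend on the points $x,y$ themselves, have no counterpart in $\mathcal T_\gamma$, so the segments $[x,p_1]$ and $[p_n,y]$ cannot be lifted to a template special path. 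Moreover, the ``moreover'' clause of Lemma~\ref{lem:qie}(2) applies only to points of $[\cup_{e\subset\gamma}X_e]\cup[\cup_{e,e'\subset\gamma}\mathcal S_{e,e'}]$; the point $x$ lies in the \emph{vertex} space $X_{\rho(x)}$, which is an unbounded set containing many edge spaces, and $d(x,F_{e_1})$ --- hence $d(x,p_1)$ --- can be arbitrarily large. Lemma~\ref{defn:vertexedgespace} puts $x$ within $D$ of $Y_{\rho(x)}$, not within bounded distance of $X_{e_1}$ or of any strip, so your claim that the endpoint matching is ``only off by a uniformly bounded amount,'' and your plan to absorb $[x,p_1]$ and $[p_n,y]$ into the additive constant, are both false: these segments have unbounded length.

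What is missing is precisely the content of Lemma~\ref{HSlideLem}: the estimate $d(x,p_i)+d(p_i,y)\le C\,d(x,y)+C$, proved by a minimizing-horizontal-slide argument in the product $Y_v=\bar Y_v\times\mathbb R$ using the hyperbolicity of $\bar Y_v$ and the $L^1$-metric comparison, with Lemma~\ref{ConnectivityLem} supplying a point of $F_{e_i}$ close to the geodesic $[x,y]$. This is a nontrivial geometric fact --- it shows the detour through the strip-intersection point $p_1$ (which depends on where $x$ sits inside $Y_{\rho(x)}$) is coarsely efficient --- and no amount of bookkeeping with the template constants $L,A,\beta,D$ substitutes for it. The paper's proof of Proposition~\ref{prop:spepathisqg} is exactly your argument for $\alpha$ combined with Lemma~\ref{HSlideLem} for the two end segments; your proposal would be repaired by proving such an endpoint estimate before invoking the template comparison.
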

To get into the proof of Proposition~\ref{prop:spepathisqg}, we need several lemmas (see Lemma~\ref{ConnectivityLem} and Lemma~\ref{HSlideLem}).

\begin{lem}\cite[Lemma~3.17]{CK02}
\label{ConnectivityLem}
There exists a constant $C>0$ with the following property. Let $[x,y]$ be a geodesic in $X$ with $\rho(x)\ne \rho(y)$ and $e_{1} \cdots e_{n}$ be the geodesic edge path connecting  $\rho(x)$ to $\rho(y)$. Then there exists a sequence of points $z_i\in [x,y]\cap N_C(F_{e_i})$ such that $d(x,z_i)\le d(x,z_j)$ for any $i\le j$.  
\end{lem}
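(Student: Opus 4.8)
The final statement to prove is the following connectivity lemma: if $[x,y]$ is a geodesic in $X$ with $\rho(x)\ne \rho(y)$, and $e_1\cdots e_n$ is the geodesic edge path in $T$ connecting $\rho(x)$ to $\rho(y)$, then there exist points $z_i\in [x,y]\cap N_C(F_{e_i})$ appearing in monotone order along $[x,y]$, i.e.\ $d(x,z_i)\le d(x,z_j)$ whenever $i\le j$.

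\medskip

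The plan is to exploit the tree-like coarse structure encoded by the indexed map $\rho\colon X\to T^0$ from Remark~\ref{rem:indexfunction}. First I would record the key separation property: for each edge $e=e_i$ in the path, the edge space $X_{e}$ (equivalently, up to bounded Hausdorff distance, the flat $F_e$) coarsely separates $X$ into the two half-spaces corresponding to the two components of $T\setminus\{e\}$. Concretely, let $T_e^-$ and $T_e^+$ be the two subtrees obtained by removing the open edge $e$ from $T$, with $\rho(x)\in T_e^-$ and $\rho(y)\in T_e^+$; set $X_e^{\pm}=\rho^{-1}((T_e^{\pm})^0)$. Since $\rho$ is coarse $L$--Lipschitz and the combinatorial distance in $T$ from any vertex of $T_e^-$ to any vertex of $T_e^+$ forces the $T$--geodesic to cross $e$, any path in $X$ from a point of $X_e^-$ to a point of $X_e^+$ must pass within a uniform constant $C_0=C_0(L,D)$ of $F_e$; this is the metric incarnation of "$e$ separates $\rho(x)$ from $\rho(y)$ in the tree." I would make this precise using Lemma~\ref{defn:vertexedgespace}: if a point $p$ has $\rho(p)$ far from $e$ in $T$ on the $-$ side and $q$ far on the $+$ side, then the image $\rho([p,q])$ is a (coarsely) connected subset of $T^0$ joining the two sides, hence meets the endpoints of $e$, and the corresponding point on $[p,q]$ lies in a bounded neighborhood of $X_e$.

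\medskip

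With this separation statement in hand, the construction of the $z_i$ is immediate: since $\rho(x)\in T_{e_i}^-$ and $\rho(y)\in T_{e_i}^+$ for every $i$, applying the separation property to the geodesic $[x,y]$ itself yields a point $z_i\in[x,y]$ with $z_i\in N_{C_0}(F_{e_i})$, and after replacing $C_0$ by the constant $C$ absorbing the Hausdorff distance between $F_{e_i}$ and $X_{e_i}$, we get $z_i\in [x,y]\cap N_C(F_{e_i})$. The only remaining point is the \emph{monotonicity} $d(x,z_i)\le d(x,z_j)$ for $i\le j$. Here I would argue by the nestedness of the half-spaces along the tree geodesic: because $e_1\cdots e_n$ is a geodesic edge path, the subtrees nest as $T_{e_1}^+\supseteq T_{e_2}^+\supseteq\cdots\supseteq T_{e_n}^+$ (equivalently $T_{e_1}^-\subseteq\cdots\subseteq T_{e_n}^-$). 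I would then show that for $i<j$ the separating set for $e_j$ sits on the $y$--side of the separating set for $e_i$: any point of $[x,y]$ lying within $C_0$ of $F_{e_j}$ has $\rho$--image near the edge $e_j$, which lies strictly beyond $e_i$ toward $y$. Parametrizing $[x,y]$ by arclength and using that $\rho$ restricted to the geodesic is coarsely monotone along the $T$--geodesic (its image progresses from $\rho(x)$ to $\rho(y)$ without backtracking more than a bounded amount, since $T$ is a tree), I can choose the $z_i$ so that their parameters are nondecreasing in $i$. If the naive choices were slightly out of order due to the bounded backtracking, I would reselect each $z_i$ to be, say, the first point along $[x,y]$ lying in $N_{C_0}(F_{e_i})$; the nesting of half-spaces then forces these first-hitting times to be monotone in $i$ after enlarging $C$ once more.

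\medskip

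The main obstacle I anticipate is making the \emph{coarse monotonicity} of $\rho$ along a CAT(0) geodesic rigorous, since $\rho$ is only coarsely Lipschitz and a priori could oscillate within bounded error; controlling this oscillation so that the first-hitting times of the nested neighborhoods are genuinely ordered (rather than merely ordered up to a bounded additive error, which would not literally give $d(x,z_i)\le d(x,z_j)$) is the delicate part. I would handle it by defining $z_i$ via a first-hitting rule and proving that if $z_i$ occurred strictly after $z_j$ for some $i<j$, then the subsegment of $[x,y]$ between them would have to cross $F_{e_j}$ before $F_{e_i}$, contradicting the nesting $T_{e_i}^-\subseteq T_{e_j}^-$ together with the separation property; this turns the monotonicity into a clean consequence of tree-geodesic nesting rather than a quantitative Lipschitz estimate. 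Since this lemma is quoted directly from \cite[Lemma~3.17]{CK02}, I would also cross-check the constant $C$ against the edge-space constant $D$ of Lemma~\ref{defn:vertexedgespace} to ensure uniformity over all geodesics.
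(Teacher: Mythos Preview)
The paper does not supply its own proof of this lemma; it is quoted directly from \cite[Lemma~3.17]{CK02} and used as a black box. So there is no in-paper argument to compare against.

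That said, your proposed argument is the standard one and is essentially how the result is established in \cite{CK02}: use the coarse Lipschitz indexed map $\rho$ together with Lemma~\ref{defn:vertexedgespace}(2) to see that each $F_{e_i}$ coarsely separates $X$ according to the tree decomposition $T\setminus e_i = T_{e_i}^-\sqcup T_{e_i}^+$, deduce that $[x,y]$ must meet $N_C(F_{e_i})$, and then extract monotonicity from the nesting $T_{e_1}^-\subseteq\cdots\subseteq T_{e_n}^-$. Your first-hitting rule is exactly the right device, and your worry about ``coarse monotonicity of $\rho$'' is a red herring once you set it up correctly: defining $z_i$ as the \emph{first} point of $[x,y]$ in $N_C(F_{e_i})$, one observes that $z_j\in N_C(F_{e_j})$ forces $\rho(z_j)$ to lie within $D$ of $e_j$ in $T$ by Lemma~\ref{defn:vertexedgespace}(2); since the coarse image $\rho([x,z_j])$ is coarsely connected in $T$ from $\rho(x)$ to a vertex near $e_j$, and $e_i$ separates these in $T$, the segment $[x,z_j]$ already meets $N_C(F_{e_i})$ (after one uniform enlargement of $C$), so $z_i\in[x,z_j]$. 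No quantitative control on oscillation of $\rho$ is needed beyond this.
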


Let $[x,y]$ be a geodesic in $Y_v=\bar Y_v\times \mathbb R$ with $y\in F_e$. 
We apply a \textit{minimizing horizontal slide} of the endpoint $y \in F_e$ to obtain a point $z\in F_e$ so that $[y,z]$ is parallel to $\bar Y_v$ and the projection of $[x,z]$ on $\bar Y_v$ is orthogonal to $F_e\cap \bar Y_v$.

\begin{lem}\label{HSlideLem}
Let $x\in X_{v_0}, y\in X_{v_n}$ where $v_0, v_n$ are the endpoints of a geodesic $e_1\cdots e_n$ in $T$. Then there exists a universal constant $C>0$ depending on $X$ such that for each $1\le i\le n$, we have
$$d(x, p_i)+d(p_i, y)\le C d(x, y) +C$$ where $p_i=\mathcal S_{e_{i-1}e_i}\cap \mathcal S_{e_{i}e_{i+1}}$ and $e_{0}:=x$ and $e_{n+1}:=y$.
\end{lem}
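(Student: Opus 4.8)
The plan is to prove the single length bound $\mathrm{Len}(\sigma)\le C\,d(x,y)+C$ for the special path $\sigma=[x,p_1][p_1,p_2]\cdots[p_{n-1},p_n][p_n,y]$, from which every instance of the lemma follows at once: since $p_i$ lies on $\sigma$, the sub-arcs of $\sigma$ from $x$ to $p_i$ and from $p_i$ to $y$ have lengths at least $d(x,p_i)$ and $d(p_i,y)$ and together exhaust $\sigma$, so $d(x,p_i)+d(p_i,y)\le \mathrm{Len}(\sigma)$. I would therefore split $\sigma$ into its interior part $\sigma_0=[p_1,p_2]\cdots[p_{n-1},p_n]$ and the two end segments $[x,p_1]$, $[p_n,y]$, and bound each.

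First I would control $\sigma_0$ through the template. Form the standard template $\mathcal{T}_\gamma$ over the edge geodesic $e_1\cdots e_n$ as in Definition~\ref{defn:standardtemplate}. By construction each wall $W_{e_i}$ maps isometrically onto $F_{e_i}$ and each strip map $\phi_{e,e'}$ is either an isometry or compresses the $I$--factor, so $\phi_\gamma$ is $1$--Lipschitz and carries the geodesic joining the two singular--geodesic intersection points on consecutive walls exactly to the segment $[p_i,p_{i+1}]$. Hence the lift $\tilde\sigma_0$ is a special path in $\mathcal{T}_\gamma$ with endpoints $\tilde p_1\in W_{e_1}$, $\tilde p_n\in W_{e_n}$, and $\phi_\gamma\circ\tilde\sigma_0=\sigma_0$. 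By Lemma~\ref{TemplateSPathLem} (whose angle hypothesis is supplied by Lemma~\ref{lem:qie}(1)) $\tilde\sigma_0$ is a $(\mu,0)$--quasi-geodesic, so $\mathrm{Len}(\tilde\sigma_0)\le\mu\,d_{\mathcal{T}}(\tilde p_1,\tilde p_n)$. Using that $\phi_\gamma$ is $1$--Lipschitz and an $(L,A)$--quasi-isometric embedding (Lemma~\ref{lem:qie}(2)) I obtain
\[
\mathrm{Len}(\sigma_0)=\mathrm{Len}(\phi_\gamma\circ\tilde\sigma_0)\le\mathrm{Len}(\tilde\sigma_0)\le\mu\,d_{\mathcal{T}}(\tilde p_1,\tilde p_n)\le \mu L\,d(p_1,p_n)+\mu LA,
\]
with \emph{no} dependence on $n$ (the point of routing through the $1$--Lipschitz map is to avoid an additive error per segment). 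Since $d(p_1,p_n)\le d(p_1,x)+d(x,y)+d(y,p_n)$, everything reduces to the two end segments.

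The remaining and main task is the endpoint estimate $d(x,p_1)\le C\,d(x,y)+C$ (and symmetrically $d(p_n,y)$). I would work in the convex piece $Y_{v_0}=\bar Y_{v_0}\times\R$, so that $[x,p_1]\subset Y_{v_0}$; set $q=\pi_{F_{e_1}}(x)$ and apply a minimizing horizontal slide so that $[x,q]$ is orthogonal to $\ell_{e_1}=F_{e_1}\cap\bar Y_{v_0}$. Then $d(x,q)=d(x,F_{e_1})$, which by Lemma~\ref{ConnectivityLem} is at most $d(x,y)+C$, since $[x,y]$ meets $N_C(F_{e_1})$ at a point $z_1$ with $d(x,z_1)\le d(x,y)$. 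The point $p_1$ lies on the vertical fiber through $q$, so $d(q,p_1)\le \mathrm{dist}_{F_{e_1}}(q,L_w)/\sin\beta$, where $L_w=\mathcal{S}_{e_1e_2}\cap F_{e_1}$ (replaced by $\mathcal{S}_{e_1y}\cap F_{e_1}$ in the degenerate case $n=1$) and the uniform lower bound $\beta$ on the angle between the $Z(G_{v_0})$-- and $Z(G_{v_1})$--axis directions in $F_{e_1}$ is used. Since nearest-point projection onto the convex set $F_{e_1}$ is $1$--Lipschitz, $q$ is within $d(x,y)$ of the projection of $z_1$; combining this with the connectivity point $z_2\in[x,y]\cap N_C(F_{e_2})$ to locate $L_w$ yields $\mathrm{dist}_{F_{e_1}}(q,L_w)\le C\,d(x,y)+C$, hence $d(x,p_1)\le C\,d(x,y)+C$.

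I expect this endpoint estimate to be the main obstacle. The interior bound is essentially a direct transcription through the $1$--Lipschitz template map, whereas bounding $d(x,p_1)$ requires genuinely controlling the \emph{vertical} (fiber) displacement of the crossing point $p_1$ relative to $x$ — precisely what the minimizing horizontal slide and the uniform angle bound $\beta$ between the two central axis directions on $F_{e_1}$ are designed to handle, with connectivity supplying the horizontal control. Assembling the three bounds gives $\mathrm{Len}(\sigma)\le C\,d(x,y)+C$, and therefore $d(x,p_i)+d(p_i,y)\le C\,d(x,y)+C$ for every $i$.
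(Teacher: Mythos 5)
Your proposal is correct, but it inverts the paper's architecture, so the two are worth comparing. The paper proves the inequality for each $i$ directly and locally at the middle plane $F_{e_i}$: Lemma \ref{ConnectivityLem} gives a point $q\in F_{e_i}$ within bounded distance of $[x,y]$, and two \emph{minimizing horizontal slides} move $q$ first to a point $z\in\mathcal S_{e_{i-1}e_i}\cap F_{e_i}$ and then to $p_i$, while the mixed functional $\|x-\cdot\|_1+d(\cdot,y)$ changes by only a bounded additive amount at each slide (hyperbolicity of $\bar Y_v$ makes a minimizing slide shorten the $L^1$-part by the slide distance up to a constant, while the triangle inequality caps the growth of $d(\cdot,y)$); no template machinery is used, and the template enters only afterwards, in Proposition \ref{prop:spepathisqg}, where Lemma \ref{HSlideLem} is an ingredient. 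You instead prove the stronger bound $\mathrm{Len}(\sigma)\le C\,d(x,y)+C$ for the entire special path --- essentially the content of Proposition \ref{prop:spepathisqg} --- and read off the lemma for all $i$ at once (valid, since $p_i$ lies on $\sigma$); this is not circular, because you invoke only Lemmas \ref{TemplateSPathLem}, \ref{lem:qie} and \ref{ConnectivityLem}, and your routing of the interior length through the $1$--Lipschitz map $\phi_\gamma$ (one additive error total, rather than one per segment) is in fact tidier than the corresponding passage in the paper's own proof of Proposition \ref{prop:spepathisqg}. The price is that you must supply the endpoint estimate that the paper's slide trick makes unnecessary, and that is your one compressed step: to bound $\mathrm{dist}_{F_{e_1}}(q,L_w)$ you should project $z_1,z_2$ to $\bar Y_{v_1}$ and apply Lemma \ref{lem:lemma2.9CK02} to the lines $\ell_1=F_{e_1}\cap\bar Y_{v_1}$ and $\ell_2=F_{e_2}\cap\bar Y_{v_1}$, so that the geodesic between the projected points passes uniformly close to the $\ell_1$-endpoint of the bridge $\gamma_{e_1e_2}$, whose vertical line in $F_{e_1}$ is exactly $L_w$; and since the constant in Lemma \ref{lem:lemma2.9CK02} depends a priori on the pair of lines, you also need the mod-$G$ finiteness argument (as in the paper's Lemma \ref{lem:projectionbounded}: take $D=2\delta$ when $d(\ell_1,\ell_2)>4\delta$, with only finitely many orbits of pairs remaining otherwise) to make it uniform. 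With that detail filled in your argument is complete; what it buys is the quasi-geodesicity of special paths essentially as a byproduct, whereas the paper's version is lighter for the lemma itself, needs no angle or template input at this stage, and defers that machinery to the next proposition.
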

\begin{proof}
We use the notion $a \asymp b$ if there exists $K = K(X)$ such that $a/ K \le b \le Ka$.

Let $D$ be the constant given by Lemma~\ref{defn:vertexedgespace} and satisfying Lemma \ref{ConnectivityLem}   such that  $X_v= {N}_{D}(Y_{v})$. Without loss of generality, we can assume $x\in Y_{v_0}, y\in Y_{v_n}$.

Denote $e=e_i$ and $e^-=e_{i-1}, e^+=e_{i+1}$ in this proof. By Lemma \ref{ConnectivityLem}, there exists a point $q\in F_e$ such that $d(q,[x,y])\le D$. For ease of computation, we will consider the \textit{mixed length} $||x- q||_1+d(q,y)$ of the path $[x,q][q, y]$  which satisfies
\begin{equation}\label{MixedLenEQ}
||x- q||_1+d(q,y) \asymp d(x,q)+d(q,y) \le d(x,y)+2D
\end{equation}
where $||\cdot||_1$ is the $L^1$--metric on the metric product $Y_v=\bar Y_{v^-}\times \mathbb R$.

Note that the Euclidean plane $F_e \subset Y_v\cap Y_{v'}$  for $e=\overline{vv'}$ contains two non-parallel lines $l_e^-:=\mathcal{S}_{e^-e} \cap F_e$ and  $l_{e}^+:=\mathcal{S}_{ee^+} \cap F_e$. So we can apply a  minimizing horizontal slide of the endpoint $q$ of $[x,q]$ in $Y_v$ to a point $z$ on $l_e^-$.  On the one hand, since the line $l_e^-$   on $F_e$ is $\mathbb R$--factor of $Y_e$, this slide decreases the $L_1$-distance $||x- p||_1$ by $d(q,z)+C$ for a constant $C$ depending on hyperbolicity constant of  $\bar Y_v$. On the other hand, by the triangle inequality,  this slide increases $d(q, y)$ by at most $d(q ,z)$. Hence, we obtain 
$$
|(||x- q||_1+d(q,y)) - (||x- z||_1+d(z,y))| \le C
$$   Similarly, by a  minimizing horizontal slide of the endpoint $z$ of $[z,y]$ in $Y_{e'}$  to $p$,  
$$
|(||x- p||_1+d(p,y)) - (||x- z||_1+d(z,y))| \le C
$$
yielding
$$
|(||x- q||_1+d(q,y)) - (||x- p||_1+d(p,y))| \le 2C
$$
Together with (\ref{MixedLenEQ}) this completes the proof of the lemma. 
\end{proof}


\begin{proof}[Proof of Proposition~\ref{prop:spepathisqg}]
Let  $\gamma$ be the special path from $x$ to $y$ for  $x, y\in X$ so that $\rho(x)\ne \rho(y)$; otherwise it is a geodesic, and thus there is nothing to do. Let $e_1\cdots e_n$ be the geodesic in $T$ from $\rho(x)$ to $\rho(y)$. With notations as above (see Definition~\ref{SpecialPathDefn}), 
$$\gamma=[x, p_{1}][p_{1}, p_{2}]\cdots [p_{{n-1}}, p_{n}][p_{n}, y]$$  
By Lemma \ref{HSlideLem},  there exists a constant $C\ge 1$ such that   
$$
\begin{array}{rl}
 &d(x, p_{1})+d(p_{1}, p_{n})+d(p_{n},y)\\
 \le & C d(x, p_{1})+ C d(p_{1}, y)+C\\ 
 \le & C^2d(x,y)+C^2+C
\end{array}
$$
Denoting $\alpha=[p_{1}, p_{2}]\cdots [p_{n-1}, p_{n}]$, it remains to give a linear bound on $
\ell(\alpha)$ in terms of $d(p_1, p_n)$.

By Lemma~\ref{lem:qie}, there exists a $K$--template $(\mathcal T, f, \phi)$ for the $e_1\cdots e_n$ such that $\phi$ is a $(L,A)$--quasi-isometric map from the template $\mathcal T$ to the union of the planes $\{F_{e_i}: 1\le i\le n\}$ with the strips $\{\mathcal S_{e_{i-1}e_i}: 1\le i\le n\}$. Moreover, $\phi$ sends walls and strips of $\mathcal T$ to the $K$--neighborhood of planes $F_{e_i}$ and  strips $\mathcal S_{e_{i-1}e_i}$ of  $\mathcal T$ accordingly. Hence, $\phi$ maps the intersection point on $W_{e_i}$ of the singular geodesics of two strips in $\mathcal T$ to  a finite $K$--neighborhood of  $p_i$ $(1\le i\le n)$. Since the map $\phi$ is affine on strips and isometric on walls of $\mathcal T$, we conclude that there exists a special path $\tilde \alpha$ in $\mathcal T$ such that $\phi(\tilde \alpha)$ is sent to a finite neighborhood of the special path $\alpha$.   Lemma \ref{TemplateSPathLem} then implies that $\tilde \alpha$ is a $(C_1, C_1)$--quasi-geodesic for some $C_1>1$ so $\alpha$ is  a $(\mu, \mu)$--quasi-geodesic for some $\mu$ depending on $L, A, K, C_1$. The proof  is complete.
\end{proof}

\section{Quasi-isometric embedding  of admissible groups into product of trees}
\label{qiefinitequasitrees}
A \emph{quasi-tree} is a geodesic metric space quasi-isometric to a tree.
In this section, we are going to prove Theorem~\ref{thm:main3} that states if $G \curvearrowright X$ is a {\it flip CKA action} (see Definition~\ref{defn:flipadaction}) then $G$ is quasi-isometric embedded into a finite product of quasi-trees. The strategy is that we first show that the space $X$ is quasi-isometric embedded into product of two hyperbolic spaces $\mathcal{X}_1$, $\mathcal{X}_2$ (see Subsection~\ref{subsection:qiehyp}). We then show that each hyperbolic space $\mathcal{X}_i$ is quasi-isometric embedded into a finite product of quasi-trees (see Subsection~\ref{subsection:qieproducttrees}).

\subsection{Flip CKA actions and constructions of two hyperbolic spaces}
\label{subsection:flipactions}
Let $G \curvearrowright X$ be a CKA action.
Recall that each $Y_v$ decomposes as a metric product of a hyperbolic Hadamard space $\bar Y_v$ with the real line $\mathbb R$ such that $\bar Y_v$ admits a geometric action of $H_v$.  Recall  that we choose a $G$--equivariant family of Euclidean planes $\{F_e: F_e\subset Y_e\}_{e \in T^1}$.

\begin{defn}[Flip CKA action]
\label{defn:flipadaction}

If for each edge $e:=[v,w]\in T^1$,  the   boundary line  $\ell=\bar Y_v \cap F_e$ is parallel  to the $\R$--line in $Y_w = \bar{Y}_w \times \R$, then the CKA action is called \textit{flip} in sense of Kapovich-Leeb.  
 
\end{defn}

Let  $\mathbb L_v$ be the set of \textit{boundary lines} of $\bar Y _v$ which are intersections of $\bar Y_v$ with $F_e$ for all edges $e$ issuing from $v$. Thus, there is a canonical one-to-one correspondence between $\mathbb L_v$ and the link  of $v$ denoted by $Lk(v)$. 

 
\begin{defn}
A \textit{flat link} is the countable union of (closed) flat strips of width 1 glued along a common boundary line called the \textit{binding line}. 
\end{defn}

{\bf  Construction of hyperbolic spaces $\mathcal{X}_1$ and $\mathcal{X}_2$:}
We first partition the vertex set $T^0$ of the Bass-Serre tree into two {disjoint} class of vertices $\mathcal V_1$ and $\mathcal V_2$ such that   if $v$ and $v'$ are in $\mathcal{V}_i$ then $d_{T}(v,v')$ is even.

Given  $\mathcal{V} \in \{\mathcal V_1, \mathcal V_2\}$, we shall build a geodesic (non-proper) hyperbolic space $\mathcal X$ by glueing $\bar Y_v$ for all $v\in \mathcal V$ along the boundary lines via  flat links. 


Consider the set of vertices  in $\mathcal V$ such that their pairwise distance in $T$  equals 2.   Equivalently, it is the union of the links of every  vertex $w\in T^0 \setminus \mathcal{V}$.  For any $v_1\ne v_2\in Lk(w)$, the edges $e_1=[v_1, w]$ and $e_2=[v_2, w]$ determine two corresponding boundary lines $\ell_1 \in \mathbb L(v_1)$ and $\ell_2\in \mathbb L(v_2)$ which are the intersections of $\bar Y_{v_i}$ with $F_{e_i}$ for $i=1, 2$  respectively. There exists a canonical identification between $\ell_1$ and $\ell_2$ so that their $\mathbb R$--coordinates equal in the metric product $Y_w=\bar Y_w\times \mathbb R$.   

Note the link $Lk(w)$ determines  a flat link $Fl(w)$ so that the flat strips are one-to-one correspondence with $Lk(w)$.  In equivalent terms, it is a metric product   $Lk(w)\times \mathbb R$, where $\mathbb R$ is parallel to the binding line.  

For each $w\in T^0\setminus \mathcal{V}$, the set of hyperbolic spaces  $\bar Y_{v}$ where $v\in Lk(w)$ are glued to the flat links $Fl(w)$ along the boundary lines of flat strips and of hyperbolic spaces with the identification indicated above. Therefore, we obtain a metric space $\mathcal X$   from the   union of $\{\bar Y_v: v\in \mathcal{V}\}$ and flat links $\{Fl(w), w\in T^0 \setminus \mathcal{V}\}$.

 \begin{rem}
By construction, $\bar Y_v$ and $\bar Y_{v'}$ are disjoint in $\mathcal X$ for any two vertices $v, v'\in\mathcal  V$ with $d_T(v,v')>2$. Endowed with induced length metric,  $\mathcal X$ is a hyperbolic geodesic space but not proper since each $\bar Y_v$ is glued via flat links with infinitely many $\bar Y_{v'}$'s where $d_T(v', v)=2$.
\end{rem}

 


\begin{defn}
Let $g$ be an element in $G$. The translation length of $g$ is defined to be 
$|g| := \inf_{x \in T} d(x, g x)$. 
Let $\mathrm{Axis}(g) = \{ x \in T \vert d(x, g x) = |g| \}$. If $\mathrm{Axis}(g) \neq \emptyset$ and $|g| = 0$ then $g$ is called \emph{elliptic}.  If $\mathrm{Axis}(g) \neq \emptyset$ and  $|g| > 0$, it is called \emph{loxodromic} (or \emph{hyperbolic}).
\end{defn}

\begin{rem}
We note that $\mathrm{Axis}(g) \neq \emptyset$ for any $g \in G$. If $g$ is loxodromic, $\mathrm{Axis}(g)$ is isometric to $\mathbb{R}$, and $g$ acts on $\mathrm{Axis}(g)$ as translation by $|g|$.
\end{rem}

\begin{lem}
\label{lem:index2subgroup}
There exists a subgroup $\dot G$ of index at most 2 in $G$    so that $\dot G$  preserves $\mathcal V_1$ and $\mathcal V_2$ respectively and $G_v\subset \dot G$ for any $v\in T^0$.
\end{lem}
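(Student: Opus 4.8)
The plan is to exhibit $\dot G$ as the kernel of a ``parity'' homomorphism $\phi\colon G\to\Z/2\Z$ induced by the bipartite structure of the Bass--Serre tree $T$. Since $T$ is a tree, it is bipartite, and the partition $T^0=\mathcal V_1\cup\mathcal V_2$ is precisely the pair of color classes of the unique proper $2$--coloring $c\colon T^0\to\Z/2\Z$; here $c(v)\ne c(v')$ for adjacent $v,v'$, and $c(v)=c(v')$ if and only if $d_T(v,v')$ is even, which matches the defining property of $\mathcal V_1,\mathcal V_2$. For $g\in G$ I would then set $\phi(g):=c(gv)-c(v)$ for a vertex $v\in T^0$.

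The first thing to verify is that $\phi(g)$ is independent of the chosen vertex $v$. Because $G$ acts on $T$ by simplicial automorphisms, $g$ preserves adjacency, so for adjacent $v,v'$ one has $c(gv')-c(gv)=1=c(v')-c(v)$, hence $c(gv')-c(v')=c(gv)-c(v)$; connectivity of $T$ upgrades this to full independence of $v$. A short telescoping argument, $\phi(gh)=\bigl(c(ghv)-c(hv)\bigr)+\bigl(c(hv)-c(v)\bigr)=\phi(g)+\phi(h)$, then shows $\phi$ is a homomorphism into $\Z/2\Z$.

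With $\dot G:=\ker\phi$ the conclusions fall out immediately. The index $[G:\dot G]\le 2$ since $\phi$ takes values in $\Z/2\Z$. By construction $g\in\dot G$ exactly when $g$ fixes each color class, i.e. $g\mathcal V_1=\mathcal V_1$ and $g\mathcal V_2=\mathcal V_2$, so $\dot G$ preserves $\mathcal V_1$ and $\mathcal V_2$ respectively. Finally, if $g\in G_v$ then $gv=v$, whence $\phi(g)=c(v)-c(v)=0$ and $g\in\dot G$; thus $G_v\subset\dot G$ for every $v\in T^0$. The argument is essentially formal, and the only step needing genuine care is the well-definedness of $\phi$, which I expect to be the main (though still routine) point and which relies solely on $G$ acting by simplicial automorphisms of the bipartite tree $T$.
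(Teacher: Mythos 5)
Your proposal is correct, and while it proves the same underlying fact as the paper --- that the parity of the displacement $d_T(gv,v)$, equivalently whether $g$ preserves or swaps the two classes, is independent of the vertex $v$ --- your route to it is genuinely different. The paper establishes vertex-independence by a case analysis on the isometry type of $g$ acting on $T$: for elliptic $g$ fixing a point $o$ it uses the containment $[v,gv]\subset [v,o]\cup[o,gv]$ to see that $d_T(gv,v)$ is even, and for hyperbolic $g$ it argues similarly along the invariant axis; it then checks by hand that the even-displacement elements form a subgroup of index at most $2$. You instead package the whole statement as the kernel of a parity homomorphism $\phi(g)=c(gv)-c(v)$ attached to the $2$--coloring $c\colon T^0\to\Z/2\Z$ of the bipartite tree: well-definedness follows from the purely local observation that a simplicial automorphism preserves adjacency, propagated over $T$ by connectivity, with no elliptic/hyperbolic dichotomy needed, and then the subgroup property, the index bound $[G:\dot G]\le 2$, and the containments $G_v\subset\dot G$ all fall out formally from $\dot G=\ker\phi$. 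Your argument is more elementary (it never invokes the classification of isometries of trees) and slightly more general (it works verbatim for any group acting simplicially on a connected bipartite graph), whereas the paper's displacement-parity formulation is stated in the language of translation distances that it uses elsewhere. The two formulations agree: since the colors alternate along the geodesic $[v,gv]$, one has $c(gv)-c(v)\equiv d_T(gv,v)\pmod 2$, so your $\ker\phi$ is exactly the paper's set of even-displacement elements.
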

\begin{proof} Observe first that if  $d_T(go, o)=0 \pmod 2$ for some   $o\in T^0$ and $g\in G$, then $d_T(gv, v)=0 \pmod 2$ holds for any $v\in T^0$.
Indeed, if $g$   is elliptic and thus rotates about  a point $o$,   the geodesic $[gv, v]$  for any $v$ is contained in the union $[o,v]\cup[o,gv]$   and thus has even length.    Otherwise, $g$ must be a hyperbolic element and leaves invariant a geodesic $\gamma$ acted upon by translation. By a similar reasoning, if $g$ moves the points on $\gamma$ with even distance, then $d_T(gv, v)=0 \pmod 2$   for any $v\in T^0$.    

Consider now the set $\dot G$ of elements $g\in G$ such that $d_T(gv, v)=0 \pmod 2$ for any $v\in T^0$. Using the tree  $T$ again, if $g,h\in \dot G$, then $d_T(gv, hv)=0 \pmod 2$ for any $v\in T^0$. Thus, $\dot G$ is a group of finite index 2. 
\end{proof}

Let $\dot G$ be the subgroup of $G$ given by the lemma. By Bass-Serre theory, it admits a finite graph  of groups where the underlying graph $\dot{\mathcal G}=T/\dot G$ is bipartite with vertex sets $V=\mathcal V/\dot G$ and $W=\mathcal W/\dot G$ where $\mathcal W:=T^0\setminus \mathcal V$,  and  the vertex groups  are isomorphic to those of $G$.

\begin{lem}\label{ActOnHypspaceLem}
The space $\mathcal X$  is a $\delta$--hyperbolic Hadamard space where $\delta>0$ only depends on the hyperbolicity constants of $\bar Y_v$  ($v\in \mathcal V$). 

If for every $v\in T^0$, $G_v=H_v\times Z(G_v)$  and for each edge $e:=[v,w]\in T^1$, $G_e=Z(G_v)\times Z(G_w)$ then the  subgroup $\dot G <G$  given by Lemma \ref{lem:index2subgroup} acts on  $\mathcal X$    with   the following properties:
\begin{enumerate}
\item
\label{ActOnHypspaceLem:1}
for each $v\in \mathcal V$,  the stabilizer of $\bar Y_v$ is isomorphic to $G_v$ and $H_v$ acts geometrically on $\bar Y_v$, and
\item
\label{ActOnHypspaceLem:2}
    for each $w\in  \mathcal W$, the flat link  $Fl(w)$   admits an isometric group action of $G_w$ so that $G_w$ acts by translation on the line parallel to the binding line and   on the set of flat strips   by the action on the link $Lk(w)$. 
    
    
    
    
    
\end{enumerate}  
\end{lem}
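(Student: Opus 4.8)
The plan is to regard $\mathcal X$ as a tree of spaces over the bipartite tree $T'$ with vertex set $\mathcal V\sqcup\mathcal W$, where $\mathcal W=T^0\setminus\mathcal V$: the vertex space over $v\in\mathcal V$ is the hyperbolic Hadamard space $\bar Y_v$, the vertex space over $w\in\mathcal W$ is the flat link $Fl(w)$, and the edge space attached to $e=[v,w]$ is the boundary line $\ell=\bar Y_v\cap F_e$, glued isometrically to the free boundary of the $v$-strip of $Fl(w)$. First I would record two structural facts. Each $\bar Y_v$ is CAT(0), and since the CKA action is cocompact there are only finitely many $G$--orbits of pieces, so the hyperbolicity constants of the $\bar Y_v$ are uniformly bounded by some $\delta_0$; moreover each boundary line is a convex geodesic (an axis of $Z(G_v)$). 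Each flat link $Fl(w)$, being a union of width-one strips glued along one common binding line, is CAT(0) and lies within Hausdorff distance $1$ of its binding line, so it is uniformly quasi-isometric to $\mathbb R$ and uniformly hyperbolic.

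For the Hadamard conclusion I would glue along $T'$: every gluing is an isometry between complete convex subspaces (lines), so the Reshetnyak gluing theorem applies at each edge, and because the gluing pattern is tree-like, an inductive/limiting argument shows $\mathcal X$ is a complete CAT(0) space. The $\delta$--hyperbolicity is the heart of the statement. The key geometric input is Lemma~\ref{lem:qie}(1): distinct singular geodesics on a wall meet at an angle in $[\beta,\pi-\beta]$. This translates into the fact that two distinct boundary lines in a single piece $\bar Y_v$, being geodesics in a $\delta_0$--hyperbolic space making a definite angle, fellow-travel only over a length bounded in terms of $\delta_0$ and $\beta$; hence the edge spaces are isometrically embedded and their nearest-point projections onto one another inside each vertex space are uniformly bounded.

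Feeding this bounded-projection property into a combination theorem for trees of hyperbolic spaces (of Bestvina--Feighn type, whose flaring condition follows from the hyperbolicity of the $\bar Y_v$ together with the bounded projections; note the absence of a group action forces this route rather than the flat-plane theorem) yields that $\mathcal X$ is $\delta$--hyperbolic with $\delta$ depending only on $\delta_0$ and $\beta$, that is, only on the hyperbolicity constants of the $\bar Y_v$. Alternatively one could argue directly inside the CAT(0) space by projecting a geodesic triangle to $T'$, obtaining a tripod, and bounding the thinness fibrewise using hyperbolicity of the pieces and the bounded overlap of gluing lines. I expect this uniform-constant step to be the main obstacle: one must verify the flaring/projection hypotheses carefully and check that the constants do not degenerate over the tree $T'$, which is \emph{not} locally finite, since infinitely many $\bar Y_{v'}$ are attached at each binding line.

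For the second half, assume $G_v=H_v\times Z(G_v)$ for every $v$ and $G_e=Z(G_v)\times Z(G_w)$ for every edge $e=[v,w]$. Since $\dot G<G$ preserves $\mathcal V$ and $\mathcal W$ and contains every $G_\sigma$ by Lemma~\ref{lem:index2subgroup}, each $g\in\dot G$ permutes the pieces by $\bar Y_v\mapsto\bar Y_{gv}$ and the flat links by $Fl(w)\mapsto Fl(gw)$, compatibly with its action on $T'$. I would then define the $\dot G$--action on $\mathcal X$ piecewise: $g$ restricts to the isometry $Y_v\to Y_{gv}$ coming from the CKA action, which respects the product splitting and hence descends to an isometry $\bar Y_v\to\bar Y_{gv}$; the splitting $G_v=H_v\times Z(G_v)$ is exactly what makes these descended maps a genuine action (with $Z(G_v)$ acting trivially on $\bar Y_v$), while $G_e=Z(G_v)\times Z(G_w)$ guarantees that $g$ carries the gluing datum of $e$ (the identification of $\ell$ with its strip boundary along matched $\mathbb R$--coordinates) to that of $ge$, so the piecewise isometries agree on overlaps and assemble into an isometry of $\mathcal X$. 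Properties~(\ref{ActOnHypspaceLem:1}) and~(\ref{ActOnHypspaceLem:2}) then follow by bookkeeping: the $\dot G$--stabilizer of $\bar Y_v$ equals $\{g\in\dot G:gv=v\}=G_v$, acting geometrically through $H_v$; and $G_w$ stabilizes $Fl(w)$ with $Z(G_w)$ translating the binding line and $H_w=G_w/Z(G_w)$ permuting the flat strips precisely as it permutes $Lk(w)$.
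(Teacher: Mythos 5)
Your second half is, in substance, the paper's own proof written out with the bookkeeping done: the paper's entire argument for this lemma consists of three sentences observing that $G_e$ acts on the boundary line $\ell_e=F_e\cap \bar Y_v$ through $G_e\twoheadrightarrow G_e/Z(G_v)=Z(G_w)$, that $Z(G_w)$ acts on the boundary line of the flat strip corresponding to $e$, and that these two actions are compatible with the gluing, so the actions on the $\bar Y_v$'s and on the flat links $Fl(w)$'s extend to an action of $\dot G$ on $\mathcal X$. Your piecewise construction --- using $G_v=H_v\times Z(G_v)$ to make the descended maps $\bar Y_v\to\bar Y_{gv}$ a genuine action with $Z(G_v)$ acting trivially, and $G_e=Z(G_v)\times Z(G_w)$ to carry the matched $\mathbb R$--coordinates of the gluing of $e$ to those of $ge$ --- is exactly this, with the verifications made explicit. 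For the first half the paper offers no proof at all (hyperbolicity and the Hadamard property are asserted in the remark preceding the lemma), so your Reshetnyak-plus-combination argument is your own contribution; it has the right shape, and your instinct that the crux is a \emph{uniform bounded-projection} statement for distinct gluing lines inside a single piece is correct: without it the conclusion is false, since a bi-infinite chain of flat strips glued along parallel boundary lines is a tree of uniformly hyperbolic CAT(0) pieces whose union is $\mathbb R^2$.

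However, one step as written is wrong: you derive the bounded projections from Lemma~\ref{lem:qie}(1), claiming that two distinct boundary lines in a piece $\bar Y_v$ ``make a definite angle.'' That lemma bounds the angle between the two singular geodesics $\mathcal S_{e_1e}\cap F_e$ and $\mathcal S_{e_2e}\cap F_e$ lying \emph{inside a Euclidean wall} $F_e$; it says nothing about two boundary lines $\bar Y_v\cap F_{e_1}$ and $\bar Y_v\cap F_{e_2}$ of a piece, which in general do not intersect at all, so no angle between them is even defined (and, as the flat-strip example shows, hyperbolicity of $\bar Y_v$ alone does not bound their mutual projections). The correct input, available in this same paper, is the bounded intersection property of the collection $\mathbb L_v$, which the authors invoke later for the coned-off spaces $\dot Y_v$ via \cite{DGO}: the stabilizer $E(\ell)<H_v$ of a boundary line is virtually cyclic and almost malnormal by condition~(3) of Definition~\ref{defn:admissible}, and $\mathbb L_v$ is $H_v$--finite, so distinct lines in $\mathbb L_v$ have uniformly bounded coarse intersection, hence uniformly bounded projections; uniformity over $v$ comes from the finitely many $G$--orbits of vertices. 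With that substitution your combination argument (and your worry about constants over the non--locally finite tree, which is resolved because the flat links are uniformly within Hausdorff distance $1$ of their binding lines) goes through --- with the caveat that $\delta$ then depends on this projection bound as well, so the lemma's claim that $\delta$ depends only on the hyperbolicity constants of the $\bar Y_v$ is itself slightly loose.
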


\begin{proof} 
On one hand, $G_e$ acts on the boundary line $\ell_e=F_e\cap \bar Y_v$ through $G_e \rightarrowtail G_e/Z(G_v)=Z(G_w)$.  On the other hand, $Z(G_w)$ acts on the boundary line of the flat strip corresponding to the edge $e$. Since these two actions are compatible with glueing of $Y_v$'s where $v\in Lk(w)$, we can extend the actions on $\bar Y_v$'s and  flat links $Fl(w)$'s to get the desired action of $\dot G$ on $\mathcal X$. 
\end{proof}

\subsection{Q.I. embedding into the product of two hyperbolic spaces}

\label{subsection:qiehyp}
\begin{prop}
\label{prop:qiehyperbolicspaces}
Let $G \curvearrowright X$ be a flip CKA action and $\dot G$ the subgroup in $G$ of index at most $2$ given by Lemma~\ref{lem:index2subgroup}. Let $\mathcal{X}_i$ ($i = 1,2$) be the hyperbolic space constructed in Section~\ref{subsection:flipactions} with respect to $\mathcal{V}_i$. Then there exists a   quasi-isometric embedding map $\phi$ from $X$ to $\mathcal X_1 \times \mathcal X_2$.  

If for every $v\in T^0$, $G_v=H_v\times Z(G_v)$  and for each edge $e:=[v,w]\in T^1$, $G_e=Z(G_v)\times Z(G_w)$ then the above map $\phi$ can be made $\dot G$-equivariant.
\end{prop}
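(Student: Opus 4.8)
The plan is to construct the map $\phi$ explicitly using the coordinate structure of the pieces $Y_v$ and to verify the quasi-isometry lower and upper bounds via the special paths of Section~\ref{subsection:pathsystem}. Recall each $Y_v = \bar Y_v \times \R$ and that $\mathcal X_i$ is built from the $\bar Y_v$ with $v \in \mathcal V_i$ glued along boundary lines through flat links. The key observation is that every vertex $v \in T^0$ lies in exactly one of $\mathcal V_1, \mathcal V_2$, and its $\bar Y_v$--factor appears directly in the corresponding $\mathcal X_i$, while its $\R$--factor $Z(G_v)$ becomes a boundary line inside a flat link $Fl(v)$ in the \emph{other} space $\mathcal X_{3-i}$ (since neighbors of $v$ lie in the complementary class). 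Thus the two coordinates $(\bar Y_v, \R)$ of a point mapped from $Y_v$ are recorded in the two different factors $\mathcal X_1$ and $\mathcal X_2$. First I would fix basepoints and define $\phi$ on each $X_v = N_D(Y_v)$ by projecting to $Y_v$ and sending the $\bar Y_v$--component into $\mathcal X_i$ (via the identity inclusion) and the $\R$--component into the binding-line direction of $Fl(v) \subset \mathcal X_{3-i}$; I would then check that the two definitions coming from overlapping pieces $X_v, X_{v'}$ agree up to bounded error, using Lemma~\ref{defn:vertexedgespace} to control the overlaps, so that $\phi$ is coarsely well-defined.

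The upper bound $d_{\mathcal X_1 \times \mathcal X_2}(\phi(x),\phi(y)) \preceq d_X(x,y)$ should follow because $\phi$ is built from $1$--Lipschitz coordinate projections and inclusions, with the gluing along flat links of width $1$ contributing only bounded additive constants per piece; since a geodesic in $X$ crosses the relevant pieces in a controlled way (again Lemma~\ref{defn:vertexedgespace} bounds tree-distance by overlap), the estimate is routine. The substantive direction is the lower bound $d_X(x,y) \preceq d_{\mathcal X_1 \times \mathcal X_2}(\phi(x),\phi(y))$. Here I would connect $x$ and $y$ by the special path $\gamma = [x,p_1][p_1,p_2]\cdots[p_n,y]$ of Definition~\ref{SpecialPathDefn}, which is a uniform $(\mu,\mu)$--quasi-geodesic by Proposition~\ref{prop:spepathisqg}, so that $d_X(x,y) \sim \operatorname{Len}(\gamma) = \sum_i d(p_{i-1},p_i)$ up to multiplicative and additive constants. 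Each segment $[p_{i-1},p_i]$ lives in a single piece $Y_{v_i}$ and decomposes into a horizontal displacement in $\bar Y_{v_i}$ and a vertical ($\R$) displacement; by the flip condition the vertical displacement in $Y_{v_i}$ becomes a horizontal displacement inside the neighboring $\bar Y_{v_{i\pm1}}$, i.e.\ exactly the binding-line direction recorded in the opposite space. The goal is to show that these horizontal and vertical displacements, distributed alternately across $\mathcal X_1$ and $\mathcal X_2$, are each faithfully recorded as distance in the respective hyperbolic space, so that $d_{\mathcal X_1}(\phi_1(x),\phi_1(y)) + d_{\mathcal X_2}(\phi_2(x),\phi_2(y))$ dominates $\sum_i d(p_{i-1},p_i)$.

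The hard part will be controlling distances \emph{inside} the non-proper hyperbolic spaces $\mathcal X_i$: a priori, distinct $\bar Y_v$'s glued through a common flat link could allow shortcuts that collapse the sum $\sum_i d(p_{i-1},p_i)$, so I must rule out such cancellation. The crucial input is that in $\mathcal X_i$ the copies $\bar Y_v$ for $v \in \mathcal V_i$ with $d_T(v,v')>2$ are disjoint, and any path in $\mathcal X_i$ between two points of a single $\bar Y_v$ that leaves $\bar Y_v$ must traverse a flat link and return, paying a length cost bounded below by the displacement along the binding line; combined with the $\delta$--hyperbolicity from Lemma~\ref{ActOnHypspaceLem} and a projection/distance-formula argument, this should force each piecewise contribution to be recorded without loss. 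I expect this comparison—showing that the image of the special path is itself an efficient path in $\mathcal X_1 \times \mathcal X_2$, with no large-scale shortcut—to be the main technical obstacle, and it is where the tree-like alternation of $\mathcal V_1, \mathcal V_2$ and the flip hypothesis are used most essentially.

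For the equivariance statement, under the hypotheses $G_v = H_v \times Z(G_v)$ and $G_e = Z(G_v)\times Z(G_w)$, Lemma~\ref{ActOnHypspaceLem} already furnishes isometric actions of $\dot G$ on each $\mathcal X_i$ (with $\operatorname{Stab}(\bar Y_v)\cong G_v$ and $G_w$ acting on $Fl(w)$ as described). It then remains to verify that the chosen basepoints and the coordinate definition of $\phi$ can be made compatible with these actions: since the splittings $G_v = H_v \times Z(G_v)$ make the $\bar Y_v$-- and $\R$--coordinates themselves $G_v$--equivariant, and $\dot G$ permutes the pieces of each class $\mathcal V_i$ exactly as it permutes the corresponding $\bar Y_v$'s in $\mathcal X_i$ (by Lemma~\ref{lem:index2subgroup}), a $\dot G$--equivariant choice of the gluing data and basepoints makes the diagonal map $\phi = (\phi_1,\phi_2)$ intertwine the $\dot G$--action on $X$ with the product action on $\mathcal X_1 \times \mathcal X_2$; I would carry this out by fixing one basepoint per $\dot G$--orbit of vertices and extending equivariantly.
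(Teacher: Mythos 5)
Your proposal is correct and follows essentially the same route as the paper: the paper likewise defines $\phi=\phi_1\times\phi_2$ (on a $\dot G$--orbit, using the indexed map $\rho$ to select the piece) by sending the $\bar Y_{gv}$--coordinate into $\mathcal X_1$ and the $\R$--coordinate to the binding line of the flat link $Fl(gv)\subset \mathcal X_2$, and verifies the two-sided bound by comparing the special path of Definition~\ref{SpecialPathDefn} with quasi-geodesics $\beta_1,\beta_2$ in the two factors. The ``no-shortcut'' issue you flag as the main obstacle is exactly what the paper's Observation resolves (any geodesic in $\mathcal X_i$ must cross the alternating $\bar Y_v$'s in tree order, so the coordinate displacements are faithfully recorded), and your equivariance argument via Lemma~\ref{ActOnHypspaceLem} matches the paper's as well.
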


\begin{proof}
Let $\rho \colon X \to T^0$ be the indexed map given by Remark~\ref{rem:indexfunction}. Choose a vertex $v \in T^0$ and a point $x_0 \in Y_v$ such that $\rho(x_0) =v$.
Note that $\rho$ is a $G$--equivariant, hence $\rho(g(x_0)) = g \rho(x_0) = gv$. Without loss of generality, we can assume that $v \in \mathcal{V}_1$.
Let $\tilde X = \dot G (x_0)$ be the orbit of $x_0$ in $X$.

For any $x \in \tilde X = \dot G (x_0)$ then $x = g(x_0)$ for some $g \in \dot G$. We remark here that in general it is possible that $g(x_0)$ belong to several $Y_w$'s for some $w \in T^0$. However, recall that we have the given indexed map $\rho \colon X \to T^0$. This indexed function will tell us exactly which space we should project $g(x_0)$ into, i.e, $g(x_0)$ should project to $\bar Y_{\rho(g(x_0))} = \bar{Y}_{gv}$.

We recall that $\dot G$ has finite index in $G$ and it preserves $\mathcal V_1$ and $\mathcal V_2$.

{\bf Step 1 : Construct a quasi-isometric embedding map $\phi \colon \tilde X \to \mathcal{X}_1 \times \mathcal{X}_2$}

We are going to define the map $\phi = \phi_1 \times \phi_2: \tilde  X\to \mathcal X_1\times \mathcal X_2$ where $\phi_i: \tilde X\to \mathcal X_i$.  We first define a map $\phi_1: \tilde X\to \mathcal X_1$. 

For any $x \in \tilde X$ then $x = g(x_0)$ for some $g \in \dot G$, and thus $g(x_0) \in Y_{gv}$. Since we assume that $v \in \mathcal V_1$ and $\dot G$ preserves $\mathcal V_1$, it follows that $gv \in \mathcal V_1$. We define  $\phi_1 (x) : = \pi_{\bar{Y}_{gv}}(x)$  where $\pi_{\bar Y_{gv}}$ is the projection of $Y_{gv} = \bar{Y}_{gv} \times \R$ to the factor $\bar Y_v$.  We define $\phi_2 (x)$ to be the point on the binding line of the flat link $Fl(v)$ so that its $\mathbb R$--coordinate is the same as that of   $x$ in the metric product $Y_{gv} = \bar Y_{gv} \times \R$.

{\bf Step 2: Verifying $\phi$ is a quasi-isometric embedding.}

We are now going to show that $\phi = \phi_1 \times \phi_2  \colon \tilde{X} \to \mathcal{X}_1 \times \mathcal{X}_2$ is a quasi-isometric embedding. Before   getting into the proof, we clarify here an observation that will be used later on.

Observation: By the tree-like construction of $\mathcal X$, any geodesic $\alpha$ in $\mathcal X$ with endpoints $\alpha_-\in \bar Y_{v_1}, \alpha_+\in \bar Y_{v_2}$   crosses $\bar Y_v$   for  alternating vertices $v\in [v_1, v_2]$ in their order appearing in the interval, where $[v_1, v_2]$ is the geodesic in the tree $T$.  Using the convexity of  boundary lines in a hyperbolic CAT(0) space, we see that  the intersection $\alpha\cap \bar Y_v$ connects two boundary lines $\ell, \ell'$ in $\bar Y_v$ so that the projection $\pi_\ell(\ell')$ is uniformly close to $\alpha$. We can then construct a quasi-geodesic $\beta$  in $\mathcal{X}$  with the same endpoints as $\alpha$ so that $\beta\cap \bar Y_v$ connects $\pi_\ell(\ell')$ to $\pi_{\ell'}(\ell)$.

{\bf Claim:} There exists a constant $C \ge 2$ such that $d(x,y) /C -C \le d(\phi(x) , \phi(y)) \le C d(x,y) +D$ for any $x, y \in \tilde X$.

Indeed, let $g \in \dot G$ and $g' \in \dot G$ be two elements such that $x = g(x_0)$ and $y = g'(x_0)$. Note that $x \in Y_{gv}$ and $y \in Y_{g'v}$. We consider the following cases.

Case~1: $gv = g'v$. In this case, it is easy to see the claim holds since $$Len^1[x,y]=Len^1(\phi_1([x, y]))+Len^1(\phi_2([x, y]))$$

Case~2: $g v \neq g' v$.  Note that they both belong to $\mathcal V_1$, so $d_{T}(gv, g'v)$ is even. Let $2n = d_{T}(gv, g'v)$.
We write 
\[
[gv, g'v ] = e_1 \cdots e_{2n}
\] as the edge paths in $T$. Let $v_{i-1}$ be the initial vertex of $e_{i}$ with $i = 1, \dots , 2n$ and $v_{2n}$ be the terminal vertex of $e_{2n}$.  {We note that $v_0, v_2, \dots, v_{2n} \in \mathcal V_1 $ and $v_1, v_3, \dots, v_{2n-1} \in \mathcal V_2$.
}

With notations in in Definition \ref{SpecialPathDefn}, the special path $\gamma$  between $x, y$  decomposes  as the concatenation of geodesics:
$$
\gamma=[x, p_1][p_1,p_2]\cdots [p_{2n}, y].
$$ 

Denote $(x_1, x_2) = (\phi_1(x), \phi_2(x)) \in \mathcal X_1\times \mathcal X_2$ and  $(y_1, y_2) = (\phi_1(y), \phi_2(y)) \in \mathcal X_1\times \mathcal X_2 $. By  the above observation, we connect $x_1$ to $y_1$   by a quasi-geodesic $\beta_1$ in $\mathcal X_1$  so that  whenever $\beta_1$ passes through $\bar{Y}_{v_2} , \, \bar{Y}_{v_4}, \, \cdots, \bar{Y}_{v_{2n-2}}$, it is orthogonal to the boundary lines. 
In this way, we can write $\beta_1$ as the concatenation of geodesic segments $\beta_1^0,\beta_1^1, \beta_1^2, \cdots, \beta_1^{2n}$, where     $\beta_1^{2i +1}$  are maximal segments contained   in the flat links. The first   $\beta_1^0$ and last $\beta_1^{2n}$ may have overlap with boundary lines, and the other $\beta_1^{2i}$   are orthogonal to the boundary lines of $\bar Y_{v_{2i}}$.  

Similarly, let $\beta_2$ be a quasi-geodesic from $x_2$ to $y_2$   in $\mathcal X_2$  as the concatenation of geodesic segments $\beta_2^0,\beta_2^1, \beta_2^2, \cdots, \beta_2^{2n}$. 


We relabel $x$ by $p_0$ and relabel $y$ by $p_{2n+1}$. 
For each vertex $v_i$, let $\pi_{\bar Y_{i}}$ and $\pi_{\R_{i}}$ denote the projections of $Y_{v_i} = \bar Y_{v_i} \times \R$ to the factor $\bar Y_i$ and $\R$ respectively. 

By the construction of $\phi_1$ and $\phi_2$ we note that there exists a constant $A = A(\mathcal{X})$ such that
\[
  Len_{\mathcal{X}_1} (\beta_1 ) \sim_{A} \,\,\, \sum_{i=0}^{n} Len_{X} \bigl ( \pi_{\bar{Y}_{2i}} [ p_{2i}, p_{2i+1}] \bigr ) + \sum_{i=0}^{n-1} Len_{X} \bigl ( \pi_{\R_{2i+1}} [ p_{2i+1}, p_{2i+2} ] \bigr )
\]
and
\[
  Len_{\mathcal{X}_2} (\beta_2 ) \sim_{A} \,\,\, \sum_{i=0}^{n} Len_{X} \bigl ( \pi_{\R_{2i}} [ p_{2i}, p_{2i+1}] \bigr ) + \sum_{i=0}^{n-1} Len_{X} \bigl ( \pi_{\bar{Y}_{2i+1}} [ p_{2i+1}, p_{2i+2} ] \bigr )
\]
Summing over two equations above, we obtain
\begin{equation}
\label{equation4:prop3.7}
   d(x,y) \sim_{B} \bigl ( Len_{\mathcal{X}_1}(\beta_1) + Len_{\mathcal{X}_2} (\beta_2) \bigr ) 
\end{equation}
for some constant $B = B (A)$.

Since $\beta_i$ is a $(\kappa, \kappa)$--quasi-geodesic connecting two points $\phi_i(x)$ and $\phi_i(y)$ (for some uniform constant $\kappa$ that does not depend on $x$, $y$), we have that $\ell(\beta_i) \sim_{\kappa} d(x_i, y_i)$ with $i = 1,2$. This fact together with formula (\ref{equation4:prop3.7}) and the fact $d(\phi(x), \phi(y)) \asymp_{\sqrt{2}} d(x_1, y_1) + d(x_2, y_2)$ give a constant $c = C(B, \kappa)$ such that $d(x, y) \sim_{C} d(\phi(x), \phi(y))$. The claim is verified.  Therefore, $\phi$ is a quasi-isometric embedding.
\end{proof}

\subsection{Q.I. embedding into a finite product of trees}  
\label{subsection:qieproducttrees}
In this section, we first recall briefly the work of Bestvina-Bromberg-Fujiwara  \cite{BBF} on constructing  a quasi-tree of spaces. The reminder is then to produce a collection of quasi-lines to establish a distance formula for   geometric actions of hyperbolic groups, see Lemma \ref{YDistFormulaLem}. The result is not new (cf. \cite[Prop. 3.3]{BBF2}), but the construction of quasi-lines is new and generalizes to certain non-proper actions, see Lemma \ref{ConeofYDistFormulaLem}.

We shall make use of the work of . Their theory  applies to any collection of spaces $\mathbb Y$   equipped with a family of  {\it projection maps}  $$\{\pi_{Y}: \mathbb Y\setminus \{Y\}\times \mathbb Y\setminus \{Y\}\to Y\}_{Y\in \mathbb Y}$$ satisfying the so-called \textit{projection axioms} with  projection constant $\xi\ge 0$. The precise formulation of projection axioms is irrelevant here.  We only mention that their results apply  to a collection of quasi-lines $\mathbb A$ with bounded projection property in a (not necessarily proper) hyperbolic space $Y$, where the projection maps $\pi_\gamma$ for $\gamma\in \mathbb A$ are shortest point projections to $\gamma$ in $Y$.   Then $(\mathbb A, {\pi_\gamma })$ satisfies projection axioms with projection constant $\xi$ (see \cite[Proposition~2.4]{BBF2}).


 



Fix $K>0$. Following \cite{BBF}, a quasi-tree of spaces $\mathcal C_K(\mathbb A) $ is constructed with a underlying quasi-tree (graph) structure where every vertex represents a quasi-line in $\mathbb A$ and two quasi-lines $\gamma, \gamma'$ are connected by an edge of length 1 from $\pi_\gamma(\gamma')$ to $\pi_{\gamma'}(\gamma)$. If $K>4\xi$, then $\mathcal C_K(\mathbb A)$ is a unbounded quasi-tree. 

If $\mathbb A$ admits a group action of $G$ so that $\pi_{g\gamma}=g\pi_\gamma$ for any $g\in G$ and $Y\in \mathbb A$, then $G$ acts by isometry on $\mathcal C_K(\mathbb A)$.

By \cite[Lemma 4.2, Corollary 4.10]{BBF}, every quasi-line  $\gamma\in \mathbb A$ with induced metric from $Y$ is totally geodesically embedded in $\mathcal C_K(\mathbb A)$ and the shortest projection maps from $\gamma'$ to $\gamma$ in the quasi-tree $\mathcal C_K(\mathbb A)$  coincides with the projection maps $\pi_\gamma(\gamma')$ up to uniform finite Hausdorff distance.

By abuse of language, for both $x, y\in Y$ and $x, y\in \mathcal C_K(\mathbb A)$, we denote $$d_\gamma(x, y)=diam(\pi_\gamma(\{x,y\})$$ where the projections in the right-hand are understood in $Y$ and   $ \mathcal C_K(\mathbb A)$ accordingly.  The above discussion implies that the two projections gives the same value up to a uniform bounded error.   

Set $[t]_K=t$   if $t\ge K$ otherwise $[t]_K=0$. We now summarize what we need from \cite{BBF, BBF2} in the present paper. 
\begin{prop}  
 \label{BBFDistanceThm}
Let $\mathbb A$ be a collection of   quasi-lines    in a $\delta$--hyperbolic space $Y$. If there is $\theta>0$ such  that $diam (\pi_\beta (\alpha)) \le \theta$ for all $\alpha\ne \beta\in \mathbb A$, then $(\mathbb A, {\pi_\gamma })$ satisfies the projection axioms with projection constant $\xi$ depending only on $\theta$. Moreover, for any $x, y\in \mathcal C_K(\mathbb A)$,
$$
d_{\mathcal C_K(\mathbb A)}(x,y) \sim_K    \sum_{\gamma\in \mathbb A} [d_\gamma (x, y)]_K 
$$
for all $K \ge 4\xi$. 
\end{prop}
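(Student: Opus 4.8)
The plan is to reduce the statement to the citable results of Bestvina--Bromberg--Fujiwara rather than to reprove the projection-axioms machinery from scratch. The proposition has two assertions: first, that the uniform bound $\diam(\pi_\beta(\alpha)) \le \theta$ on pairwise projections forces $(\mathbb{A}, \{\pi_\gamma\})$ to satisfy the projection axioms with some constant $\xi = \xi(\theta)$; and second, the distance formula in the resulting quasi-tree of spaces $\mathcal C_K(\mathbb A)$. The first assertion is exactly the content invoked in the paragraph preceding the statement, namely \cite[Proposition~2.4]{BBF2}: a collection of quasi-lines with the bounded projection property in a (possibly non-proper) $\delta$--hyperbolic space satisfies the projection axioms, with $\xi$ depending only on $\theta$ and $\delta$. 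So the first step is simply to verify the hypotheses of that proposition. The projection maps $\pi_\gamma$ are the shortest-point (nearest-point) projections onto the quasi-line $\gamma$ inside $Y$; one checks that the three projection axioms hold. The only nontrivial input is the so-called \emph{Behrstock inequality} / strong contraction behavior of nearest-point projection onto a quasi-line in a hyperbolic space, which holds uniformly in $\delta$-hyperbolic spaces and yields the bound on $\diam(\pi_\gamma(\pi_\beta(\alpha)))$ from the given pairwise bound $\theta$. I would cite this directly rather than redo it.

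With the projection axioms in hand with constant $\xi = \xi(\theta)$, the second step is to invoke the construction and main distance estimate of \cite{BBF}. For $K \ge 4\xi$ the quasi-tree of spaces $\mathcal C_K(\mathbb A)$ is well-defined and unbounded, and its metric is governed by the \emph{distance formula} of Bestvina--Bromberg--Fujiwara, which states precisely that
$$
d_{\mathcal C_K(\mathbb A)}(x,y) \sim_K \sum_{\gamma \in \mathbb A} [d_\gamma(x,y)]_K.
$$
The lower bound $\sum_\gamma [d_\gamma(x,y)]_K \preceq_K d_{\mathcal C_K(\mathbb A)}(x,y)$ is the easier direction: any path in $\mathcal C_K(\mathbb A)$ from $x$ to $y$ must, each time it contributes a term $d_\gamma$ exceeding $K$, pass near $\pi_\gamma(x)$ and $\pi_\gamma(y)$, and distinct large contributions cannot be double-counted because of the projection axioms (this uses that at most one $\gamma$ can have a large projection "between" any two others). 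The upper bound comes from building an explicit path realizing the sum, concatenating the totally geodesic copies of the relevant $\gamma$'s together with the length-one connecting edges; here I would use \cite[Lemma~4.2, Corollary~4.10]{BBF}, already quoted in the excerpt, which says each $\gamma$ is totally geodesically embedded and that projections in $\mathcal C_K(\mathbb A)$ agree with the $\pi_\gamma$ up to uniform Hausdorff error.

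The one bookkeeping point that needs care, and which I expect to be the main (if modest) obstacle, is the identification of the two meanings of $d_\gamma(x,y)$ flagged in the "abuse of language" remark: for $x,y \in Y$ the quantity is $\diam(\pi_\gamma(\{x,y\}))$ computed in $Y$, whereas for $x,y \in \mathcal C_K(\mathbb A)$ it is the analogous diameter computed using projections inside the quasi-tree of spaces. The distance formula as stated must be read with $d_\gamma$ interpreted in $\mathcal C_K(\mathbb A)$, and one must be sure that the two interpretations agree up to the uniform bounded error guaranteed by \cite[Corollary~4.10]{BBF}, so that the formula may be freely applied later (as in Proposition~\ref{prop:distanceformulaX_1}) with projections computed in the original space $Y$. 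Thus the final step is to record this comparison, after which the two summations differ only by changing finitely many near-threshold terms between $0$ and a bounded value, which is absorbed into the $\sim_K$ relation. The whole argument is therefore an assembly of \cite[Proposition~2.4]{BBF2} and \cite[Lemma~4.2, Corollary~4.10]{BBF} together with the BBF distance formula, and contains no genuinely new computation.
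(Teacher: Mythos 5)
Your proposal is correct and is in substance the paper's own argument: the paper gives no independent proof of this proposition, treating it exactly as you do, as an assembly of citations to \cite{BBF2} and \cite{BBF}, and the surrounding text makes precisely the identification you flag as the main bookkeeping point --- that $d_\gamma(x,y)$ computed via shortest-point projection in $Y$ agrees, up to uniform error, with the projection diameter computed in $\mathcal C_K(\mathbb A)$, by \cite[Lemma 4.2, Corollary 4.10]{BBF}. The one divergence is how the distance formula itself is sourced. The paper quotes it from \cite[Proposition~2.4]{BBF2}, where it is stated under the \emph{strong} projection axioms, and then disposes of the mismatch in a remark: by \cite[Theorem 2.2]{BBF2} the maps $\pi_\gamma$ can be modified within uniformly bounded Hausdorff distance so that the strong axioms hold, and since the modification is bounded, the formula survives with the original projections (the change being absorbed into $\sim_K$). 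You instead appeal to the original distance formula of \cite{BBF}, valid under the ordinary projection axioms, sketching its standard two halves (large projections along a path cannot be double-counted for the lower bound; concatenation of totally geodesic copies of the relevant $\gamma$'s for the upper bound); this sidesteps the strong-axioms issue entirely and is equally legitimate, at the modest cost that the version in \cite{BBF} carries its own threshold and constants rather than the clean ``for all $K \ge 4\xi$'' form --- again harmless, since $\sim_K$ absorbs such constants. The only thing to add, should you want to derive the statement verbatim from \cite[Proposition~2.4]{BBF2} as the paper does, is that one-line strong-axioms remark.
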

\begin{rem} 
In \cite[Proposition~2.4]{BBF2},  the above formula is stated  for $(\mathbb A, {\pi_\gamma })$ with the \textbf{strong} projection axioms. However, by \cite[Theorem 2.2]{BBF2}, a modification $\pi_\gamma'$ of projection maps $\pi_\gamma$ within finite Hausdorff distance can always be done so that  $(\mathbb A, {\pi_\gamma'})$ satisfies the strong projection axioms. Thus, the same formula still holds with original projection maps.  
\end{rem}
  
As a corollary, the distance formula still works when the points  $x, y$ are perturbed up to bounded error. 
  
\begin{cor}\label{BBFDistanceCor}
Under the assumption of Theorem \ref{BBFDistanceThm}, if $d(x, x'), d(y, y')\le R$ for some $R>0$, then exists $K_0=K_0(R,\xi, \delta)$ such that 
$$
d_{\mathcal C_K(\mathbb A)}(x,y) \sim_K \sum_{\gamma\in \mathbb A} [d_\gamma (x', y')]_K 
$$
for all $K \ge 2K_0$.
\end{cor}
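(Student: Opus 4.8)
The plan is to deduce Corollary~\ref{BBFDistanceCor} from Theorem~\ref{BBFDistanceThm} using two soft ingredients: the coarse Lipschitz behaviour of nearest-point projections in a hyperbolic space, and an elementary comparison between threshold sums. Applying Theorem~\ref{BBFDistanceThm} to the pair $(x,y)$ already gives $d_{\mathcal C_K(\mathbb A)}(x,y)\sim_K\sum_{\gamma\in\mathbb A}[d_\gamma(x,y)]_K$ for every $K\ge 4\xi$, so by transitivity of $\sim_K$ it suffices to prove that the two sums $\sum_\gamma[d_\gamma(x,y)]_K$ and $\sum_\gamma[d_\gamma(x',y')]_K$ are $\sim_K$--comparable once $K$ is taken large enough.

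First I would record a uniform perturbation estimate for the individual projection distances. Since each $\gamma\in\mathbb A$ is a uniform quasi-line in the $\delta$--hyperbolic space $Y$, the shortest-point projection $\pi_\gamma$ is coarsely Lipschitz with constants depending only on $\delta$ and the quasi-line constants; hence $d_\gamma(x,x')=\mathrm{diam}(\pi_\gamma\{x,x'\})\le d(x,x')+C(\delta)\le R+C(\delta)$, and likewise $d_\gamma(y,y')\le R+C(\delta)$. Applying the triangle inequality to the diameters defining $d_\gamma$ then yields a single bound $\lvert d_\gamma(x,y)-d_\gamma(x',y')\rvert\le\eta$ valid for all $\gamma$ simultaneously, with $\eta=2R+2C(\delta)$. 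Setting $K_0:=\max\{\eta,\,4\xi\}$ records exactly the dependence $K_0=K_0(R,\xi,\delta)$ required by the statement.

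Next I would feed this into the standard threshold comparison. For $K\ge 2K_0\ge 2\eta$, whenever $d_\gamma(x,y)\ge K$ one has $d_\gamma(x',y')\ge d_\gamma(x,y)-\eta\ge d_\gamma(x,y)/2\ge K/2$, so $[d_\gamma(x',y')]_{K/2}=d_\gamma(x',y')\ge\tfrac12[d_\gamma(x,y)]_K$. Summing over all $\gamma$ gives $\sum_\gamma[d_\gamma(x,y)]_K\le 2\sum_\gamma[d_\gamma(x',y')]_{K/2}$, and the symmetric inequality holds with the roles of $(x,y)$ and $(x',y')$ interchanged.

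The one genuine subtlety, which I expect to be the main obstacle, is the drop in threshold from $K$ to $K/2$ produced by the previous step: a collection of ``borderline'' indices $\gamma$ with $d_\gamma(x,y)$ just above $K$ but $d_\gamma(x',y')$ just below $K$ is counted by one threshold-$K$ sum and discarded by the other, so the two sums are genuinely \emph{not} comparable at a single fixed threshold by purely elementary means. To absorb this I would invoke the threshold-independence intrinsic to the Bestvina--Bromberg--Fujiwara machinery underlying Theorem~\ref{BBFDistanceThm}: since the distance formula holds simultaneously for all thresholds $\ge 4\xi$ and the quasi-trees $\mathcal C_{K/2}(\mathbb A)$ and $\mathcal C_K(\mathbb A)$ are quasi-isometric via the natural map, one gets $\sum_\gamma[d_\gamma(x',y')]_{K/2}\sim_K\sum_\gamma[d_\gamma(x',y')]_K$, the constant now being allowed to depend on $K$ precisely as the $\sim_K$ notation permits (in particular the borderline terms, which survive the $K/2$ cutoff, are harmlessly swallowed by the additive $K$--dependent constant). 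Chaining this with the inequality of the previous paragraph and its symmetric partner yields $\sum_\gamma[d_\gamma(x,y)]_K\sim_K\sum_\gamma[d_\gamma(x',y')]_K$ for all $K\ge 2K_0$, and combining with Theorem~\ref{BBFDistanceThm} applied to $(x,y)$ completes the proof.
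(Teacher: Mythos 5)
Your first two steps reproduce the paper's proof exactly: the paper's argument is precisely the uniform perturbation bound $|d_\gamma(x,y)-d_\gamma(x',y')|\le K_0$ for all $\gamma\in\mathbb A$, followed by the asserted term-wise inequality $\tfrac12[d_\gamma(x',y')]_K\le [d_\gamma(x,y)]_K\le 2[d_\gamma(x',y')]_K$ whenever $d_\gamma(x,y)>K\ge 2K_0$, with no discussion of borderline indices. Read literally, that displayed inequality fails when $d_\gamma(x,y)\ge K>d_\gamma(x',y')$ (the right-hand side is then $0$), so your identification of the borderline terms as the genuine subtlety is correct, and indeed sharper than the source.

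However, the patch you propose for this subtlety does not work. The threshold-change claim $\sum_\gamma[d_\gamma(x',y')]_{K/2}\sim_K\sum_\gamma[d_\gamma(x',y')]_K$, and the assertion that the natural map $\mathcal C_{K/2}(\mathbb A)\to\mathcal C_K(\mathbb A)$ is a quasi-isometry, are both false in general: in Proposition \ref{BBFDistanceThm} the space and the threshold are tied together, and since the left-hand side $d_{\mathcal C_K(\mathbb A)}$ itself changes with $K$, there is no single ambient metric approximated at all thresholds (unlike the Masur--Minsky distance formula, where such threshold-independence does hold). Concretely, let $Y$ be a tree, let $[x',y']$ be a geodesic of length $0.9nK$ subdivided into $n$ segments of length $0.9K$, and let $\gamma_i\in\mathbb A$ be bi-infinite lines extending these segments by rays hanging off at the subdivision points. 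All pairwise projections are single points, and $d_{\gamma_i}(x',y')=0.9K$ for every $i$, so the threshold-$K/2$ sum is $0.9nK$ while the threshold-$K$ sum vanishes; correspondingly, in the actual BBF construction $\gamma_1$ and $\gamma_n$ are joined by an edge of $\mathcal C_K(\mathbb A)$ (every intermediate projection is $0.9K\le K$), giving $d_{\mathcal C_K}(x',y')\le 2K+1$, whereas $\mathcal C_{K/2}(\mathbb A)$ is a chain and $d_{\mathcal C_{K/2}}(x',y')\asymp nK$. Since $n$ is arbitrary, no constant depending only on $K$ relates the two sums or the two spaces, so your third step rests on a false lemma. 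A correct repair must use the hypothesis $d(x,x'),d(y,y')\le R$ at the level of the space $\mathcal C_K(\mathbb A)$ rather than at the level of thresholds: coarse-Lipschitzness of shortest projections shows that $d_{\mathcal C_K}(x,x')$ and $d_{\mathcal C_K}(y,y')$ are bounded by $2R$ plus a constant depending only on $\delta$ and the quasi-line constants (travel along the quasi-line carrying $x$ to the foot of the edge towards the quasi-line carrying $x'$), whence $|d_{\mathcal C_K}(x,y)-d_{\mathcal C_K}(x',y')|$ is uniformly bounded and the desired formula follows by applying Proposition \ref{BBFDistanceThm} directly to the pair $(x',y')$. This is the geometric input your purely threshold-theoretic chain never invokes, and it is what rules out configurations (like the one above) in which unboundedly many projections sit in the window $[K/2,K)$ for one pair but not the other.
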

\begin{proof}
If $d(x, x'), d(y, y')\le R$ then there exists  a constant $K_0=K_0(R, \xi, \delta)$ such that  $|d_\gamma (x, y)-d_\gamma(x',y')|\le K_0$ for any $\gamma\in \mathbb A$.   Assuming $d_\gamma (x, y)>K\ge 2K_0$  then $d_\gamma (x',y')\ge K_0$ we see that $${1\over 2}[d_\gamma (x',y')]_K \le [d_\gamma (x, y)]_K\le 2 [d_\gamma (x',y')]_K$$
yielding the desired formula.
\end{proof}
  
\begin{defn}[Acylindrical action]\cite{Bow08}\cite{Osin}
Let $G$ be a group acting by isometries on a metric space $(X,d)$. The action of $G$ on $X$ is called {\it acylindrical} if for any $r \ge 0$, there exist constants $R, N \ge 0$ such that for any pair $a, b \in X$ with $d(a, b) \ge R$ then we have 
\[
\# \bigl \{  g \in G \,|\, d(ga, a) \le r \,\, \textup{and} \,\, d(gb, b) \le r \bigr \} \le N
\]
\end{defn}

By \cite{Bow08}, any nontrivial isometry of acylindrical group action on a hyperbolic space is either elliptic  or loxodromic.  A $(\lambda, c)$-quasi-geodesic $\gamma$ for some $\lambda, c>0$ is referred to as a \textit{quasi-axis} for a loxodromic element $g$, if $\gamma, g\gamma$ have (uniform) finite Hausdorff distance.

The following property in hyperbolic groups is probably known to experts, but  is referred to a more general result \cite[Lemma 2.14]{YANG}   since   we could not locate a precise statement as follows. A group is called \textit{non-elementary} if it is neither finite nor virtually cyclic.

\begin{lem}\label{ExtensionLem}
Let $H$ be a non-elementary   group admitting a co-bounded and acylindrical action on  a $\delta$--hyperbolic space $(\bar Y, d)$. Fix a basepoint $o$. Then there exist a set $F\subset H$ of three loxodromic elements and $\lambda, c>0$  with the following property. 

For any $h\in H$ there exists $f\in F$ so that $hf$ is a loxodromic element and the   bi-infinite path $$\gamma=\cup_{i\in \mathbb Z}(hf)^{i}([o, ho][ho, hfo])$$  is a $(\lambda, c)$--quasi-geodesic.
\end{lem}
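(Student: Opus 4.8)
The plan is to build the three elements of $F$ from loxodromic isometries with pairwise disjoint fixed-point pairs, to reduce the quasi-geodesic claim to two Gromov-product estimates at the two junction types of the periodic path $\gamma$, and to win by a pigeonhole argument over $F$. First I would construct $F$: since $H$ is non-elementary and acts acylindrically on the $\delta$--hyperbolic space $\bar Y$, every nontrivial isometry is elliptic or loxodromic and $H$ contains loxodromic elements with distinct pairs of fixed points on $\partial\bar Y$ \cite{Bow08}. Pick three loxodromic elements $g_1,g_2,g_3$ whose fixed-point pairs $\{g_j^+,g_j^-\}$ are mutually disjoint and set $f_j=g_j^{N}$ for a large power $N$. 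Taking $N$ large forces $f_j o$ close to $g_j^+$ and $f_j^{-1}o$ close to $g_j^-$; since the six boundary points are distinct, I can choose $N$ so that the three points $\{f_j o\}$ are pairwise well-separated (meaning $\langle f_j o,f_k o\rangle_o<B-\delta$ for $j\neq k$), the three points $\{f_j^{-1}o\}$ are pairwise well-separated, and $d(o,f_j o)$ is as large as desired. Here $B$ is a threshold depending only on $\delta$, fixed below.

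Second, I would reduce the quasi-geodesic property to two estimates. Writing $g:=hf$, one checks $g\gamma=\gamma$, and the vertices of $\gamma$ in order are $\ldots,g^{-1}ho,o,ho,go,gho,g^2o,\ldots$. Applying the isometry $g^{-i}$, every junction of $\gamma$ is a translate of the junction at $o$ or the junction at $ho$, and a direct computation (using $g^{-1}ho=f^{-1}o$ and $h^{-1}go=fo$) gives
\begin{equation*}
\langle g^{-1}ho,\,ho\rangle_o=\langle f^{-1}o,\,ho\rangle_o=:\mathrm{(A)},\qquad \langle o,\,go\rangle_{ho}=\langle h^{-1}o,\,fo\rangle_o=:\mathrm{(B)}.
\end{equation*}
I claim that if both $\mathrm{(A)}\le B$ and $\mathrm{(B)}\le B$, then $\gamma$ is a uniform quasi-geodesic. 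Indeed, $\mathrm{(B)}\le B$ yields $d(o,hfo)\ge d(o,ho)+d(o,fo)-2B$, so each period $[o,ho][ho,hfo]$ is a $(1,2B)$--quasi-geodesic whose endpoints are at distance at least $d(o,fo)-2B$, which I made large in the first step. Treating each period as a coarse geodesic segment (legitimate since replacing it by $[g^io,g^{i+1}o]$ costs at most $2B$), the bound $\mathrm{(A)}$ controls the coarse junction products $\langle g^{i-1}o,g^{i+1}o\rangle_{g^io}$, so the standard local-to-global principle for $\delta$--hyperbolic spaces shows $\gamma$ is a $(\lambda,c)$--quasi-geodesic with $\lambda,c$ depending only on $\delta$ and $B$. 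Since $g$ shifts $o$ to $go$ by the definite amount $d(o,go)\ge d(o,fo)-2B>0$ along the bi-infinite quasi-geodesic $\gamma$, the element $hf$ is loxodromic.

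Finally, the pigeonhole. Fix $h$ and seek $f\in F$ making both $\mathrm{(A)}$ and $\mathrm{(B)}$ small. If two distinct indices had $\langle h^{-1}o,f_j o\rangle_o>B$ and $\langle h^{-1}o,f_k o\rangle_o>B$, hyperbolicity would force $\langle f_j o,f_k o\rangle_o>B-\delta$, contradicting the separation of $\{f_j o\}$; hence $\mathrm{(B)}$ fails for at most one $f_j$. Symmetrically, using the separation of $\{f_j^{-1}o\}$ against the single direction $ho$, estimate $\mathrm{(A)}$ fails for at most one $f_j$. Thus at most two of $f_1,f_2,f_3$ are excluded, and at least one $f\in F$ satisfies $\mathrm{(A)}\le B$ and $\mathrm{(B)}\le B$; for this $f$, $hf$ is loxodromic and $\gamma$ is the desired $(\lambda,c)$--quasi-geodesic.

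The step I expect to be the main obstacle is the interface between the two families of constants: the junction bound $B$ must be fixed from $\delta$ first so that both the pigeonhole separation and the local-to-global principle can refer to it, while the lower bound on period length demanded by that principle must then be achieved by enlarging $N$ without disturbing the already-fixed separation of the six boundary directions. The resolution is the observation that passing to powers of the $g_j$ changes neither the fixed points nor the hyperbolicity constant while increasing translation length; making this precise is essentially the content imported from \cite[Lemma 2.14]{YANG}.
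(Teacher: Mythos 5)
There is a genuine gap in your Step~2, and it propagates to the pigeonhole: the claim ``if both $\mathrm{(A)}\le B$ and $\mathrm{(B)}\le B$ then $\gamma$ is a uniform quasi-geodesic'' is false when $d(o,ho)$ is small, because then both Gromov products are trivially small and carry no information about where the next period goes. Concretely, take $H=(\mathbb Z/2\ast\mathbb Z/2)\ast\mathbb Z=\langle a,b\mid a^2,b^2\rangle\ast\langle t\rangle$ acting on its Cayley graph (proper, cocompact, hence co-bounded and acylindrical, and $H$ is non-elementary), let $h=a$ and suppose $f=(ab)^N$ belongs to $F$ --- nothing in your construction of $F$ rules this out, since you only ask for three loxodromics with disjoint fixed-point pairs raised to high powers. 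Then $d(o,ho)=1$, so $\mathrm{(A)}=\langle f^{-1}o,ho\rangle_o\le 1\le B$ and $\mathrm{(B)}=\langle h^{-1}o,fo\rangle_o\le 1\le B$ hold automatically; yet $a(ab)^Na=(ab)^{-N}$ gives $(hf)^2=1$, so $hf$ has order two, $\gamma$ has bounded diameter, and your ``coarse junction product'' $\langle g^{-1}o,go\rangle_o$ equals $d(o,go)=2N+1$, which is huge. This is exactly the point where your period trick breaks: the bound on $\langle g^{i-1}o,g^{i+1}o\rangle_{g^io}$ does \emph{not} follow from $\mathrm{(A)}$ and $\mathrm{(B)}$ in the regime where the $h$-segment is short, and since both conditions can hold for \emph{all three} $f_j$ simultaneously, the pigeonhole gives no protection against picking an $f$ that $h$ inverts.

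What is missing is a third family of constraints ruling out the degenerate alignment of consecutive axes: one needs, say, that $h$ does not coarsely stabilize (in particular reverse) a quasi-axis of $f$, i.e.\ $h\notin E(f)$, equivalently that the consecutive contracting sets $h\,\mathrm{Ax}(f)$ and $gh\,\mathrm{Ax}(f)$ along $\gamma$ are distinct members of the translate system; and one must choose $F$ so that no single $h$ can be bad in this new sense for more than one $f_j$ (e.g.\ via the structure of $E(f_i)\cap E(f_j)$ for independent $f_i,f_j$, which is where the finiteness of double $E(g)$-cosets with large projection --- Proposition~\ref{FiniteDblCosetsProp}, i.e.\ \cite[Proposition 3.4]{BBF2} --- enters). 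This extra bookkeeping, together with the bounded-projection property of the whole system of translated axes rather than just the three base axes, is precisely the content the paper imports wholesale by citing \cite[Lemma 2.14]{YANG}, whose admissible-path machinery replaces your bare Gromov-product local-to-global argument. Separately, and more minorly: your separation requirement $\langle f_jo,f_ko\rangle_o<B-\delta$ with ``$B$ depending only on $\delta$'' has the quantifiers backwards, since $\langle g_j^No,g_k^No\rangle_o$ converges to a fixed positive constant as $N\to\infty$ and cannot be made small; the repair (fix the triple, let $B$ dominate its separation constant as well as $\delta$, then enlarge $N$) is routine and you essentially note it in your closing paragraph, but unlike that issue, the missing inversion condition cannot be patched without new ideas.
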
 
\begin{proof}[Sketch of the proof]
This follows from the  result \cite[Lemma 2.14]{YANG} which  applies to any isometric action of $H$ on a metric space with a set $F$ of three pairwise independent contracting elements (loxodromic elements in hyperbolic spaces). If   $\mathbb X$ denotes the set of $G$--translated quasi-axis of all elements in $F$, the pairwise independence  condition is equivalent (defined) to be the  bounded projection property of $\mathbb X$. Thus, the existence of such $F$ is clear in a proper action of a non-elementary group. For acylindrical actions, this is also well-known, see \cite[Proposition 3.4]{BBF2} recalled  below.   
\end{proof}

\begin{prop}\cite{BBF2}
\label{FiniteDblCosetsProp}
Assume that a hyperbolic group $H$ acts acylindrically on a hyperbolic space $\bar Y$. For a loxodromic element $g\in H$, consider    the set $\mathbb A$ of all $H$-translates of a fixed $(\lambda, c)$-quasi-axis of $g$ for given $\lambda, c>0$.  Then there exist 
$\theta =\theta(\lambda, c)> 0$ and $N =N(\lambda, c)> 0$ such that for any $\gamma\in \mathbb A$, the set $$\{h\in G: diam(\pi_\gamma(h\gamma))\ge  \theta\}$$ consists of at most $N$     double $E(g)$-cosets.

In particular, there are at most $N$ distinct pairs $(\gamma, \gamma')\in  \mathbb A\times   \mathbb A$  satisfying  $diam(\pi_\gamma(\gamma'))> \theta$ up to the action of $H$.
\end{prop}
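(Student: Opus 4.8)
The plan is to realize $E := E(g)$ as the coarse stabilizer of the quasi-axis $\gamma$ and then to reduce the whole statement to a counting problem that is resolved by acylindricity. Since the action is acylindrical and $g$ is loxodromic, $E(g)=\{h\in H: \text{$h\gamma$ and $\gamma$ have finite Hausdorff distance}\}$ is virtually cyclic, contains $\langle g\rangle$ with finite index, and coarsely stabilizes $\gamma$; hence $\mathbb A$ is in bijection with $H/E$ via $hE\mapsto h\gamma$, and $\Stab(\gamma)=E$ coarsely. The first thing I would record is that $\diam(\pi_\gamma(h\gamma))$ is coarsely invariant under $h\mapsto e_1 h e_2$ for $e_1,e_2\in E$: the factor $e_2$ moves $\gamma$ only a bounded Hausdorff distance, so $he_2\gamma$ stays uniformly close to $h\gamma$, while $e_1$ is an isometry coarsely preserving $\gamma$ and therefore changing the projection by a bounded amount. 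After enlarging $\theta$ by a universal constant it follows that $S:=\{h:\diam(\pi_\gamma(h\gamma))\ge\theta\}$ is a union of double cosets $EhE$, so it suffices to bound the number of double cosets meeting $S$.

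For the geometric input I would use that in a $\delta$--hyperbolic space a large projection forces long fellow-travelling: there is $r=r(\delta,\lambda,c)$ so that $\diam(\pi_\gamma(h\gamma))\ge\theta$ forces a subsegment of $\gamma$ of length comparable to $\theta$ to lie in $\nbd{h\gamma}{r}$, with matching orientation. Using that $g\in E$ translates along $\gamma$, I normalize the double-coset representative $h$ so that this fellow-travelling subsegment starts near a fixed basepoint $o$. I then fix once and for all two points $a,b\in\gamma$ with $a$ near $o$ and $b=g^m a$, where $m$ is chosen so that $d(a,b)\ge R$ for the acylindricity constant $R$ below, and I take $\theta$ large enough that every normalized representative $h\gamma$ passes within $r$ of the whole segment of $\gamma$ running from $a$ to $g^m b$.

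The main step converts this into an acylindricity count. For each normalized $h$, set $w_h:=g^{-m}\,h\,g^{m}\,h^{-1}=g^{-m}(hg^m h^{-1})$ (replacing the sign of $m$ if the fellow-travelling is orientation-reversing). Because $h\gamma$ is a $(\lambda,c)$--quasi-axis of $hgh^{-1}$ that fellow-travels $\gamma$ over the relevant range, the element $hg^m h^{-1}$ coarsely translates the points of $\gamma$ near that range in the same way $g^m$ does, so it sends $a$ within a uniform distance of $g^m a=b$ and $b$ within a uniform distance of $g^m b$; composing with $g^{-m}$ gives $d(w_h a,a)\le r'$ and $d(w_h b,b)\le r'$ for a uniform $r'=r'(\delta,\lambda,c)$. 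Taking $R=R(r')$ and $N=N(r')$ from the definition of acylindricity applied to the pair $a,b$, there are at most $N$ possible values of $w_h$. Finally $h\mapsto w_h$ is injective on right cosets $hE$: if $w_h=w_{h'}$ then $hg^m h^{-1}=h'g^m h'^{-1}$, so $h'^{-1}h$ centralizes $g^m$ and hence lies in $E(g^m)=E$, giving $hE=h'E$ and $h\gamma=h'\gamma$. Thus the normalized representatives yield at most $N$ distinct quasi-lines, whence $S$ meets at most $N$ double cosets. The ``in particular'' is then immediate: since $H$ is transitive on $\mathbb A$ with $\Stab(\gamma)=E$, pairs $(\gamma',\gamma'')$ with $\diam(\pi_{\gamma'}(\gamma''))>\theta$ considered up to $H$ correspond to the $E$-orbits of $\{h\gamma:h\in S\}$, i.e.\ to the double cosets $EhE$ meeting $S$, of which there are at most $N$.

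The step I expect to be the main obstacle is the uniformity of constants in the chain ``large projection $\Rightarrow$ long fellow-travelling $\Rightarrow$ $w_h$ coarsely fixes $a$ and $b$'': one must run the bounded-geodesic-image and Morse estimates for the $(\lambda,c)$--quasi-axes so that $r$ and $r'$ depend only on $\delta,\lambda,c$ and do \emph{not} grow with $m$, and one must track the orientation of the fellow-travelling in order to choose the sign of the power of $g$ that makes $w_h$ coarsely fix, rather than translate, the points $a$ and $b$. Once $E(g)$ is identified as the coarse stabilizer of $\gamma$, the double-coset invariance and the injectivity of $h\mapsto w_h$ are purely formal.
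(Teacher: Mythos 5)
The paper offers no proof of this proposition---it is imported verbatim from \cite{BBF2}---and your argument correctly reconstructs the standard acylindricity proof given there: normalize a double-coset representative by left-multiplying with powers of $g$ so the long fellow-travelling overlap sits at a fixed basepoint, form the element $g^{-m}hg^{m}h^{-1}$ which coarsely fixes the two far-apart points $a$ and $b=g^{m}a$, invoke acylindricity to bound the number of such elements, and use that the centralizer of $g^{m}$ lies in $E(g)$ to get injectivity on cosets. The only loose ends are bookkeeping, not gaps: the orientation-reversing case should be counted separately (giving $2N$ rather than $N$, absorbed into the constant), and the bijection $\mathbb A \leftrightarrow H/E(g)$ is really with $H$ modulo the setwise stabilizer of $\gamma$, which contains $\langle g\rangle$ with finite index in $E(g)$ and so again only perturbs constants.
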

The following corollary can be derived from the ``in particular" statement using hyperbolicity. 
\begin{cor}\label{LocSegmFiniteCor}
Under the assumption of Proposition \ref{FiniteDblCosetsProp}, for any $R>0$, there exist  constants $\theta=\theta(\lambda, c, R), N=N(\lambda, c, R)>0$ such that for any geodesic segment $p$ of length $\theta$, we have
$$
\sharp \{\gamma \in \mathbb A: p\subset N_R(\gamma)\}\le N.
$$
\end{cor}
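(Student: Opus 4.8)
The plan is to convert the geometric condition ``$p$ lies in the $R$--neighborhood of $\gamma$'' into the projection condition governed by Proposition~\ref{FiniteDblCosetsProp}. The point is that if a long geodesic segment $p$ is $R$--close to two distinct quasi-lines $\gamma,\gamma'\in\mathbb{A}$, then $\gamma$ and $\gamma'$ must fellow-travel along $p$, and in a $\delta$--hyperbolic space this forces $\diam(\pi_\gamma(\gamma'))$ to be large. Feeding this into the double-coset finiteness of Proposition~\ref{FiniteDblCosetsProp} should then bound the set $S:=\{\gamma\in\mathbb{A}: p\subset N_R(\gamma)\}$.

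First I would record the elementary hyperbolic estimate: there is $C_1=C_1(\lambda,c,\delta,R)$ such that whenever a geodesic $p$ of length $\theta$, with endpoints $p_-,p_+$, satisfies $p\subset N_R(\gamma)\cap N_R(\gamma')$ with $\gamma\ne\gamma'$, then $\diam(\pi_\gamma(\gamma'))\ge \theta-C_1$. Indeed, choosing $u_\pm\in\gamma$ and $w_\pm\in\gamma'$ within $R$ of $p_\pm$, one has $d(u_-,u_+)\ge\theta-2R$, and since each $w_\pm$ lies within $2R$ of the point $u_\pm\in\gamma$, the nearest-point projections $\pi_\gamma(w_\pm)$ lie within $2R+O(\delta)$ of $u_\pm$ (using that quasi-lines are quasiconvex/Morse in a $\delta$--hyperbolic space); as $\pi_\gamma(w_\pm)\subset\pi_\gamma(\gamma')$ this gives the bound. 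Letting $\theta_*$ be the constant $\theta$ furnished by Proposition~\ref{FiniteDblCosetsProp} for the given $\lambda,c$, I set $\theta:=\theta_*+C_1+1$, so that any two distinct members of $S$ have mutual projection exceeding $\theta_*$.

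It then remains to bound $\sharp S$ in absolute terms, and this is where the double-coset statement of Proposition~\ref{FiniteDblCosetsProp} is used. Fix $\gamma_1\in S$ and write $\gamma_1=h_1\gamma_0$, where $\mathbb{A}=H\gamma_0$. For $\gamma=h\gamma_0\in S$, equivariance of the projections gives $\diam(\pi_{\gamma_0}(h_1^{-1}h\,\gamma_0))=\diam(\pi_{\gamma_1}(\gamma))>\theta_*$, so $h_1^{-1}h$ lies in one of at most $N_0$ double cosets $E(g)s_iE(g)$. Since $b\gamma_0=\gamma_0$ for $b\in E(g)=\Stab(\gamma_0)$, this yields $h_1^{-1}\gamma=a\,s_i\gamma_0$ for some $a\in E(g)$. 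Translating by $h_1^{-1}$ (which carries $S$ to $S^\flat=\{\beta: h_1^{-1}p\subset N_R(\beta)\}\ni\gamma_0$), I am reduced to bounding, for each fixed $i$, the number of $a\in E(g)$ with $a\,s_i\gamma_0\in S^\flat$.

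The main obstacle is precisely this last count, which requires the structure of $E(g)$ together with the geometry of two translated axes. Here $E(g)$ is the maximal elementary subgroup containing $g$, hence virtually cyclic, with $g$ translating $\gamma_0$ by some $\tau>0$. The condition $a\,s_i\gamma_0\in S^\flat$ reads $a^{-1}(h_1^{-1}p)\subset N_R(s_i\gamma_0)$, and since also $a^{-1}(h_1^{-1}p)\subset N_R(a^{-1}\gamma_0)=N_R(\gamma_0)$, the translate $a^{-1}(h_1^{-1}p)$ lies in the set of points $R$--close to \emph{both} $\gamma_0$ and $s_i\gamma_0$. The crucial fact is that this coincidence region is bounded, of diameter at most some $D_i$: because $s_i\notin E(g)$, the axes of $g$ and $s_igs_i^{-1}$ share no ideal endpoint (in an acylindrical action two loxodromics cannot share exactly one fixed point, while sharing both would force $s_i\in E(g)$), so by hyperbolicity their $R$--neighborhoods meet in a bounded set. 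Taking $D_*=\max_i D_i$ over the finitely many $i$, each qualifying $a^{-1}(h_1^{-1}p)$ sits in a region of diameter $\le D_*$ near $\gamma_0$; as the finite-index subgroup $\langle g\rangle\le E(g)$ translates this fixed-length segment along $\gamma_0$ in steps of $\tau$, only $O((D_*+\theta)/\tau)\cdot[E(g):\langle g\rangle]$ values of $a$ can occur. Summing over $i=1,\dots,N_0$ and adding $1$ for $\gamma_0$ itself gives the desired $N=N(\lambda,c,R)$ with $\sharp S\le N$. I expect the only genuinely delicate points to be confirming the no-shared-endpoint property (where acylindricity, already packaged into Proposition~\ref{FiniteDblCosetsProp}, re-enters) and keeping the translation count uniform over the finitely many $s_i$; both become routine once boundedness of the coincidence region is established.
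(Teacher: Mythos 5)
Your proposal is correct, and it supplies in full the derivation that the paper itself leaves as a one-line remark: the paper states only that the corollary ``can be derived from the `in particular' statement using hyperbolicity,'' with no written proof. Your route is the natural implementation of that hint, with one mild variation: rather than invoking the ``in particular'' form (finitely many pairs $(\gamma,\gamma')$ with large mutual projection up to the $H$-action) and then counting the elements $t\in H$ carrying one such pair containing $p$ to another via acylindricity, you work directly from the double-coset form, fix $\gamma_1\in S$, and count within each double coset $E(g)s_iE(g)$ by combining boundedness of the coincidence region $N_R(\gamma_0)\cap N_R(s_i\gamma_0)$ with the translation action of $\langle g\rangle$ along the axis. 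Both counts rest on the same two inputs — Proposition~\ref{FiniteDblCosetsProp} and the hyperbolicity estimate converting $p\subset N_R(\gamma)\cap N_R(\gamma')$ into $\diam(\pi_\gamma(\gamma'))\ge \theta - C_1$ — so the approaches are essentially equivalent; yours is somewhat more explicit at the cost of needing the structure of $E(g)$. Two small points of hygiene: the identification $E(g)=\Stab(\gamma_0)$ with $b\gamma_0=\gamma_0$ exactly is an overstatement — in general $E(g)$ only preserves the quasi-axis up to uniformly bounded Hausdorff distance (or one chooses the axis $E(g)$-invariant), but this only enlarges $R$ by a fixed constant and does not affect the count; and since the underlying hyperbolic space need not be proper in the intended application (coned-off spaces), boundedness of the coincidence region should be justified, as you do, by the absence of shared ideal endpoints via acylindricity rather than by any compactness, and your argument there is sound. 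Finally, note that your constants $D_*$, $\tau$, $[E(g):\langle g\rangle]$ depend on $g$ and the action, not merely on $(\lambda,c,R)$; this matches the paper's convention, since the constants in Proposition~\ref{FiniteDblCosetsProp} carry the same suppressed dependence.
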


\begin{conv}\label{ConvQuasiLine}
When speaking of quasi-lines in hyperbolic spaces with actions satisfying Lemma \ref{ExtensionLem},   we always mean  $(\lambda, c)$--quasi-geodesics where  $\lambda, c>0$  depend on $F$ and $\delta$.
\end{conv}

\begin{lem} \label{YDistFormulaLem}
Let $H$ be a non-elementary   group admitting a proper and cocompact action on  a $\delta$--hyperbolic space $(\bar Y, d)$. Assume that $\mathbb L$ is a $H$--finite collection of quasi-lines.  Then for any sufficiently large $K>0$,  there exist  a  $H$--finite collection of quasi-lines $\mathbb L\subset \mathbb A$    in $\bar Y$  and a constant $N=N(K, \delta, \mathbb A)>0$, such that  for any $x, y\in \bar Y$, the following holds
$$
\frac{1}{N} \sum_{\gamma\in \mathbb A} 
[d_\gamma(x, y)]_K\le d(x, y) \le   2\sum_{\gamma\in \mathbb A } 
[d_\gamma(x, y)]_K + 2K. 
$$
\end{lem}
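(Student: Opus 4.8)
The plan is to build the collection $\mathbb A$ as a finite union of $H$--orbits of quasi-axes, apply the distance formula of Proposition~\ref{BBFDistanceThm}, and then argue that the resulting quasi-tree distance is comparable to the metric $d$ on $\bar Y$. First I would use Lemma~\ref{ExtensionLem} to extract the finite set $F$ of three loxodromic elements with the associated $(\lambda,c)$--quasi-axis constants. The key geometric input from Lemma~\ref{ExtensionLem} is that \emph{every} $h \in H$ can be "completed" to a loxodromic element $hf$ (for some $f \in F$) whose bi-infinite quasi-axis $\gamma_h$ passes uniformly close to the concatenation $[o,ho][ho,hfo]$. I would let $\mathbb A$ be the union of the $H$--orbits of the quasi-axes of the (finitely many) elements in $F$, together with the given collection $\mathbb L$; since $F$ is finite and $\mathbb L$ is $H$--finite, $\mathbb A$ is $H$--finite, and by construction $\mathbb L \subset \mathbb A$.

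The upper bound $d(x,y) \le 2\sum_{\gamma\in\mathbb A}[d_\gamma(x,y)]_K + 2K$ is the easier half and is essentially formal: a shortest-point projection to any quasi-line is $1$--Lipschitz up to additive error, so for a single term the inequality $[d_\gamma(x,y)]_K \le d(x,y)$ holds, and more carefully one selects one quasi-line $\gamma$ (the quasi-axis $\gamma_h$ for the element $h$ carrying $x$ to $y$, i.e. with $ho \approx y$) along which the projection distance $d_\gamma(x,y)$ already recovers $d(x,y)$ up to the additive constant $K$. Concretely, taking $x = o$ and $y = ho$, the quasi-geodesic property of $\gamma_h$ forces $d_{\gamma_h}(o,ho) \ge d(o,ho)/2 - K$ for the chosen threshold, which rearranges to the stated upper bound. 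By $H$--equivariance this passes to arbitrary $x,y$ after translating one endpoint to the basepoint $o$.

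For the lower bound $\frac1N \sum_{\gamma}[d_\gamma(x,y)]_K \le d(x,y)$, I would invoke Proposition~\ref{BBFDistanceThm}: once $\mathbb A$ has bounded projections, the sum $\sum_\gamma [d_\gamma(x,y)]_K$ is comparable to the quasi-tree distance $d_{\mathcal C_K(\mathbb A)}(x,y)$, and the orbit map $\bar Y \to \mathcal C_K(\mathbb A)$ (sending a point to its image under projections) is coarsely Lipschitz, giving $d_{\mathcal C_K(\mathbb A)}(x,y) \preceq d(x,y)$ with a multiplicative constant depending on $K,\delta$ and the geometry of $\mathbb A$. The crucial hypothesis of Proposition~\ref{BBFDistanceThm} — that $\mathrm{diam}(\pi_\beta(\alpha))$ is uniformly bounded over distinct $\alpha,\beta\in\mathbb A$ — is exactly what Proposition~\ref{FiniteDblCosetsProp} supplies: each $H$--orbit of a quasi-axis has the bounded-projection property with constant $\theta(\lambda,c)$, and finitely many orbits give a uniform bound $\theta$. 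Since $\mathbb A$ is assembled from finitely many such orbits, I would need to also control cross-projections \emph{between different orbits}, which follows from the pairwise independence built into the choice of $F$ in Lemma~\ref{ExtensionLem}.

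The main obstacle I anticipate is the lower bound, specifically proving that $d$ is bounded \emph{above} by a multiple of the projection sum — i.e. that enough quasi-lines in $\mathbb A$ "see" the separation between $x$ and $y$. The point is that along a geodesic $[x,y]$ in $\bar Y$, one must exhibit quasi-lines $\gamma \in \mathbb A$ whose projections accumulate at least a definite fraction of $d(x,y)$; this is where the completeness property of Lemma~\ref{ExtensionLem} (every element completes to a loxodromic with a quasi-axis through it) does the real work, ensuring the orbit $\mathbb A$ is "dense enough" to detect distance. Properness and cocompactness of the $H$--action guarantee the local finiteness needed to keep the constant $N$ finite and to make the orbit map coarsely surjective onto $\mathcal C_K(\mathbb A)$, so that the comparison with the distance formula is two-sided.
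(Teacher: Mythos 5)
There is a genuine gap, and it sits exactly where you declared the work ``essentially formal.'' In the inequality $d(x,y)\le 2\sum_\gamma[d_\gamma(x,y)]_K+2K$ you propose to take $x=o$, $y=ho$ and use the quasi-axis $\gamma_h$ of the loxodromic completion $hf$ from Lemma~\ref{ExtensionLem}. But $\gamma_h$ is \emph{not} in your collection $\mathbb A$: you defined $\mathbb A$ as the $H$--orbits of the axes of the three elements of $F$ (plus $\mathbb L$), and an $H$--translate of the axis of $f$ is the axis of a conjugate $gfg^{-1}$, not of a product $hf$. Enlarging $\mathbb A$ to contain $\gamma_h$ for every $h\in H$ destroys $H$--finiteness, since these axes fall into infinitely many orbits. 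This forced trade-off is precisely what the paper's proof is designed to resolve: it fixes the scale first, setting $\tilde K=K+C$ and $S=\{h\in H: |d(o,ho)-\tilde K|\le 2R\}$ (finite by properness), and takes $\mathbb A$ to be the $H$--translates of the axes of $hf$ for $h\in S$, $f\in F$. The upper bound is then proved not by one quasi-line but by a chain argument: subdivide a geodesic $\alpha=[x,y]$ at points $x_i$ with $d(x_i,x_{i+1})=\tilde K$, approximate $x_i$ by orbit points $h_io$ with $h_i^{-1}h_{i+1}\in S$, and observe that each $[h_io,h_{i+1}o]$ lies on a translated axis $\gamma_i\in\mathbb A$, so that $\alpha$ is covered by the sets $N_R(\gamma_i)\cap\alpha$, each of diameter $\ge\tilde K$ --- large enough to survive the cutoff $[\cdot]_K$ (this is why $S$ must be adapted to $K$). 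For an $H$--finite family no single quasi-line passes near two generic far-apart points, so some such covering argument is unavoidable; your labelling of this direction as the easy one inverts the actual difficulty.

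Your lower bound also rests on an inapplicable tool. Proposition~\ref{BBFDistanceThm} requires $\mathrm{diam}(\pi_\beta(\alpha))\le\theta$ for \emph{all} distinct pairs in $\mathbb A$, and this fails for the crowded union $\mathbb A=\cup_v$ of orbits: Proposition~\ref{FiniteDblCosetsProp} only bounds the number of offending pairs up to the action, it does not exclude them, and the paper says explicitly (in the introduction, and again via Lemma~\ref{lem:partitionofA}) that $\mathbb A$ may fail the projection axioms and must later be \emph{partitioned} into sparse subfamilies before quasi-trees can be built. The paper's proof of the lower bound therefore never passes through $\mathcal C_K(\mathbb A)$ at all: it is a direct multiplicity count. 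Bounded intersection of the $H$--finite family gives a uniform overlap bound $D$, Corollary~\ref{LocSegmFiniteCor} shows each subsegment of $\alpha$ of length $\theta$ lies in the $R$--neighborhood of at most $N_0$ quasi-lines with large projection, and combining these yields $N\cdot \mathrm{Len}(\alpha)\ge\sum_{\gamma\in\mathbb B}\mathrm{diam}(N_R(\gamma)\cap\alpha)\ge\frac12\sum_{\gamma}[d_\gamma(x,y)]_K$. (Your cross-orbit independence remark misses the point: the trouble is within a single orbit, where nearby translates can have large mutual projections.) So both halves of your argument need to be replaced: the upper bound by the scale-adapted chain construction, and the lower bound by the covering-multiplicity estimate.
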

\begin{rem}
The statement of Lemma~\ref{YDistFormulaLem}  (with torsion allowed here) is a re-package of Proposition 3.3 and Theorem 3.5 in \cite{BBF2}. Our proof is different and generalizes to certain co-bounded and acylindrical actions coming from relatively hyperbolic groups on their relative Cayley graphs. See  Lemma \ref{ConeofYDistFormulaLem} for details.  
\end{rem}
\begin{proof}   
Fixing a point $o\in \bar Y$,  the co-bounded action of $H$   on $(\bar{Y}, d)$     gives   a constant $R>0$ such that $N_R(Ho)=\bar Y$. By hyperbolicity, if $\gamma$ is a $(\lambda, c)$--quasi-geodesic, then there exists a constant $C=C(\lambda, c, R)>0$ such that   $diam([x,y]\cap N_R(\gamma))>C$ implies 
\begin{equation}\label{ProjBdEQ}
|d_\gamma(x, y)-diam([x,y]\cap N_R(\gamma))|\le C.
\end{equation}

Let $\theta=\theta(R), N_0=N_0(R)$ be the constant given by Corollary \ref{LocSegmFiniteCor} for the geometric (so acylindrical)  action of $H$ on $\bar Y$. 
 
Fix {$K> 2C+2\theta$} and denote $\tilde K=K+C$.  Since the action is proper, the set  $S=\{h\in H: |d(o, ho)- \tilde K|\le 2R\}$  is finite.  Let us consider the set $\tilde S$ of loxodromic elements $hf$ where $h\in S$ and $f\in F$ is provided by Lemma \ref{ExtensionLem}. Note that $\sharp \tilde S=\sharp S$.  Let $\mathbb A$ be the set of all $H$--translated axis of $hf\in \tilde S$. It is possible that $\sharp \mathbb A/H\le \sharp \tilde S$ since two elements in $\tilde S$ may be conjugate.  

Assume that $d(x,y)> \tilde K$. Consider a geodesic $\alpha$ from $x$ to $y$ and choose points $x_i$ on $\alpha$ for $0\le i\le n+1$ such that $d(x_i, x_{i+1})= \tilde K$ for $0\le i \le n-1$ and $d(x_{n},x_{n+1})\le  \tilde K$ where $x_0=x, x_{n+1}=y$. Since $N_R(Ho)=\bar Y$,  there exists $h_i\in H$ so that $d(x_i, h_io)\le R$.  It implies that $\tilde K- 2R \le d(o, h_{i}^{-1}h_{i+1}o) \le  \tilde K+2R$, and thus we have $h_i^{-1}h_{i+1}\in S$ for $0\le i\le n-1$. Noting that $[h_io,h_{i+1}o]$ is contained in a $H$-translated axis of some loxodromic element  in $\tilde S$, we thus obtain $n$ axis  $\gamma_0, \cdots, \gamma_{n-1} \in \mathbb A$ (with possible multiplicities: $\gamma_i=\gamma_j$ for $i\ne j$)  satisfying $diam(N_R(\gamma_i)\cap \alpha)\ge \tilde K$  so that $$\alpha\setminus [x_n, x_{n+1}]\subset  \bigcup_{0\le i\le n-1} N_R(\gamma_i)\cap \alpha$$ which yields
$$ 
Len(\alpha) \le \sum_{0\le i\le n-1} diam(N_R(\gamma_i)\cap \alpha)+ \tilde K    
$$ 
where the constant $\tilde K$ bounds the length of the last segment $[x_{n}, x_{n+1}]$.  

By Equation (\ref{ProjBdEQ}),   $d_{\gamma_i}(x,y)\ge diam(N_R(\gamma_i)\cap \alpha)-C \ge K>2C$ and then $diam(N_{R}(\gamma_i) \cap \alpha) \le d_{\gamma_i}(x, y) + C \le 2d_{\gamma_i}(x,y)$. Thus,  we obtain
$$
Len(\alpha) \le  \sum_{0\le i\le n-1}  2 [d_{\gamma_i}(x,y)]_K +\tilde K,
$$
which implies the upper bound over $\gamma \in \mathbb A$. Of course, the upper bound holds as well  after adjoining $\mathbb L$ into $\mathbb A$.

The remainder of the proof is to prove the lower bound. Let $\mathbb B$   be the set  of quasi-lines $\gamma \in \mathbb A\cup \mathbb L$ satisfying   $d_{\gamma}(x,y) > K=2\theta+2C$. Note that the set of axis $\gamma_0,\cdots,  \gamma_{n-1}$ obtained as above is included  into $\mathbb B$.

By the proper action of $H$ on $\bar Y$, the $H$--finite $\mathbb L$ has bounded intersection so does  $\mathbb L\cup \mathbb A$. Thus, there is  $D=D(\mathbb A, \mathbb L, \tilde K, R)>0$ so that $diam(N_R(\gamma)\cap N_R(\gamma'))< D$ for any $\gamma\ne \gamma'\in \mathbb A \cup \mathbb L$.   In particular, different $N_{R}(\gamma) \cap \alpha$'s have overlaps bounded above by $D$.


By Eq. (\ref{ProjBdEQ}), we obtain  $diam(N_{R}(\gamma) \cap \alpha) \ge d_{\gamma}(x, y) - C \ge {1\over 2} d_{\gamma}(x,y) \ge \theta$.  As mentioned-above in the second paragraph, by Corollary \ref{LocSegmFiniteCor},  any segment of length $\theta$ is covered at most $N_0$ times by  the   $R$-neighborhood of  quasi-lines in $\mathbb B$.  Thus, there exists a constant $N=N(D, N_0)>0$ such that
$$
N\cdot Len(\alpha) \ge \sum_{\gamma\in \mathbb B} diam(N_R(\gamma)\cap \alpha) \ge {1\over 2}\sum_{\gamma\in \mathbb B} [d_{\gamma}(x,y)]_K.   
$$
The proof is completed by renaming  $\mathbb A:= \mathbb A\cup  \mathbb L.$
\end{proof}

\subsection{Q.I. embedding into a finite product of trees}  
\label{subsection:qieproducttrees}
This subsection is devoted to the proof of Theorem \ref{thm:main3}. The results obtained here are not used in other places, and so can be skipped if the reader is interested in the stronger conclusion, the property (QT), under stronger assumption. 

We start by explaining the choice of the constants and the collection of quasi-lines $\mathbb A$ in $\mathcal{X}_1$ that will be used in the rest of this subsection. 

{\bf The constants $D$ and $\theta$ and $\xi = \xi(\theta)$:} Let $\mathcal{X}_1$ and $\mathcal{X}_2$ be two $\delta$--hyperbolic spaces given by Lemma~\ref{ActOnHypspaceLem} where $\delta>0$ depends on the hyperbolicity constants of $\bar Y_v$.

Note that each $\bar Y_v$ for $v\in \mathcal V_1$ is isometrically embedded into $\mathcal X_1$  and thus $\delta$--hyperbolic. We follow the  Convention \ref{ConvQuasiLine} on the quasi-lines which are $(\lambda, c)$--quasi-geodesics in $\bar Y_v$ and $\mathcal X_1$. 

By the $\delta$--hyperbolicity of $\mathcal X_1$, there exist constants $D, \theta>0$ depending on $\delta$   (and also $\lambda, c$) such that if any ($(\lambda,c)$--)quasi-lines  $\alpha\ne \beta$ have a distance at least $D$ then $diam (\pi_\beta (\alpha)) \le \theta$.

We then obtain the projection constant $\xi=\xi(\theta)$   by Proposition \ref{BBFDistanceThm}. 

{\bf The collection of quasi-lines $\mathbb A$ in $\mathcal{X}_1$:}
Fix any sufficiently large number  $K>\max\{4\xi, \theta, 2\}$ depending on $\mathbb L_v$, where $\mathbb L_v$ is the collection of boundary lines of $\bar Y_v$.  By Lemma \ref{YDistFormulaLem}, there exist  a locally finite collection of quasi-lines $\mathbb L_v\subset \mathbb A_v$    in $\bar Y_v$ and a constant $N=N(K, \mathbb A_v, \delta)>0$ such that 
\begin{equation}\label{YvDistFormulaEQ}
\frac{1}{N}\sum_{\gamma\in \mathbb A_v } 
[d_\gamma(x, y)]_K\le d_{\bar Y_v} (x, y) \le   2\sum_{\gamma\in \mathbb A_v } 
[d_\gamma(x, y)]_K + 2K
\end{equation}
for any $x, y\in \bar Y_v$.
 Since there are only finitely many $\dot G$--orbits of $(H_v, \bar Y_v)$ we assume furthermore $\mathbb A_w=g\mathbb A_v$ if $w=gv$  for $g\in \dot G$. Then $$\mathbb A := \cup_{v\in\mathcal  V_1} \mathbb A_v$$ is a locally finite collection of quasi-lines in $\mathcal X_1$, preserved by the group $\dot G$. 
 
We use the following lemma in the proof of Proposition~\ref{prop:distanceformulaX_1} that gives us a distance formula for $\mathcal{X}_1$.

\begin{lem}
\label{lem:Claim1}
There exists a  constant $L>0$ depending only on $K$ with the following properties. 
\begin{enumerate}
\item
For any $\ell\ne \gamma\in \mathbb A$, we have $diam(\pi_\ell(\gamma))\le L$.
\item
For any $v\in \mathcal V_1$ and     $x, y\in \bar Y_v$,  there are at most $L$ quasi-lines $\gamma$ in $\mathbb A \setminus  \mathbb A_v$  such that  $L\ge d_\gamma(x, y)\ge K$.
\end{enumerate}
\end{lem}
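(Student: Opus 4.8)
The plan is to prove the two parts separately, both as consequences of the hyperbolicity of $\mathcal X_1$ together with the bounded projection setup already established for $\mathbb A$.

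\textbf{Part (1).} I would first reduce to the two cases determined by whether $\ell$ and $\gamma$ lie in the same piece $\bar Y_v$. If $\ell,\gamma\in\mathbb A_v$ for a common $v\in\mathcal V_1$, then both are $(\lambda,c)$--quasi-geodesics in the $\delta$--hyperbolic space $\bar Y_v$; since they are distinct quasi-lines coming from the $H_v$--finite family $\mathbb A_v$, Proposition~\ref{FiniteDblCosetsProp} (applied to the acylindrical action of $H_v$ on $\bar Y_v$) bounds $\diam(\pi_\ell(\gamma))$ by a constant $\theta$ depending only on $\lambda,c$. If instead $\ell$ and $\gamma$ lie in distinct pieces $\bar Y_v$ and $\bar Y_{v'}$ with $v\ne v'$, then by the tree-like structure of $\mathcal X_1$ established in Section~\ref{subsection:flipactions} their $\bar Y$--supports are separated: any geodesic in $\mathcal X_1$ from $\gamma$ to $\ell$ must pass through a flat link $Fl(w)$, so the boundary lines and the convexity observation in the proof of Proposition~\ref{prop:qiehyperbolicspaces} force $\pi_\ell(\gamma)$ to be trapped near the single boundary line $\bar Y_v\cap F_e$ through which geodesics enter $\bar Y_v$. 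Thus $\diam(\pi_\ell(\gamma))$ is uniformly bounded. Taking $L$ to be the maximum of the two bounds gives the claim. Since these bounds depend only on $\delta,\lambda,c$ and hence on $K$, the constant $L=L(K)$ is as required.

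\textbf{Part (2).} Fix $v\in\mathcal V_1$ and $x,y\in\bar Y_v$, and consider a quasi-line $\gamma\in\mathbb A\setminus\mathbb A_v$ with $d_\gamma(x,y)\ge K$. I would argue that such a $\gamma$ must live in a piece $\bar Y_{v'}$ whose support is close to $\bar Y_v$ in the tree: if $d_\gamma(x,y)$ is large then a geodesic $[x,y]$ in $\mathcal X_1$ comes within a bounded distance of $\gamma$, but $[x,y]\subset\bar Y_v$ by convexity, so $\gamma$ must approach $\bar Y_v$, and by the tree-like separation this is only possible for $v'\in Lk(w)$ for some $w$ adjacent to $v$ — in fact $\gamma$ must project close to one of the boundary lines $\ell\in\mathbb L_v$. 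The key point is then that the projection of $\gamma$ to $\bar Y_v$ is coarsely contained in a single boundary line, so $d_\gamma(x,y)$ is controlled by $d_\ell(x,y)$ for that boundary line $\ell$. Imposing the upper bound $d_\gamma(x,y)\le L$ restricts attention to a bounded segment near $\ell$, and Corollary~\ref{LocSegmFiniteCor} then bounds the number of such quasi-lines $\gamma$ crossing that segment. Enlarging $L$ if necessary to absorb these finitely many bounds yields the stated count.

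\textbf{Main obstacle.} The routine part is the same-piece case of (1), which is essentially Proposition~\ref{FiniteDblCosetsProp}. The genuinely delicate step is making the ``tree-like separation'' and ``projects near a single boundary line'' assertions precise and uniform: I must verify that when $\gamma$ lives in a piece $\bar Y_{v'}$ different from $\bar Y_v$, its shortest-point projection to $\gamma$ of points of $\bar Y_v$ is coarsely determined by the projection to the boundary line $\ell$ through which $[x,y]$ would exit, using the convexity of boundary lines in a hyperbolic \CAT(0) space together with the flat-link gluing. This is exactly the mechanism in the Observation inside the proof of Proposition~\ref{prop:qiehyperbolicspaces}, and the hard work is quantifying it so that Corollary~\ref{LocSegmFiniteCor} can be applied with constants depending only on $K$ (through $\lambda,c,\delta,\theta$), rather than on the particular vertex $v$.
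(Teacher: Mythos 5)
Your part (2) is essentially the paper's argument. The paper likewise observes that the shortest projection of a point of $\bar Y_v$ to $\gamma\in\mathbb A_w$, $w\ne v$, must factor through a boundary line $\ell\in\mathbb L_w$ (you work with $\mathbb L_v$ instead, which is equivalent up to the flat-link gluing), deduces the upper bound $d_\gamma(x,y)\le \diam(\pi_\gamma(\ell))\le L$ from assertion (1), uses the defining property of $D$ and $\theta$ to force $d(\gamma,\ell)\le D$ once $d_\gamma(x,y)\ge K\ge\theta$, and then counts by local finiteness; your appeal to Corollary~\ref{LocSegmFiniteCor} is the same count in slightly different clothes. Note, however, that both in your version and in the paper's the counting rests on the bound from assertion (1) being available for pairs in which one member is a boundary line, since $\mathbb L_w\subset\mathbb A_w$ was arranged in Lemma~\ref{YDistFormulaLem} precisely for this purpose.

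This is where your part (1) has a genuine gap. Proposition~\ref{FiniteDblCosetsProp} does \emph{not} say that $\diam(\pi_\ell(\gamma))\le\theta$ for all distinct $\ell,\gamma$ in the same $H_v$-finite family: it says the set of $h$ with $\diam(\pi_\gamma(h\gamma))\ge\theta$ consists of up to $N$ double $E(g)$-cosets, and such exceptional pairs genuinely occur --- that is exactly why Lemma~\ref{FirstPartitionLem} must later pass to a finite-index subgroup $\dot K_v$, using separability, to make each orbit $\theta$-separated; if your reading of Proposition~\ref{FiniteDblCosetsProp} were correct, that lemma would be vacuous. The conclusion of (1) is still true, but the repair is different: up to the $H_v$-action there are only finitely many exceptional pairs (the ``in particular'' clause), projections are equivariant, and each exceptional value is finite, so one takes $L$ to be the maximum; moreover you must separately handle the pairs Proposition~\ref{FiniteDblCosetsProp} says nothing about, namely pairs involving the boundary lines $\mathbb L_v$ (bounded intersection of the peripheral structure) and pairs of translates of axes of two \emph{different} elements of $\tilde S$. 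This omission propagates: your cross-piece case of (1) and your part (2) both reduce to the same-piece bound applied with one member a boundary line, which your cited tool does not deliver. The paper avoids the case analysis altogether, deriving (1) in one stroke from the asserted uniform local finiteness of $\mathbb A$ (finitely many isometry types plus equivariance) combined with the hyperbolicity of $\mathcal X_1$.
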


\begin{proof}
Since there are only finitely many $\bar Y_w$'s up to isometry, and $\mathbb A_w=g\mathbb A_v$ if $w=gv$,  the union    $\mathbb A$ of quasi-lines containing $\cup_{w\in\mathcal  V_1} \mathbb L_w$ is uniformly locally finite: any ball of a fixed radius  in $\mathcal X_1$ intersects a uniform number of quasi-lines depending only on the radius.  By the hyperbolicity of $\mathcal X_1$, the local finiteness implies the bounded projection property, so  gives the desired constant $L$ in the assertion (1).  

By the construction of $\mathcal X_1$,  the shortest projection of a point $x\in \bar Y_v$ to  $\gamma \in \mathbb A_w$ for $w\ne v$ has to pass through a boundary line $\ell\in \mathbb L_w$  of $\bar Y_w$, so is contained in the projection of  $\ell$ to $\gamma$.  By the assertion (1) we have   $d_\gamma(x, y)\le diam(\pi_\gamma(\ell)) \le L$.   If   $d_\gamma(x, y)\ge K\ge \theta $ for $x, y\in \bar Y_v$ and $\gamma\in \mathbb A_w$ with $w\ne v$, then  $d(\gamma, \ell)\le D$ by the above defining property of $D$ and $\theta$. By local finiteness, there are at most $L=L(D)$ quasi-lines $\ell$ with this property, proving the assertion~(2).   
\end{proof}

\begin{prop}[Distance formula for $\mathcal{X}_1$]
\label{prop:distanceformulaX_1}
For any $x, y\in \mathcal X_1$, there exists a  constant $\mu=\mu(L, K)>0$ such that 
\begin{equation}\label{DistFormulaEQ}
\begin{array}{cc}
{1\over \mu}\sum_{\gamma\in \mathbb A } 
[d_\gamma(x, y)]_{K}+    d_T(\rho(x), \rho(y)) - L^2 \\
\\
\le d_{\mathcal X_1} (x, y) \le \\
\\
   \mu \sum_{\gamma\in \mathbb A } 

[d_\gamma(x, y)]_{K} + 4K   \cdot d_T(\rho(x), \rho(y)).
\end{array}
\end{equation}
\end{prop}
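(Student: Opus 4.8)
The plan is to analyze a geodesic of $\mathcal X_1$ piece by piece along the tree-like gluing, reducing everything to the intrinsic distance formula (\ref{YvDistFormulaEQ}) inside each $\bar Y_v$ together with the combinatorics of crossing flat links. For the setup, index a point of $\mathcal X_1$ by the $\mathcal V_1$-vertex of the piece containing it, and assume $x\in\bar Y_{v_0}$, $y\in\bar Y_{v_{2n}}$ with $\rho(x)=v_0$, $\rho(y)=v_{2n}\in\mathcal V_1$ (a point in a flat link is pushed a bounded amount onto an adjacent $\bar Y_v$, which perturbs every $d_\gamma$ and $d_{\mathcal X_1}$ boundedly and is absorbed by Corollary~\ref{BBFDistanceCor}). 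Write the $T$-geodesic as $v_0\,w_1\,v_2\cdots w_{2n-1}\,v_{2n}$ with $v_{2i}\in\mathcal V_1$, $w_{2i+1}\in\mathcal W$, so $d_T(\rho(x),\rho(y))=2n$. By construction a geodesic $\alpha$ from $x$ to $y$ traverses precisely $\bar Y_{v_0},\dots,\bar Y_{v_{2n}}$ and the flat links $Fl(w_1),\dots,Fl(w_{2n-1})$ in order; marking its entry/exit points $p_i^-,p_i^+$ on the boundary lines $\ell_i^-,\ell_i^+\in\mathbb L_{v_{2i}}$ (with $p_0^-:=x$, $p_n^+:=y$) gives the exact decomposition $d_{\mathcal X_1}(x,y)=\sum_{i=0}^n d_{\bar Y_{v_{2i}}}(p_i^-,p_i^+)+\sum_{i}\operatorname{len}\bigl(\alpha\cap Fl(w_{2i+1})\bigr)$.

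There are two geometric inputs. First, each flat-link crossing is two width-$1$ segments through two strips plus a translation $\Delta_i$ along the binding line, so its length is $2+\Delta_i$ and is bounded below by $2$. Second, by the flip hypothesis (Definition~\ref{defn:flipadaction}) the binding line of $Fl(w_{2i+1})$ is parallel to the boundary lines $\ell_i^+$ and $\ell_{i+1}^-$ it glues, and $\pi_{\ell_i^+}(x),\pi_{\ell_i^+}(y)$ land near the two ends of that translation; hence $\Delta_i$ agrees up to uniform additive error with $d_{\ell_i^+}(x,y)$ and $d_{\ell_{i+1}^-}(x,y)$, so every binding translation is \emph{already recorded} by a projection onto a boundary line in $\mathbb A$, while the widths contribute a term comparable to $d_T$. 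Finally, since $x,y$ lie outside $\bar Y_{v_{2i}}$ on the entry/exit sides, the observation used in the proof of Proposition~\ref{prop:qiehyperbolicspaces} pins $\pi_\gamma(x)\approx\pi_\gamma(p_i^-)$ and $\pi_\gamma(y)\approx\pi_\gamma(p_i^+)$ for every $\gamma\in\mathbb A_{v_{2i}}$, so $[d_\gamma(x,y)]_K$ and $[d_\gamma(p_i^-,p_i^+)]_K$ are comparable by Corollary~\ref{BBFDistanceCor}.

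For the upper bound I would follow $\alpha$'s itinerary: bound each piece by the right-hand side of (\ref{YvDistFormulaEQ}), $d_{\bar Y_{v_{2i}}}(p_i^-,p_i^+)\le 2\sum_{\gamma\in\mathbb A_{v_{2i}}}[d_\gamma(p_i^-,p_i^+)]_K+2K$, and cross each flat link in length $2+\Delta_i$ with $\Delta_i\preceq_K [d_{\ell_i^+}(x,y)]_K+K$. Summing the $n+1$ pieces and $n$ crossings, the geometric terms assemble (via the comparability above) into $\mu\sum_{\gamma\in\mathbb A}[d_\gamma(x,y)]_K$, while the additive constants $2K$ per piece and $2$ per crossing combine, using $K\ge 2$, into $4K\,d_T(\rho(x),\rho(y))$. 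For the lower bound, the decomposition gives $d_{\mathcal X_1}(x,y)\ge \sum_i d_{\bar Y_{v_{2i}}}(p_i^-,p_i^+)+2n$, where the crossings alone yield $2n=d_T$ and the left-hand side of (\ref{YvDistFormulaEQ}) converts the within-piece sum into $\tfrac1N\sum_i\sum_{\gamma\in\mathbb A_{v_{2i}}}[d_\gamma(p_i^-,p_i^+)]_K$. It remains to replace $\sum_i\sum_{\gamma\in\mathbb A_{v_{2i}}}$ by the full sum $\sum_{\gamma\in\mathbb A}$: a quasi-line $\gamma$ in a piece \emph{not} on $[\rho(x),\rho(y)]$ has both $\pi_\gamma(x),\pi_\gamma(y)$ factoring through the single boundary line $\ell$ where its piece branches off, so $d_\gamma(x,y)\le\operatorname{diam}\pi_\gamma(\ell)\le L$ by Lemma~\ref{lem:Claim1}(1), and Lemma~\ref{lem:Claim1}(2) caps the number of such $\gamma$ with $d_\gamma(x,y)\ge K$, bounding their total contribution by a constant of size $\asymp L^2$; this produces the $-L^2$ correction.

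The main obstacle is this last bookkeeping. What must be proved is that the total contribution to $\sum_{\gamma\in\mathbb A}[d_\gamma(x,y)]_K$ of quasi-lines lying off the traversed pieces is bounded by a \emph{single} additive constant, uniform in $n$. The naive count (apply Lemma~\ref{lem:Claim1}(2) to the entry/exit pair of each piece) a priori scales with the number $n$ of pieces, so the real work is to show that for an \emph{interior} piece the geodesic meets $\ell_i^-,\ell_i^+$ transversally and has uniformly bounded projection onto the remaining boundary lines, so that side branches off interior pieces do not contribute large projections and only the two terminal pieces (where $x$ or $y$ may sit off-axis) produce the $\asymp L^2$ error. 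Making the transversality and the comparison $d_\gamma(x,y)\sim d_\gamma(p_i^\pm,\cdot)$ genuinely uniform across all pieces, resting on the bounded-projection and uniform-local-finiteness package of Lemma~\ref{lem:Claim1}, is the delicate part of the argument.
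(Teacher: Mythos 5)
Your proposal follows essentially the same route as the paper's proof: the same alternating decomposition of a geodesic into pieces $\bar Y_{v_{2i}}$ and flat-link crossings (the paper's identity (\ref{GeodFormulaEQ})), the same transfer $[d_\gamma(p_i^-,p_i^+)]_K\asymp_{L,K}[d_\gamma(x,y)]_K$ via the $2L$ error from Lemma~\ref{lem:Claim1}(1) (the paper's Claim~\ref{claim:1}(1)), the same accounting of each crossing by its width at least $2$ (yielding the $d_T$ terms, with $K\ge 2$) plus the shear recorded by projections onto the boundary lines $\mathbb L_v\subset \mathbb A_v$ (Claim~\ref{claim:1}(2) and (\ref{SumFlatlinksEQ})), and the same piecewise use of (\ref{YvDistFormulaEQ}) for both bounds. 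The ``main obstacle'' you single out at the end receives no additional treatment in the paper either: its lower bound disposes of the off-path quasi-lines with exactly the two-line argument you sketch, citing Claim~\ref{claim:1}/Lemma~\ref{lem:Claim1} to assert that at most $L$ quasi-lines in $\cup\{\mathbb A_v: v\in\mathcal V_1\setminus[\rho(x),\rho(y)]^0\}$ satisfy $L\ge[d_\gamma(x,y)]_K>0$ --- a single global count resting on the bounded-projection and local-finiteness package of Lemma~\ref{lem:Claim1}, not a per-piece application that would scale with $n$ --- so your attempt matches the paper's own level of detail (and rigor) at precisely the step you flag as delicate.
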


\begin{proof}
Since the $1$--neighborhood of the union $\cup_{v\in\mathcal V_1} \bar Y_v$ is $\mathcal X_1$,  assume for simplicity $x\in \bar Y_{v_1}$ and $y\in \bar Y_{v_n}$ where $v_1=\rho(x), v_n=\rho(y)\in \mathcal V_1$.  By the construction of $\mathcal X_1$, a geodesic $[x,y]$ travels through $\bar Y_{v_i}$ and then flat links $Fl(w_i)$, where $\{v_i\in \mathcal V_1: 1\le i\le n\}$ and $\{w_i\in \mathcal V_2: 1\le i\le n-1\}$ appear alternatively on $[v_1, v_n]\subset T$. Thus, $d_T(v_1, v_n)=2n-2$.

Let us denote the exit point on the boundary line $\ell_i$ of $\bar Y_{v_i}$ and entry point on  $\ell_{i+1}'$ of $\bar Y_{v_{i+1}}$ by $y_i$ and $x_{i+1}$ respectively for $1\le i \le n$  where $x_1:=x$ and  $y_n : = y$ by convention.  
Thus,
\begin{equation}\label{GeodFormulaEQ}
d_{\mathcal X_1}(x, y) - \sum_{1\le i\le n-1} d_{\mathcal X_1}(y_i, x_{i+1}) = \sum_{1\le i\le n} d_{\bar Y_{v_i}}(x_i, y_i).
\end{equation}

Therefore, we shall derive  (\ref{DistFormulaEQ}) from (\ref{GeodFormulaEQ}) which requires to apply  the formula  (\ref{YvDistFormulaEQ}) for  $d_{\bar Y_{v_i}}  (x_i, y_{i})$. To that end, we need the following estimates. Recall that    $  \asymp_{L,K} $ means the equality holds up to a multiplicative constant depending on $ L, K$.
\begin{claim}
\label{claim:1}
\begin{enumerate}
\item
If there is   $\gamma\in \mathbb A_{v_i}$ such that $d_\gamma(x_i, y_{i})\ge K$ then
\begin{equation}\label{SameProjEQ}
[d_\gamma(x_i, y_{i})]_K  \asymp_{L,K} [d_\gamma(x, y)]_{ K}
\end{equation}

\item
$d_{\mathcal X_1}(y_i, x_{i+1})\ge 2$. If  $d_{\mathcal X_1}(y_i, x_{i+1})>K+2$, then $$[d_{\ell_i}(y_i, x_{i+1})]_K \asymp_{L,K} [d_{\ell_i}(x, y)]_K, \;\;d_{\ell_{i+1}'}(y_i, x_{i+1})\asymp_{L,K} [d_{\ell_{i+1}'}(x, y)]_K$$
\end{enumerate}
\end{claim}
\begin{proof}[Proof of the Claim~\ref{claim:1}]
If there is   $\gamma\in \mathbb A_{v_i}$ such that $d_\gamma(x_i, y_{i})\ge K$   we then have $$|d_\gamma(x, y)-d_\gamma(x_i, y_{i})|\le diam(\pi_\gamma(\ell_i))+diam(\pi_\gamma(\ell_{i}'))\le 2L,$$
where Lemma~\ref{lem:Claim1} is applied,  and  after taking the cutoff function $[\cdot]_K$, 
$$
|[d_\gamma(x, y)]_{K} - [d_\gamma(x_i, y_{i})]_K|\le 2L+K.
$$
This in turn  implies (\ref{SameProjEQ}).

Recall that $[y_i, x_{i+1}]$ is contained in the union of two flat strips with width $1$ in a flat link, and is from one boundary line $\ell_i$ to the other $\ell_{i+1}'$.  Thus, $ d_{\mathcal X_1}(y_i, x_{i+1})\ge 2$.    If  $d_{\mathcal X_1}(y_i, x_{i+1})>K+2$ is assumed, then $d_{\ell_i}(y_i, x_{i+1})>K$ and $d_{\ell_{i+1}'}(y_i, x_{i+1})>K$. The assertion (2) follows similarly as above.
\end{proof}

Recalling $K\ge 2$, the assertion (2) of the Claim~\ref{claim:1} implies a constant $\mu_1=\mu_1(L, K)>1$ such that
\begin{equation}\label{SumFlatlinksEQ}
d_T(\rho(x), \rho(y)) \le \sum_{1\le i\le n-1} d_{\mathcal X_1}(y_i, x_{i+1})  \le \mu_1 \sum_{\ell\in \mathbb A} [d_{\ell}(x, y)]_K+2K d_T(\rho(x), \rho(y)).
\end{equation}

Using (\ref{SameProjEQ}), we now replace   $[d_\gamma(x_i, y_{i})]_K$  by $[d_\gamma(x, y)]_{K}$ in the formula  (\ref{YvDistFormulaEQ}) for $d_{\bar Y_{v_i}}  (x_i, y_{i})$. 
Hence, there exists      a constant $\mu_2=\mu_2(K, L)>1$  so that 
$$
\frac{1}{\mu_2}\sum_{\gamma\in \mathbb A_{v_i} } 
[d_\gamma(x, y)]_{K}\le d_{\bar Y_{v_i}}  (x_i, y_{i}) \le    \mu_2 \sum_{\gamma\in \mathbb A_{v_i} } 
[d_\gamma(x, y)]_{K} +2K.
$$

Noting $d_T(\rho(x), \rho(y))=2n-2$, we deduce from Eq. (\ref{GeodFormulaEQ}) and (\ref{SumFlatlinksEQ}) that 
$$
d_{\mathcal X_1} (x, y) \le   (\mu_1+\mu_2) \sum_{1\le i \le n} \sum_{\gamma\in \mathbb A_{v_i} } 
[d_\gamma(x, y)]_{K}  +  4K  \cdot d_T(\rho(x), \rho(y)) $$  
so the upper bound in (\ref{DistFormulaEQ})  follows by setting $\mu:=\mu_1+\mu_2$. 

We now derive the lower bound from those of Eq. (\ref{GeodFormulaEQ}) and (\ref{SumFlatlinksEQ}):
$$
\begin{array}{rl}
    d_{\mathcal X_1} (x, y) & \ge \sum_{1\le i \le n} d_{\bar Y_{v_i}}  (x_i, y_{i}) + d_T(\rho(x), \rho(y))  \\
     & \ge  \frac{1}{\mu_2} \sum_{1\le i \le n} \sum_{\gamma\in \mathbb A_{v_i} } [d_\gamma(x, y)]_{ K}  +       d_T(\rho(x), \rho(y)) 
\end{array}
$$  
By the Claim~\ref{claim:1},  there are at most $L$   quasi-lines $\gamma \in \cup \{\mathbb A_{v}:v\in \mathcal V_1\setminus [\rho(x), \rho(y)]^0\}$ satisfying $L\ge [d_\gamma(x, y)]_{K}>0$. Hence, the following holds    
$$
\begin{array}{rl}
    d_{\mathcal X_1} (x, y) & \ge    \frac{1}{\mu_2} \sum_{v\in \mathcal V_1} \sum_{\gamma\in \mathbb A_{v} } [d_\gamma(x, y)]_{ K}  +       d_T(\rho(x), \rho(y))-L^2 
\end{array}
$$
completing the proof of the lower bound.
\end{proof}

\begin{lem}
\label{lem:partitionofA}
The collection 
$\mathbb A$ can be written as a union (possibly non-disjoint) $\mathbb A_1\cup  \cdots \cup \mathbb A_n$ with the following properties for each $\mathbb A_i$:
\begin{enumerate}
    \item 
    for any two quasi-lines $\alpha\ne \beta\in \mathbb A_i$ we have $d(\alpha, \beta) \ge D$,
    \item
     the $(D+R)$--neighborhood of the union $\cup_{\gamma\in \mathbb A_i} \gamma$ contains $\mathcal X_1$,
     \item  for any $K > 4\xi$   the quasi-tree of quasi-lines $(\mathcal C_K(\mathbb A_i), d_{\mathcal C_i})$ is a quasi-tree.
\end{enumerate}
\end{lem}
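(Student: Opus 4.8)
**

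The goal is to partition the locally finite collection $\mathbb A$ of quasi-lines into finitely many sub-collections, each of which is simultaneously \emph{sparse} (pairwise $D$-separated) and \emph{coarsely covering} (its $(D+R)$-neighborhood fills $\mathcal X_1$). The plan is to treat this as a graph-coloring problem. First I would form the \emph{conflict graph} $\Gamma$ whose vertex set is $\mathbb A$ and in which two quasi-lines $\alpha \neq \beta$ are joined by an edge precisely when $d(\alpha,\beta) < D$. The key observation is that $\Gamma$ has uniformly bounded degree: by the uniform local finiteness of $\mathbb A$ established in the proof of Lemma~\ref{lem:Claim1} (any ball of fixed radius in $\mathcal X_1$ meets a uniformly bounded number of quasi-lines), each quasi-line can be within distance $D$ of only boundedly many others. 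Concretely, if $d(\alpha,\beta)<D$ then $\alpha$ and $\beta$ share a point in a common $D$-ball, and by local finiteness only a bounded number $d_0$ of quasi-lines pass through any such neighborhood along $\alpha$; covering a quasi-axis by such balls and using Corollary~\ref{LocSegmFiniteCor} bounds the total degree by some $d_0 = d_0(D,\delta)$.

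Once $\Gamma$ has maximum degree bounded by $d_0$, a greedy coloring produces a proper vertex-coloring with at most $n := d_0 + 1$ colors. Setting $\mathbb A_i$ to be the $i$-th color class immediately gives property~(1): two quasi-lines sharing a color are non-adjacent in $\Gamma$, hence $d(\alpha,\beta) \ge D$. This is the sparseness required to invoke Proposition~\ref{BBFDistanceThm}, and since each $\mathbb A_i \subset \mathbb A$ inherits the bounded projection property, property~(3) follows directly: for $K > 4\xi$ the projection constant is controlled and $\mathcal C_K(\mathbb A_i)$ is an (unbounded) quasi-tree.

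The genuine obstacle is property~(2), the covering condition, because an arbitrary proper coloring could strip away all the quasi-lines needed to cover some region of $\mathcal X_1$. The plan here is to exploit the boundary lines: recall $\mathbb L_v \subset \mathbb A_v$, and every point of $\bar Y_v$ lies within $R$ of the $H_v$-orbit underlying the distance formula~(\ref{YvDistFormulaEQ}). The strategy is to ensure each color class retains a coarsely covering sub-family. I would argue that the $D$-separated packing produced \emph{within each $\bar Y_v$} by the greedy procedure can be arranged, by refining the choice if necessary, so that every color class meets every $\bar Y_v$ in a net: since the quasi-lines in $\mathbb A_v$ already $R$-cover $\bar Y_v$, one shows that after discarding those within $D$ of a chosen representative, the survivors in each color still $(D+R)$-cover. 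The cleanest route is to choose, inside each $\mathbb A_v$, a maximal $D$-separated subset (a $D$-net among quasi-lines); maximality forces every quasi-line of $\mathbb A_v$ to lie within $D$ of the net, so the net $(D+R)$-covers $\bar Y_v$, and these nets assemble $\dot G$-equivariantly across $v \in \mathcal V_1$.

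Thus the concrete plan is: (i) for each $v$, extract a maximal $D$-separated subcollection $\mathbb B_v \subseteq \mathbb A_v$, which is automatically sparse and $(D+R)$-covering of $\bar Y_v$ by maximality; (ii) color the conflict graph restricted to $\bigcup_v \mathbb B_v$ with $n = d_0+1$ colors via bounded degree, but assign colors so that each class inherits a covering net in every $\bar Y_v$; (iii) verify the three properties globally. The subtle point, and where I expect to spend the most care, is reconciling the \emph{local} $D$-separation inside each $\bar Y_v$ with \emph{global} $D$-separation in $\mathcal X_1$ across the flat-link gluings, and simultaneously preserving coverage --- this is exactly where the uniform local finiteness of $\mathbb A$ in $\mathcal X_1$ (as opposed to merely in each $\bar Y_v$) is indispensable. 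With bounded degree in hand, the finiteness of the partition is automatic, and properties~(1) and~(3) are formal consequences of sparseness plus Proposition~\ref{BBFDistanceThm}.
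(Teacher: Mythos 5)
There is a genuine gap, and it sits exactly where you predicted you would "spend the most care": property (2) cannot be obtained by your coloring framework, and the way you try to rescue it breaks the lemma elsewhere. Note first that the lemma asks for a \emph{possibly non-disjoint} union, and this is not cosmetic: the paper's proof builds each $\mathbb A_i$ \emph{separately} as a maximal $D$--separated family grown by chaining --- starting from a seed quasi-line, one keeps adjoining quasi-lines that meet the $(D+R)$--neighborhood of those already chosen. Maximality, together with the two facts that every $R$--ball in $\mathcal X_1$ meets some quasi-line and that for every $\alpha\in\mathbb A$ there is $\beta\in\mathbb A$ with $D\le d(\alpha,\beta)\le D+2R$, forces the $(D+R)$--neighborhood of each such family to cover all of $\mathcal X_1$; overlaps between different families are allowed, and local finiteness enters only to bound the number $n$ of families needed so that every $\gamma\in\mathbb A$ lies in at least one. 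Your proper coloring instead produces a \emph{disjoint} partition, and a disjoint partition cannot in general satisfy (2): in a region of $\mathcal X_1$ where only one quasi-line passes within distance $D+R$, at most one color class can cover that region. Your proposed fix --- "assign colors so that each class inherits a covering net in every $\bar Y_v$" --- is precisely the unproved step, with no mechanism supplied. The fallback plan (i)--(ii) has a second, independent defect: restricting to maximal nets $\mathbb B_v\subseteq\mathbb A_v$ discards all quasi-lines of $\mathbb A\setminus\bigcup_v\mathbb B_v$, so your classes no longer union to $\mathbb A$. But the lemma requires $\mathbb A=\mathbb A_1\cup\cdots\cup\mathbb A_n$, and the proof of Theorem~\ref{thm:main3} uses exactly this: every $\gamma$ with $[d_\gamma(x,y)]_K>0$ must appear in at least one $\mathbb A_i$ (and at most once per $\mathbb A_i$) for $\sum_{i}\sum_{\gamma\in\mathbb A_i}[d_\gamma(x,y)]_K$ to dominate $\sum_{\gamma\in\mathbb A}[d_\gamma(x,y)]_K$.

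Two secondary points. Your degree bound for the conflict graph is also shaky: local finiteness bounds the number of quasi-lines through a \emph{fixed ball}, but a quasi-line is unbounded, so covering it by $D$--balls gives infinitely many balls and no bound on the number of quasi-lines meeting its $D$--neighborhood; the paper's corresponding finiteness assertion invokes $\dot G$--invariance of $\mathbb A$ together with local finiteness, not a ball-by-ball count along the axis. And the claim that your nets "assemble $\dot G$--equivariantly" is both unjustified (a maximal $D$--separated subfamily chosen by Zorn-type arguments has no reason to be invariant) and beside the point: this lemma is deliberately the \emph{non-equivariant} statement --- the paper stresses that here one partitions "without respecting group action," and making the sub-collections group-invariant is the much harder problem that Section 5 solves using residual finiteness and omnipotence. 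What does work in your proposal: once a class is $D$--separated, properties (1) and (3) do follow formally via Proposition~\ref{BBFDistanceThm}, and your observation that maximality of a $D$--separated subfamily yields $(D+R)$--covering is the same key mechanism the paper uses --- you just need to deploy it to build $n$ overlapping maximal covering families rather than one disjoint coloring.
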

\begin{proof}
Since $H_v$ acts geometrically on $\bar Y_v$ for $v\in \mathcal V_1$ and $\mathcal V_1$ is $\dot G$--finite, there exists a constant $R>0$ such that       the $R$--neighborhood of the union   $\cup_{\gamma\in \mathbb A_v} \gamma$ contains $\bar Y_v$ for each $v\in \mathcal V_1$. Since $\mathbb A$ is locally finite and $\dot G$--invariant, the $D$--neighborhood of any quasi-line in $\mathbb A$ intersects  $n$ quasi-lines from $\mathbb A$ for some $n=n(D)\ge 1$.  
 
We  can now  write $\mathbb A$ as the (possibly non-disjoint) union  $\mathbb A_1\cup  \cdots \cup \mathbb A_n$ with the following two properties for each $\mathbb A_i$:
\begin{enumerate}
    \item 
    for any two quasi-lines $\alpha\ne \beta\in \mathbb A_i$ we have $d(\alpha, \beta) \ge D$,
    \item
     the $(D+R)$--neighborhood of the union $\cup_{\gamma\in \mathbb A_i} \gamma$ contains $\mathcal X_1$.
\end{enumerate}
Indeed, by definition of $R$, any ball of radius $R$ intersects a quasi-line so for each  $\alpha\in \mathbb A$, there exists $\beta\in \mathbb A$ such that $D\le d(\alpha,\beta)\le D+2R$. Starting from a quasi-line $\gamma_1$, we inductively choose the quasi-lines which intersect the $(D+R)$--neighborhood of the already chosen ones, and by the axiom of choice,  a collection $\mathbb A_1$ of quasi-lines  containing $\gamma_1$ is obtained so that the properties  (1) and (2) are true.  The other collections $\mathbb A_i$ for $n\ge i\ge 1$ is obtained similarly from the other $n-1$ quasi-lines intersecting the $D$--neighborhood of $\gamma_1$. The property (2) guarantees    $\mathbb A\subseteq \cup_{1\le i\le n} \mathbb A_i$ from the choice of $R$. We do allow $\mathbb A_i\cap \mathbb A_j\ne \emptyset$, but any $\gamma\in \mathbb A$ would appear at most once in each $\mathbb A_i$.

By the defining property of $D$,   the collection $\mathbb{A}_{i}$ of   quasi-lines in the hyperbolic space $\mathcal{X}_1$ satisfies $diam(\pi_{\beta}(\alpha)) \le \theta$ for all $\alpha \neq \beta \in \mathbb{A}_i$. By Proposition~\ref{BBFDistanceThm}, $(\mathbb{A}_{i}, \pi_{\gamma})$ satisfies projection axioms with projection constant $\xi=\xi(\theta)$. For given $K>4\xi$, the quasi-tree of quasi-lines $(\mathcal C_K(\mathbb A_i), d_{\mathcal C_i})$ is a quasi-tree by \cite{BBF}. 
\end{proof}

\begin{proof}[Proof of Theorem~\ref{thm:main3}]
 Let $\mathcal{X}_i$ ($i = 1,2$) be the hyperbolic space constructed in Section~\ref{subsection:flipactions} with respect to $\mathcal{V}_i$. By Proposition~\ref{prop:qiehyperbolicspaces}, the admissible group $G$ admits a quasi-isometric embedding into $\mathcal{X}_1 \times \mathcal{X}_2$. Thus, to complete the proof of Theorem~\ref{thm:main3}, we only need to show that each hyperbolic space $\mathcal{X}_i$ is quasi-isometric embedded into a finite product of quasi-trees. We  give the proof for $\mathcal{X}_1$ and the proof for  $\mathcal{X}_2$ is symmetric.

Let $\rho \colon X \to T^0$ be the indexed map given by Remark~\ref{rem:indexfunction}. 
Let $\mathbb A_1, \dots, \mathbb A_n$ be the collection of quasi-lines given by Lemma~\ref{lem:partitionofA}.

Let $\hat{ \mathcal{X}_1} : = \cup_{v\in \mathcal V_1} \bar Y_v $. Since the $1$--neighborhood of $\hat{ \mathcal{X}_1}$ is $\mathcal{X}_1$, it suffices define a quasi-isometric embedding map from $\hat{ \mathcal{X}_1}$ to a finite product of quasi-trees.

We now define  a map  $$\Phi: \hat {\mathcal X_1} \to T \times \prod_{1\le i\le n} \mathcal C_K(\mathbb A_i),$$
where $T$ is the Bass-Serre tree of $G$.

Let $x\in \hat {\mathcal X_1} =\cup_{v\in \mathcal V_1} \bar Y_v$ and assume  $x\in \bar Y_v$. By the property~(2) of Lemma~\ref{lem:partitionofA}, we choose a point $\Phi_i(x) \in \cup_{\gamma\in \mathbb A_i} \gamma$ for each $1\le i\le n$ such that $d(x, \Phi_i(x))\le R+D$. Denote $\tilde R=R+D$.  Let $\Phi(x)=(\rho(x),\Phi_1(x),\cdots, \Phi_n(x))$.

We now verify that $\Phi$ is a quasi-isometric embedding. Since $d(x, (\Phi_i(x))\le \tilde R$ and $d(y, \Phi_i(y))\le \tilde R$, let     $K_0=K_0(\tilde R, \xi, \delta)$ be given by Corollary \ref{BBFDistanceCor} so that for $K>K_0$, the following distance formula   holds
$$
d_{\mathcal C_i}(\Phi_i(x), \Phi_i(y)) \sim_{K} \sum_{\gamma\in \mathbb A_i } 
[d_\gamma(x, y)]_{K}
$$
for each $1\le i\le n$. 
Therefore, 
$$
\begin{array}{rl}
d(\Phi(x), \Phi(y))&=d_T(\rho(x), \rho(y)) +\sum_{i=1}^n d_{\mathcal C_i}(\Phi_i(x), \Phi_i(y))\\
&\\
& \sim_K d_T(\rho(x), \rho(y)) +\sum_{i=1}^{n} \sum_{\gamma\in \mathbb A_i } 
[d_\gamma(x, y)]_{K}.
\end{array}
$$ 
Recall that $\mathbb A= \cup_{1\le i\le n} \mathbb A_i$ is a possibly non-disjoint union, but any quasi-line $\gamma$ with $[d_\gamma(x, y)]_{K}>0$ in the above sum appears at most once in each $\mathbb A_i$. We thus obtain
$$
\begin{array}{c}
d(\Phi(x), \Phi(y))  \sim_{K,n} d_T(\rho(x), \rho(y)) +  \sum_{\gamma\in \mathbb A } 
[d_\gamma(x, y)]_{K}
\end{array}
$$ 
which together with distance formula  (\ref{DistFormulaEQ}) for $\mathcal{X}_1$ concludes the proof of Theorem. 
\end{proof}

\section{Proper action on a finite product of quasi-trees}
\label{sec:properaction}
In this section, under a stronger assumption on  vertex groups as stated in Theorem \ref{thm:main4}, we shall promote the quasi-isometric embedding to be an orbital map of an action of  the admissible group on a (different) finite product of quasi-trees. 

By \cite[Induction 2.2]{BBF2}, if $H < G$ has finite index and acts on a finite product of quasi-trees, then so does $G$. We are thus free to pass to finite index subgroups in the proof.

Recall that $T^0=\mathcal V_1\cup\mathcal V_2$ where $\mathcal V_i$ consists of vertices in $T$ with pairwise even distances, and $ {\mathcal X}$ is the hyperbolic space constructed from $\mathcal V\in \{\mathcal V_1, \mathcal V_2\}$. By Lemma \ref{lem:index2subgroup}, let $\dot G < G$ be the subgroup of index at most $2$ preserving $\mathcal V_1$ and $\mathcal V_2$.


\subsection{Construct cone-off spaces: preparation}\label{SSConeoffSpace}
In this preparatory step, we first introduce   another hyperbolic space $\dot {\mathcal X}$ which is the cone-off of the previous hyperbolic space $\mathcal X$ over boundary lines of flat links.    We then embed $ {\mathcal X}$ into a   product of $\dot {\mathcal X}$ and a quasi-tree built from the set of binding lines from the flat links.
  
 
\begin{defn}[Hyperbolic cones] \cite[Part I, Ch. 5]{BH99}
For a line $\ell$ and a constant $r>0$, a \textit{hyperbolic $r$--cone}  denoted by $cone_r(\ell)$    is the quotient space of $\ell\times [0, r]$ by collapsing $\ell\times 0$ as a point called \textit{apex}. A metric   is endowed  on $cone_r(\ell)$  so that it  is isometric to the metric completion of the universal covering of   a closed   disk of radius $r$ in the real hyperbolic plane $\mathbb H^2$ punctured at the center.  

A \textit{hyperbolic multicones} of radius $r$ is the countable wedge of hyperbolic $r$--cones  with apex identified. 
\end{defn}

If $\xi$ is an isometry on $\ell$ then $\xi$ extends to a natural isometric  action on the hyperbolic cone $cone_r(\ell)$ which rotates around the apex and sends the radicals to radicals.  

Similar to the flat links, the link $Lk(w)$ for $w\in T^0$ determines a hyperbolic multicones of radius $r$ denoted by $Cones_r(w)$ so that the set of hyperbolic cones is bijective to the set of vertices adjacent to $w$. And $G_w=H_w\times Z(G_w)$ acts on $Cones_r(w)$ so that the center of $G_w$ rotates each hyperbolic cone around the apex and $G_w/Z(G_w)$ permutes the set of hyperbolic cones by the action of $G_w$ on $Lk(w)$.

{\bf Construction of the cone-off space $\dot{\mathcal  X}$.}
Let $\mathcal V\in \{\mathcal V_1, \mathcal V_2\}$ and $r>0$. Let $\dot{\mathcal  X}$ be the disjoint union of $\{\bar Y_v: v\in \mathcal{V}\}$ and hyperbolic multicones $\{Cones_r(w), w\in T^0 \setminus \mathcal{V}\}$ glued by isometry along  the boundary lines of $\cup_{v\in Lk(w)} \bar Y_v$ and those of   hyperbolic multicones $Cones(w)$. 

\begin{rem}
We note that $\dot {\mathcal X}$ is obtained from $\mathcal X$ by replacing each flat links by hyperbolic multicones.  However, the identification in $\dot{\mathcal  X}$ between boundary lines of $\bar Y_v$ and multicones $Cones(w)$ is only required to be isometric, while  the   $\mathbb R$--coordinates of the boundary lines  in constructing ${\mathcal  X}$ have to be matched up.
\end{rem}

We now give an alternative way to construct the cone-off space $\dot {\mathcal X}$, which shall be convenient in the sequel.

\textbf{Relatively hyperbolic structure of cone-off spaces.} Let  $\dot Y_v$ be  the disjoint union of $\bar Y_v$ and hyperbolic cones $cone_r(\ell)$ glued along boundary lines $\ell \in \mathbb L_v$.  If $E(\ell)$ denotes the stabilizer in $H_v$ of the boundary $\ell\in \mathbb L_v$, then $E(\ell)$ is virtually cyclic and almost malnormal. Since $\{E(\ell): \ell\in \mathbb L_v\}$ is $H_v$-finite by conjugacy, choose a complete set $\mathbb E_v$ of representatives.  By \cite{Bow12}, $H_v$ is hyperbolic relative to peripheral subgroups $\mathbb E_v$. 

Let    $\hat{\mathcal G}(H_v)$ be the  coned-off Cayley graph  (after choosing a finite generating set)  by adding a cone point for each peripheral coset of $E(\ell)\in \mathbb E_v$ and joining the cone point by half edges to each element in the  peripheral coset. The union of two half edges with two endpoints in a peripheral coset shall be referred to as an   \textit{peripheral edge}. See the relevant details in \cite{Farb}. 

By  \cite[Lemma 3.3]{Bow08}, \cite[Prop. 5.2]{Osin}, the action of $H_v$ on $\hat{\mathcal G}(H_v)$ is acylindrical. There exists a $H_v$-equivariant quasi-isometry between the   coned-off Cayley graph $\hat{\mathcal G}(H_v)$ and the coned-off space  $\dot Y_v$ which sends peripheral coset of $E(\ell)\in \mathbb E_v$ to $\ell$. Thus, the action of $H_v$ on $\dot Y_v$ is co-bounded and acylindrical as well.    
 
Alternatively, the cone-off space $\dot{\mathcal  X}$ could be obtained from the disjoint union of $\{\dot Y_v\}_{v\in \mathcal V}$ by identifying the apex of hyperbolic cones  from the same link $Lk(w)$ where ${w\in T^0\setminus \mathcal V}$. 

Since $\mathbb L_v$ has the bounded intersection property, by \cite[Corollary 5.39]{DGO}, for a sufficiently large constant $r$, the space {$\dot Y_v$} is a hyperbolic space with constant depending only on the original one. Thus, the space $\dot{\mathcal  X}$ is a hyperbolic space.

By Lemma \ref{lem:index2subgroup}, a subgroup $\dot G$ of index at most 2 in $G$ leaves invariant $\mathcal V_1$ and $\mathcal V_2$.  The following lemma is proved similarly as Lemma \ref{ActOnHypspaceLem}. 
\begin{lem}\label{ActOnConeoffHypspaceLem}
Fix a sufficiently large $r>0$. The space $\dot{\mathcal  X}$  is a $\delta$--hyperbolic space where $\delta>0$ only depends on the hyperbolicity constants of $\dot Y_v$  ($v\in \mathcal V$).

If $G_v=H_v\times Z(G_v)$ for every $v\in T^0$ and $G_e=Z(G_v)\times Z(G_w)$ for every edge $e=[v,w]\in T^1$, then  a subgroup $\dot G$ of index at most 2 in $G$   acts on  $\dot{\mathcal X}$    with   the following properties:
\begin{enumerate}
\item
for each $v\in \mathcal V$, the stabilizer of $\dot Y_v$ is isomorphic to $G_v$ and $H_v$ acts coboundedly on $\dot Y_v$, and
\item
    for each $w\in  T^0\setminus \mathcal V$, the stabilizer of the apex  of $Cones_r(w)$   is isomorphic to $G_w$ so that $H_w$ acts     on the set of hyperbolic cones by the action on the link $Lk(w)$ and  $Z(G_w)$ on acts by rotation on each hyperbolic cone. 
\end{enumerate}  
\end{lem}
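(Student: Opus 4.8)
The plan is to follow the proof of Lemma~\ref{ActOnHypspaceLem} almost verbatim, replacing the flat links by hyperbolic multicones and the flat-strip gluings by cone-gluings. The hyperbolicity assertion is in fact already assembled in the discussion immediately preceding the lemma: for sufficiently large $r$ each $\dot Y_v$ is hyperbolic with constant depending only on that of $\bar Y_v$ by \cite[Corollary 5.39]{DGO}, and since $\dot{\mathcal X}$ is obtained from the disjoint union $\{\dot Y_v\}_{v\in\mathcal V}$ by identifying, for each $w\in T^0\setminus\mathcal V$, the apexes of the hyperbolic cones indexed by $Lk(w)$, the same tree-of-spaces coning argument shows $\dot{\mathcal X}$ is $\delta$--hyperbolic with $\delta$ depending only on the $\dot Y_v$. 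So the first assertion requires only recording these facts, and the work is entirely in the second assertion.

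For the action I would build the $\dot G$--action on $\dot{\mathcal X}$ piece by piece and then check that the gluings are compatible, exactly as in Lemma~\ref{ActOnHypspaceLem}. First I would fix the action on each $\dot Y_v$ for $v\in\mathcal V$: the geometric action of $H_v$ on $\bar Y_v$ permutes the boundary lines $\mathbb L_v$ and hence extends canonically to the cone-off $\dot Y_v$, under which $H_v$ acts coboundedly (indeed acylindrically, as established above), while $Z(G_v)$ acts trivially because it already acts trivially on $\bar Y_v$ and therefore fixes each $\ell\in\mathbb L_v$ pointwise together with the attached cone. Thus the stabilizer of $\dot Y_v$ is exactly $G_v=H_v\times Z(G_v)$, giving property~(1). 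Next, for $w\in T^0\setminus\mathcal V$ I would let $H_w$ permute the hyperbolic cones of $Cones_r(w)$ via its action on $Lk(w)$ and let $Z(G_w)$ act on each cone by the rotation extending its isometry of the binding line; since every such isometry fixes the apex, all of $G_w$ fixes the apex, giving property~(2).

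The crux is the compatibility along a shared boundary line $\ell=F_e\cap\bar Y_v$ for an edge $e=[v,w]$, which is the single place where the two prescribed actions meet. On this line the stabilizer is the edge group $G_e=Z(G_v)\times Z(G_w)$, and I would verify that the two actions agree there: from the $\dot Y_v$ side $Z(G_v)$ acts trivially and the image $G_e/Z(G_v)\cong E(\ell)$ in $H_v$ acts on $\ell$ by translation, while from the cone side $Z(G_w)$ acts on $\ell$ precisely by the translation to which its cone-rotation restricts. Hence $G_e$ acts on $\ell$ through $G_e/Z(G_v)=Z(G_w)$, which is word-for-word the compatibility of Lemma~\ref{ActOnHypspaceLem}, the only change being that a flat strip has been replaced by a hyperbolic cone. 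Because an isometry of $\ell$ extends uniquely to an isometry of $cone_r(\ell)$ rotating about the apex --- the extension property recorded after the definition of hyperbolic cones and quoted from \cite[Part~I, Ch.~5]{BH99} --- the local actions patch to a global isometric $\dot G$--action, and one uses Lemma~\ref{lem:index2subgroup} to guarantee that $\dot G$ preserves $\mathcal V$ so that the pieces $\dot Y_v$ and the multicones $Cones_r(w)$ are permuted among themselves.

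I expect the main obstacle to be bookkeeping rather than a genuine difficulty. The two points to check carefully are that the canonical extension of line-isometries to cone-isometries is $\dot G$--equivariant, and that the apex identifications among cones sharing a common link $Lk(w)$ are respected by $\dot G$ so that the stabilizer of each apex is exactly $G_w$ and no cones coming from parallel boundary lines get accidentally identified. These are the same verifications as in the flat-link case of Lemma~\ref{ActOnHypspaceLem}, and the one genuinely new ingredient --- extending the $Z(G_w)$--translation of $\ell$ to a rotation of $cone_r(\ell)$ --- is supplied directly by the hyperbolic-cone construction.
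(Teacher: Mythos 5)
Your proposal is correct and takes essentially the same route as the paper, which in fact gives no separate argument beyond remarking that the lemma ``is proved similarly as Lemma~\ref{ActOnHypspaceLem}'': the hyperbolicity of each $\dot Y_v$ (via \cite[Corollary 5.39]{DGO}) and hence of $\dot{\mathcal X}$ is exactly the material recorded in the discussion preceding the statement, and your assembly of the action via the boundary-line compatibility $G_e \to G_e/Z(G_v)=Z(G_w)$, with cone rotations replacing the flat strips, is word-for-word the intended adaptation. The only tiny imprecision is writing $G_e/Z(G_v)\cong E(\ell)$ for its image in $H_v$, which is in general only a finite-index subgroup of $E(\ell)$, but this plays no role in the compatibility check.
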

 
From now on and until the end of the next subsection, we assume that   $G_v=H_v\times Z(G_v)$ for every $v\in T^0$ and $G_e=Z(G_v)\times Z(G_w)$ for every edge $e=[v,w]\in T^1$. After passage to a finite index subgroup of $G$, this assumption will be guaranteed by Corollary \ref{FiniteIndexAdmissibleCor} below, where  $H_v$ are assumed to omnipotent.

The remainder of this section is to relate the metric geometry of $(\dot{\mathcal  X},   d_{\dot{\mathcal  X}})$ and $(\mathcal X, d_{\mathcal X})$. This is achieved by a series of lemmas,  accumulating in Lemma \ref{QuasiGeodConeoffLem}.

Let $\pi_\ell$ denote the shortest projection to a quasi-line $\ell$ in $\mathcal X$ or in $\dot{\mathcal  X}$. To be clear, we use  $d_\ell(x, y)$ (resp. $\dot d_\ell(x, y)$) to denote  the $d_{\mathcal  X}$-diameter (resp. $d_{\dot{\mathcal  X}}$-diameter) of the projection of the points $x, y$ to $\ell$. 

Let $\mathbb L$ be the set of all binding lines in the flat links $Fl(w)$ over $w\in   T^0 \setminus \mathcal{V}$. Note that $\mathbb L$ is disjoint with the union $\cup_{v\in\mathcal V} \mathbb L_v$, although the binding lines in flat links $Fl(w)$ are   parallel  to the boundary lines of (the flat strips and) $\bar Y_v$ for $v\in Lk(w)$.   

We first give an analogue of Lemma \ref{YDistFormulaLem} for acylindrical action on coned-off spaces $\dot Y_v$.

Up the equivariant quasi-isometry mentioned above, we shall identify $(\bar Y_v,d)$ with  the Cayley graph of $H_v$, and   $(\dot Y_v,\dot d)$ with the  coned-off Cayley graph     $\hat{\mathcal G}(H_v)$, and    $\mathbb L_v$ with the collection of left peripheral cosets of  $\mathbb E_v$.

A geodesic edge path $\beta$  in the coned-off Cayley graph $\dot Y_v$ is \textit{$K$-bounded} for $K>0$ if every peripheral edge  has two endpoints within   $\bar Y_v$-distance   at most $K$.

By definition,  a geodesic     $\beta=[x,y]$ can be subdivided into maximal  $K$-bounded non-trivial segments   $\alpha_i$ ($0\le i\le n$) separated by peripheral edges $e_j$ ($0\le j\le m$) where $d_{\bar Y_v}((e_j)_-, (e_j)_+)>K$. It is possible that $n=0$:   $\beta$ consists of only peripheral edges.   

Define $$|\beta|_K:=\sum_{0\le i\le n} [Len(\alpha_i)]_K,$$  which   sums up the lengths of $K$-bounded subpaths of length at least $K$. It is possible that $n=0$, so $|\beta|_K=0$.  Define $d_{\dot Y_v}^{K}(x, y)$ to be the maximum of $|\beta|_K$ over all relative geodesics $\beta$ between $x,y$. Thus, $d_{\dot Y_v}^{K}(x, y)$ is $H_v$-invariant. We remark that the ``thick" distance $d_{\dot Y_v}^{K}(x, y)$ is inspired by the corresponding \cite[Def. 4.8]{BBF2} in mapping class groups.

\begin{lem}
For any sufficiently large $K>0$,  
\begin{equation}\label{ConeoffYvDistFormulaEQ2}
d_{\bar Y_v}(x, y) \sim_K d_{\dot Y_v}^{K}(x, y) + \sum_{\gamma\in \mathbb L_v } 
[ d_\gamma(x, y)]_K  
\end{equation}
\end{lem}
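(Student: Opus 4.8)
The plan is to recognize (\ref{ConeoffYvDistFormulaEQ2}) as a distance formula for the relatively hyperbolic group $H_v$ and to deduce it from Farb's bounded coset penetration (BCP) property. Throughout I identify $(\bar Y_v, d)$ with the Cayley graph of $H_v$, the coned-off space $(\dot Y_v, \dot d)$ with the coned-off Cayley graph $\hat{\mathcal G}(H_v)$, and the collection $\mathbb L_v$ of boundary lines with the left peripheral cosets of $\mathbb E_v$, each of which is a quasi-line since the $E(\ell)$ are virtually cyclic. For $\gamma\in\mathbb L_v$ the map $\pi_\gamma$ is the shortest-point projection in $\bar Y_v$, and by almost malnormality of the $E(\ell)$ the family $\mathbb L_v$ has the bounded projection property: distinct $\gamma\ne\gamma'$ satisfy $\diam(\pi_\gamma(\gamma'))\le\theta_0$ for a uniform $\theta_0$. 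I fix $K$ larger than all the relevant relatively hyperbolic constants.

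First I would fix a relative geodesic $\beta$ from $x$ to $y$ in $\dot Y_v$ and pass to the path $\hat\beta$ in $\bar Y_v$ obtained by replacing each peripheral edge $e_j$ of $\beta$ (a cone-point traversal into a coset $\gamma_j$) by a $\bar Y_v$-geodesic from $(e_j)_-$ to $(e_j)_+$. By the standard relationship between relative geodesics and Cayley-graph geodesics in relatively hyperbolic groups, $\hat\beta$ is a $(\lambda,\epsilon)$-quasigeodesic in $\bar Y_v$ with uniform constants and no backtracking, so, parametrizing by arc length, $Len(\hat\beta)\asymp d_{\bar Y_v}(x,y)$. Writing $\beta$ as the concatenation of its maximal $K$-bounded segments $\alpha_i$ and the deep peripheral edges $e_j$ (those with $d_{\bar Y_v}((e_j)_-,(e_j)_+)>K$) yields the decomposition
\begin{equation*}
d_{\bar Y_v}(x,y)\asymp \sum_i Len_{\bar Y_v}(\alpha_i)+\sum_j P_j,\qquad P_j:=d_{\bar Y_v}\bigl((e_j)_-,(e_j)_+\bigr)>K .
\end{equation*}

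The next step is to match the two sums with the two terms on the right-hand side of (\ref{ConeoffYvDistFormulaEQ2}). For the peripheral term, BCP forces the entry and exit points of $\beta$ in $\gamma_j$ to lie within bounded $\bar Y_v$-distance of $\pi_{\gamma_j}(x)$ and $\pi_{\gamma_j}(y)$, whence $P_j\asymp d_{\gamma_j}(x,y)$; moreover the $\gamma_j$ are pairwise distinct (a geodesic penetrates each coset at most once) and, by BCP together with the bounded projection property, the set $\{\gamma_j\}$ agrees up to bounded error and up to adjusting thresholds with $\{\gamma\in\mathbb L_v:d_\gamma(x,y)>K\}$, so that $\sum_j P_j\sim_K\sum_{\gamma\in\mathbb L_v}[d_\gamma(x,y)]_K$. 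For the thick term, the long segments ($Len(\alpha_i)\ge K$) sum to $|\beta|_K$ up to the multiplicative constant comparing $\dot d$- and $d$-lengths of $K$-bounded paths, while the short segments number at most one more than the deep peripheral edges and hence contribute at most $\sim_K\sum_j P_j+K$; therefore $\sum_i Len_{\bar Y_v}(\alpha_i)\sim_K|\beta|_K+\sum_j P_j$. Finally, taking $\beta$ to (nearly) realize $d_{\dot Y_v}^{K}(x,y)=\max_\beta|\beta|_K$, and using that by BCP all relative geodesics penetrate coarsely the same cosets with coarsely the same entry and exit data, I obtain $|\beta|_K\sim_K d_{\dot Y_v}^{K}(x,y)$. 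Assembling these estimates gives (\ref{ConeoffYvDistFormulaEQ2}); bounding each of $\sum_i Len_{\bar Y_v}(\alpha_i)\succeq_K d_{\dot Y_v}^K$ and $\sum_j P_j\succeq_K\sum_\gamma[d_\gamma]_K$ below yields the lower bound, while the short-segment count and $|\beta|_K\le d_{\dot Y_v}^K$ yield the upper bound.

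The main obstacle is the two-way coarse correspondence in the peripheral step: that the cosets deeply penetrated by a relative geodesic are, up to bounded error, exactly those $\gamma$ with $d_\gamma(x,y)$ large, and that each penetration depth $P_j$ is comparable to the projection gap $d_{\gamma_j}(x,y)$. This is precisely where BCP and the almost malnormality (bounded projection) of the peripheral family are indispensable, and it is also what makes $d_{\dot Y_v}^{K}$ — a maximum over relative geodesics — well defined up to the constants hidden in $\sim_K$. Once this correspondence is established, the remaining bookkeeping (comparing $\dot d$- and $d$-lengths on $K$-bounded segments, controlling the number of short thick segments, and tracking the additive cutoff errors) is routine.
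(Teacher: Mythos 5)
Your proposal is correct and takes essentially the same route as the paper: both arguments lift a relative geodesic in $\dot Y_v$ to a uniform quasi-geodesic in $\bar Y_v$, both hinge on the same key correspondence between deep peripheral penetrations and the cosets $\gamma\in\mathbb L_v$ with $d_\gamma(x,y)\ge K$, and both pass from the full coned-off length to the thick distance $d_{\dot Y_v}^{K}$ by bounding the short $K$-bounded segments through the count of deep peripheral edges. The only difference is presentational: the paper invokes Sisto's distance formula and the known quasi-geodesicity of lifted paths as citations, whereas you unpack those black boxes into a direct bookkeeping argument from the bounded coset penetration property.
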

\begin{proof}
Let $\beta$ be a geodesic between $x,y$ in $\dot Y_v$ so that $d_{\dot Y_v}(x, y)=Len(\beta)$.  We obtain the \textit{lifted path} $\hat \beta$ by replacing each peripheral edge $e$ with endpoints in a peripheral coset $\ell_e$ by a geodesic in $\bar Y_v$ with same endpoints. The well-known fact (\cite{DS05}, \cite{GP16}) that $\beta$ is a uniform quasi-geodesic gives the so-called distance formula \cite[Theorem 0.1]{S13}: for any $K\gg 0$,     $$d_{\bar Y_v}(x, y) \sim_K d_{\dot Y_v}(x, y) + \sum_{\gamma\in \mathbb L_v } 
[ d_\gamma(x, y)]_K.$$
Indeed, the additional ingredient (to quasi-geodesicity of $\hat\beta$) is the following fact: 
  $d_\gamma(x, y)\ge K$ if and only if $\beta$ contains a peripheral edge $e$ with two endpoints in $\gamma$ with $d_{\bar Y_v}(e_-, e_+)>K'$, where $K$ depends on $K'$ and vice versa.  

By definition,  $Len(\beta)$ differs from $|\beta|_{K}=d_{\dot Y_v}^{K}(x, y)$ in the sum of lengths of at most $(m+2)$ segments  $\alpha_i$    and $(m+1)$ edges $e_j$, where $Len(\alpha_i)<K$ and $d_{\bar Y_v}((e_j)_-,(e_j)_+)\ge K$.  Therefore, we can replace $d_{\dot Y_v}(x, y)$ with $d_{\dot Y_v}^{K}(x, y)$ by worsening  the multiplicative constant in the above distance formula. This shows (\ref{ConeoffYvDistFormulaEQ2}).
\end{proof}


\begin{lem} \label{ConeofYDistFormulaLem}
For any sufficiently large $K>\max\{4\xi, \theta\}$,  there exist  a  $H_v$--finite collection $\mathbb A_v$  of quasi-lines    in $\dot Y_v$  and a constant $N=N(K, \delta, \mathbb A_v)>0$, such that  for any two vertices $x, y\in  \dot Y_v$, the following holds

\begin{equation}\label{ConeoffYvDistFormulaEQ}
d_{\dot Y_v}^{K}(x, y) \sim_N \sum_{\gamma\in \mathbb A_v } 
[\dot d_\gamma(x, y)]_K  
\end{equation}
\end{lem}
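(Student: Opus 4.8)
The plan is to mirror the proof of Lemma~\ref{YDistFormulaLem}, replacing the two places where \emph{properness} was used by features of the \emph{acylindrical} action of $H_v$ on the coned-off space $\dot Y_v$. Throughout I identify $(\bar Y_v,d)$ with the (locally finite) Cayley graph of $H_v$ and $(\dot Y_v,\dot d)$ with the coned-off Cayley graph $\hat{\mathcal G}(H_v)$, on which $H_v$ acts coboundedly and acylindrically by \cite{Bow08}, \cite{Osin}. The collection $\mathbb A_v$ will consist of the $H_v$--translated quasi-axes of a finite set $\tilde S$ of loxodromic elements, produced as follows. Fix $o\in\dot Y_v$ and let $R>0$ satisfy $N_R(H_v o)=\dot Y_v$; as in Lemma~\ref{YDistFormulaLem} set $\tilde K=K+C$, where $C=C(\lambda,c,R)$ controls projections to $(\lambda,c)$--quasi-geodesics. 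The essential change is the definition of the finite set
\[
S=\set{h\in H_v}{\dot d(o,ho)\le \tilde K+2R \text{ and } [o,ho]\subset\dot Y_v \text{ is } K\text{-bounded}}.
\]
In contrast with the proper case, $\dot d$--balls about $o$ contain infinitely many $H_v$--points; however, a $K$--bounded geodesic of $\dot d$--length at most $\tilde K+2R$ lifts to a path in $\bar Y_v$ of $d$--length at most $(\tilde K+2R)K$, so $S$ sits inside a finite $d$--ball and is finite by local finiteness of $\bar Y_v$. Applying Lemma~\ref{ExtensionLem} to the acylindrical action on $\dot Y_v$ gives a fixed finite set $F$ of loxodromic elements so that $\tilde S=\set{hf}{h\in S,\ f\in F}$ consists of loxodromic elements, and I let $\mathbb A_v$ be the $H_v$--orbit of their quasi-axes. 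Since $[o,hfo]=[o,ho]\cup h[o,fo]$ with $[o,ho]$ being $K$--bounded and $F$ finite, every $\gamma\in\mathbb A_v$ is uniformly $K'$--bounded (\emph{thick}) for some $K'=K'(K,F)$; I record this now for the lower bound.

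For the upper bound I would take a relative geodesic $\beta$ from $x$ to $y$ with $d_{\dot Y_v}^{K}(x,y)=|\beta|_K$ and run the subdivision argument of Lemma~\ref{YDistFormulaLem} on each thick segment $\alpha_i$ with $Len(\alpha_i)\ge K$. Cutting $\alpha_i$ into consecutive pieces of $\dot d$--length $\tilde K$, each piece $[x_k,x_{k+1}]$ is again $K$--bounded, so choosing $h_k$ with $\dot d(x_k,h_ko)\le R$ forces $h_k^{-1}h_{k+1}\in S$; hence the piece lies in the $R$--neighborhood of some $\gamma_k\in\mathbb A_v$. Recalling that relative geodesics are uniform quasi-geodesics \cite{DS05}, \cite{GP16}, the analogue of (\ref{ProjBdEQ}) for $\beta$ in the hyperbolic space $\dot Y_v$ gives $diam(N_R(\gamma_k)\cap\beta)\le \dot d_{\gamma_k}(x,y)+C\le 2[\dot d_{\gamma_k}(x,y)]_K$, and summing over the pieces and the segments $\alpha_i$ exactly as in Lemma~\ref{YDistFormulaLem} yields $d_{\dot Y_v}^{K}(x,y)\le 2\sum_{\gamma\in\mathbb A_v}[\dot d_\gamma(x,y)]_K+O(\tilde K)$.

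For the lower bound I would again use the maximizing $\beta$ and set $\mathbb B=\set{\gamma\in\mathbb A_v}{\dot d_\gamma(x,y)>K}$. By (\ref{ProjBdEQ}) each $\gamma\in\mathbb B$ satisfies $diam(N_R(\gamma)\cap\beta)\ge \tfrac12\dot d_\gamma(x,y)\ge\theta$. Since $\gamma$ is $K'$--thick and the arc $N_R(\gamma)\cap\beta$ fellow-travels $\gamma$, this arc is itself thick and so lies in the union of the thick segments $\alpha_i$, thereby contributing to $|\beta|_K$. Finally Corollary~\ref{LocSegmFiniteCor}, applied to the acylindrical action, bounds by $N_0$ the number of quasi-lines of $\mathbb B$ whose $R$--neighborhood meets any fixed segment of length $\theta$; hence $N_0\cdot|\beta|_K\ge \sum_{\gamma\in\mathbb B}diam(N_R(\gamma)\cap\beta)\ge \tfrac12\sum_{\gamma\in\mathbb B}[\dot d_\gamma(x,y)]_K$, which is the desired lower bound.

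I expect the genuine obstacle to be precisely this thickness step: one must verify that the loxodromic quasi-axes in $\mathbb A_v$ stay uniformly away from long peripheral jumps, and that an arc shadowing such an axis inherits this thickness, so that projections to $\mathbb A_v$ measure the \emph{thick} distance $d_{\dot Y_v}^{K}$ rather than the ordinary coned-off distance (for which the peripheral lines of $\mathbb L_v$, now of bounded $\dot d$--diameter, carry no information). The finiteness of $S$ via the $K$--boundedness/local-finiteness trick is the other point where the non-proper setting genuinely departs from Lemma~\ref{YDistFormulaLem}; everything else is a formal transcription of that argument.
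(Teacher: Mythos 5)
Your construction coincides, ingredient for ingredient, with the paper's: the same finite set $S$ of short $K$-bounded jumps (the paper normalizes to $\dot d(1,h)=K$ and gets finiteness from the lift bound $d(1,h)\le K^{2}$, which is exactly your $(\tilde K+2R)K$ trick), the same loxodromic completion $\tilde S=\{hf : h\in S,\, f\in F\}$ via Lemma~\ref{ExtensionLem} applied to the acylindrical action on $\dot Y_v$, the same $H_v$-orbit collection $\mathbb A_v$ of quasi-axes, and the same substitution of Corollary~\ref{LocSegmFiniteCor} for the local finiteness that properness supplied in Lemma~\ref{YDistFormulaLem}; your upper bound is the paper's subdivision argument essentially verbatim. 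Where you genuinely diverge is the lower bound. You run the counting globally along $\beta$, and so you must show that for $\gamma\in\mathbb B$ the arc $N_R(\gamma)\cap\beta$ is itself thick and hence registered by $|\beta|_K$. The paper is organized so as never to need this: it fixes one maximal $K$-bounded segment $\alpha=\alpha_i$ at a time, reruns all of Lemma~\ref{YDistFormulaLem} (both bounds, including the Corollary~\ref{LocSegmFiniteCor} multiplicity count) inside $\alpha$ --- where every subpath is $K$-bounded by definition, so the arcs are counted against $Len_{\dot Y}(\alpha)$, a quantity that $|\beta|_K$ does measure --- obtaining (\ref{alphaLenEQ}), and then transfers from segment endpoints to $x,y$ by the hyperbolicity estimate (\ref{MoToEndptsEQ}), $[\dot d_\gamma(\alpha_-,\alpha_+)]_K\asymp_{K,C}[\dot d_\gamma(x,y)]_K$, before summing over $i$ via (\ref{dotdxyDistEQ}). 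In other words, the paper pays for the missing thickness argument with the endpoint-transfer estimate, rather than with inheritance of thickness along shadowing arcs.

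The step you flag as ``the genuine obstacle'' is indeed a gap as written, and for a sharper reason than you give: a constant mismatch. Your axes are only $K'$-bounded for some $K'=K'(K,F)\ge K$ (the translates of $[o,fo]$, $f\in F$, may themselves contain peripheral jumps exceeding $K$), and even granting that a shadowing arc of $\beta$ inherits $K''$-boundedness for some $K''\ge K'$, such an arc need not lie in a single maximal $K$-bounded segment: it may straddle peripheral edges whose $\bar Y_v$-jumps lie in $(K,K'']$, and by definition these edges separate the $\alpha_i$. The arc is then fragmented into subsegments each possibly of $\dot d$-length less than $K$, all annihilated by the cutoff $[\cdot]_K$, so a large value of $diam\bigl(N_R(\gamma)\cap\beta\bigr)$ does not by itself force a contribution to $|\beta|_K$, and your inequality $N_0\cdot|\beta|_K\ge\frac{1}{2}\sum_{\gamma\in\mathbb B}[\dot d_\gamma(x,y)]_K$ does not follow. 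To repair it you would either have to recalibrate thresholds (decompose $\beta$ at level $K''$ and compare the two truncated sums) or reorganize as the paper does, producing the axes only from within a single thick segment and pushing all interaction with the complementary peripheral jumps into the transfer estimate (\ref{MoToEndptsEQ}). Apart from this one step, your write-up is a faithful reconstruction of the paper's argument, and your diagnosis of where properness is really used (finiteness of $S$ and the multiplicity bound) matches the paper exactly.
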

\begin{proof}
For simplicity, we suppress indices $v$ from $H_v, Y_v, \dot Y_v, \mathbb L_v$ in the proof.

Let $S$ be the   set of elements $h\in H$  so that $\dot d(1,h)= K$ and some geodesic $[1,h]$ in $\dot Y$ is $K$-bounded.  The definition of $K$-boundedness implies   that  $S$ is a finite set, since $d(1, h)\le K^2$ and the word metric $d$ is proper.

Let $\beta$ be a geodesic path between $x, y$ so that $d_{\dot Y_v}^{K}(x, y)=|\beta|_K$. Let $\{\alpha_i: 0\le i\le n\}$ be the set of maximal $K$-bounded segments of   $\beta$ in $\dot Y$. Then 
\begin{equation}\label{dotdxyDistEQ}
\begin{array}{rl}
|\beta|_K  = \; \sum_{i=0}^n [Len_{\dot Y}(\alpha_i)]_K.
\end{array}
\end{equation}


We now follow the argument in the proof of Lemma \ref{YDistFormulaLem} to produce the collection of quasi-lines $\mathbb A$ to approximate $Len_{\dot Y}(\alpha_i)$.  

The argument below considers every $\alpha_i$ in the above sum of (\ref{dotdxyDistEQ}), but for simplicity, denote $\alpha=\alpha_i$  with endpoints $\alpha_-,\alpha_+$. 

Since the action of $H$ on $\dot Y$ is acylindrical, consider the set $\tilde S$ of loxodromic elements $hf$ on $\dot Y$ where $h\in S$ and $f\in F$ is provided by Lemma \ref{ExtensionLem}. Since $S$ is finite, $\tilde S$ is finite as well. Let $\mathbb A$ be the set of all $H$--translated axis of $hf\in \tilde S$.  

If $Len_{\dot Y}(\alpha)\ge K$, we subdivide further $\alpha=\gamma_0\cdot \gamma_1\cdots\gamma_n$, so that subsegments $\gamma_i$ $(0\le i< n)$ have $\dot Y$-length $K$ and the last one $\gamma_n$  have $\dot Y$-length less than $K$. Each $\gamma_i$ is $K$-bounded since the $K$-boundedness is preserved by taking subpaths. Similarly as in Lemma \ref{YDistFormulaLem}, a set of axes of loxodromic elements $\gamma_i f$ is then produced to give the upper bound of $Len_{\dot Y}(\alpha)$.  The lower bound uses a certain local finiteness of $\mathbb A$ due to the proper action in Lemma \ref{YDistFormulaLem},  which  is now guaranteed  by Corollary \ref{LocSegmFiniteCor} in an acylindrical action.  Thus, 
\begin{equation}\label{alphaLenEQ}
\frac{1}{N} \sum_{\gamma\in \mathbb A} 
[d_\gamma(\alpha_-, \alpha_+)]_K\le Len_{\dot Y}(\alpha)\le   2\sum_{\gamma\in \mathbb A } 
[d_\gamma(\alpha_-, \alpha_+)]_K + 2K
\end{equation}
where the constant $N$ depends on $K,\delta, \mathbb A$.

Recall that each $\gamma\in \mathbb A$ is a uniform quasi-line, i.e.: a $(\lambda,c)$-quasi-geodesic in $\dot Y$.  Since $\alpha=\alpha_i$ is a subsegment of the geodesic $\beta=[x,y]$,  the hyperbolicity of $\dot Y$ implies that $\pi_\gamma(\alpha_-)$ and $\pi_\gamma(\alpha_+)$ are contained in a $C$-neighborhood of $\pi_\gamma(x)$ and  $\pi_\gamma(y)$ respectively, where $C$ depends on $\lambda, c, \delta$. Thus, we have 
\begin{equation}\label{MoToEndptsEQ}
[\dot d_\gamma(\alpha_-, \alpha_+)]_K\asymp_{K,C} [\dot d_\gamma(x,y)]_K    
\end{equation} for every $\gamma\in \mathbb A$. So for every $\alpha=\alpha_i$, we obtained (\ref{alphaLenEQ}) and (\ref{MoToEndptsEQ}),  and the conclusion thus follows from (\ref{dotdxyDistEQ}).
\end{proof}


Recall that a geodesic $\gamma=[x,y]$ in $\dot {\mathcal X}$ is the union of a sequence of maximal geodesics $\beta_v$ in $\dot Y_v$'s. As    $d_{\dot Y_v}^{K}(x, y)$ is related with $d_{\dot Y_v}(x, y)$, we define 
\begin{equation}\label{ThickDistEQ}
d_{\dot {\mathcal X}}^{K}(x, y):=\sum_{v} d_{\dot Y_v}^{K}((\beta_v)_-, (\beta_v)+).    
\end{equation}
Since $d_{\dot Y_v}^{K}(\cdot, \cdot)$ is $H_v$-invariant, we see that $d_{\dot {\mathcal X}}^{K}$ is $\dot G$-invariant (even though it is not a metric anymore).

\begin{lem}\label{QuasiGeodConeoffLem}
There exists $K_0>0$ such that for any two points $x, y\in \mathcal X$ and $K>K_0$, we have
\begin{equation}\label{QuasiGeodFormulaEQ}
d_{\mathcal X}(x,y)\sim_K  d_{\dot{\mathcal X}}^K(x,y)+\sum_{\ell\in \mathbb L} [d_\gamma(x, y)]_K
\end{equation}

\end{lem}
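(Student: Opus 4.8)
The plan is to compare both sides along a geodesic $[x,y]$ in $\mathcal{X}$, using that $\mathcal{X}$ and $\dot{\mathcal{X}}$ are built over the same Bass--Serre tree: a geodesic in either space projects to the same tree geodesic $[\rho(x),\rho(y)]$ and crosses the same alternating sequence of vertex spaces $\bar{Y}_{v_i}$ ($v_i\in\mathcal V$) and gluing regions (flat links $Fl(w_i)$ in $\mathcal{X}$, hyperbolic multicones $Cones_r(w_i)$ in $\dot{\mathcal{X}}$). First I would record the exact decomposition of the left-hand side: writing $x_i,y_i$ for the entry and exit points of $[x,y]$ on the boundary lines $\ell_i',\ell_i$ of $\bar{Y}_{v_i}$, the computation behind Equation (\ref{GeodFormulaEQ}) gives
\[
d_{\mathcal X}(x,y)=\sum_{i} d_{\bar{Y}_{v_i}}(x_i,y_i)+\sum_i d_{\mathcal X}(y_i,x_{i+1}),
\]
where each crossing term $d_{\mathcal X}(y_i,x_{i+1})$ is the length of a path through two width-one strips of $Fl(w_i)$, hence is at least $2$ and equals $2$ plus the displacement along the binding line $\ell_{w_i}$.

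Next I would apply the two vertex-level formulas already established. For each $i$, Equation (\ref{ConeoffYvDistFormulaEQ2}) gives $d_{\bar{Y}_{v_i}}(x_i,y_i)\sim_K d_{\dot Y_{v_i}}^K(x_i,y_i)+\sum_{\gamma\in\mathbb L_{v_i}}[d_\gamma(x_i,y_i)]_K$, so summing over $i$ splits each traversal into a thick part and a boundary-line part. The thick part is matched with $d_{\dot{\mathcal X}}^K(x,y)$ from (\ref{ThickDistEQ}): the $\dot{\mathcal X}$-geodesic enters and exits $\dot Y_{v_i}$ at the apices over $\ell_i',\ell_i$, and since these boundary lines are coned, the thick distance is insensitive, up to the bounded additive error of the type (\ref{MoToEndptsEQ}), to replacing the apices by $x_i,y_i$; hence $\sum_i d_{\dot Y_{v_i}}^K(x_i,y_i)\sim_K d_{\dot{\mathcal X}}^K(x,y)$.

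For the boundary-line part I would use the flip hypothesis (Definition \ref{defn:flipadaction}) decisively: each boundary line $\ell_i=\bar{Y}_{v_i}\cap F_{e_i}$ is parallel, at distance one and with matching $\mathbb R$-coordinate, to the binding line $\ell_{w_i}$ of $Fl(w_i)$, so projections onto $\ell_i$ and onto $\ell_{w_i}$ agree up to a uniform additive error and $[d_{\ell_i}(\cdot,\cdot)]_K\asymp_K[d_{\ell_{w_i}}(\cdot,\cdot)]_K$. A tree-like (Behrstock-type) comparison, exactly as in the local-to-global passage (\ref{SameProjEQ}), lets me replace the local endpoints $x_i,y_i$ by the global $x,y$; moreover only the two boundary lines $\ell_i',\ell_i$ on the geodesic contribute in each piece, since projections to the remaining $\gamma\in\mathbb L_{v_i}$ are bounded and hence killed by $[\cdot]_K$. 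Thus each binding line $\ell_{w_i}$ is matched to at most two boundary lines plus the crossing across $Fl(w_i)$, and this bounded multiplicity converts $\sum_i\sum_{\gamma\in\mathbb L_{v_i}}[d_\gamma(x_i,y_i)]_K$ together with the displacement parts of the crossings into $\sum_{\ell\in\mathbb L}[d_\ell(x,y)]_K$, up to constants depending on $K$.

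The main obstacle is the residual part of the crossing cost: each of the $\tfrac12 d_T(\rho(x),\rho(y))$ crossings contributes at least $2$ to $d_{\mathcal X}(x,y)$, and for a purely transverse crossing the binding-line displacement vanishes, so this ``number-of-pieces'' cost is on its face invisible to both $d_{\dot{\mathcal X}}^K$ and $\sum_{\ell}[d_\ell]_K$. The crux of the upper bound is therefore to dominate these crossing costs by the right-hand side, and this is where the flip condition must be used beyond mere parallelism: two consecutive transverse crossings would force the straight geodesic inside $\bar{Y}_{v_{i+1}}$ to enter perpendicular to $\ell_{i+1}'$ and leave perpendicular to $\ell_{i+1}$, which is impossible since these boundary lines meet at an angle bounded away from $0$ and $\pi$ by Lemma \ref{lem:qie}. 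Turning this incompatibility into a quantitative bound on the number of crossings in terms of $d_{\dot{\mathcal X}}^K(x,y)+\sum_\ell[d_\ell(x,y)]_K$ is the heart of the argument; the reverse inequality $d_{\dot{\mathcal X}}^K(x,y)+\sum_\ell[d_\ell(x,y)]_K\preceq_K d_{\mathcal X}(x,y)$ is comparatively routine, as projections and thick distances are coarsely Lipschitz with respect to $d_{\mathcal X}$.
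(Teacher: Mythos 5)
There is a genuine gap here, and you have in fact located it yourself: your final paragraph concedes that dominating the crossing costs --- the roughly $\tfrac{1}{2}d_T(\rho(x),\rho(y))$ contributions of size at least $2$ that are invisible to both $d_{\dot{\mathcal X}}^K$ and $\sum_{\ell\in\mathbb L}[d_\ell(x,y)]_K$ --- is ``the heart of the argument,'' and the mechanism you sketch for that heart does not work. Lemma \ref{lem:qie} bounds the angle between the two singular geodesics $\mathcal S_{e^-e}\cap F_e$ and $\mathcal S_{ee^+}\cap F_e$ inside a common wall $F_e$; it says nothing about an ``angle'' between the disjoint boundary lines $\ell_{i+1}'$ and $\ell_{i+1}$ in $\bar Y_{v_{i+1}}$, which never meet. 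Worse, a shortest traversal of $\bar Y_{v}$ between two disjoint convex lines typically meets both of them essentially perpendicularly --- that is the generic behavior of minimizing segments, not an impossibility --- so ``two consecutive transverse crossings'' cannot be excluded this way. The same unbounded accumulation of per-piece errors also infects the steps you label routine: the claims $\sum_i d_{\dot Y_{v_i}}^K(x_i,y_i)\sim_K d_{\dot{\mathcal X}}^K(x,y)$ and the replacement of the local endpoints $x_i,y_i$ by the global $x,y$ each cost an additive constant per vertex space, and over the tree geodesic these errors add up to order $d_T(\rho(x),\rho(y))$ --- exactly the quantity your argument cannot control, since $\sim_K$ permits only one global additive constant.

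The paper avoids all of this by running the comparison in the opposite direction and along a single path. It starts from the $\dot{\mathcal X}$-geodesic $\gamma$ --- the very path whose pieces $\beta_v$ define $d_{\dot{\mathcal X}}^K$ in (\ref{ThickDistEQ}) --- decomposes it as in (\ref{PathDecompEQ}), replaces each multicone crossing $c_w$ by a segment in the corresponding flat link lying in $N_1(\ell_{c_w})$, and uses the bounded intersection of $\mathbb L$ to conclude, via Bowditch's efficient semi-polygonal paths \cite[Lemma 7.3]{Bow12}, that the resulting lifted path $\hat\gamma$ is a \emph{uniform quasi-geodesic in $\mathcal X$}. This single input does the work your angle argument was meant to do: $d_{\mathcal X}(x,y)\sim Len_{\mathcal X}(\hat\gamma)$; each large crossing satisfies $d_{\mathcal X}(c_-,c_+)\asymp[d_{\ell_c}(x,y)]_K$ by quasi-geodesicity of $\hat\gamma$ together with hyperbolicity of $\mathcal X$; small crossings are absorbed into the comparison constants; and (\ref{ConeoffYvDistFormulaEQ2}) converts each $Len_{\mathcal X}(\beta_v)$ into its thick part plus boundary-line projections. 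Since both sides of (\ref{QuasiGeodFormulaEQ}) are then read off the same path, no separate bound on the number of crossings, and no appeal to the flip condition beyond the structure of the flat links, is needed. If you want to repair your version, the missing ingredient is precisely this quasi-geodesicity of lifted paths (i.e., the bounded-intersection/efficiency argument), not an angle-rigidity statement.
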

Recall that the notation  $A \sim_K B$ means that $A$   equals $B$  up to multiplicative and additive constants depending on $K$. 

\begin{proof}
Let $\gamma=[x,y]$ be a $\dot {\mathcal X}$-geodesic with endpoints $x, y\in \mathcal X$. Let us consider the generic case that $\rho(x)\ne \rho(y)$; the case $\rho(x)\ne \rho(y)$ is much easier and left to the reader.

Assume that $x, y\in \mathcal X$ are not contained in any hyperbolic $r$-cones, up to moving $x, y$ with a distance at most $r$.  We can then write the geodesic $\gamma$ as the following union 
\begin{equation}\label{PathDecompEQ}
\gamma=\left(\cup_{v\in \mathcal V\cap [\rho(x), \rho(y)]} \beta_v\right) \bigcup \left(\cup_{w\in (T^0\setminus \mathcal V)\cap [\rho(x), \rho(y)]} c_w\right)    
\end{equation}
where  $\beta_v$ is a geodesic in the cone-off space $\dot Y_v$ with endpoints   on boundary lines of $Y_v\subset \dot Y_v$,  and   $c_w$ is contained in  the hyperbolic multicones $Cones_r(w)$ passing the apex. Thus, $Len(c_w)$ has  length $2r$.

We replace $c:=c_w$ by an $\mathcal X$-geodesic $c'$ with the same endpoints $c_-, c_+$: $c'$ is contained  in the corresponding flat links, whose binding line is denoted by $\ell_c$. Thus, $c'\subset N_1(\ell_c)$.  This replacement becomes non-unique when different $c$'s have overlap (in the subspace $\bar Y_v\subset \dot{ Y_v}$). However, the bounded  intersection of $\mathbb L$ gives a uniform upper bound on the overlap. Let $K$ be any constant sufficiently bigger than this   bound.  We then number those subpaths $c$ of $\gamma$ with $d_{\mathcal X}(c_-,c_+)>K$ in a fixed order (eg. from left to right): $c_1, \dots, c_n$. Up to bounded modifications, we obtain a well-defined  notion of \textit{lifted path} $\hat \gamma$ with same endpoints of $\gamma$. 

Observe that $\hat \gamma$ is a uniform quasi-geodesic. This is well-known and one proof  proceeds as follows: since the set of binding lines $\mathbb L$ has bounded intersection in $\mathcal X$, the above construction implies that $\hat \gamma$  is an efficient semi-polygonal path in the sense of   \cite[6, Section 7]{Bow12}. The observation then follows as a consequence of  \cite[Lemma 7.3]{Bow12}.  

If $d_{\mathcal X}(c_-,c_+)> K$ so $c'\subset N_1(\ell_c)\cap \hat \gamma$, then $d_{\mathcal X}(c_-,c_+)\asymp [d_{\ell_{c}}(x, y)]_K$   follows from the quasi-geodesicity of $\hat \gamma$ and the  hyperbolicity of $\mathcal X$. We incorporate the $c_w$'s in Eq. (\ref{PathDecompEQ}) with $d_{\mathcal X}((c_w)_-,(c_w)_+)\le K$    into the multiplicative constant, and thus the following holds
$$Len_{\mathcal X}(\hat \gamma) \sim_K \sum_{v\in \mathcal V\cap [\rho(x), \rho(y)]} Len_{\mathcal X}(\beta_v)+\sum_{i=1}^n [d_{\ell_{c_i}}(x, y)]_K.$$
Combining the formula  (\ref{ConeoffYvDistFormulaEQ2}) and   (\ref{ThickDistEQ}), we get
$$Len_{\mathcal X}(\beta_v)\sim |\beta_v|_K+\sum_{\ell\in \mathbb L_v} [d_{\gamma}((\beta_v)_-, (\beta_v)_+)]_K.$$

Using the local finiteness and bounded intersection $\mathbb L$, for each $\ell_{c_i}$ with $[d_{\gamma_{c_i}}(x, y)]_K>0$, there are only finitely many $\ell\in \mathbb L$ such that $[d_{\gamma_{\ell}}(x, y)]_K>0$. Hence, by worsening the multiplicative constant, we have 
$$Len_{\mathcal X}(\hat \gamma) \asymp_K \sum_{v\in \mathcal V\cap [\rho(x), \rho(y)]} |\beta_v|_K+\sum_{\ell\in \mathbb L} [d_{\ell}(x, y)]_K.$$

Recall that $Len_{\mathcal X}(\hat \gamma)\sim d_{\mathcal X}(x,y)$ from the quasi-geodesic $\hat \gamma$. Then the desired (\ref{QuasiGeodFormulaEQ}) follows.
\end{proof}

Recall that $\mathbb L$ is the set of all binding lines in   $Fl(w)$ over $w\in   T^0 \setminus \mathcal{V}$. Since $\mathbb L$ has bounded projection property in $\mathcal X$,   it satisfies the projection axioms  and we can construct the quasi-tree $\mathcal C_K(\mathbb L)$ for $K\gg 0$, equipped with the length metric $d_{\mathcal C}$.  

By the following result, we shall be reduced to embed the action of $\dot G$ on $(\dot{\mathcal  X}, d_{\dot{\mathcal X}}^K)$ into finite product of quasi-trees. We warn the reader that $d_{\dot{\mathcal X}}^K$ is not a metric on $\dot{\mathcal  X}$. However, we can still talk about the product space $\dot{\mathcal  X} \times \mathcal C_K(\mathbb L)$ equipped with the symmetric non-negative function $d_{\dot{\mathcal X}}^K\times d_{\mathcal C}$.  The quasi-isometric embedding of $\mathcal X$ into the product only means the coarse bilipschitz inequality.

\begin{prop}\label{QIembedProp}
For any $K\gg K_0$, there exists a  $\dot G$--equivariant quasi-isometric embedding from ${\mathcal  X}$ to the product $(\dot{\mathcal  X}  \times \mathcal C_K(\mathbb L), d_{\dot{\mathcal X}}^K\times d_{\mathcal C})$. 
\end{prop}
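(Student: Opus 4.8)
The plan is to combine the distance formula of Lemma~\ref{QuasiGeodConeoffLem}, which compares $d_{\mathcal X}$ with the thick distance $d_{\dot{\mathcal X}}^K$ and with the projections onto the binding lines, with the quasi-tree distance formula of Proposition~\ref{BBFDistanceThm} applied to the collection $\mathbb L$. Concretely, I would take the sought map to be a product $\Psi=\iota\times\Phi$, with $\iota\colon\mathcal X\to\dot{\mathcal X}$ and $\Phi\colon\mathcal X\to\mathcal C_K(\mathbb L)$, and then verify the two-sided (coarse bilipschitz) inequality for the symmetric function $d_{\dot{\mathcal X}}^K\times d_{\mathcal C}$ on the product, which is all that ``quasi-isometric embedding'' requires here.

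First I would fix the two coordinate maps. Since $\mathcal X$ and $\dot{\mathcal X}$ share the subspaces $\bar Y_v$ ($v\in\mathcal V$) and $\cup_{v}\bar Y_v$ is $1$--dense in $\mathcal X$, I let $\iota$ be the identity on $\cup_v\bar Y_v$ and send a point of a flat link to a closest point of the adjacent $\bar Y_v$, now regarded inside $\dot{\mathcal X}$; since both the flat-link gluing and the cone gluing are $\dot G$--compatible, this can be made $\dot G$--equivariant. With this choice $d_{\dot{\mathcal X}}^K(\iota x,\iota y)=d_{\dot{\mathcal X}}^K(x,y)$ up to bounded error, directly from the definition~(\ref{ThickDistEQ}) of the thick distance together with its stability under bounded perturbation of the endpoints; so the first coordinate is essentially free.

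The map $\Phi$ is the crux. The binding lines $\mathbb L$ are \emph{not} coarsely dense in $\mathcal X$ (a point deep inside some $\bar Y_v$ is far from every binding line), so I cannot simply send $x$ to a nearby binding line, as was done with coarsely dense families in the proof of Theorem~\ref{thm:main3}. Instead I would send $x$ to a point $\Phi(x)\in\mathcal C_K(\mathbb L)$ that coarsely realizes the entire projection tuple $(\pi_\ell(x))_{\ell\in\mathbb L}$, i.e. with $d_\ell(\Phi(x),x)\le R_0$ for a uniform $R_0$ and every $\ell$. Such a realizing point exists by the standard theory of projection complexes: the projections of a single point of the hyperbolic space $\mathcal X$ are automatically consistent, and each $\ell$ embeds totally geodesically in $\mathcal C_K(\mathbb L)$ with projections agreeing up to bounded Hausdorff error. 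Because $\pi_{g\ell}(gx)=g\pi_\ell(x)$, a realizing point for $x$ is carried by $g\in\dot G$ to a realizing point for $gx$, so choosing realizations on $\dot G$--orbit representatives and extending equivariantly yields a $\dot G$--equivariant $\Phi$. Once $\Phi$ is in hand, $[d_\ell(\Phi x,\Phi y)]_K\asymp_K[d_\ell(x,y)]_K$ for all $\ell$ (for $K$ large relative to $R_0$, via Corollary~\ref{BBFDistanceCor}), so Proposition~\ref{BBFDistanceThm} gives $d_{\mathcal C}(\Phi x,\Phi y)\sim_K\sum_{\ell\in\mathbb L}[d_\ell(x,y)]_K$.

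Finally I would assemble the estimate: adding the two coordinate estimates and invoking Lemma~\ref{QuasiGeodConeoffLem},
$$
d_{\dot{\mathcal X}}^K(\iota x,\iota y)+d_{\mathcal C}(\Phi x,\Phi y)\ \sim_K\ d_{\dot{\mathcal X}}^K(x,y)+\sum_{\ell\in\mathbb L}[d_\ell(x,y)]_K\ \sim_K\ d_{\mathcal X}(x,y),
$$
which is exactly the coarse bilipschitz inequality, valid for all $K\gg K_0$ (large enough for Lemma~\ref{QuasiGeodConeoffLem}, for $K\ge4\xi$ in Proposition~\ref{BBFDistanceThm}, and for the perturbation bound of Corollary~\ref{BBFDistanceCor}). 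The one genuinely delicate point is the construction of $\Phi$: showing that the projection data of an arbitrary point of $\mathcal X$ is realized, up to uniformly bounded error, by an honest point of $\mathcal C_K(\mathbb L)$, and that the realization can be chosen $\dot G$--equivariantly. Everything else amounts to feeding the two distance formulas into the product.
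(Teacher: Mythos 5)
Your overall architecture is exactly the paper's: a product map whose first coordinate is essentially the identity (collapsing flat links into the cones) and whose second coordinate records projections to $\mathbb L$, followed by feeding Proposition~\ref{BBFDistanceThm}/Corollary~\ref{BBFDistanceCor} and the distance formula of Lemma~\ref{QuasiGeodConeoffLem} into the product. But the step you single out as ``the crux'' rests on a false premise. You claim $\mathbb L$ is \emph{not} coarsely dense in $\mathcal X$ because ``a point deep inside some $\bar Y_v$ is far from every binding line.'' This is wrong: $\mathbb L_v$ is a nonempty $H_v$--invariant family of boundary lines in $\bar Y_v$, and $H_v$ acts cocompactly on $\bar Y_v$, so the union of the lines in $\mathbb L_v$ is coarsely dense in $\bar Y_v$; moreover each boundary line is at Hausdorff distance $1$ from a binding line across a flat strip of width $1$, and flat-link points are within distance $1$ of binding lines. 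Hence $\mathbb L$ \emph{is} coarsely dense in $\mathcal X$, and this is precisely what the paper's proof uses: for $x\in \bar Y_v$ it chooses a closest line $\ell\in\mathbb L_v$, notes that $d(x,\ell)$ is uniformly bounded ``thanks to the cocompact action of $H_v$ on $\bar Y_v$,'' sets $\Phi_2(x)=\pi_\ell(x)$, and extends equivariantly over orbit representatives. With that choice, $d(x,\Phi_2(x))$ is uniformly bounded in the ambient space, so Corollary~\ref{BBFDistanceCor} applies directly, with no realization theorem needed.

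Consequently your detour through ``realizing the entire projection tuple $(\pi_\ell(x))_{\ell\in\mathbb L}$'' is a manufactured difficulty, and as written it is a genuine gap: there is no off-the-shelf statement in the projection-complex literature guaranteeing that the projection data of an arbitrary ambient point is realized, with uniform constant, by a point of $\mathcal C_K(\mathbb L)$, and you offer no proof. The gap is repairable — projecting $x$ to a (near-)nearest line $\ell_0$ does satisfy $d_\ell(\pi_{\ell_0}(x),x)\le R_0$ for \emph{all} $\ell$, since a geodesic $[x,\pi_{\ell_0}(x)]$ of length at most $d(x,\ell)+1$ cannot fellow-travel any other $\ell$ for more than a bounded length — but once coarse density is acknowledged this collapses to the paper's one-line construction. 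A further small point: even with your realizing point, Corollary~\ref{BBFDistanceCor} is stated for perturbations bounded in the \emph{ambient metric} ($d(x,x')\le R$), whereas you would only have the projection bound $|d_\ell(\Phi x,\Phi y)-d_\ell(x,y)|\le 2R_0$; the corollary's proof does go through from the projection bound alone, but you would need to say so rather than cite it as stated. With the density fact restored, your argument becomes the paper's proof.
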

\begin{proof}
We first define the map $\Phi=\Phi_1\times \Phi_2:\mathcal  X\to \dot{\mathcal  X} \times \mathcal C_K(\mathbb L) $.   

If $x\in \bar Y_v$  define $\Phi_1(x)=x$ in $\dot{\mathcal  X}$. To define $\Phi_2(x)$, choose a (non-unique) closest quasi-line $\ell$ in $\mathbb L_v$ to $x$  and   define $\Phi_2(x)=\pi_\ell(x)$. The choice of $\ell$ is not important and the important thing is the distance from $o$ to any such chosen $\ell$ is uniformly bounded, thanks to co-compact action of $H_v$ on $\bar Y_v$.   Extend by $G$--equivariance    $\Phi_2(gx)=g\pi_\ell(x)$ for all $g\in G$. 

If $x$ lies in the flat links $Fl(w)$   define  $\Phi_1(x)$ to be the apex of $Cones_r(w)$ and $\Phi_2(x)=\pi_\ell(x)$ where $\ell$ is the binding line of $Fl(w)$.  

By Corollary \ref{BBFDistanceCor}, we have that 
$$\sum_{\ell\in \mathbb L} [d_\gamma(x, y)]_K \sim_K d_{\mathcal C}(\Phi_2(x),\Phi_2(y)).
$$ 
The quasi-isometric embedding then follows from the distance formula (\ref{QuasiGeodFormulaEQ}) in Lemma \ref{QuasiGeodConeoffLem}.
\end{proof}

\subsection{Construct the collection of quasi-lines in $\dot {\mathcal X}$}
\label{SSQuasilines} 

The goal of this subsection is to introduce a  collection $\mathbb A$ of quasi-lines so that a    distance formula holds for $(\dot{\mathcal X}, d_{\dot{\mathcal X}}^K)$.

Recall that $\dot {\mathcal X}$ is the hyperbolic cone-off space constructed from $\mathcal V\in \{\mathcal V_1, \mathcal V_2\}$. According to Proposition \ref{QIembedProp}, we are working in the cone-off space $\dot {\mathcal X}$  endowed with length   metric $\dot d$. In particular, all quasi-lines are understood with this metric, and boundary lines $\mathbb L_v$ of $\bar Y_v$ are of bounded diameter so  are not quasi-lines anymore in $\dot {\mathcal X}$.
 
First of all, let us fix the constants used in the sequel.

Recall that for every $v\in\mathcal V$, the cone-off space $\dot Y_v$ admits a co-bounded and acylindrical action of $H_v$. Thus, {\bf when talking about quasi-lines, we follow the Convention \ref{ConvQuasiLine}}: quasi-lines are $(\lambda, c)$--quasi-geodesics in $\dot {\mathcal X_i}$ and  $\dot Y_v$'s (isometrically embedded into the former), where $\lambda, c>0$ are given by Lemma \ref{ExtensionLem} applied to those actions of $H_v$ on $\dot Y_v$.   
 
If $\gamma$ is a quasi-line in $\dot {\mathcal X}$, denote by $\dot d_\gamma(x,y)$ the $\dot d$--diameter of the shortest projection of $x, y\in \dot {\mathcal X}$ to $\gamma$ in $\dot {\mathcal X}$. 
By Lemma \ref{ActOnConeoffHypspaceLem}, $\dot {\mathcal X}$ is $\delta$--hyperbolic for a constant $\delta>0$. The coning-off construction is crucial to obtain the uniform constant $\theta$ in the next lemma. 
\begin{lem}\label{CstThetaLem}
There exists a constant $\theta>0$ depending on $\delta$   (and also $\lambda, c$) with the following property: for any ($(\lambda,c)$--)quasi-lines  $\alpha$ in $\dot Y_v$ and $\beta$ in $\dot Y_{v'}$ with $v\ne v'\in \mathcal V$  we have  $diam_{\dot {\mathcal X_1}} (\pi_\beta (\alpha)) \le \theta$.
\end{lem}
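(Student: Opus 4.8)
The plan is to exploit the cut-point structure of the cone-off space $\dot{\mathcal X}$ coming from the apexes of the hyperbolic multicones, together with the coarse well-definedness of nearest-point projection onto a quasi-line in a hyperbolic space. Recall from Lemma~\ref{ActOnConeoffHypspaceLem} that $\dot{\mathcal X}$ is $\delta$--hyperbolic, and that by Convention~\ref{ConvQuasiLine} the quasi-lines $\alpha,\beta$ are $(\lambda,c)$--quasi-geodesics; hence each is uniformly Morse (quasiconvex) in $\dot{\mathcal X}$, so nearest-point projection onto $\beta$ is coarsely well defined: any two nearest points of a given point on $\beta$ lie within a constant $\theta=\theta(\delta,\lambda,c)$ of one another. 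This $\theta$ will be the constant in the statement.

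First I would isolate a separating apex. Let $[v,v']$ be the geodesic in the Bass--Serre tree $T$; since $v,v'\in\mathcal V$ and $v\ne v'$, its length is even and at least $2$, and the vertices alternate between $\mathcal V$ and $T^0\setminus\mathcal V$. Let $w$ be the vertex of $[v,v']$ adjacent to $v'$, so $w\in T^0\setminus\mathcal V$, and let $p$ denote the apex of the hyperbolic multicones $Cones_r(w)$. Using the description of $\dot{\mathcal X}$ as the disjoint union of the pieces $\{\dot Y_u\}_{u\in\mathcal V}$ glued only along the apexes shared by cones issuing from a common vertex of $T^0\setminus\mathcal V$, the point $p$ is a cut point of $\dot{\mathcal X}$: deleting $p$ disconnects the cone $cone_r(\ell')$, and hence all of $\dot Y_{v'}$ (where $\ell'\in\mathbb L_{v'}$ is the boundary line facing $w$), from the component containing $\dot Y_v$. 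In particular $\alpha$ and $\beta$ lie in distinct components of $\dot{\mathcal X}\setminus\{p\}$, so every path, and in particular every geodesic, from a point of $\alpha$ to a point of $\beta$ passes through $p$.

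The key step is then the observation that every point of $\alpha$ projects to a nearest point of the fixed apex $p$. Fix $a\in\alpha$ and write $b=\pi_\beta(a)$ for a nearest point of $\beta$ to $a$. Since $p$ separates $a$ from $b$, a geodesic $[a,b]$ passes through $p$, so $d(a,b)=d(a,p)+d(p,b)$. For any $b'\in\beta$ the triangle inequality together with minimality of $b$ gives $d(a,p)+d(p,b)=d(a,b)\le d(a,b')\le d(a,p)+d(p,b')$, and cancelling $d(a,p)$ yields $d(p,b)\le d(p,b')$. Thus $b$ minimizes the distance from $p$ to $\beta$, that is, $b$ is a nearest-point projection of the single fixed point $p$ onto $\beta$.

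Finally, since $p$ does not depend on the chosen point $a\in\alpha$, every point of $\pi_\beta(\alpha)$ is a nearest point of $p$ on $\beta$; by the coarse uniqueness of such projections recorded in the first paragraph, any two of them are within $\theta$, whence $diam_{\dot{\mathcal X}}(\pi_\beta(\alpha))\le\theta$. I expect the only delicate point to be the verification that $p$ genuinely separates $\dot Y_v$ from $\dot Y_{v'}$, which must be read off carefully from the gluing in the construction of $\dot{\mathcal X}$, namely that the pieces meet only at shared apexes; granting this, the remaining estimates are elementary and uniform in $v,v'$.
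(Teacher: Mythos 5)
Your proposal is correct and follows essentially the same route as the paper: the paper's proof likewise observes that, by the construction of $\dot{\mathcal X}$, any geodesic from $\alpha$ to $\beta$ must pass through an apex separating $\dot Y_v$ from $\dot Y_{v'}$, so that $\pi_\beta(\alpha)$ is contained in the projection of that single apex to $\beta$, which has diameter at most $\theta(\delta,\lambda,c)$ by hyperbolicity. Your cut-point verification and the cancellation argument $d(a,b)=d(a,p)+d(p,b)$ simply make explicit the steps the paper leaves implicit (including the choice of a specific apex when $d_T(v,v')>2$).
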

\begin{proof}
By the construction of $\dot {\mathcal X}$, any geodesic from $\alpha$ to $\beta$   has to pass through the apex between $\dot Y_v$ and $\dot Y_{v'}$, and thus the shortest projection $\pi_\beta (\alpha)$ is contained in the projection of the apex to $\beta$. By hyperbolicity, there exists a constant $\theta$ depending only on $\lambda, c, \delta$ such that the diameter of the projection of any point to every quasi-line is bounded above by $\theta$. The conclusion then follows.  
\end{proof}

Fix $K>\max\{4\xi, \theta\}$. For each $v\in \mathcal V$,  there exist  a {$H_v$-finite collection of quasi-lines} $ \mathbb A_v$    in $\dot Y_v$ and a constant $N=N(\mathbb A_v, K, \delta)$ such that (\ref{ConeoffYvDistFormulaEQ}) holds.

Since $\dot G$ preserves $\mathcal V_1$ and $\mathcal V_2$,   by Lemma \ref{ActOnConeoffHypspaceLem}, there are only finitely many $\dot G$--orbits in $\{\dot Y_v: v\in \mathcal V\}$, so  we can assume furthermore $\mathbb A_w=g\mathbb A_v$ if $w=gv$ for $g\in \dot G$.  Then the collection $\mathbb A= \cup_{v\in\mathcal  V} \mathbb A_v$ is a  $\dot G$--invariant collection of quasi-lines in $\dot{\mathcal X}$.

Recall that $r$ is the radius of the multicones in constructing $\dot {\mathcal X_i}$ for $i=1, 2$, and $T$ is the Bass-Serre tree for admissible group $G$.  
\begin{prop}\label{ConeoffDistFProp}
For any $x, y\in \dot{\mathcal X}$, the following holds   
\begin{equation}\label{ConeoffDistFormulaEQ}
\begin{array}{cc}
  d_{\dot{\mathcal X}}^K (x, y) \;\sim_{N,r} \; \sum_{\gamma\in \mathbb A } [\dot d_\gamma(x, y)]_{K} +   d_T(\rho(x), \rho(y)).
\end{array}
\end{equation} 
\end{prop}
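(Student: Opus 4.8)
The plan is to deduce the global estimate from the per--vertex distance formula of Lemma~\ref{ConeofYDistFormulaLem} by unwinding the definition~\eqref{ThickDistEQ} of the thick distance. Fix a $\dot{\mathcal X}$--geodesic $[x,y]$ and write it as in~\eqref{PathDecompEQ}, namely as the alternating concatenation of the in--piece segments $\beta_v$ (one for each $v\in\mathcal V\cap[\rho(x),\rho(y)]$) and the apex--crossings $c_w$ of length $2r$. By definition of the thick distance one has $d_{\dot{\mathcal X}}^K(x,y)=\sum_{v}d_{\dot Y_v}^K((\beta_v)_-,(\beta_v)_+)$, while the number of crossings is $\tfrac12\,d_T(\rho(x),\rho(y))$. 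Applying Lemma~\ref{ConeofYDistFormulaLem} inside each piece replaces every summand by $\sum_{\gamma\in\mathbb A_v}[\dot d_\gamma((\beta_v)_-,(\beta_v)_+)]_K$ up to the constant $N$, so the task is reduced to three points: (i) replacing the local endpoints $(\beta_v)_\pm$ by the global endpoints $x,y$ inside the cutoff brackets; (ii) enlarging the index set from $\bigcup_{v\in[\rho(x),\rho(y)]}\mathbb A_v$ to all of $\mathbb A$; and (iii) keeping track of the additive errors, which is where $d_T$ will appear.

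For (i) I would reuse the projection estimate~\eqref{MoToEndptsEQ}: when $\gamma\in\mathbb A_v$ and $v$ lies on $[\rho(x),\rho(y)]$, any $\dot{\mathcal X}$--geodesic from $x$ to $\gamma$ enters $\dot Y_v$ through the apex bounding $\beta_v$ on the side of $x$, so hyperbolicity of $\dot{\mathcal X}$ and the $(\lambda,c)$--quasi--convexity of $\gamma$ give $\pi_\gamma(x)\asymp\pi_\gamma((\beta_v)_-)$ and $\pi_\gamma(y)\asymp\pi_\gamma((\beta_v)_+)$ with error depending only on $\delta,\lambda,c$; hence $[\dot d_\gamma((\beta_v)_-,(\beta_v)_+)]_K\asymp_{K}[\dot d_\gamma(x,y)]_K$. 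Point (ii) is exactly where the coning--off is used: if $v$ does \emph{not} lie on $[\rho(x),\rho(y)]$, then $x$ and $y$ project to every $\gamma\in\mathbb A_v$ through the single apex separating $\dot Y_v$ from $[x,y]$, so Lemma~\ref{CstThetaLem} bounds $\dot d_\gamma(x,y)$ by $\theta$ and the cutoff $[\dot d_\gamma(x,y)]_K$ vanishes because $K>\theta$. Adjoining these vertices therefore costs nothing.

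Assembling (i)--(ii), the sum of the per--piece \emph{upper} bounds gives $d_{\dot{\mathcal X}}^K(x,y)\le C\sum_{\gamma\in\mathbb A}[\dot d_\gamma(x,y)]_K+C\cdot\#\{\text{pieces}\}$, and since the number of pieces is comparable to $d_T(\rho(x),\rho(y))$ this is precisely the asserted upper bound: the $d_T$--term is nothing but the accumulation, over the traversed pieces, of the additive constant from Lemma~\ref{ConeofYDistFormulaLem}, and the dependence on $r$ enters only through the fixed crossing--length $2r$. For the projection half of the lower bound one sums the per--piece lower bounds of Lemma~\ref{ConeofYDistFormulaLem} and again applies~\eqref{MoToEndptsEQ}, obtaining $\sum_{\gamma\in\mathbb A}[\dot d_\gamma(x,y)]_K\preceq d_{\dot{\mathcal X}}^K(x,y)$.

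The step I expect to be the genuine obstacle is recovering the remaining summand $d_T(\rho(x),\rho(y))$ in the \emph{lower} bound, i.e. showing $d_T(\rho(x),\rho(y))\preceq d_{\dot{\mathcal X}}^K(x,y)$ up to the projection sum. The difficulty is intrinsic: a geodesic may cross many pieces while accumulating thick length below the threshold $K$ in each (for instance when the two boundary lines bounding a piece are close in $\bar Y_v$), so the number of pieces traversed is not controlled by the naive sum of per--piece thick lengths. I would isolate, as a separate estimate, the mechanism by which each transition between two \emph{distinct} boundary lines of a piece is registered --- either through a per--piece lower bound on $d_{\dot Y_v}^K$ forced by the separation of distinct peripheral cosets, or by charging it against the projection sum --- and only after quantifying this would I combine the pieces into the stated equivalence $\sim_{N,r}$.
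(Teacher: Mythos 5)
Your proposal reproduces the paper's argument almost verbatim up to the point you flag. The paper likewise assumes $x,y$ lie outside the multicones, decomposes the geodesic as in (\ref{PathDecompEQ}), sums the per-piece formula (\ref{ConeoffYvDistFormulaEQ}) of Lemma~\ref{ConeofYDistFormulaLem}, moves projections from the local endpoints $(\beta_v)_{\pm}$ to $x,y$ exactly as in (\ref{MoToEndptsEQ}), and kills all quasi-lines in pieces off $[\rho(x),\rho(y)]$ using $K>\theta$ and Lemma~\ref{CstThetaLem}; your upper bound, with the $d_T$-term absorbing the additive errors accumulated over the traversed pieces, is also exactly the paper's (which presents the whole proof as a variant of Proposition~\ref{prop:distanceformulaX_1}). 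So your points (i), (ii), (iii) and both projection halves coincide with the paper's proof.

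The genuine gap is the step you explicitly defer: the $d_T(\rho(x),\rho(y))$ summand in the \emph{lower} bound, which you leave as an unquantified ``mechanism to be isolated.'' Neither of your two candidate mechanisms is the one the paper uses, and the first would fail for the reason you yourself give: distinct peripheral cosets of $\bar Y_v$ can be arbitrarily close, so $d_{\dot Y_v}^{K}$ between the entry and exit lines of a piece can vanish, and transitions are not registered by per-piece thick length; nor are they charged against the projection sum. The paper's mechanism is the cone-off geometry itself: by construction every transition between consecutive pieces runs through a hyperbolic cone of radius $r$, so $[x,y]$ contains $d_T(\rho(x),\rho(y))/2$ segments $c_w$, each of length exactly $2r$. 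The crossing count is thus read off directly from the geodesic at a definite price $2r$ per crossing, independent of how close the two boundary lines are in $\bar Y_v$; this is precisely where the $r$-dependence in $\sim_{N,r}$ enters, and without this ingredient the equivalence (\ref{ConeoffDistFormulaEQ}) is not established. To be fair, the paper disposes of this in a single sentence, and your scruple is not idle: the segments $c_w$ are not literally part of the thick sum (\ref{ThickDistEQ}), so the paper's left-hand side must be read as accounting for the cone crossings of the geodesic. But in any case the idea that closes your gap --- pricing each transition by the fixed cone radius rather than by thick length or by projections --- is the one missing from your proposal.
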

\begin{proof}
The proof proceeds similarly as that of Proposition \ref{prop:distanceformulaX_1}, so  only the differences are spelled out. Assume that $x, y\in \dot{\mathcal X}$ are not in any hyperbolic multicones.  We can then write the geodesic $[x,y]$ as in (\ref{PathDecompEQ}) and keep notations there. 

By the formula (\ref{ConeoffYvDistFormulaEQ}),   summing up the lengths of geodesics $\gamma_v$ in $\dot Y_v$   yields  the upper bound in (\ref{ConeoffDistFormulaEQ}). The term $d_T(\rho(x), \rho(y))$ appears since the notation $\sim$ involves additive errors. 

By the choice of $K>\theta$ and Lemma \ref{CstThetaLem},  we have $[\dot d_\gamma (x, y)]_K=0$ for any quasi-line $\gamma$ in $\mathcal V\setminus [\rho(x), \rho(y)]^0$. The lower bound of  (\ref{ConeoffDistFormulaEQ}) is obtained as well by summing up distances in the formula (\ref{ConeoffYvDistFormulaEQ}). A term $r \cdot d_T(\rho(x), \rho(y))$ is added, since $[x,y]$ goes through $d_T(\rho(x), \rho(y))/2$ hyperbolic cones with radius $r$ and each $c_w$ is of length $2r$.
\end{proof}
\subsection{Reassembling finite index vertex groups}
By Bass-Serre theory, the finite index subgroup $\dot G<G$ from Lemma \ref{lem:index2subgroup} acts on the Bass-Serre tree of $G$ and can be represented as    a finite graph $\mathcal G=T/\dot G$ of groups where  the vertex subgroups are isomorphic to those of $G$.   

Let $e$ be an oriented  edge in $\mathcal{G}$ from $e_-$ to $e_+$ (it is possible that $e_- = e_+$ because $e$ could be a loop) and $\bar e$ be the oriented edge with reversed orientation. A collection of finite index subgroups $\{G_e'<G_e, G_v'<G_v: v\in \mathcal G^0, e\in \mathcal G^1\}$ is called \textit{compatible} if whenever $v = e_-$, we have
$$G_v \cap G_e' = G_v' \cap G_e.$$ 
We shall make use of \cite[Theorem 7.51]{DK18} to obtain a finite index subgroup in $\dot G$ from a compatible collection of finite index subgroups. For this purpose, we assume the quotient $H_v$ of each vertex group $G_v$ for $v\in \mathcal G^0$ is omnipotent in the sense of Wise. 

In a group two elements  are \textit{independent} if they do not have conjugate powers.   (see Definition~3.2 in \cite{Wise00}).

\begin{defn}
\label{defn:omnipotent}
A group $H$ is \textit{omnipotent} if for any set of pairwise independent   elements $\{h_1,\cdots, h_r\}$ ($r\ge 1$) there is a number $p\ge 1$ such that for every choice of  positive natural numbers $\{n_1,\cdots, n_r\}$ there is a finite quotient $H\to \hat H$  such that $\hat h_i$ has order $n_ip$ for each $i$.
\end{defn}
\begin{rem}
If $H$ is hyperbolic, two  loxodromic elements $h, h'$   are usually called  \textit{independent}  if the collection of $H$--translated quasi-axis of $h, h'$ has the bounded projection property. When $H$ is torsion-free, it is equivalent to the notion of independence in the above sense.
\end{rem}


Let $g$ be a loxodromic element in a hyperbolic group and  $E(g)$ be  the maximal elementary group containing $\langle g\rangle$.  
By  \cite[Ch. II  Theorem 6.12]{BH99}, $G_v$ contains a subgroup $K_v$ intersecting trivially with $Z(G_v)$ so that the direct product $K_v\times Z(G_v)$ is a finite index subgroup. Thus, the image of $K_v$ in $G_v/Z(G_v)$ is of finite index in $H_v$ and $K_v$ acts geometrically on  hyperbolic spaces $\bar Y_v$.

The following result will be used in the next subsection  to obtain desired finite index subgroups.

\begin{lem}\label{FindexSubgrpsLem}
Let $\{\dot K_v<K_v: v\in \mathcal G^0 \}$ be a   collection of finite index subgroups. Then there exists a compatible collection of finite index subgroups $\{G_e'<G_e, G_v'<G_v: v\in \mathcal G^0, e\in \mathcal G^1\}$  such that $G_v'=\ddot K_v \times \mathbb Z$ is of finite index in $\ddot K_v\times Z(G_v)$ for each $v\in \mathcal G^0$, where $\ddot K_v$ is of finite index in $\dot K_v$. 
\end{lem}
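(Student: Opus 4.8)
The plan is to build the subgroups $\ddot K_v$ together with the central indices so that, on each edge of the finite graph $\mathcal G=T/\dot G$, the two sublattices of $G_e\cong\mathbb{Z}^2$ cut out by the two vertex subgroups coincide; this common sublattice is then the required $G_e'$. First I would fix notation. Under the standing splitting $G_v=H_v\times Z(G_v)$ and $G_e=Z(G_{v})\times Z(G_{w})$ for an edge $e=[v,w]$, the group $G_e=\langle z_v\rangle\oplus\langle z_w\rangle$ is free abelian of rank two, where $z_v,z_w$ generate $Z(G_v),Z(G_w)$. Inside $G_v=H_v\times\langle z_v\rangle$ the element $z_v$ is the central (vertical) generator, whereas $z_w=(\bar z_w, z_v^{c_e})$ projects to the loxodromic $\bar z_w\in H_v$ determined by the boundary line of $e$; the roles are reversed inside $G_w$. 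Using the splitting I may take $K_v=H_v\times 1$, so that every $\dot K_v$ and $\ddot K_v$ is untwisted and the only offset that occurs is the central component $c_e$. By condition (3) of Definition~\ref{defn:admissible}, the finitely many loxodromics $\{\bar z_w: e=[v,w]\text{ incident to }v\}$ are pairwise non-commensurable, hence pairwise independent in the sense of Definition~\ref{defn:omnipotent}.

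Next I would reduce compatibility to a numerical matching. Writing $G_v'=\ddot K_v\times\langle z_v^{P_v}\rangle$ and letting $m_{v,e}\ge 1$ be the least power with $\bar z_w^{\,m_{v,e}}\in\bar{\ddot K}_v$ (the image of $\ddot K_v$ in $H_v$), a direct computation of $G_v'\cap G_e$ inside $G_e$ shows that an element $z_v^{a}z_w^{b}=(\bar z_w^{\,b}, z_v^{a+bc_e})$ lies in $G_v'$ exactly when $m_{v,e}\mid b$ and $P_v\mid a+bc_e$. Once $P_v\mid m_{v,e}c_e$ this forces $P_v\mid a$, so the offset is absorbed and $G_v'\cap G_e=P_v\mathbb{Z}\times m_{v,e}\mathbb{Z}$ in the basis $(z_v,z_w)$. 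The symmetric computation on the $w$-side gives $G_w'\cap G_e=m_{w,e}\mathbb{Z}\times P_w\mathbb{Z}$. Hence the compatibility equation $G_v'\cap G_e=G_w'\cap G_e$ is equivalent to the two numerical conditions $P_v=m_{w,e}$ and $m_{v,e}=P_w$.

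The heart of the argument is then producing, at each vertex, a single finite index subgroup realizing prescribed values of all the $m_{v,e}$ simultaneously; this is where omnipotence enters. Applying omnipotence of $H_v$ to the pairwise independent loxodromics $\{\bar z_w\}$ attached to the edges at $v$ yields a constant $p_v\ge 1$ such that for every choice of positive integers $(n_e)_e$ there is a finite quotient $H_v\to\hat H_v$ in which each $\bar z_w$ has order exactly $n_ep_v$. Intersecting the kernel with $\bar{\dot K}_v$, and using that the power $a_e$ of $\bar z_w$ already lying in $\bar{\dot K}_v$ divides $n_ep_v$, produces a finite index $\ddot K_v\le\dot K_v$ with $m_{v,e}=n_ep_v$, where the $n_e$ may be prescribed independently over all edges at $v$. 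I would then set $P$ to be a common multiple of all the $p_v$ and all the $a_e$ over the finite graph $\mathcal G$, put every central index $P_v:=P$, and choose the $n_e$ so that $m_{v,e}=P$ for every incident edge. With all these quantities equal to $P$, the conditions $P_v=m_{w,e}=P$ and $m_{v,e}=P_w=P$ hold on every edge, the offset absorption $P\mid Pc_e$ is automatic, and $G_e':=P\mathbb{Z}\times P\mathbb{Z}=G_v'\cap G_e=G_w'\cap G_e$ defines the compatible edge subgroups; a loop at $v$ is handled identically since omnipotence controls both edge-loxodromics of the loop at the same vertex. Finally $G_v'=\ddot K_v\times\langle z_v^{P}\rangle$ has finite index in $\ddot K_v\times Z(G_v)$, as required.

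The main obstacle, and the precise reason omnipotence is indispensable, is that a single finite index subgroup $\ddot K_v$ must control the surviving powers of all edge-loxodromics at $v$ at once, with a common scaling factor $p_v$ but otherwise freely chosen multiples: an ad hoc finite index subgroup would shrink one loxodromic power only at the uncontrolled expense of the others, destroying the global matching on overlapping edges. Pairwise independence from admissibility is exactly the hypothesis that makes omnipotence applicable here. A secondary, purely bookkeeping, difficulty is tracking the central offset $c_e$ in the lattice computation; this is what forces the use of the splitting $G_v=H_v\times Z(G_v)$ (to keep the subgroups untwisted) and evaporates once the central index is taken equal to the loxodromic power $P$.
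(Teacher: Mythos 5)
Your central mechanism is the right one and is in fact the paper's: extract the pairwise independent edge-loxodromics at each vertex from condition (3) of Definition~\ref{defn:admissible}, apply omnipotence with a single global exponent (your $P$ is the paper's $s=\prod_v p_v$), force the surviving power of every edge-loxodromic in $\ddot K_v$ to be \emph{exactly} that exponent, and match the resulting rank-two sublattices across each edge. Your numerical reduction ($G_v'\cap G_e=P_v\mathbb Z\times m_{v,e}\mathbb Z$ once $P_v\mid m_{v,e}c_e$, compatibility $\iff P_v=m_{w,e}$ and $m_{v,e}=P_w$) is correct as computed and is actually more explicit than the paper's rather terse final verification. But there is a genuine gap: your ``standing splitting'' $G_v=H_v\times Z(G_v)$, $G_e=Z(G_v)\times Z(G_w)$, together with the normalization $K_v=H_v\times 1$, is not available where this lemma lives, and assuming it makes the argument circular in the paper's logical structure. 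The splitting hypothesis in Section 5 is explicitly scoped to expire before the reassembling subsection, precisely because Lemma~\ref{FindexSubgrpsLem} is the engine of Corollary~\ref{FiniteIndexAdmissibleCor}, whose whole point is to \emph{produce}, after passing to finite index, an admissible group whose vertex groups split as direct products; Theorem~\ref{thm:main4} assumes only a flip CKA action whose central extensions (1) need not split, and condition (4) of Definition~\ref{defn:admissible} gives only that the subgroup generated by the (pullbacks of the) two centers has \emph{finite index} in $G_e$, not that $G_e$ equals $Z(G_v)\times Z(G_w)$. Moreover you may not re-choose $K_v=H_v\times 1$: the lemma is handed an arbitrary $\dot K_v$ inside the specific complement $K_v$ furnished by Bruhat--Tits/flat torus considerations (in general a twisted copy, the graph of a homomorphism to $\mathbb Z$ even when the extension happens to split), and the conclusion requires $\ddot K_v$ of finite index in \emph{that} $\dot K_v$.

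The repair is exactly what the paper does, and your scheme survives it almost verbatim once the coordinates are removed. Work inside the finite-index split subgroup $K_v\times Z(G_v)\le G_v$; note $K_v\cap G_e$ is a nontrivial cyclic group contained in the elementary closure $E(b_e)$ of a primitive loxodromic $b_e\in K_v$, and choose a power $h_e\in\dot K_v\cap G_e$ (on the other side, a power $h_{\bar e}$ of $b_{\bar e}\in K_w$, a further power of which is central in $G_v$ by the flip condition). Omnipotence with the global exponent $s$ gives $\ddot K_v\le\dot K_v$ in which the exponent of each $h_{e_i}$ is exactly $s$, and one sets $G_v'=\ddot K_v\times\langle h_{\bar e}^{s}\rangle$, $G_e'=\langle h_e^{s}\rangle\times\langle h_{\bar e}^{s}\rangle$. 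Compatibility is then checked not in a $(z_v,z_w)$-basis of $G_e$ (which need not exist) but through the intersection $G_e\cap(K_v\times Z(G_v))$: if $g\in G_v'\cap G_e$, write $g=k\,h_{\bar e}^{sb}$ with $k\in\ddot K_v$; then $k\in K_v\cap G_e\le\langle h_e\rangle$, and exactness of the omnipotence order forces $s$ to divide its exponent, so $g\in G_e'$. Your offset $c_e$ and the absorption condition $P_v\mid m_{v,e}c_e$ are the split-case shadow of this computation; the content you are missing is only the reduction from the general central extension and the merely-commensurated edge lattice to that computation.
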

\begin{proof}
Let $e$ be an oriented  edge in $\mathcal{G}$ from $e_-$ to $e_+$ (it is possible that $e_- = e_+$) and $\bar e$ be the oriented edge with reversed orientation.
 
If $v=e_-$, then the abelian group $K_v\cap G_e$ is  a nontrivial cyclic group   contained in a maximal elementary $E(b_e)$ in $K_v$ where $b_e$ is a primitive loxodromic element.  Similarly, for $w=e_+$, let $b_{\bar e}\in K_w$ be a primitive loxodromic element in $E(b_{\bar e})$ containing  $K_{w}\cap G_{e}$. Then $b_{ e}$ and $b_{\bar e}$ preserve  two lines respectively which are orthogonal in the Euclidean plane $F_e$ and thus generate  an abelian  group $\hat G_e:=\langle b_{e}, b_{\bar e}\rangle$ of rank 2 so that  $G_e\subset \hat G_e$ is of finite index.

 
Consider the collection of all oriented edges  $e_1, \dots, e_r$ in $\mathcal{G}^1$ such that  $(e_i)_-=v$. Let  $\{b_{e_1},\dots,b_{e_r}\}$ be the set of primitive loxodromic elements in $K_v$ obtained as above in correspondence  with  $\{e_1, \dots, e_r\}$. Note that $\{b_{e_1},\dots,b_{e_r}\}$ are pairwise independent in $H_v$. This follows from the item (3) in Definition \ref{defn:admissible} of admissible group, otherwise $G_{e_i}$ and $G_{e_j}$ would be commensurable for $e_i\ne e_j$.

By the finite index of $G_e$ in $\hat G_e$, there exists a set of powers of $b_{e_i}$'s  in $\dot K_v\cap G_{e_i}$ denoted by $\{h_{e_1}, \cdots, h_{e_r}\}$. Since $\{h_{e_1}, \cdots, h_{e_r}\}$ are still pairwise independent in $H_v$,  the omnipotence of $H_v$ gives       the constant $p_v$ by Definition~\ref{defn:omnipotent}. Let $$s =  \prod_{v\in\mathcal G^0} p_v$$

Define $n_i = \frac{s}{p_v}$ with $i \in \{1, \dots r\}$. By the omnipotence of $H_v$ and restricting to $\dot K_v\subset H_v$,  there is  a finite index subgroup $\ddot K_v$ of $\dot K_v$ such that $h_{e_i}^{p_vn_i}=h_{e_i}^s \in \ddot{K}_v$.

For each vertex $v$ in $\mathcal{G}$ and for each edge $e$ in  $\mathcal{G}$, we define 
\[
G'_v := \ddot{K}_{v} \times \langle h_{\bar e}^{s} \rangle
\]
and 
\[
G'_e := \langle h_{e}^{s} \rangle \times \langle h_{\bar e}^{s} \rangle =s\Z \times s\Z
\]

To conclude the proof, it remains to note the collection $\{G'_v, G'_e \,\,| \, v \in \mathcal{G}^{0}, e \in \mathcal{G}^{1} \}$ is   compatible.
  It is obvious that $G'_{e_i} \subset G'_v$, so $G'_{e_i} \le G'_{v} \cap G_{e_i}$. Conversely, $G'_{v} \cap G_{e_i} \subset G'_{v} \cap\hat  G_{e_i}\subset  (\ddot{K}_{v}\cap \langle h_{e}^{s} \rangle) \times \langle h_{\bar e}^{s} \rangle \subset  G'_{e_i}$.   
\end{proof}

\begin{cor}\label{FiniteIndexAdmissibleCor}
There is a finite index admissible group $G_0<\dot G<G$ in the sense of  Definition \ref{defn:admissible} where every vertex group are direct products of a hyperbolic group and $\mathbb Z$.
\end{cor}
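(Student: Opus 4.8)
The plan is to feed the finite index splittings of the vertex groups into Lemma~\ref{FindexSubgrpsLem}, assemble the resulting compatible collection with \cite[Theorem~7.51]{DK18}, and then verify the axioms of Definition~\ref{defn:admissible} for the subgroup so produced.

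First I would set up the input to Lemma~\ref{FindexSubgrpsLem}. Work inside the index at most $2$ subgroup $\dot G$ of Lemma~\ref{lem:index2subgroup}, which is the fundamental group of a finite graph of groups over $\mathcal G = T/\dot G$ with vertex and edge groups isomorphic to those of $G$. For each $v\in\mathcal G^0$ take the subgroup $K_v<G_v$ recorded before Lemma~\ref{FindexSubgrpsLem}, for which $K_v\times Z(G_v)$ has finite index in $G_v$ and $K_v$ projects to a finite index subgroup of $H_v$; set $\dot K_v:=K_v$. Here is where the omnipotence of the $H_v$ enters, through Lemma~\ref{FindexSubgrpsLem}: it yields a \emph{compatible} collection $\{G_e'<G_e,\,G_v'<G_v\}$ of finite index subgroups with $G_v'=\ddot K_v\times\mathbb Z$ and $G_e'=s\mathbb Z\times s\mathbb Z$, where $\ddot K_v<K_v$ has finite index.

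Next, because the collection is compatible, \cite[Theorem~7.51]{DK18} produces a finite index subgroup $G_0<\dot G$ that is itself the fundamental group of a finite graph of groups whose underlying graph is a finite cover of $\mathcal G$ and whose vertex, resp. edge, groups are conjugates of the $G_v'$, resp. $G_e'$. As $[\dot G:G_0]<\infty$ and $[G:\dot G]\le 2$, the group $G_0$ has finite index in $G$. Its vertex groups are conjugates of $\ddot K_v\times\mathbb Z$, that is, direct products of a hyperbolic group with $\mathbb Z$, which is the structural assertion.

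Finally I would check that $G_0$ is admissible. The underlying graph is finite with at least one edge since it covers $\mathcal G$. Each vertex group $\ddot K_v\times\mathbb Z$ has quotient $\ddot K_v$, a finite index subgroup of the non-elementary hyperbolic group $H_v$ and hence again non-elementary hyperbolic, and its center is $Z(\ddot K_v)\times\mathbb Z$, which is infinite cyclic once one records that the non-elementary hyperbolic factor $\ddot K_v$ has trivial center (pass to a further finite index subgroup, or use torsion-freeness of the geometric action on $\bar Y_v$); each edge group is $s\mathbb Z\times s\mathbb Z\cong\mathbb Z^2$. Axioms~(3) and~(4) are inherited from the admissibility of $G$: the edge images and the two central directions in $G_0$ are commensurable with their counterparts in $G$, so non-commensurability of distinct conjugated edge images and the finite index generation of $G_e'$ by the two central cyclic factors follow from the corresponding properties in $G$. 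The main obstacle is precisely the assembly step: one must confirm that compatibility of the finite index collection is exactly the hypothesis under which \cite[Theorem~7.51]{DK18} glues the $G_v'$ and $G_e'$ into a single finite index subgroup realizing them as a graph of groups decomposition; granting this, the admissibility verification is a routine commensurability bookkeeping.
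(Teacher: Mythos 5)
Your proposal is correct and follows essentially the same route as the paper's own proof: Lemma~\ref{FindexSubgrpsLem} supplies the compatible collection of finite index subgroups, \cite[Theorem~7.50]{DK18} assembles it into a finite index subgroup $G_0<\dot G<G$ whose vertex groups are conjugates of $G_v'=\ddot K_v\times\mathbb Z$, and admissibility is then verified. The only (harmless) divergence is in checking axiom~(3): the paper quotes the pairwise independence of the loxodromic elements $b_{e_i}$ established inside the proof of Lemma~\ref{FindexSubgrpsLem}, whereas you inherit non-commensurability from the admissibility of $G$ via $G_e'\le G_e$ of finite index --- which is legitimate, since compatibility gives exactly $G_v'\cap G_e=G_e'$, so any $g\in G_v'\setminus G_e'$ lies outside $G_e$ as required for the malnormality half of the axiom --- and your explicit attention to the triviality of $Z(\ddot K_v)$ (so that $Z(G_v')\cong\mathbb Z$) fills in a point the paper leaves implicit.
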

\begin{proof}
By \cite[Theorem 7.50]{DK18}, the compatible collection of finite index subgroups from Lemma \ref{FindexSubgrpsLem} determines a finite index group $G_0<\dot G<G$. Indeed,   $G_0$ is the fundamental group of a finite covering space which are obtained from finite many copies of finite coverings in correspondence to $G_v'<G_v$ glueeing along edge spaces in correspondence to $G_e'$. Thus, $G_0$ splits over edge groups $\mathbb Z^2$ as a finite graph of groups where the vertex groups are conjugates of $G_v'=\ddot K_v\times \mathbb Z$. In view of Definition of  \ref{defn:admissible}, it suffices to certify the non-commensurable edge groups adjacent to the same vertex group. This follows from   the above proof of Lemma \ref{FindexSubgrpsLem}, where the edge groups are direct products of $\mathbb Z$ with pairwise independent loxodromic elements. Thus, different edge groups are  not commensurable.      
\end{proof}

\subsection{Partition $\mathbb A$ into sub-collections with good projection constants: completion of the proof}\label{SSPartitonProperly}
With purpose to prove Theorem \ref{thm:main4}, it suffices to prove property (QT) for a finite index subgroup of $G$. By Corollary \ref{FiniteIndexAdmissibleCor}, we can assume that the CKA flip action of $G$ on $X$ satisfies that every vertex group are direct products $K_v\times \mathbb Z$ where $K_v$ is of finite index in $H_v$.  Since $H_v$ is omnipotent and then residually finite,  without loss of generality we can assume that $K_v$ is torsion-free. 

Since the assumption of Lemma \ref{ActOnConeoffHypspaceLem} is fulfilled, the results in Sections \ref{SSConeoffSpace} and \ref{SSQuasilines}  hold: a finite index at most 2  subgroup $\dot G<G$ acts on the cone-off spaces $\dot{\mathcal X_i}$ for $i=1,2$ with distance formula. 

Let $\mathcal V\in \{\mathcal V_1, \mathcal V_2\}$.
Let us recall the data we have now:
\begin{enumerate}
    \item 
    For every $v\in\mathcal V$, $\mathbb A_v$  is a  $K_v$-finite   collection of quasi-lines   in $\dot Y_v$
so that the distance formula (\ref{ConeoffYvDistFormulaEQ}) holds for $\dot Y_v$. (Lemma \ref{ConeofYDistFormulaLem})
\item
Let $\mathbb A=\cup_{v\in\mathcal V}\mathbb A_v$ be the $\dot G$-invariant collection of quasi-lines  so that the formula (\ref{ConeoffDistFormulaEQ}) holds. (Proposition \ref{ConeoffDistFProp})
\end{enumerate} 

The first step is passing to a further finite index subgroup $\dot K_v$ of $K_v$ so that $\mathbb A_v$ is  partitioned into $\dot {K_v}$-invariant sub-collections with    projection constants $\xi$. It follows closely the argument in  \cite{BBF2} which is presented below for completeness.

{\bf The constants $\theta$ and $\xi$:}  The constant $\theta>0$ is chosen so that it satisfies Proposition \ref{FiniteDblCosetsProp} and Lemma \ref{CstThetaLem} simultaneously.  Then $\xi= \xi(\theta)$ is given by Proposition \ref{BBFDistanceThm}.

\begin{lem}\label{FirstPartitionLem}
Let $\mathbb A_v$ be  a $K_v$-finite collection of quasi-lines obtained as above by Lemma  \ref{YDistFormulaLem}. Then there exists a finite index subgroup $\dot{K_v} <K_v$ such that       any two distinct quasi-lines in the same $\dot K_v$-orbit  have $\theta$-bounded projection.   
\end{lem}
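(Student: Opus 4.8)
The plan is to combine the double-coset finiteness of Proposition~\ref{FiniteDblCosetsProp} with the residual finiteness of $K_v$, reproducing the strategy of \cite{BBF2}. Throughout, $K_v$ is a torsion-free finite-index subgroup of $H_v$ acting coboundedly and acylindrically on $\dot Y_v$ (so $K_v$ is itself residually finite), and by Convention~\ref{ConvQuasiLine} every member of $\mathbb A_v$ is a $(\lambda,c)$--quasi-axis of a loxodromic element produced by Lemma~\ref{ExtensionLem}.

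First I would decompose $\mathbb A_v$ into its finitely many $K_v$--orbits, say $\mathbb A_v = K_v\gamma_1 \cup \cdots \cup K_v\gamma_J$, where each $\gamma_j$ is a quasi-axis of a loxodromic $g_j\in K_v$; this is legitimate because $\mathbb A_v$ is $K_v$--finite (Lemma~\ref{ConeofYDistFormulaLem}). Writing $E_j = E(g_j)$ for the maximal elementary subgroup of $K_v$ containing $g_j$, we have $E_j = \Stab_{K_v}(\gamma_j)$, so two quasi-lines $a\gamma_j,\, b\gamma_j$ in the same orbit (with $a,b\in K_v$) are distinct exactly when $a^{-1}b \notin E_j$. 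By equivariance of the shortest-point projections, requiring a $\dot K_v$--orbit to have $\theta$--bounded projections is equivalent to the single family of conditions $\diam\bigl(\pi_{\gamma_j}(h\gamma_j)\bigr)\le\theta$ for every $h\in\dot K_v\setminus E_j$ and every $1\le j\le J$.

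Next I would apply Proposition~\ref{FiniteDblCosetsProp} to each orbit: for the fixed $\theta$ there are finitely many elements $h_{j,1},\dots,h_{j,N_j}\in K_v\setminus E_j$ with $\{\,h\in K_v : \diam(\pi_{\gamma_j}(h\gamma_j))>\theta\,\} = E_j \cup \bigcup_{k} E_j h_{j,k} E_j$. Hence the sparseness condition becomes $\dot K_v \cap E_j h_{j,k} E_j = \emptyset$ for all $j,k$. This is where residual finiteness enters: for each $j$, I would use that the maximal cyclic subgroup $E_j$ is separable in $K_v$ to produce a finite-index subgroup $\dot K_v^{(j)}\le K_v$ with $E_j\le \dot K_v^{(j)}$ and $h_{j,k}\notin \dot K_v^{(j)}$ for all $k$ (intersect over the finitely many $k$). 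Since $E_j\le \dot K_v^{(j)}$, any element $e_1 h_{j,k} e_2\in \dot K_v^{(j)}$ with $e_1,e_2\in E_j$ would force $h_{j,k}\in \dot K_v^{(j)}$, a contradiction, so $\dot K_v^{(j)}\cap E_j h_{j,k} E_j=\emptyset$. Setting $\dot K_v = \bigcap_{j=1}^{J} \dot K_v^{(j)}$ gives a finite-index subgroup with $\dot K_v\subseteq \dot K_v^{(j)}$ for each $j$, whence $\dot K_v\cap E_j h_{j,k}E_j=\emptyset$ for every $j,k$; this yields the claimed sparseness of each $\dot K_v$--orbit.

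The main obstacle is exactly the separability input: one must separate the finitely many elements $h_{j,k}$ from the maximal (virtually) cyclic subgroups $E_j$ inside the hyperbolic group $K_v$, which is the step reproducing \cite{BBF2} and where residual finiteness of $H_v$ together with the quasiconvexity and almost-malnormality of the elementary subgroups $E_j$ is used; the rest is bookkeeping with finitely many orbits and finite intersections of finite-index subgroups. I would also take care that $\theta$ is the constant already fixed to satisfy both Proposition~\ref{FiniteDblCosetsProp} and Lemma~\ref{CstThetaLem}, so that the resulting partition is compatible with the projection constant $\xi=\xi(\theta)$ needed in the subsequent construction of the quasi-trees.
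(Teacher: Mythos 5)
Your proposal is correct and is essentially the paper's own argument: the paper likewise combines Proposition~\ref{FiniteDblCosetsProp} (the set $\{h\in K_v:\diam(\pi_{\gamma}(h\gamma))\ge\theta\}$ is a finite union of double $C(g)$-cosets, where $E(g)=C(g)$ is cyclic since $K_v$ is torsion-free) with separability of centralizers in the residually finite group $K_v$ and the remark after Lemma~2.1 of \cite{BBF2} to produce $\dot K_v$, and your explicit orbit-by-orbit bookkeeping with $\bigcap_j \dot K_v^{(j)}$ is just the multi-orbit case the paper leaves implicit --- indeed you are slightly more careful, since the paper's set $E$ literally contains $C(g)$, so its stated conclusion ``$E\cap\dot K_v=\emptyset$'' should read that $\dot K_v$ meets $E$ only in the stabilizer, exactly as you formulate it. One shared subtlety worth noting: for a $\dot K_v$-orbit of a translate $a\gamma_j$ with $a\notin\dot K_v$ the sparseness condition becomes $\diam\bigl(\pi_{\gamma_j}(a^{-1}ha\,\gamma_j)\bigr)\le\theta$ for $h\in\dot K_v$, which involves $a^{-1}\dot K_v a$ rather than $\dot K_v$; this is repaired by replacing your $\dot K_v$ with its normal core, which is harmless because the double-coset avoidance argument (using $E_j\le\dot K_v^{(j)}$) persists under passing to smaller subgroups of $\dot K_v^{(j)}$.
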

\begin{proof}
By construction, the quasi-lines in   $\mathbb A_v$ are quasi-axis of loxodromic elements whose maximal elementary groups are virtually cyclic. Recalling that $K_v$ is torsion-free,   the maximal elementary group is cyclic and thus $E(g)$ is the centralizer  $C(g):=\{h\in K_v: hg=gh\}$ of $g$.  By \cite[Lemma 2.1]{BBF2},  since $K_v$ is   residually finite, then  the centralizer   of any element $g\in K_v$ is   separable, i.e.  the intersection of all finite index subgroups containing $C(g)$.

Proposition \ref{FiniteDblCosetsProp} implies that  $E:=\{h\in K_v: diam(\pi_\gamma(h\gamma))\ge  \theta\}$ consists of finite double $C(g)$-cosets. Since $C(g)$ is separable, we use the remark after Lemma~2.1 in \cite{BBF2} to get a finite index $\dot K_v<K_v$ such that $E\cap \dot K_v=\emptyset$. The proof is complete.
\end{proof}

The next step is re-grouping  appropriately  the    collections of quasi-lines  $\cup_{v\in \mathcal V} \mathbb A_v$ in Lemma \ref{FirstPartitionLem}. 

By \cite[Theorem 7.51]{DK18}, the compatible collection of finite index subgroups $\ddot K_v < \dot K_v$ from Lemma \ref{FindexSubgrpsLem} determines a finite index group $G_0<\dot G<G$  
such that
$G_0 \cap G_v = G_v', G_0 \cap G_e = G_e'$
 and $G_v'\subset \ddot K_v\times Z(G_v)\subset \dot K_v\times Z(G_v)$ for every vertex $v$ and edge $e$.

By Bass-Serre theory, $G_0$ acts on the Bass-Serre tree $T$ of $G$ with finitely many vertex orbits. To be precise, let $\{v_0, \cdots, v_m\}$ be the full set of vertex representatives. 

Since for each $1\le i\le m$, $\ddot K_{v_i}\subset \dot K_{v_i}$ is of finite index,   Lemma \ref{FirstPartitionLem} implies that  $\mathbb A_{v_i}$ consists of finitely many $\ddot K_v$-orbits, say $\check{\mathbb A}_{i}^j (1\le j\le l_i)$, 
$$
\mathbb A_{v_i}=\cup_{j=1}^{l_i} \check{\mathbb A}_{i}^j,
$$
each of which satisfies projection axioms with projection constant $\xi$. 

Recall that $G_0\subset \dot G$ acts on $\mathcal X_1$ and $\mathcal X_2$. We now set $\mathbb A_{ij}: =\cup_{g\in G_0} g\check{\mathbb A}_{i}^j$ so we have
$$\mathbb A=\cup_{i=1}^m\cup_{j=1}^{l_i} \mathbb A_{ij}.$$

We summarize the above discussion as the following.
\begin{prop}
For each $\mathcal X\in \{\mathcal X_1, \mathcal X_2\}$, there exists a finite partition  $\mathbb A=\mathbb A_1\cup \mathbb A_2\cdots \cup \mathbb A_n$ where $n=\sum_{i=1} l_i$ such that for each $1\le i\le n$, $\mathbb A_i$ is $ G_0$--invariant and     satisfies projection axioms with projection constant $\xi$.  
\end{prop}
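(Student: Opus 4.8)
The statement packages the construction carried out just above, so the plan is to verify its two asserted properties for each collection $\mathbb A_{ij}$ (after relabelling them $\mathbb A_1,\dots,\mathbb A_n$ with $n=\sum_i l_i$). The $G_0$--invariance is immediate: by definition $\mathbb A_{ij}=\cup_{g\in G_0}g\check{\mathbb A}_i^j$ is a union of $G_0$--orbits of quasi-lines. The substance is the projection-axioms claim. Since $\xi=\xi(\theta)$ is precisely the constant produced by Proposition~\ref{BBFDistanceThm} out of a $\theta$--bounded family, it suffices to show that $\mathrm{diam}_{\dot{\mathcal X}}(\pi_\beta(\alpha))\le\theta$ for every pair of distinct quasi-lines $\alpha\ne\beta\in\mathbb A_{ij}$; Proposition~\ref{BBFDistanceThm} then hands us the projection axioms with constant $\xi$, and the identity $\mathbb A=\cup_{i,j}\mathbb A_{ij}$ is recorded from the fact that any quasi-line in $\dot Y_{gv_i}$ is a $G_0$--translate of one in some $\check{\mathbb A}_i^j$.

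The verification of the $\theta$--bound splits according to whether $\alpha$ and $\beta$ lie in a common piece. Every quasi-line of $\mathbb A_{ij}$ is contained in some $\dot Y_v$ with $v$ in the $G_0$--orbit of $v_i$. If $\alpha\subset\dot Y_v$ and $\beta\subset\dot Y_{v'}$ with $v\ne v'$, then Lemma~\ref{CstThetaLem} applies directly (recall $\theta$ was chosen to satisfy it) and gives $\mathrm{diam}_{\dot{\mathcal X}}(\pi_\beta(\alpha))\le\theta$. Suppose instead $\alpha,\beta\subset\dot Y_v$ with $v=gv_i$ for some $g\in G_0$. The first point is that the quasi-lines of $\mathbb A_{ij}$ lying in $\dot Y_v$ form the single translate $g\check{\mathbb A}_i^j$: any other $g'\in G_0$ with $g'v_i=v$ satisfies $g^{-1}g'\in \Stab_{G_0}(v_i)=G_{v_i}'\subset\ddot K_{v_i}\times Z(G_{v_i})$, and since the $G_{v_i}$--action on $\dot Y_{v_i}$ factors through $H_{v_i}$ (so $Z(G_{v_i})$ acts trivially) while $\ddot K_{v_i}$, acting through $H_{v_i}$, preserves its own orbit $\check{\mathbb A}_i^j$, we get $g'\check{\mathbb A}_i^j=g\check{\mathbb A}_i^j$. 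Hence $\alpha=g\alpha'$ and $\beta=g\beta'$ for distinct $\alpha',\beta'\in\check{\mathbb A}_i^j$. As $\check{\mathbb A}_i^j$ is a single $\ddot K_{v_i}$--orbit contained in a $\dot K_{v_i}$--orbit, Lemma~\ref{FirstPartitionLem} bounds the projection of $\alpha'$ to $\beta'$ inside $\dot Y_{v_i}$ by $\theta$; because $\dot Y_{v_i}$ embeds isometrically and convexly in $\dot{\mathcal X}$ (its apices are cut points of the tree-of-spaces structure, so a shortest path between two of its points never leaves it), the $\dot{\mathcal X}$--projection coincides with the $\dot Y_{v_i}$--projection, and applying the isometry $g$ preserves the bound.

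Combining the two cases, every distinct pair in $\mathbb A_{ij}$ is $\theta$--bounded, and Proposition~\ref{BBFDistanceThm} yields the projection axioms with constant $\xi$; finiteness of the partition follows from there being finitely many $G_0$--vertex orbits and finitely many $\ddot K_{v_i}$--orbits inside each $K_{v_i}$--finite collection $\mathbb A_{v_i}$, which gives $n=\sum_i l_i$. I expect the main obstacle to be the same-piece case: one must pin down that the slice $\mathbb A_{ij}\cap\dot Y_v$ is exactly one translate of the $\ddot K_{v_i}$--orbit, so that re-grouping into $G_0$--orbits does not degrade the constant coming from Lemma~\ref{FirstPartitionLem}, and one must confirm that shortest-point projections computed in the ambient $\dot{\mathcal X}$ agree with those computed in the convex subspace $\dot Y_v$. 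By contrast the cross-piece case is essentially free, since the coning-off is exactly what Lemma~\ref{CstThetaLem} was designed to exploit.
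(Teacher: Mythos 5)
Your proof is correct and takes essentially the same route as the paper, which states this proposition without a separate proof as a summary of the preceding discussion: same-piece pairs handled by Lemma~\ref{FirstPartitionLem}, cross-piece pairs by Lemma~\ref{CstThetaLem} (the purpose of the coning-off), and the axioms then supplied by Proposition~\ref{BBFDistanceThm}. Your explicit stabilizer computation showing that the slice $\mathbb A_{ij}\cap \dot Y_v$ is a single translate $g\check{\mathbb A}_{i}^j$ --- using $G_0\cap G_{v_i}=G_{v_i}'\subset \ddot K_{v_i}\times Z(G_{v_i})$ and the fact that the $G_{v_i}$--action on $\dot Y_{v_i}$ factors through $H_{v_i}$ --- is precisely the detail the paper leaves implicit, and it is sound.
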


We are now ready to complete the proof of Theorem \ref{thm:main4}.

\begin{proof}[Proof of Theorem \ref{thm:main4}]
By \cite[Induction 2.2]{BBF2}, if a finite index subgroup of $G$ has property (QT) then so does $G$. Thus it suffices to show that $G_0$ has property (QT) where $G_0$ is the finite index subgroup of $\dot G < G$ given by Corollary~\ref{FiniteIndexAdmissibleCor}. By abuse of notations, we denote $G_0$ by $G$, and we remark here that for the rest of the proof, results in Section~\ref{qiefinitequasitrees} and Section~\ref{sec:properaction}  will  apply for $G = G_0$, but not for the original $G$.

By Proposition \ref{prop:qiehyperbolicspaces}, $\dot G$ acts on the product $\mathcal X_1\times \mathcal X_2$ so that the orbital map is quasi-isometrically embedded. Furthermore, there exists a $\dot G$--equivariant quasi-isometric embedding of each $\mathcal X_i$ $(i=1,2)$ into the product of the cone-off space $(\dot{\mathcal X_i}, d_{\dot{\mathcal X}}^K)$ and a quasi-tree by  Proposition \ref{QIembedProp}. Therefore, it suffices to establish  a $G_0 $--equivariant quasi-isometric embedding of  $(\dot{\mathcal X_i}, d_{\dot{\mathcal X}}^K)$ into a finite product of quasi-trees.

By construction, each $\check{\mathbb A}_{i}^j (1\le j\le l_i)$ is $\ddot K_{v_i}$--invariant and $\ddot K_{v_i}$ acts co-boundedly on $\dot Y_v$, so there exists  some $R$ independent of $i, j$ so that the union of quasi-lines in $\check{\mathbb A}_{i}^j$  is $R$--cobounded in $\dot Y_{v_i}$.

Let $x\in \mathcal X$ and $x\in \dot Y_{v_i}$, we choose a point $\Phi_i(x) \in \cup_{\gamma\in \check{\mathbb A}_{i}^j} \gamma$ for $1\le i\le n$ such that $d(x, \Phi_i(x))\le R$.  By $G_0$--equivariance we define $\Phi_i(gx)=g\Phi_i(x)$ for any $g\in G_0$.

By Proposition \ref{ConeoffDistFProp}, the formula (\ref{ConeoffDistFormulaEQ}) holds for any $x, y\in \dot {\mathcal X}$. Note the sum $$\sum_{\gamma\in \mathbb A} [\dot d_\gamma(x, y)]_{K}=\sum_{i=1}^n \sum_{\gamma\in \mathbb A_i} [\dot d_\gamma(x, y)]_{K}$$ 
For each $\mathbb A_i$, let $\mathcal C_K(\mathbb A_i)$ be the quasi-tree of quasi-lines and by Proposition \ref{BBFDistanceThm} 
$$
\sum_{\gamma\in \mathbb A_i} [\dot d_\gamma(x, y)]_{K} \sim d_{\mathcal C_i} (\Phi_i(x), \Phi_i(y))
$$
Hence the formula (\ref{ConeoffDistFormulaEQ}) implies 
$$
(\dot {\mathcal X}, d_{\dot{\mathcal X}}^K)  \to T\times \prod_{i=1}^n \mathcal C_K(\mathbb A_i)  
$$
is a  $G_0$--equivariant quasi-isometric embedding. 
The proof of the Theorem is thus completed. 
\end{proof}

\section{Finite height subgroups in a CKA action $G \curvearrowright X$}
\label{sec:finiteheightquasiconvex}
In this section, we are going to prove Theorem~\ref{thm:generalization} that basically says having finite height and strongly quasiconvexity are equivalent to each other in the context of CKA actions, and both properties can be characterized in term of their group elements. The heart of the proof of this theorem belongs to the implication  $(\ref{thm1:item3}) \implies (\ref{thm1:item1})$ where we use Sisto's notion of path systems (\cite{Sis18}).
We first review some concepts finite height subgroups, strongly quasi-convex subgroups as well as some terminology in \cite{Sis18}.

\begin{defn}
Let $G$ be a group and $H$ a subgroup of $G$. We say that conjugates $g_1Hg_1^{-1}, \cdots g_kHg_k^{-1} $ are \emph{essentially distinct} if the cosets $g_1H,\cdots,g_kH$ are distinct. We call $H$ has {\it height at most $n$} in $G$ if the intersection of any $(n+1)$ essentially distinct conjugates is finite. The least $n$ for which this is satisfied is called the \emph{height} of $H$ in $G$.
\end{defn}

\begin{defn} [Strongly quasiconvex, \cite{Tra19}]
A subset $Y$ of a geodesic space $X$ is called \emph{strongly quasiconvex}  if for every $K \geq 1,C \geq 0$ there is some $M = M(K,C)$ such that every $(K,C)$--quasi–geodesic with endpoints on $Y$ is contained in the $M$--neighborhood of $Y$. 

Let $G$ be a finitely generated group and $H$ a subgroup of $G$.  We say $H$ is \emph{strongly quasiconvex} in $G$ if $H$ is a strongly quasi-convex subset in the Cayley graph $\Gamma(G,S)$ for some (any) finite generating set $S$. A group element $g$ in $G$ is \emph{Morse} if $g$ is of infinite order and the cyclic subgroup generated by $g$ is strongly quasiconvex.
\end{defn}

\begin{rem}
The strong quasiconvexity of a subgroup does not depend on the choice of finite generating sets, and this notion is equivalent to quasiconvexity in the setting of hyperbolic groups. It is shown in \cite{Tra19} (see Theorem~1.2) that strongly quasi-convex subgroups of a finitely generated group are finitely generated and have finite height.
\end{rem}

The following proposition is cited from Proposition~2.3 and Proposition~2.6 in \cite{NTY19}.
\begin{prop}
\label{prop:summarizeNTY19}
\begin{enumerate}
    \item Let $G$ be a group such that the centralizer $Z(G)$ of $G$ is infinite. Let $H$ be a finite height infinite subgroup of $G$. Then $H$ must have finite index in $G$
    
    \item Assume a group $G$ is decomposed as a finite graph $T$ of groups that satisfies the following.
\begin{enumerate}
    \item For each vertex $v$ of $T$ each finite height subgroup of vertex group $G_v$ must be finite or have finite index in $G_v$.
    \item Each edge group is infinite.
\end{enumerate}
Then, if $H$ is a finite height subgroup of G of infinite index, then $gHg^{-1} \cap G_v$ is finite for each vertex group $G_v$ and each group element $g$. In particular, if $H$ is torsion free, then $H$ is a free group.
\end{enumerate}
\end{prop}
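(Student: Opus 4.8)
The plan is to treat the two parts separately, and in both the decisive mechanism is the distinction between \emph{essentially distinct} conjugates, which are indexed by distinct cosets $gH$, and distinct conjugate \emph{subgroups} $gHg^{-1}$; a central direction produces many distinct cosets yielding the \emph{same} conjugate subgroup.

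For part (1), write $Z=Z(G)$ and let $n$ be the height of $H$. Since $Z$ is central, $zHz^{-1}=H$ for every $z\in Z$, while $z_1H=z_2H$ if and only if $z_1^{-1}z_2\in Z\cap H$; hence the cosets $\{zH:z\in Z\}$ fall into exactly $[Z:Z\cap H]$ essentially distinct classes, all of whose associated conjugates equal the infinite group $H$. If $Z\cap H$ were finite this index would be infinite, giving more than $n+1$ essentially distinct conjugates whose common intersection is $H$, contradicting finite height; so $Z\cap H$ is infinite. Again by centrality $Z\cap H$ is fixed by every conjugation and lies in $H$, so $Z\cap H\subseteq\bigcap_{g\in G}gHg^{-1}$. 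Were $[G:H]$ infinite, there would be infinitely many essentially distinct conjugates, and the intersection of any $n+1$ of them would contain the infinite group $Z\cap H$ — again contradicting finite height. Thus $[G:H]<\infty$, proving (1).

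For part (2), pass to the Bass--Serre tree $T$, on which $G$ acts with vertex stabilizers the conjugates of the $G_v$ and edge stabilizers the conjugates of the $G_e$, the latter infinite by (b). I would first record the general fact that if $H$ has height $\le n$ in $G$ and $G'\le G$, then $H\cap G'$ has height $\le n$ in $G'$: distinct cosets $x(H\cap G')$ with $x\in G'$ lift to distinct cosets $xH$, and $\bigcap_i x_i(H\cap G')x_i^{-1}=\bigl(\bigcap_i x_iHx_i^{-1}\bigr)\cap G'$. Applying this with $G'=G_{\tilde v}$ a vertex stabilizer and invoking (a), each $H\cap G_{\tilde v}$ is finite or of finite index in $G_{\tilde v}$.

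Now suppose toward a contradiction that some $gHg^{-1}\cap G_v$ is infinite; conjugating, assume $H\cap G_{\tilde v_0}$ is infinite, hence of finite index in $G_{\tilde v_0}$. For an edge $\tilde e$ at $\tilde v_0$ with other endpoint $\tilde w$, the group $H\cap G_{\tilde e}$ is then of finite index in the infinite stabilizer $G_{\tilde e}$, so it is infinite and lies in $H\cap G_{\tilde w}$, forcing $H\cap G_{\tilde w}$ to be of finite index in $G_{\tilde w}$. As $T$ is connected, induction propagates this to every vertex: all $H\cap G_{\tilde v}$ are of finite index and all $H\cap G_{\tilde e}$ are infinite. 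I would then deduce $[G:H]<\infty$ from the index formula for coverings of graphs of groups, $[G:H]=\sum_{\tilde v}[G_{\tilde v}:H\cap G_{\tilde v}]$ over $H$-orbit representatives $\tilde v$ of the finitely many $G$-orbits of vertices, contradicting the infinite index of $H$. The main obstacle is exactly this finite-index step: finite indices of the vertex intersections alone do not force $[G:H]<\infty$ — as shown by $G=\mathbb Z^2$ realized as an HNN extension of $\langle a\rangle$ with stable letter acting trivially, where $\langle a\rangle$ meets every vertex stabilizer in finite index yet has infinite index. What excludes this is the finite-height hypothesis (in that example $\langle a\rangle$ has infinite height), and carrying it out requires establishing that $T/H$ is finite, which I would obtain from the infiniteness of every edge stabilizer of the $H$-action together with minimality of the $H$-invariant subtree and finite generation of $H$. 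Finally, once every $gHg^{-1}\cap G_v$ is finite, all vertex and edge stabilizers of the $H$-action on $T$ are finite, so if $H$ is torsion-free they are trivial; then $H$ acts freely on the tree $T$ and is therefore free.
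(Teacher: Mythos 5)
Your part (1) is correct, and so are the two main devices in your part (2): the restriction lemma (for $G'\le G$, the height of $H\cap G'$ in $G'$ is at most the height of $H$ in $G$) and the propagation along the Bass--Serre tree showing that if one intersection $H\cap G_{\tilde v_0}$ is infinite, then $H\cap G_{\tilde w}$ has finite index in $G_{\tilde w}$ for \emph{every} vertex $\tilde w$ of $T$, with all $H\cap G_{\tilde e}$ infinite. (The paper gives no proof of this proposition --- it is quoted from \cite{NTY19} --- so only internal correctness is at issue.) The genuine gap is your endgame. You propose to contradict the infinite index of $H$ by proving $T/H$ finite and invoking the covering formula $[G:H]=\sum_{\tilde v}[G_{\tilde v}:H\cap G_{\tilde v}]$. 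First, that formula sums over $H$-orbit representatives of a single $G$-orbit of vertices of the \emph{whole} tree, not of a minimal $H$-invariant subtree; cocompactness of the minimal subtree (the only finiteness that finite generation of $H$ would buy, and finite generation is not a hypothesis of the proposition) says nothing about $H$-orbits on all of $T$. Second, the three ingredients you list cannot yield $T/H$ finite: in your own example $G=\mathbb Z^2=\langle a,t\rangle$, $H=\langle a\rangle$, every edge stabilizer of the $H$-action is infinite, $H$ is finitely generated, and the minimal $H$-subtree is a fixed vertex, yet $T/H$ is an infinite line. More structurally, once all vertex intersections have finite index, the infinite index of $H$ forces infinitely many $H$-orbits of vertices by the very formula you quote --- and the finite-height hypothesis, which you correctly flag as the thing that must exclude the example, never actually enters your final step.

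The repair is short and uses height directly, with no control of $T/H$ and no finite generation. Fix a vertex $\tilde v_0$ of $T$; its stabilizer is infinite since it contains an infinite edge stabilizer. By your propagation, $H\cap G_{\tilde u}$ has finite index in $G_{\tilde u}$ for every vertex $\tilde u$, hence for every $g\in G$ the subgroup $gHg^{-1}\cap G_{\tilde v_0}=g\bigl(H\cap G_{g^{-1}\tilde v_0}\bigr)g^{-1}$ has finite index in $G_{\tilde v_0}$. Since $[G:H]=\infty$, choose $n+1$ distinct cosets $g_1H,\dots,g_{n+1}H$, where $n$ is the (finite) height of $H$. Then $\bigcap_{i=1}^{n+1} g_iHg_i^{-1}$ contains $\bigcap_{i=1}^{n+1}\bigl(g_iHg_i^{-1}\cap G_{\tilde v_0}\bigr)$, a finite intersection of finite-index subgroups of the infinite group $G_{\tilde v_0}$, hence infinite --- contradicting height $n$. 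This completes the reductio, so every $gHg^{-1}\cap G_v$ is finite; your concluding sentence (finite, hence trivial, stabilizers for torsion-free $H$, so $H$ acts freely on $T$ and is free) is fine as written.
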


\begin{defn}[Path system, \cite{Sis18}]
\label{defn:pathsystem}
Let $X$ be a metric space. A \emph{path system} $\mathcal{PS}(X)$ in $X$ is a collection of $(c,c)$--quasi-geodesic for some $c\ge 1$ such that
 any subpath of a path in $\mathcal{PS}(X)$ is in $\mathcal{PS}(X)$, and all pairs of points in $X$ can be connected by a path in $\mathcal{PS}(X)$.
\end{defn}

\begin{defn}[$\mathcal{PS}$--contracting, \cite{Sis18}]
\label{defn:contracting}
Let $X$ be a metric space and let $\mathcal{PS}(X)$ be a path system in $X$.
A subset $A$ of $X$ is called $\mathcal{PS}(X)$--contracting if there exists $C>0$ and a map $\pi \colon X \to A$ such that
\begin{enumerate}
    \item
    \label{item1:defn:contracting}For any $x \in A$, then $d(x, \pi(x)) \le C$
    \item 
    \label{item2:defn:contracting}For any $x, y \in X$ such that $d(\pi(x), \pi(y)) \ge C$ then for any path $\gamma$ in $\mathcal{PS}(X)$ connecting $x$ to $y$ then $d(\pi(x), \gamma) \le C$ and $d(\pi(y), \gamma) \le C$.
\end{enumerate}
The map $\pi$ will be called $\mathcal{PS}(X)$--projection on $A$ with constant $C$.
\end{defn}

\begin{lem}\cite[Lemma~2.8]{Sis18}
\label{lem:contractingtoquasiconvex}
Let $A$ be a $\mathcal{PS}(X)$--contracting subset of a metric space $X$, then $A$ is strongly quasi-convex.
\end{lem}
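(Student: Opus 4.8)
The plan is to extract from the $\mathcal{PS}(X)$--contraction data $(\pi,C)$ a genuine \emph{strong-contraction} property for $\pi$, and then run a farthest-point/pinch argument. As a preliminary reduction, since $X$ is geodesic any $(K,C')$--quasi-geodesic $q$ with endpoints in $A$ may be taken continuous and parametrized by arc length (inserting geodesic segments only changes the constants). I would also record at once that $\pi$ is coarsely Lipschitz: if $d(\pi(u),\pi(v))\ge C$, then applying Definition~\ref{defn:contracting}(\ref{item2:defn:contracting}) to any $\eta\in\mathcal{PS}(X)$ from $u$ to $v$, the $(c,c)$--quasi-geodesic $\eta$ meets both $N_C(\pi(u))$ and $N_C(\pi(v))$, so $d(\pi(u),\pi(v))\le \operatorname{len}(\eta)+2C\le c\,d(u,v)+c^{2}+2C$; the opposite case is trivial.

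In a first stage I would prove that there is $M_0=M_0(c,C)$ with $\eta\subseteq N_{M_0}(A)$ for every $\eta\in\mathcal{PS}(X)$ whose endpoints lie in $A$. Let $p$ realize $R:=\max_{w\in\eta}d(w,A)$ and split $\eta=\eta_1\eta_2$ at $p$. On each side I produce a point within $2C$ of the \emph{single} point $\pi(p)\in A$: on the $x$--side, if $d(\pi(x),\pi(p))\ge C$ then Definition~\ref{defn:contracting}(\ref{item2:defn:contracting}) yields $u_1\in\eta_1$ with $d(u_1,\pi(p))\le C$, while if $d(\pi(x),\pi(p))<C$ the endpoint itself works, since $d(x,\pi(p))\le d(x,\pi(x))+C\le 2C$ by Definition~\ref{defn:contracting}(\ref{item1:defn:contracting}); symmetrically one gets $u_2\in\eta_2$. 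Then $d(u_1,u_2)\le 4C$, whereas $u_1,p,u_2$ occur in this order along the single $(c,c)$--quasi-geodesic $\eta$ and $d(u_i,p)\ge d(p,\pi(p))-d(\pi(p),u_i)\ge R-2C$, so the subarc from $u_1$ to $u_2$ has length $\ge 2(R-2C)$ and also $\le c\,d(u_1,u_2)+c^{2}\le 4cC+c^{2}$, forcing $R\le 2cC+c^{2}/2+2C$. Pinching at the one point $\pi(p)$ is essential: it makes $u_1,u_2$ automatically close in $X$ \emph{without} any convexity of $A$. Using the coarse-Lipschitz bound to control $d(x,\pi(x))$, the same estimate gives the conclusion even when the endpoints of $\eta$ lie only in $N_C(A)$.

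The hard part will be upgrading from the fixed path-system constants $(c,c)$ to \emph{arbitrary} $(K,C')$--quasi-geodesics, to which Definition~\ref{defn:contracting}(\ref{item2:defn:contracting}) does not apply directly. I would bridge this gap by deducing a strong-contraction property: whenever $d(u,v)\le r$ and $d(u,A)>c\,r+c^{2}+C$, one has $d(\pi(u),\pi(v))\le C$. Indeed a path-system path $\eta$ from $u$ to $v$ has $\operatorname{len}(\eta)\le c\,r+c^{2}$, and if $d(\pi(u),\pi(v))\ge C$ then by Definition~\ref{defn:contracting}(\ref{item2:defn:contracting}) it meets $N_C(\pi(u))$, giving $d(u,A)\le\operatorname{len}(\eta)+C\le c\,r+c^{2}+C$, a contradiction. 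With a coarsely Lipschitz, strongly contracting projection in hand, I would conclude strong quasiconvexity by the standard farthest-point argument: for a $(K,C')$--quasi-geodesic $q$ with endpoints in $A$ and farthest point $z$ at height $R$, strong contraction makes $\pi$ almost constant on $B(z,\sim R/c)$, so $q$ enters and leaves that ball with nearly equal projection; feeding the entry/exit points and the first-stage control of path-system paths joining them to $A$ into the quasi-geodesic inequality bounds $R$ in terms of $K,C',c,C$ alone, again by pinching at a single point of $A$.

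I expect this last stage to be the crux. The first stage is a clean length estimate, and the coarse-Lipschitz and strong-contraction statements are short deductions from the two contraction axioms; the real work is assembling them into a neighborhood bound uniform over \emph{all} $(K,C')$ while $A$ carries no convexity of its own. This is precisely the content that lets a fixed $(c,c)$ path system detect strong quasiconvexity, and is where I would concentrate the care.
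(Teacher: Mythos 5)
The paper offers no internal proof of this lemma --- it is quoted directly from \cite{Sis18}, so your proposal has to be measured against Sisto's argument on its own merits. Your preliminary stages are correct and assemble exactly the right ingredients: the coarse-Lipschitz estimate $d(\pi(u),\pi(v))\le c\,d(u,v)+c^{2}+2C$, the pinch showing every path-system path with endpoints on $A$ (or on $N_{C}(A)$) lies in $N_{M_0}(A)$ with $M_0\approx 2C+2cC+c^{2}/2$, and the derived ball-contraction property are all valid short deductions from the two axioms of Definition~\ref{defn:contracting}; in particular your use of closure under subpaths to apply axiom~(\ref{item2:defn:contracting}) to the two halves $\eta_1,\eta_2$, and the case split on whether $d(\pi(x),\pi(p))\ge C$, are exactly right.

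The gap is where you predicted it, and the specific farthest-point mechanism you propose does not close it. At the apex $z$ with $R=d(z,A)$, strong contraction makes $\pi$ coarsely constant only on $B(z,\rho)$ with $\rho\approx R/(c+1)$, and at the entry/exit points $w_1,w_2$ of that ball all you learn is $d(\pi(w_1),\pi(w_2))\le 2C$. Unlike your first stage, nothing places a point of $q$ \emph{itself} within bounded distance of the single point $\pi(z)$: axiom~(\ref{item2:defn:contracting}) applies only to path-system paths, and strong contraction controls projections, not positions, so the pinching mechanism is simply unavailable for an arbitrary $(K,C')$--quasi-geodesic. Quantitatively there is no tension: $d(w_1,w_2)$ may be of order $2\rho$ while the subpath of $q$ between them has length at most $2K\rho+KC'$, so the quasi-geodesic inequality is satisfied for every $R$ and no bound on $R$ results; nor can Stage~1 be ``fed in,'' since path-system paths from $w_i$ to $A$ do not have both endpoints near $A$. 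The correct completion --- which your ingredients do support --- replaces the single-ball pinch by chaining along a whole excursion at a threshold \emph{independent} of $R$: fix $R_0\ge 2cKC+c^{2}+C+1$, take a maximal subsegment $q|_{[a,b]}$ outside $N_{R_0}(A)$, and mark points at arc-length gaps $\Delta\approx (R_0-c^{2}-C)/c$; your strong-contraction property gives $d(\pi(q(t_i)),\pi(q(t_{i+1})))\le C$ per step, hence total projection drift at most $C(b-a)/\Delta$, while your coarse-Lipschitz bound gives $d(u,\pi(u))\le (c+1)\,d(u,A)+c^{2}+3C+O(1)$ at the excursion endpoints, where $d(\cdot,A)=R_0$. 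Feeding these into $(b-a)/K-C'\le d(q(a),q(b))$ and using $C/\Delta\le 1/(2K)$ bounds $b-a$, hence the height of the excursion, by a constant depending only on $K,C',c,C$. With that telescoping step substituted for your third stage, the proof is complete and self-contained.
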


\begin{thm}
\label{thm:pathsystems}
 Let $G \curvearrowright X$ be a CKA action. Let $\mathcal{PS}(X)$ be the collection of all special paths defined in Definition~\ref{SpecialPathDefn}. Then $(X, \mathcal{PS}(X))$ is a path system.
\end{thm}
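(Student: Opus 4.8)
The plan is to verify the three requirements of Definition~\ref{defn:pathsystem} for $\mathcal{PS}(X)$: that every special path is a uniform quasi-geodesic, that every pair of points is joined by some element of $\mathcal{PS}(X)$, and that $\mathcal{PS}(X)$ is closed under passing to subpaths. The first requirement is exactly Proposition~\ref{prop:spepathisqg}, which supplies a single constant $\mu$; taking $c:=\mu\ge 1$ (and noting that in the degenerate case $\rho(x)=\rho(y)$ the special path is a genuine geodesic, hence a $(1,0)$-- and a fortiori $(c,c)$--quasi-geodesic) makes every special path a $(c,c)$--quasi-geodesic. The second requirement is immediate from Definition~\ref{SpecialPathDefn}, which produces a special path for each ordered pair $(x,y)$. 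Thus the whole content of the theorem lies in subpath-closure.

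For subpath-closure, let $\gamma=[x,p_1][p_1,p_2]\cdots[p_{n-1},p_n][p_n,y]$ be the special path from $x$ to $y$ associated to the geodesic edge path $e_1\cdots e_n$ in $T$; the case $\rho(x)=\rho(y)$ is trivial, as then $\gamma$ and all of its subpaths are honest geodesics. Fix $a,b$ on $\gamma$, say with $a$ on the segment terminating at $p_i\in F_{e_i}$ and $b$ on the segment issuing from $p_{j-1}\in F_{e_{j-1}}$ for some $i\le j$, so that the subpath is $[a,p_i][p_i,p_{i+1}]\cdots[p_{j-2},p_{j-1}][p_{j-1},b]$. I want to show this subpath is precisely the special path from $a$ to $b$. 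Its interior portion $[p_i,p_{i+1}]\cdots[p_{j-2},p_{j-1}]$ depends, by Remark~\ref{rem:independent}, only on the edge path and the fixed equivariant family $\{F_e\}$, and therefore coincides automatically with the corresponding interior portion of any special path running along these same edges. So the entire issue is local to the two end segments $[a,p_i]$ and $[p_{j-1},b]$.

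The key geometric point, which handles the end segments, is that a subsegment of a strip determines the same strip. The segment of $\gamma$ carrying $a$ lies on $\mathcal{S}_{e_{i-1}e_i}=\gamma_{e_{i-1}e_i}\times\R\subset Y_{v_{i-1}}=\bar Y_{v_{i-1}}\times\R$, where $\gamma_{e_{i-1}e_i}\subset\bar Y_{v_{i-1}}$ is the minimal geodesic between the boundary lines $\bar Y_{v_{i-1}}\cap F_{e_{i-1}}$ and $\bar Y_{v_{i-1}}\cap F_{e_i}$. By CAT(0) convexity this minimal geodesic meets $\bar Y_{v_{i-1}}\cap F_{e_i}$ orthogonally, so writing $a=(\bar a,t)$ the nearest-point projection of $\bar a$ onto that boundary line is exactly the endpoint of $\gamma_{e_{i-1}e_i}$ lying on it. Consequently the strip $\mathcal{S}_{a e_i}$ cut out by the minimal geodesic from $a$ to $F_{e_i}$ is a subsegment of $\mathcal{S}_{e_{i-1}e_i}$ and meets $F_{e_i}$ in the very same line $\mathcal{S}_{e_{i-1}e_i}\cap F_{e_i}$; intersecting with the next strip $\mathcal{S}_{e_ie_{i+1}}$ then yields the same breakpoint $p_i$. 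Hence the first segment of the special path from $a$ is $[a,p_i]$, matching the subpath, and the identical argument at $b$ (and, when $a$ or $b$ falls on the initial segment $[x,p_1]$ or the terminal segment $[p_n,y]$, applied to the strips $\mathcal{S}_{xe_1}$ and $\mathcal{S}_{e_ny}$) shows the last segment matches as well.

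The step I expect to require the most care is the bookkeeping with the index map $\rho$: to invoke Definition~\ref{SpecialPathDefn} for the pair $(a,b)$ one must know the geodesic edge path from $\rho(a)$ to $\rho(b)$ is the sub-edge-path $e_i\cdots e_{j-1}$, equivalently that $\rho(a)=v_{i-1}$ and $\rho(b)=v_{j-1}$, for only then do the breakpoints of the special path from $a$ to $b$ coincide with $p_i,\dots,p_{j-1}$ and the orthogonality argument above apply. Since the segment carrying $a$ lies in $Y_{v_{i-1}}\subset X_{v_{i-1}}$, Lemma~\ref{defn:vertexedgespace} at least confines $\rho(a)$ to $T$--distance $D$ of $v_{i-1}$; the remaining task, and the genuinely delicate one, is to check that interior points of the $i$-th segment are indexed to $v_{i-1}$ itself, so that the edge path—and with it the whole special path—restricts as claimed. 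Once this compatibility is in hand, the subpath equals the special path from $a$ to $b$, which completes the verification that $(X,\mathcal{PS}(X))$ is a path system.
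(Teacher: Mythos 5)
Your treatment of the first and third requirements of Definition~\ref{defn:pathsystem} matches the paper: the paper's entire proof of Theorem~\ref{thm:pathsystems} is the one-line citation of Proposition~\ref{prop:spepathisqg}, and connectivity is immediate from Definition~\ref{SpecialPathDefn}. The divergence --- and the problem --- is in subpath-closure, where you set out to prove the literal statement that a subpath of a special path is again a special path. Your strip argument for the interior is sound: for $a$ an interior point of $[p_{i-1},p_i]\subset \mathcal S_{e_{i-1}e_i}$, the nearest-point projection in $\bar Y_{v_{i-1}}$ of $\bar a\in \gamma_{e_{i-1}e_i}$ to the line $\bar Y_{v_{i-1}}\cap F_{e_i}$ is indeed the endpoint of $\gamma_{e_{i-1}e_i}$ on that line, so $\mathcal S_{ae_i}\subset \mathcal S_{e_{i-1}e_i}$ meets $F_{e_i}$ in the same vertical line and yields the same breakpoint $p_i$. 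But the step you defer --- that $\rho(a)=v_{i-1}$ for interior points of the $i$-th segment --- is not merely delicate; it is unavailable, and for a general admissible choice of $\rho$ it is false. Remark~\ref{rem:indexfunction} constrains $\rho$ only by $x\in X_{\rho(x)}$ together with equivariance and coarse Lipschitzness, and the tubes $X_w={N}_{D}(Y_w)$ overlap: a point $a$ of the strip lying close to $F_{e_i}$ belongs to $X_{v_{i-1}}\cap X_{v_i}$, and $\rho$ may assign it to $v_i$. In that event the geodesic in $T$ from $\rho(a)$ to $\rho(b)$ is $e_{i+1}\cdots$ rather than $e_ie_{i+1}\cdots$, the special path from $a$ to $b$ begins with the breakpoint $\mathcal S_{ae_{i+1}}\cap\mathcal S_{e_{i+1}e_{i+2}}$ instead of $p_i$, and the subpath is not a special path. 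So exact subpath-closure of the bare set of special paths fails as you have framed it, and no bookkeeping with $\rho$ will rescue it.

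The repair is the one the paper tacitly makes by reducing everything to Proposition~\ref{prop:spepathisqg}: take $\mathcal{PS}(X)$ to consist of the special paths together with all of their subpaths. A restriction of a $(\mu,\mu)$--quasi-geodesic parametrization is again a $(\mu,\mu)$--quasi-geodesic, so Proposition~\ref{prop:spepathisqg} supplies the uniform constant for the enlarged family, closure under subpaths is then automatic, and connectivity is unchanged, so Definition~\ref{defn:pathsystem} holds verbatim. Nothing downstream is harmed: the later applications (Lemma~\ref{lem:neighborhood}, Proposition~\ref{morseimpliesstronglyqc}) use only the uniform quasi-geodesicity and Remark~\ref{rem:independent} for the middle portions, both of which persist for subpaths. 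Alternatively, your orthogonality computation, combined with the bound $d_T(\rho(a),v_{i-1})<D$ coming from Lemma~\ref{defn:vertexedgespace}, does yield a coarse closure statement --- a subpath and the special path between its endpoints differ only in boundedly many end segments and are uniformly Hausdorff-close --- which would also suffice for Sisto's contraction machinery; but that is more work than the definitional fix and still does not deliver the literal closure your outline promises.
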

\begin{proof}
The proof   follows from Proposition~\ref{prop:spepathisqg}.
\end{proof}

{\bf For the rest of this section, we fix a CKA action $G \curvearrowright X$ and $G \curvearrowright T$ the action of $G$ on the associated Bass-Serre tree. We also fix the path system  $(X,\mathcal{PS}(X))$ in Theorem~\ref{thm:pathsystems}.}


To get into the proof of Theorem~\ref{thm:generalization}, we need several lemmas.
The following lemma tells us that finite height subgroups in the CKA action $G \curvearrowright X$ are virtually free.

\begin{lem}
\label{lem:virtuallyfree}
 Let $K \le G$ be a nontrivial finitely generated infinite index subgroup of $G$. If $K$ has finite height in $G$, then $K \cap gG_{v}g^{-1}$ is finite for any $v\in T^0$ and $g\in G$. In particular, $K$ is virtually free.
\end{lem}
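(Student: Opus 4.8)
The plan is to deduce the first assertion from Proposition~\ref{prop:summarizeNTY19}(2), applied to the defining admissible graph-of-groups decomposition of $G$, and then to obtain virtual freeness from the resulting finiteness of vertex stabilizers by a Bass--Serre argument.

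First I would check that $G$, viewed as the fundamental group of its admissible graph of groups, meets the hypotheses of Proposition~\ref{prop:summarizeNTY19}(2). Hypothesis (b) is immediate, since every edge group $G_e\cong\mathbb{Z}^2$ is infinite. For hypothesis (a), fix a vertex $v$: by Definition~\ref{defn:admissible} the center $Z(G_v)\cong\mathbb{Z}$ is infinite, so Proposition~\ref{prop:summarizeNTY19}(1) applies to $G_v$ and shows that any \emph{infinite} finite-height subgroup of $G_v$ already has finite index in $G_v$. Hence every finite-height subgroup of $G_v$ is either finite or of finite index, which is exactly hypothesis (a). Proposition~\ref{prop:summarizeNTY19}(2) then gives, for the finite-height infinite-index subgroup $K$, that $gKg^{-1}\cap G_v$ is finite for every $v\in T^0$ and $g\in G$. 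Since $K\cap gG_vg^{-1}=g\bigl(g^{-1}Kg\cap G_v\bigr)g^{-1}$ is a conjugate of such a group, it is finite as well; this is the first assertion.

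For virtual freeness I would note that the $K$-stabilizer of any $\sigma\in T^0$ is $K\cap\Stab_G(\sigma)$, and $\Stab_G(\sigma)=gG_vg^{-1}$ is a conjugate of a vertex group, so by the first part this stabilizer is finite; the edge stabilizers, being contained in adjacent vertex stabilizers, are finite too. Hence $K$ acts on the tree $T$ with finite stabilizers. If $K$ fixes a point of $T$ then $K$ lies in a finite stabilizer and is itself finite, so trivially virtually free. Otherwise, being finitely generated, $K$ admits a unique minimal invariant subtree $T_K$ on which it acts cocompactly: for a finite generating set $S$ and a basepoint $x_0\in T^0$, the convex hull $Y$ of $\{x_0\}\cup(S\cup S^{-1})x_0$ is a finite subtree, its $K$-translates $KY$ form a connected $K$-invariant subtree, and minimality forces $KY=T_K$, so $K\backslash T_K$ is finite. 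By Bass--Serre theory $K$ is then the fundamental group of a finite graph of finite groups, hence virtually free by the theorem of Karrass--Pietrowski--Solitar.

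The content is almost entirely bookkeeping on top of the cited Proposition~\ref{prop:summarizeNTY19}; the one point that is not literally supplied there is the passage from finiteness of vertex stabilizers to virtual freeness when $K$ has torsion, since the proposition records only the torsion-free ($K$ free) conclusion. I expect this last step --- the minimal-subtree/cocompactness argument combined with the structure theorem for groups acting on trees with finite stabilizers --- to be the main, though entirely routine, obstacle.
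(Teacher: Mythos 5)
Your proposal is correct and follows essentially the same route as the paper: the paper likewise obtains finiteness of $K \cap gG_vg^{-1}$ from Proposition~\ref{prop:summarizeNTY19} (with hypothesis (a) supplied exactly as you do, via part (1) and $Z(G_v)\cong\Z$ being infinite), and then concludes virtual freeness from the resulting action on $T$ with finite stabilizers by citing \cite[Theorem~7.51]{DK18} --- which is precisely the statement you reprove by hand via the minimal subtree and Karrass--Pietrowski--Solitar. One cosmetic slip: minimality gives $T_K \subseteq KY$ rather than $KY = T_K$, but since the quotient $K\backslash KY$ is already finite, cocompactness of the action on $T_K$ follows anyway.
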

\begin{proof}
Suppose that $K$ has finite height in $G$. Since the centralizer $Z(G_v)$ each each vertex group is isomorphic to $\Z$, it follows from Proposition~\ref{prop:summarizeNTY19} that for any $g \in G$ and $v \in T^0$, the intersection $K \cap gG_{v}g^{-1}$ is finite. Thus, $K$ acts properly on the tree $T$ and the stabilizer in $K$ of each vertex in $T$  is finite. It follows from \cite[Theorem~7.51]{DK18} that $K$ is virtually free.
\end{proof}

\begin{rem}
\label{rem:constructC_K}
Let $K \le G$ be a nontrivial finitely generated infinite index subgroup of $G$. Suppose that $K$ is a free group of finite rank and every nontrivial element in $K$ is not conjugate into any vertex group. Then there exists a subspace $C_K$ of $X$ such that $K$ acts geometrically on $C_K$ with respect to the induced length metric on $C_K$. The subspace $C_K$ is constructed as the following.

Fix a vertex $v$ in $T$, and fix a point $x_0$ in $Y_v$ such that $\rho(x_0) =v$. Let $\{g_1, g_2, \dots , g_n\}$ be a generating set of $K$.  For each $i \in \{1,2, \dots, n\}$, let $g_{n+i} = g_{i}^{-1}$. Let $\gamma_j$ be the geodesic in $X$ connecting $x_0$ to $g_{j}(x_0)$ with 
$j \in \{1,2, \dots,  2n\}$. Let $C_K$ be the union of segment $g(\gamma_j)$ where $g$ varies elements of $K$ and $j \in \{1, \dots, 2n\}$.
\end{rem}

The following lemma is well-known (see Lemma~2.9 in \cite{CK02} or Lemma~4.5 in \cite{GM14} for proofs).
\begin{lem}
\label{lem:lemma2.9CK02}
Let $X$ be a $\delta$--hyperbolic Hadamard space. Let $\gamma_1$ and $\gamma_2$ be two geodesic lines of $X$ such that $\partial_{\infty} \gamma_{1} \cap \partial_{\infty} \gamma_{2} = \emptyset$. 
 Let $\eta$ be a minimal geodesic segment between $\gamma_1$ to $\gamma_2$. Then any geodesic segment running from $\gamma_1$ to $\gamma_2$ will pass within distance $D = D(\gamma_1, \gamma_2)$ of both endpoints of $\eta$. Moreover, when $d(\gamma_1, \gamma_2) > 4\delta$ then we may take $D =2\delta$.
\end{lem}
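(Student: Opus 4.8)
The plan is to use the two structures on $X$ simultaneously: the Hadamard (CAT(0)) geometry, which makes $\eta$ an honest minimal geodesic realizing $d(\gamma_1,\gamma_2)$ and gives well-defined $1$--Lipschitz nearest-point projections $\pi_i$ onto $\gamma_i$; and the $\delta$--hyperbolicity, which supplies thin polygons. Write $\eta=[p_1,p_2]$ with $p_1\in\gamma_1$, $p_2\in\gamma_2$, and let $\sigma=[a,b]$ be an arbitrary geodesic with $a\in\gamma_1$, $b\in\gamma_2$. By symmetry it suffices to produce a point of $\sigma$ within $D$ of $p_1$. The first thing I would record is the elementary consequence of minimality: if a point $x$ of $\eta$ lies within $t$ of $\gamma_1$, then $d(x,p_1)\le t$ (and symmetrically with $\gamma_2$ and $p_2$). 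Indeed, choosing $y\in\gamma_1$ with $d(x,y)\le t$ and using that $x$ lies on the geodesic $\eta$ of length $d(\gamma_1,\gamma_2)$ gives $d(\gamma_1,\gamma_2)\le d(y,p_2)\le t+d(x,p_2)=t+d(\gamma_1,\gamma_2)-d(p_1,x)$, whence $d(p_1,x)\le t$. This is the mechanism that pins motion along $\eta$ to its endpoints.

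For the sharp regime $d(\gamma_1,\gamma_2)>4\delta$ I would form the geodesic quadrilateral with sides $[a,p_1]\subset\gamma_1$, $\eta$, $[p_2,b]\subset\gamma_2$ and $\sigma$, which is $2\delta$--thin. Combining thinness with the minimality estimate, any point of $\eta$ at $\eta$--distance more than $2\delta$ from both $p_1$ and $p_2$ cannot be $2\delta$--close to $\gamma_1$ or $\gamma_2$, so it must be $2\delta$--close to $\sigma$. Letting such a point tend to the end of this middle subsegment nearest $p_1$, and using that the relevant neighborhood of $\sigma$ is closed, produces a point of $\sigma$ within a bounded multiple of $\delta$ of $p_1$, which is the asserted $2\delta$ up to the thinness convention. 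A tidier bookkeeping subdivides the quadrilateral along the diagonal $[a,p_2]$ and applies the same minimality trick in each of the two triangles $\triangle(a,p_1,p_2)$ and $\triangle(a,b,p_2)$.

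For the general constant $D(\gamma_1,\gamma_2)$ — and in particular the regime where $\eta$ is short and the minimality estimate is vacuous — the hypothesis $\partial_\infty\gamma_1\cap\partial_\infty\gamma_2=\emptyset$ becomes essential, and this is the step I expect to be the main obstacle. Here I would pass to the projections $\pi_i$. Disjointness of the endpoint sets forces $\pi_1(\gamma_2)$ to have finite diameter $B=B(\gamma_1,\gamma_2)$: were it infinite, $\gamma_2$ would accumulate at an endpoint of $\gamma_1$, contradicting $\partial_\infty\gamma_1\cap\partial_\infty\gamma_2=\emptyset$. Since $p_1=\pi_1(p_2)$ and $\pi_1(b)$ both lie in $\pi_1(\gamma_2)$, we get $d(\pi_1(b),p_1)\le B$.

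Finally I would invoke the standard hyperbolic fact that a geodesic $[a,b]$ passes within some $C=C(\delta)$ of $\pi_1(a)=a$ and of $\pi_1(b)$ whenever these projections are more than $C$ apart. If $d(a,\pi_1(b))>C$, then $\sigma$ comes $C$--close to $\pi_1(b)$, hence within $C+B$ of $p_1$; otherwise $d(a,p_1)\le C+B$ and $a\in\sigma$, so again $\sigma$ meets the $(C+B)$--neighborhood of $p_1$. Either way $\sigma$ passes within $D=C(\delta)+B(\gamma_1,\gamma_2)$ of $p_1$, and when $d(\gamma_1,\gamma_2)>4\delta$ the projection diameter $B$ is itself $O(\delta)$, recovering the uniform bound $2\delta$. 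The symmetric argument at $p_2$ completes the proof.
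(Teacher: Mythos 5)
The paper does not actually prove this lemma: it is quoted as well known, with pointers to Lemma~2.9 of \cite{CK02} and Lemma~4.5 of \cite{GM14}, so your argument is being measured against those sources rather than an in-paper proof. Your overall route is sound and does establish the main assertion. The minimality observation is correct and is exactly the right mechanism (in fact it gives the equality $d(x,\gamma_i)=d(x,p_i)$ for every $x\in\eta$); the CAT(0) inputs are legitimate ($p_1=\pi_1(p_2)$ by uniqueness of nearest points, $\pi_i$ well defined and $1$--Lipschitz); and the endgame via the standard projection lemma correctly yields $D=C(\delta)+B$ with $B=\mathrm{diam}\,\pi_1(\gamma_2)$, with the symmetric argument at $p_2$. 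Two points need attention, one fixable and one a genuine shortfall. First, your one-line justification that $B<\infty$ --- ``were it infinite, $\gamma_2$ would accumulate at an endpoint of $\gamma_1$'' --- is not valid in a Hadamard space per se: two transverse lines in the Euclidean plane have disjoint endpoint sets at infinity, yet each projects onto all of the other. The implication holds here only because of $\delta$--hyperbolicity: if $\pi_1(y_n)$ leaves every bounded set, the very projection lemma you invoke at the end places points of the segments $[y_0,y_n]\subset\gamma_2$ uniformly close to $\pi_1(y_n)$, so $\gamma_2$ fellow-travels $\gamma_1$ out to infinity and the two lines share an endpoint at infinity, a contradiction. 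So the step is repairable with a tool you already use, but the reason as written is the CAT(0) one, and it fails.

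Second, and this is the real gap: you do not obtain the stated constant $2\delta$ when $d(\gamma_1,\gamma_2)>4\delta$. Run your quadrilateral argument carefully: the point $x\in\eta$ with $d(x,p_1)=2\delta$ ends up $2\delta$--close to $\sigma=[a,b]$, hence $d(p_1,\sigma)\le 4\delta$. Your ``tidier'' subdivision along the diagonal $[a,p_2]$ does better but still not enough: with $\delta$--thin triangles, the point $x\in\eta$ with $d(x,p_1)=\delta$ forces a point $q\in[a,p_2]$ with $d(p_1,q)\le 2\delta$, and since $d(q,\gamma_2)\ge d(\gamma_1,\gamma_2)-2\delta>2\delta$ excludes the $\gamma_2$--side, $q$ is $\delta$--close to $\sigma$, giving $d(p_1,\sigma)\le 3\delta$. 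Neither computation reaches $2\delta$, and ``up to the thinness convention'' is not an argument for a quantitative claim: under the usual convention what you have proved is $D=3\delta$. Likewise, in the last paragraph $B=O(\delta)$ is true in the far regime, but it recovers a uniform $O(\delta)$ bound, not the advertised $2\delta$. To be fair, the sharp value is immaterial for this paper --- the only place Lemma~\ref{lem:lemma2.9CK02} is used quantitatively is Lemma~\ref{lem:projectionbounded}, where $r(e,e')=4D+\mu$ and any uniform $D=O(\delta)$ would do --- but as a proof of the lemma as stated, the ``moreover'' clause remains unproven; for the exact constant one should follow the bookkeeping in \cite{CK02} rather than a generic thinness count.
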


Lemma~\ref{lem:projectionbounded} and Lemma~\ref{lem:neighborhood} below are used in the proof of Proposition~\ref{morseimpliesstronglyqc}.

\begin{lem} 
\label{lem:projectionbounded}
{Given a constant $\mu >0$, there exists a constant $r >0$ such that the following holds. Let $e$ and $e'$ be two consecutive edges in $T$ with a common vertex $v$. }
Let $A$ be a subset of $Y_v$ such that  $diam(A) \le \mu$. Suppose that $A \cap F_{e} \neq \emptyset$ and $A \cap F_{e'} \neq \emptyset$.
 
Let $[p, q]$ be the shortest path joining two lines $\ell := \bar{Y}_{v} \cap F_{e}$ to $\ell' : = \bar{Y}_{v} \cap F_{e'}$ where $p\in \ell$ and $q\in \ell'$. For any $x \in F_{e} \cap A$ and $y \in F_{e'} \cap A$, let $u$ and $v$ be the projections of $x$ and $y$ into the lines $\ell$ and $\ell'$ respectively. Then $d(u, p) \le r$ and $d(v, q) \le r$. 
\end{lem}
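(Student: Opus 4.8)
The plan is to discard the fiber direction, reduce everything to a statement about two geodesic lines in the hyperbolic factor $\bar Y_v$, and then read off the conclusion from Lemma~\ref{lem:lemma2.9CK02}. First I would observe that each plane splits as a product with the fiber: since the singular geodesic $\mathcal{S}\cap F_e$ is an axis of $Z(G_v)$ and $Z(G_v)$ translates the $\mathbb R$--factor of $Y_v=\bar Y_v\times\mathbb R$ while fixing $\bar Y_v$, the plane $F_e$ contains fiber lines, so $F_e=\ell\times\mathbb R$ and likewise $F_{e'}=\ell'\times\mathbb R$. Consequently, projecting to $\bar Y_v$ sends $x\in F_e$ onto $\ell$ and $y\in F_{e'}$ onto $\ell'$, and these projected points are exactly $u$ and the projection of $y$. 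Because the projection $Y_v\to\bar Y_v$ is $1$--Lipschitz and $d(x,y)\le\diam(A)\le\mu$, we get $d_{\bar Y_v}(u,v)\le\mu$ for the two projections $u\in\ell$, $v\in\ell'$. Thus it suffices to prove: in the $\delta$--hyperbolic space $\bar Y_v$, if $u\in\ell$ and $v\in\ell'$ satisfy $d_{\bar Y_v}(u,v)\le\mu$, then $u$ and $v$ lie within a uniform distance $r=r(\mu)$ of the endpoints $p\in\ell$, $q\in\ell'$ of the minimal geodesic $[p,q]$ between the two lines.

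For this reduced statement, I would first note that $\partial_\infty\ell\cap\partial_\infty\ell'=\emptyset$: by condition~(3) of Definition~\ref{defn:admissible} the edge subgroups determining $e$ and $e'$ are non-commensurable, so their axes $\ell,\ell'$ cannot share an endpoint at infinity, and the minimal geodesic $[p,q]$ is well defined. The geodesic $[u,v]$ then runs from $\ell$ to $\ell'$, so Lemma~\ref{lem:lemma2.9CK02} applies and $[u,v]$ passes within some $D=D(\ell,\ell')$ of both $p$ and $q$; say $w\in[u,v]$ with $d(w,p)\le D$. Since $w\in[u,v]$ forces $d(u,w)\le d(u,v)\le\mu$, the triangle inequality gives $d(u,p)\le\mu+D$, and symmetrically $d(v,q)\le\mu+D$.

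The only real issue, and the heart of the lemma, is to see that $D$ can be taken uniform over all vertices $v$ and all pairs of adjacent edges $e,e'$, i.e. that $r$ depends on $\mu$ alone. When $d_{\bar Y_v}(\ell,\ell')>4\delta$ Lemma~\ref{lem:lemma2.9CK02} already supplies $D=2\delta$, so the danger is confined to boundary lines that are close together or cross at a small angle, where a priori the fellow--travelling region could be arbitrarily long. This is exactly ruled out by the bounded projection (bounded intersection) property of the family $\mathbb L_v$: since $H_v$ is hyperbolic relative to the almost malnormal edge subgroups (again condition~(3)) and there are only finitely many $G$--orbits of vertices, there is a single constant $B_0$ with $\diam(\pi_\ell(\ell'))\le B_0$ for all distinct $\ell,\ell'\in\mathbb L_v$ and all $v$. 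Since $p$ is coarsely $\pi_\ell(q)$, and closest--point projection is coarsely $1$--Lipschitz in a $\delta$--hyperbolic space, I would in fact phrase the estimate directly through projections, $d(u,p)\le d(u,\pi_\ell(v))+\diam(\pi_\ell(\ell'))+O(\delta)\le\mu+B_0+O(\delta)$, which makes the uniformity $r=r(\mu,\delta,B_0)$ transparent and bypasses the case split entirely.
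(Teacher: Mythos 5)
Your proof is correct, and its skeleton matches the paper's: both arguments project to the factor $\bar Y_v$, use that the factor projection is $1$--Lipschitz to get $d(u,v)\le d(x,y)\le\mu$, and then control $d(u,p)$, $d(v,q)$ by the thin-ness of geodesics running between the two non-asymptotic lines $\ell,\ell'$ (your middle paragraph, with $d(u,p)\le \mu+D$, is the same computation the paper performs, up to the inessential difference between $\mu+D$ and $4D+\mu$). Where you genuinely diverge is the uniformity step, which you rightly identify as the heart of the matter. The paper handles it by a case split plus cocompactness: when $d(\ell,\ell')\ge 4\delta$, Lemma~\ref{lem:lemma2.9CK02} gives the uniform value $D=2\delta$, so $r(e,e')=8\delta+\mu$; and modulo the $G$--action there are only finitely many pairs of adjacent edges with $d(F_e,F_{e'})<4\delta$, so one takes the maximum of the finitely many resulting constants. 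You instead invoke a single bounded-projection constant $B_0$ with $\diam(\pi_\ell(\ell'))\le B_0$ for all distinct $\ell,\ell'\in\mathbb L_v$ and all $v$, note that $p=\pi_\ell(q)$ is exactly (not just coarsely) the projection since $[p,q]$ is minimal and $\bar Y_v$ is CAT(0), and conclude $d(u,p)\le d(u,\pi_\ell(v))+\diam(\pi_\ell(\ell'))+O(\delta)\le\mu+B_0+O(\delta)$ with no case analysis. That $B_0$ exists is legitimate: it follows either from the relatively hyperbolic structure the paper sets up in Section~\ref{SSConeoffSpace} (almost malnormality of the $E(\ell)$, via condition~(3) of Definition~\ref{defn:admissible}), or more elementarily from bounded intersection of the $H_v$--finite family $\mathbb L_v$ under the proper action, exactly as in the proof of Lemma~\ref{YDistFormulaLem}, together with finiteness of $G$--orbits of vertices. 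Your route buys a transparent dependence $r=r(\mu,\delta,B_0)$ and avoids the orbit-counting argument (whose finiteness claim the paper states without proof); the paper's route is more self-contained at this point in the text, using only Lemma~\ref{lem:lemma2.9CK02} and cocompactness. You also supply a detail the paper leaves implicit, namely that $\partial_\infty\ell\cap\partial_\infty\ell'=\emptyset$ (hence $[p,q]$ exists) because the relevant edge subgroups are non-commensurable.
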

\begin{proof}
We recall that $Y_v = \bar{Y}_{v} \times \R$ and $H_v$ acts properly and cocompactly on $\bar{Y}_v$. Since $H_v$ is a nonelementary hyperbolic group, it follows that $\bar{Y}_{v}$ is a $\delta_v$--hyperbolic space for some $\delta_v \ge 0$. { Let $\delta$ be the maximum of the hyperbolicity constants of the $\bar{Y}_{v}$'s}.

Let $D = D(\ell, \ell') >0$ be the constant given by Lemma~\ref{lem:lemma2.9CK02}. Let $r(e, e') := 4D + \mu$. Since $\partial_{\infty} \ell \cap \partial_{\infty} \ell' = \emptyset$ and $u \in \ell$, $v \in \ell'$, it follows from Lemma~\ref{lem:lemma2.9CK02}  that there exist $p', q' \in [u, v]$ such that $d(p, p') \le D$ and $d(q, q') \le D$. By the triangle inequality, we have $d(u, p) + d(p, q) + d(q, v) \le 4D + d(u,v)$.
Since $u$ and $v$ are projection points of $x$ and $y$ into the factor $\bar{Y}_{v}$ of $Y_{v} = \bar{Y}_{v} \times \R$ respectively, it follows that $d(u, v) \le d(x,y)$.
Since $x, y \in A$ and $diam(A) \le \mu$, it follows that $d(x, y) \le \mu$. Hence $d(u, v) \le d(x, y) \le \mu$. Thus, $d(u, p) \le 4D + d(u,v)  \le 4D + \mu = r(e, e')$ and $d(v, q) \le 4D + d(u,v) \le 4D + \mu =r(e,e')$.

{By Lemma~\ref{lem:lemma2.9CK02}, we note that whenever the distance between two lines $\bar{Y}_{v} \cap F_{e}$ and $\bar{Y}_{v} \cap F_{e'}$ is at least $4\delta$ then we can define $r(e,e') = 8\delta + \mu$. We remark here that module $G$ there are only finitely many cases $d(F_{e}, F_{e'}) < 4\delta$. Thus  there are only finitely many $r(e,e')$ up to the action of $G$. Let $r$ be the maximum of these constants.}
\end{proof}

\begin{lem}
\label{lem:neighborhood}
 Let $K \le G$ be a finitely generated, finite height subgroup of $G$ of infinite index.   Let $C_K$ be the subspace of $X$ given by Remark~\ref{rem:constructC_K}. Then there exists a constant $R >0$ such that if $\gamma$ is a special path in $X$ (see Definition~\ref{SpecialPathDefn}) connecting two points in $C_K$ then $\gamma \subset {N}_{R}(C_K)$.
\end{lem}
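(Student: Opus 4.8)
The plan is to bound, uniformly over all special paths $\gamma$ joining points $a,b\in C_K$, the distance of $\gamma$ from $C_K$ by comparing $\gamma$ with the coarse image of $C_K$ one edge plane at a time. First I would record the tree-level picture. By Lemma~\ref{lem:virtuallyfree}, $K$ is virtually free and $K\cap gG_vg^{-1}$ is finite for all $g\in G$ and $v\in T^0$; hence $K$ acts on $T$ properly with finite vertex stabilizers and without a global fixed point, so it admits a unique minimal invariant subtree $T_K\subseteq T$ on which, being finitely generated, it acts cocompactly (\cite{DK18}). Since $\rho$ is $K$-equivariant and coarsely Lipschitz and $K\curvearrowright C_K$ is cocompact, $\rho(C_K)$ lies within bounded $T$-distance of $T_K$; and since subtrees are convex, for $a,b\in C_K$ the geodesic $[\rho(a),\rho(b)]=e_1\cdots e_n$ lies entirely in $T_K$. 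Writing $\gamma=[a,p_1][p_1,p_2]\cdots[p_n,b]$ as in Definition~\ref{SpecialPathDefn}, every vertex $v_i$ and edge $e_i$ thus lies in $T_K$, and because $T_K/K$ is finite there are only finitely many $K$-orbits of local data $(v_i;e_i,e_{i+1})$. From this I would extract uniform constants: every strip $\mathcal S_{e_ie_{i+1}}$ has bounded width, consecutive boundary lines $\ell_i=F_{e_i}\cap\bar Y_{v_i}$ and $\ell'_{i+1}=F_{e_{i+1}}\cap\bar Y_{v_i}$ are at bounded distance in $\bar Y_{v_i}$, and $C_K$ meets a uniform neighborhood of each edge plane $F_{e_i}$ (the $K$-translates of the generator geodesics cross every edge of $T_K$ within bounded distance).

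The heart of the argument is to match, on each edge plane $F_{e_i}\cong\mathbb E^2$, the special-path turning point $p_i$ with a nearby point of $C_K$. Recall that $p_i$ is by construction the intersection $L_i^-\cap L_i^+$ of the two central axes on $F_{e_i}$, namely $L_i^-=\mathcal S_{e_{i-1}e_i}\cap F_{e_i}$ (an axis of $Z(G_{v_{i-1}})$) and $L_i^+=\mathcal S_{e_ie_{i+1}}\cap F_{e_i}$ (an axis of $Z(G_{v_i})$); by Lemma~\ref{lem:qie}(1) these cross at angle bounded away from $0$ and $\pi$. I would pick a point $c_i\in C_K$ crossing $F_{e_i}$ and control its two coordinates relative to $L_i^-,L_i^+$ separately: applying Lemma~\ref{lem:projectionbounded} inside $Y_{v_{i-1}}$ pins the $L_i^-$-coordinate of $c_i$ near that of $p_i$, and applying it inside $Y_{v_i}$ pins the $L_i^+$-coordinate. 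Because $L_i^-$ and $L_i^+$ are transverse, these two one-dimensional estimates combine into a genuine bound $d(p_i,c_i)\le R_0$, with $R_0$ uniform over the finitely many configurations.

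The delicate point is that $c_i$ must cross $F_{e_i}$ at the correct \emph{fiber height}: while Lemma~\ref{lem:projectionbounded} governs the base (hyperbolic) directions, the heights along the central axes are controlled by no cocompactness in a single $Y_v$. I would supply them from the connectivity of $C_K$ together with the rigidity of the strip construction. Since $\gamma$ and an efficient track of $C_K$ both run from $a$ to $b$ over the same edge path $e_1\cdots e_n$, and since the heights at which $\gamma$ crosses the successive planes are produced by the minimizing horizontal slides from the common endpoints $a,b$ through the fixed strips (cf. the slide construction of Lemma~\ref{HSlideLem}), the same heights must be realized along $C_K$ up to bounded error; transversality of $L_i^-,L_i^+$ then converts this into the height-matching of $c_i$ with $p_i$.

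Finally I would assemble the local matchings. With $d(p_i,c_i)\le R_0$ for all $i$, and the analogous bounds at the two ends $a,b$, convexity of the CAT$(0)$ metric shows that each special-path segment $[p_i,p_{i+1}]$, which lies inside the flat strip $\mathcal S_{e_ie_{i+1}}$ of bounded width, stays within uniform distance of the geodesic between the matched $C_K$-points, hence within a uniform neighborhood of $C_K$; together with the two end segments this yields an $R$ with $\gamma\subset N_R(C_K)$. \textbf{The main obstacle is exactly the fiber direction} of the previous paragraph: because $Y_v=\bar Y_v\times\mathbb R$ is only a product and $K$ meets each center $Z(G_v)$ in a finite group, the $\mathbb R$-heights at which $\gamma$ crosses the successive edge planes can grow without bound, and the estimate succeeds only because these heights are rigidly determined by the two transverse central axes on each $F_{e_i}$ and are therefore shared by $C_K$, which joins the same endpoints $a,b$.
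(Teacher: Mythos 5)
Your second paragraph is in substance the paper's own mechanism: choose $O_i\in C_K\cap F_{e_i}$, bound its distance to the line $L_i^+=\mathcal S_{e_ie_{i+1}}\cap F_{e_i}$ by Lemma~\ref{lem:projectionbounded} applied in $Y_{v_i}$, bound its distance to $L_i^-$ working in $Y_{v_{i-1}}$, and convert the two one-dimensional estimates into $d(p_i,O_i)\le R_0$ by the Rule of Sines, the angles taking only finitely many values modulo $G$. But there is a genuine gap in how you feed Lemma~\ref{lem:projectionbounded}: its hypothesis is a subset $A\subseteq Y_v$ of diameter at most $\mu$ meeting both adjacent edge planes, so what you actually need is the uniform bound $\diam(C_K\cap X_v)\le\mu$ for all $v\in T^0$. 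You never establish this, and your third paragraph in effect denies it (``the heights along the central axes are controlled by no cocompactness in a single $Y_v$''). This bound is exactly what the finite-height hypothesis purchases, and it is the first line of the paper's proof: by Lemma~\ref{lem:virtuallyfree} each $K\cap gG_vg^{-1}$ is finite, so $K$ acts metrically properly on $T$; combined with the cocompactness of $K\curvearrowright C_K$ and the coarse Lipschitz index map $\rho$, only boundedly many translates $g\gamma_j$ of the finitely many generator geodesics can meet a given $X_v$, whence $C_K\cap X_v$ has uniformly bounded diameter in \emph{all} directions, the $\R$-fiber included.

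Once that bound is in place, your ``delicate point'' about fiber heights dissolves --- and it must, because the substitute you offer for it is circular: the claim that ``the same heights must be realized along $C_K$ up to bounded error'' is precisely the statement to be proved, and nothing ties $C_K$ to the minimizing-slide construction (Lemma~\ref{HSlideLem} concerns special paths; $C_K$ is a union of $K$-translates of finitely many geodesic segments with no slide structure). The correct accounting is already implicit in your second paragraph: the estimate $d(O_i,L_i^+)\le r$ is a bound in the Euclidean plane $F_{e_i}$ on the distance to a line transverse to $L_i^-$ --- it is purely horizontal in $Y_{v_i}$, hence constrains exactly the fiber direction of $Y_{v_{i-1}}$ --- so together with $d(O_i,L_i^-)\le 2\mu$ the transversality pins $O_i$ near $p_i=L_i^-\cap L_i^+$ in both coordinates simultaneously; no separate height argument exists or is needed. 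Finally, the paper's assembly is simpler than your convexity step: from $d(p_i,O_i),\,d(p_{i+1},O_{i+1})\le r+\xi$ and $d(O_i,O_{i+1})\le\mu$ one gets $d(p_i,p_{i+1})\le 2(r+\xi)+\mu$, so each segment of $\gamma$ has uniformly bounded length with endpoints uniformly close to $C_K$, which immediately yields $\gamma\subseteq N_R(C_K)$.
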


\begin{proof}
By the construction of $C_K$, we note that there exists a constant $\mu >0$ such that $diam(C_K \cap X_{v}) < \mu$ for any vertex $v \in \mathcal{V}(T)$.  {Let $r$ be the constant given by  Lemma~\ref{lem:projectionbounded}.}


Recall that we choose a $G$--equivariant family of Euclidean planes $\{F_e: F_e\subset Y_e\}_{e \in T^1}$. Let $\{\mathcal{S}_{ee'}\}$ be the collection of strips in $X$ given by Section~\ref{section:preliminary}.
For  any three consecutive edges $e, e', e''$ in the tree $T$, two lines $\mathcal{S}_{ee'} \cap F_{e'}$ and $\mathcal{S}_{e'e''} \cap F_{e'}$ in the plane $F_{e'}$ determine an angle in $(0, \pi)$. However, there are only finitely many angles shown up. We denote these angles by $\theta_{1}, \dots, \theta_{k}$. 

Let $D$ be the constant given by Lemma~\ref{defn:vertexedgespace} such that  $X_v= {N}_{D}(Y_{v})$ for every vertex $v \in T^{0}$.
Let 
\[
\xi = 2\mu + r + \max \bigset {\frac{2 \mu + r}{\sin(\theta_j)} + \frac{2\mu + r}{\sin(\pi - \theta_j)}}{j \in \{1, \dots, k\} \, \textup{and} \, \theta_j \neq \pi/2}
\] 
and
\[
R= 2r + \mu + 2\xi + D
\]
Let $x$ and $y$ be the initial and terminal points of $\gamma$. We note that $x \in X_{\rho(x)}$ and $y \in X_{\rho(y)}$.
We consider the following cases:

{\bf Case~1:} $\rho(x) = \rho(y)$.
In this case, the special path $\gamma$ is the geodesic in $X$ connecting $x$ to $y$. 
Since $x, y \in C_K \cap X_{\rho(x)}$ and $diam(C_K \cap X_{\rho(x)}) \le \mu$, it follows that $Len (\gamma ) = d(x, y) \le \mu < R$. 
Thus, $\gamma \subset {N}_{R}(C_K \cap X_{\rho(x)}) \subset {N}_{R}(C_K)$.

{\bf Case~2:} $\rho(x) \neq \rho(y)$. Since $X_u = {N}_{D}(Y_u)$ for any vertex $u\in T^0$, hence without losing of generality, we can assume that $x \in Y_{\rho(x)}$ and $y \in Y_{\rho(y)}$. We recall the construction of the path $\gamma$ from Definition~\ref{SpecialPathDefn}. Let $e_{1} \cdots e_{n}$ be the geodesic edge path connecting  $\rho(x)$ to $\rho(y)$ and let $p_i\in F_{e_i}$ be the intersection point of the strips $\mathcal S_{e_{i-1}e_i}$ and $\mathcal S_{e_{i}e_{i+1}}$, where $e_{0}:=x$ and $e_{n+1}:=y$.  Then
\[
\gamma = [x, p_{1}][p_{1}, p_{2}]\cdots [p_{{n-1}}, p_{n}][p_{n}, y]
\]
Let $p_0 : = x$ and $p_{n+1} : = y$. In order to prove that $\gamma \subset {N}_{R}(C_K)$, we only need to show that $[p_i, p_{i+1}] \subset {N}_{R}(C_K)$ with $i \in \{1, \dots, n\}$.

The proofs for the cases $i =0$ and $i =n$ and for  the cases $i = 1, \dots, n-1$ are similar, so we only need to give the proofs for the cases $i =0, 1$.

{\underline {Proof of case $i =0$:}}

Let $v_i$ be the initial vertex of $e_i$ (with $i \in \{1, \dots, n \}$), and $v_n$ be the terminal vertex of $e_n$. We recall that two lines $\mathcal{S}_{xe_1} \cap F_{e_1}$ and $F_{e_1} \cap \bar{Y}_{v_0}$ in the plane $F_{e_1}$ are perpendicular.
Since $C_K \cap F_{e_1} \neq \emptyset$, we choose a point $O_1 \in Y \cap F_{e_1}$.

{\bf Claim:} $d(O_1, p_1) < r + \xi$.
\begin{proof}[Proof of the claim]
Let $\bar{O}_1$ be the projection of $O_1$ into the line $F_{e_1} \cap \bar{Y}_{v_0}$. Let $\bar{V}_1$ be the projection of $O_1$ into the line $F_{e_1} \cap \bar{Y}_{v_1}$. By Lemma~\ref{lem:projectionbounded}, we have
\begin{equation}
\label{eq:1}
d(\bar V_{1}, \mathcal{S}_{e_1e_2} \cap F_{e_1} \cap \bar{Y}_{v_1}) \le r
\end{equation} (we note that $ \mathcal{S}_{e_1e_2} \cap F_{e_1} \cap \bar{Y}_{v_1} = \gamma_{e_1e_2}(0)$).
Since $O_1$ and $p_0 =x$ belong to $X_{v_0} \cap C_K$ and $diam(X_{v_0} \cap C_K) \le \mu$, it follows that $d(O_1, p_0) \le \mu$.
Let $\bar{p}_0$ be the projection of $p_0$ into the factor $\bar{Y}_{v_0}$ of $Y_{v_0}  = \bar{Y}_{v_0} \times \R$. We have that $d(\bar O_1, \bar p_0) \le d(O_1, p_0) \le \mu$. Since $d(\bar p_0, \mathcal{S}_{xe_1} \cap F_{e_1} \cap \bar{Y}_{v_0})$ is the minimal distance from $\bar p_0$ to the line $F_{e_1} \cap \bar{Y}_{v_0}$ and $\bar{O}_{1} \in F_{e_1} \cap \bar{Y}_{v_0}$ we have that $d(\bar p_0, \mathcal{S}_{xe_1} \cap F_{e_1} \cap \bar{Y}_{v_0}) \le d(\bar p_0, \bar O_1) \le \mu$. Using the triangle inequality for three points $\bar p_0$, $\bar O_1$, and $\mathcal{S}_{xe_1} \cap F_{e_1} \cap \bar{Y}_{v_0}$, we have
\begin{equation}
    \label{eq:2}
 d(\bar O_1,   \mathcal{S}_{xe_1} \cap F_{e_1} \cap \bar{Y}_{v_0}) \le 2\mu
\end{equation}

Let $A$ be the projection of $O_1$ into the line $F_{e_1} \cap \mathcal{S}_{e_1e_2}$. Using formula~(\ref{eq:1}), we have
\begin{equation}
    \label{eq:3}
    d(O_1, A)  = d(V_1, \mathcal{S}_{e_1e_2} \cap F_{e_1} \cap \bar{Y}_{v_1}) \le r
\end{equation}

Let $T$ be the projection of $O_1$ into the line $\mathcal{S}_{xe_1} \cap F_{e_1}$. Using formula~(\ref{eq:2}), we have
$d(O_1, T) = d(\bar O_1, \mathcal{S}_{xe_1} \cap F_{e_1} \cap \bar{Y}_{v_0}) \le 2\mu$. Thus, we have $d(A,T) \le d(A, O_1) + d(O_1, T) \le r + 2\mu$.
An easy application of Rule of Sines to the triangle $\Delta(T, p_1, A)$ together with the fact $d(A, T) \le 2\mu +r$ give us that $d(p_1, A) < \xi$ and $d(p_1, T) < \xi$. 
Combining these inequalities with formula~(\ref{eq:3}), we obtain that
$d(O_1, p_1) \le d(O_1, A) + d(A, p_1) < r + \xi$ The claim is verified.
\end{proof}
Using the facts $d(O_1, p_0) \le \mu$ and $d(O_1, p_1) < r + \xi$ we have
$d(p_0, p_1) \le d(p_0, O_1) + d(O_1, p_1) \le \mu + r + \xi < R$.
Since $p_0 = x \in C_K$, it follows that $[p_0, p_1] \subset {N}_{R}(C_K)$.

{\underline{Proof of the case $i =1$:}}

Since $C_K \cap F_{e_2} \neq \emptyset$, we choose a point $O_2 \in C_K \cap F_{e_2}$. Since $O_1, O_2$ belong to $C_{K} \cap Y_{v_1}$ and $diam(C_K \cap Y_{v_1}) \le \mu$, we have $d(O_1, O_2) \le \mu$.
By a similar argument as in the proof of the claim of  the case $i =0$, we can show that $d(O_2, p_2) < r + \xi$. Thus,
$ d(p_1, p_2) \le d(p_1, O_1) +d(O_1, O_2) + d(O_2, p_2)
    < (r +\xi) + \mu + (r+ \xi) 
    = 2r + \mu + 2\xi $
Since $O_1, O_2 \in C_K$, it is easy to see that $[p_1, p_2] \subset {N}_{3r + \mu+ 3\xi}(Y) \subset  {N}_{R}(Y)$.
\end{proof}

We recall that an infinite order element $g$ in a finitely generated group is Morse if the cyclic subgroup generated by $g$ is strongly quasi-convex. 
\begin{lem}
\label{lem:Moresenotconjugate}
If an infinite order element $g$ in $G$ is More, then it is not conjugate into any vertex group of $G$.
\end{lem}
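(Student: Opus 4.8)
The plan is to argue by contradiction, exploiting the infinite center of the vertex groups together with the fact that strongly quasiconvex subgroups have finite height. Suppose for contradiction that $g$ is an infinite-order Morse element of $G$ that is conjugate into some vertex group. Since being Morse is exactly the statement that $\langle g \rangle$ is strongly quasiconvex, and since conjugation is an isometry of the Cayley graph (so it preserves strong quasiconvexity), I may replace $g$ by a suitable conjugate and assume outright that $g \in G_v$ for some $v \in T^0$. Note that $g$ still has infinite order, so $\langle g \rangle$ is an infinite cyclic subgroup of $G_v$.

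First I would record that $\langle g \rangle$, being Morse, is strongly quasiconvex in $G$ and therefore has finite height in $G$ by \cite{Tra19} (the statement quoted in the remark following the definition of strong quasiconvexity). The one observation that needs to be checked by hand is that \emph{height does not increase when passing to a subgroup}: if $H \le K \le G$, then the height of $H$ in $K$ is at most the height of $H$ in $G$. Indeed, if $k_0 H k_0^{-1}, \dots, k_m H k_m^{-1}$ are essentially distinct conjugates with all $k_i \in K$, then these are conjugates of $H$ by elements of $G$ as well, and distinctness of the cosets $k_i H$ inside $K$ forces their distinctness inside $G$ (if $k_i^{-1}k_j \in H$ then, as $k_i^{-1}k_j \in K$ and $H \le K$, the two cosets already coincide in $K$). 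Hence an upper bound on the height in $G$ transfers to an upper bound on the height in $K$. Applying this with $K = G_v$ shows that $\langle g \rangle$ has finite height in $G_v$.

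Now I invoke Proposition~\ref{prop:summarizeNTY19}(1): the vertex group $G_v$ has infinite center, since $Z(G_v) \cong \Z$, and $\langle g \rangle$ is an infinite subgroup of $G_v$ of finite height, so the proposition forces $\langle g \rangle$ to have finite index in $G_v$. This is the desired contradiction: the quotient $H_v = G_v/Z(G_v)$ is a non-elementary hyperbolic group, so $G_v$ is not virtually cyclic, whereas a finite-index subgroup $\langle g \rangle$ would make $G_v$ virtually cyclic. Therefore no infinite-order Morse element can be conjugate into a vertex group.

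The proof is essentially an assembly of cited results, so I do not anticipate a genuine obstacle; the only point that is not purely formal is the height-monotonicity under subgroups used in the second step, and this is immediate from the definition of essentially distinct conjugates. As an alternative to invoking Proposition~\ref{prop:summarizeNTY19}(1), one could argue directly: after possibly replacing $g$ by a power, produce an infinite-order element $h$ commuting with $g$ and independent from it (taken from $Z(G_v)$ when the image of $g$ in $H_v$ has infinite order, and from the edge group $G_e \cong \Z^2$ containing a power of $g$ otherwise), whence the conjugates $h^n \langle g \rangle h^{-n} = \langle g \rangle$ are essentially distinct with infinite common intersection, contradicting finite height directly; but the route through Proposition~\ref{prop:summarizeNTY19}(1) is cleaner and I would use it.
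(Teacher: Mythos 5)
Your proof is correct, and it takes a genuinely different route from the paper's at one key step. The paper, after reducing to $h = x^{-1}gx \in G_v$ exactly as you do, descends \emph{strong quasiconvexity} to the vertex group: it observes that $G_v$ is undistorted in $G$ (since $G_v$ acts geometrically on $Y_v$, which is undistorted in $X$), invokes Proposition~4.11 of \cite{Tra19} to conclude $\langle h \rangle$ is strongly quasiconvex in $G_v$, and only then applies Theorem~1.2 of \cite{Tra19} inside $G_v$ to get finite height in $G_v$. You instead apply Theorem~1.2 of \cite{Tra19} once, in the ambient group $G$, and descend \emph{finite height} to $G_v$ via the monotonicity observation that height cannot increase when passing to an intermediate subgroup $H \le K \le G$ --- which is indeed immediate from the definition, since the cosets $k_iH$ are the same subsets of $G$ whether viewed in $K$ or in $G$, so essentially distinct $K$-conjugates with infinite intersection are in particular essentially distinct $G$-conjugates with infinite intersection. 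From that point on the two arguments coincide: Proposition~\ref{prop:summarizeNTY19}(1) applied with $Z(G_v) \cong \Z$ forces $\langle h \rangle$ to have finite index in $G_v$, contradicting the fact that $G_v$ is not virtually cyclic (as $H_v$ is non-elementary hyperbolic). Your version is slightly more elementary in that it bypasses both the undistortedness of the vertex groups and Tran's Proposition~4.11; the paper's version buys the stronger intermediate fact that $\langle h \rangle$ is strongly quasiconvex in $G_v$ itself, though nothing in this lemma needs it. Your closing alternative sketch (conjugating by powers of a commuting independent element to contradict finite height directly) is essentially the mechanism behind Proposition~\ref{prop:summarizeNTY19}(1) anyway, and its second case is stated a bit loosely, but since you explicitly prefer the main route, this does not affect the proof.
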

\begin{proof}
Since $g$ is Morse, it follows that the infinite cyclic subgroup $\langle g \rangle$ generated by $g$ is strongly quasi-convex in $G$. We would like to show that $g$ is not conjugate into any vertex group. Indeed, by way of contradiction, we assume that $g \in xG_{v} x^{-1}$ for some $x \in G$ and for some vertex group $G_v$. Hence, the cyclic subgroup generated by $h = x^{-1} g x$ is strongly quasi-convex in $G$. Since $G_v$ is undistorted in $G$ (as $G_v$ acts geometrically on $Y_v$ and $Y_v$ is undistorted in $X$), it follows from Proposition~4.11 in \cite{Tra19} that  $\langle h \rangle $ is strongly quasi-convex in $G_v$. By Theorem~1.2 in \cite{Tra19},  $\langle h \rangle $ has finite height in $G_v$. Since the centralizer $Z(G_v)$ of $G_v$ is isomorphic to $\Z$, it follows from Proposition~\ref{prop:summarizeNTY19} that  $\langle h \rangle $ has finite index in $G_v$. This contradicts to the fact that $G_v$ is not virtually cyclic group. Therefore $g$ is not conjugate into any vertex group of $G$.
\end{proof}


\begin{prop}
\label{morseimpliesstronglyqc}
 Let $K$ be a finitely generated free subgroup of $G$ of infinite index such that all nontrivial elements in $K$ are Morse in $G$. Choose a vertex $v$ in a minimal $K$--invariant subtree $T'$ of $T$. Let $C_K$ be the subspace of $X$ given by  Remark~\ref{rem:constructC_K} with respect to a generating set $\{g_1, g_2, \dots, g_n\}$ of $K$. Then $C_K$ is contracting in $(X, \mathcal{PS}(X))$. As a consequence, $K$ is strongly quasi-convex in $G$.
\end{prop}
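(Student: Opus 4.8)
The plan is to produce a coarse projection $\pi\colon X\to C_K$ and verify the two clauses of Definition~\ref{defn:contracting}; once $C_K$ is shown to be $\mathcal{PS}(X)$--contracting, Lemma~\ref{lem:contractingtoquasiconvex} immediately yields that $K$ is strongly quasi-convex. First I would record the standing facts that make $C_K$ usable. Since every nontrivial element of $K$ is Morse, Lemma~\ref{lem:Moresenotconjugate} shows that no such element is conjugate into a vertex group; as $K$ is torsion-free this forces $K\cap gG_vg^{-1}=1$ for all $g,v$, so $K$ acts freely on $T$ with trivial vertex stabilizers. Hence Remark~\ref{rem:constructC_K} applies, $K$ acts geometrically on $C_K$, and since $K$ also acts geometrically on the minimal subtree $T'$ the $K$--equivariant coarse Lipschitz map $\rho|_{C_K}\colon C_K\to (T')^0$ is a quasi-isometry; moreover there is a uniform $\mu$ with $\diam(C_K\cap Y_v)\le\mu$ for all $v$. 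I would then define $\pi$ through the tree: let $\hat\pi\colon T^0\to (T')^0$ be nearest--point projection onto $T'$, and for $x\in X$ set $\pi(x)$ to be any point of $C_K\cap Y_{\hat\pi(\rho(x))}$. Because these pieces have diameter at most $\mu$, $\pi$ is coarsely well defined, and clause~(\ref{item1:defn:contracting}) holds since $\rho(C_K)\subseteq (T')^0$.

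For clause~(\ref{item2:defn:contracting}), fix $x,y$ with $d(\pi(x),\pi(y))$ large and let $\gamma$ be a special path from $x$ to $y$. Writing $a'=\hat\pi(\rho(x))$ and $b'=\hat\pi(\rho(y))$, the quasi-isometry $\rho|_{C_K}$ forces $d_T(a',b')$ to be large, and in the tree $[\rho(x),\rho(y)]$ factors as $[\rho(x),a']\cup[a',b']\cup[b',\rho(y)]$ with $[a',b']\subset T'$. I would compare $\gamma$ with the special path $\sigma$ joining $\pi(x)$ to $\pi(y)$, whose underlying tree geodesic is precisely $[a',b']$. Since both endpoints of $\sigma$ lie in $C_K$, Lemma~\ref{lem:neighborhood} gives $\sigma\subset N_R(C_K)$. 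By Remark~\ref{rem:independent} the interior crossing points of a special path depend only on its tree geodesic and the chosen planes $F_e$; hence all fully interior crossing points of $\gamma$ over the common segment $[a',b']$ coincide with those of $\sigma$, and therefore lie in $N_R(C_K)$. In particular $\gamma$ passes through the first and last such shared crossings, which I call $q$ and $q'$.

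It remains to recover $\pi(x),\pi(y)$ near $\gamma$. The point $q$ lies in $N_R(C_K)$ and $\rho(q)$ is within bounded tree distance of $a'$, while $\pi(x)\in C_K\cap Y_{a'}$; since both points lie a bounded distance from $C_K$ over tree--adjacent vertices and $C_K$ is quasi-isometric to $T'$, this gives $d(\pi(x),q)\le C$, whence $d(\pi(x),\gamma)\le C$. The same argument at $b'$ gives $d(\pi(y),\gamma)\le C$, completing clause~(\ref{item2:defn:contracting}).

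The hard part is exactly the passage above, and the subtlety worth flagging is why one cannot simply use the boundary crossing. The naive hope that $\gamma$ hugs $\pi(x)$ at the \emph{boundary} crossing of $[a',b']$ fails: that crossing point sits on an edge-plane $F_e$ whose transverse (central $\Z$) position is fixed by the edge of $[\rho(x),\rho(y)]$ leaving $T'$ toward $\rho(x)$, about which $C_K$ carries no information, so it can be pushed arbitrarily far from $C_K$ along the corresponding $Z$--axis. This is precisely why one must discard the two boundary crossings and work with the first and last genuinely interior crossings $q,q'$: for these both incident edges lie in $T'$, so $C_K$ meets both neighbouring planes and Lemma~\ref{lem:projectionbounded} pins down the in-plane position of $C_K$ in \emph{both} product directions. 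The transversality furnished by the angle bound $\beta\le\alpha\le\pi-\beta$ of Lemma~\ref{lem:qie} then upgrades the two one-directional estimates into a genuine bound $d(q,C_K)\le C$ --- the same mechanism underlying Lemma~\ref{lem:neighborhood}. Carrying out these plane estimates and checking that the finitely many $G$--orbits of configurations keep all constants uniform is the only substantial work.
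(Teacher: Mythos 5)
Your proposal is correct and follows essentially the same route as the paper's proof: the same projection $\pi$ built from the indexed map $\rho$ and nearest-point projection to the minimal subtree $T'$, clause~(\ref{item1:defn:contracting}) via the quasi-isometry between $C_K$ and $T'$, and clause~(\ref{item2:defn:contracting}) by comparing $\gamma$ with the special path between $\pi(x)$ and $\pi(y)$, invoking Lemma~\ref{lem:neighborhood} for the latter, Remark~\ref{rem:independent} to identify the shared interior crossing points, and chaining through the $\mu$-bounded pieces $C_K\cap X_u$. Your choice of the first and last fully interior crossings $q,q'$ plays exactly the role of the paper's vertices $\sigma,\tau$ placed at depth $3+\delta'$ inside the common tree segment, and your flagged subtlety about discarding the endpoint-dependent boundary crossings is precisely why the paper makes that inward shift.
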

\begin{proof}
{We first recall the construction of $C_K$ from Remark~\ref{rem:constructC_K}. We first fix a point $x_0$ in $Y_v$ such that $\rho(x_0) =v$. For each $i \in \{1,2, \dots, n\}$, let $g_{n+i} = g_{i}^{-1}$. Let $\gamma_j$ be the geodesic in $X$ connecting $x_0$ to $g_{j}(x_0)$ with 
$j \in \{1,2, \dots,  2n\}$. Then $C_K$ is the union of segment $g(\gamma_j)$ where $g$ varies elements of $K$ and $j \in \{1, \dots, 2n\}$.  }
Since $K$ is a free subgroup of $G$ and all nontrivial elements in $K$ are Morse in $G$, it follows from Lemma~\ref{lem:Moresenotconjugate} that every nontrivial element in $K$ is not conjugate into any vertex group $G_v$. 
Hence,  $K$ acts freely on the Bass-Serre tree $T$.
To show that $C_K$ (we note that $K(x_0) \subset C_K$) is contracting in $(X, \mathcal{PS}(X))$, we need to define a $\mathcal{PS}(X)$--projection  $\pi \colon X \to C_K$ satisfying conditions (\ref{item1:defn:contracting}) and (\ref{item2:defn:contracting}) in Definition~\ref{defn:contracting}.



{\bf Step~1: Constructing $\mathcal{PS}(X)$--projection $\pi \colon X \to C_K$ on $C_K$.}

Let  $\rho \colon X \to T^0$ be the indexed $G$-map given by Remark~\ref{rem:indexfunction}, which is coarsely $L$-lipschitez.  Let $\mathcal{R} \colon T \to T'$ be the nearest point projection from $T$ to the minimal $K$-invariant subtree $T'$.  Since $K$ acts on the minimal tree $T'$ cocompactly, it follows that there exists a constant $\delta' >0$ such that $T' \subset N_{\delta'} (K(v))$. 

Let $x$ be any point in $X$. Choose an element $g \in K$ such that $\mathcal{R}(\rho(x))\in B(g(v), \delta')$. We define $\pi(x) : = g(x_0)$.

{\bf Step~2:  Verifying the condition~(\ref{item1:defn:contracting}) in Definition~\ref{defn:contracting}.}
{Recall that $\gamma_i$ is the geodesic in $X$ connecting $x_0$ to $g_{i}(x_0)$}. 
Let $\delta = \max \bigset{Len( \gamma_i )}{ i \in \{1, \dots, n \}}$.    Let $\mu >0$ be a constant such that $diam(C_K \cap Y_u) \le \mu$ for any vertex $u \in T^0$. Let $R$ be the constant given by Lemma~\ref{lem:neighborhood}.
Since $K$ acts cocompactly both on $C_K$ and $T'$, there exists a constant $\epsilon \ge 1$ such that for any $h$ and $h'$ in $K$ then 
\begin{equation}
\label{eq:quasiC_K}
    d \bigl (h(x_0), h'(x_0) \bigr )\, \bigl /\epsilon - \epsilon \le d_{T}\bigl (h(v), h'(v) \bigr ) \le \epsilon d \bigl (h(x_0), h'(x_0) \bigr ) + \epsilon
\end{equation}

{\bf Claim~1:} Let $C$ be a sufficiently large constant such that $  \delta+ \epsilon ( \epsilon + 2(L \delta + L) + \delta') + (5+2\delta') \mu + R  < C$. Then $d(x, \pi(x)) \le C$ for any $x \in C_K$.

Indeed, since $C_{K} \subset {N}_{\delta}(K(x_0))$, there exists $k \in K$ such that $d(x, k(x_0)) \le \delta$. 
Recalling that $\rho$ is a $K$-equivariant, coarsely $L$-lipschitez map,  and $v=\rho(x_0)$, we have
$$d_{T}(\rho(x), k(v)) = d(\rho(x), \rho(k(x_0))
    \le L d(x, k(x_0)) + L \le L\delta + L.$$ 
The nearest point projection of $\mathcal{R} \colon T \to T'$ implies
$
d_{T}(\rho(x), \mathcal{R}(\rho(x))) \le d_{T}(\rho(x), k(v)) \le L\delta + L
$. Hence, {$d_{T}(\rho(x), g(v)) \le d_{T}(\rho(x), \mathcal{R}(\rho(x))) +d(\mathcal{R}(\rho(x)), g(v)) \le (L\delta + L) + \delta'$}. It implies that {$d_{T}(k(v), g(v)) \le d_{T}(k(v), \rho(x)) + d_{T}(\rho(x), g(v)) \le  2(L\delta + L) + \delta'$.}
Putting the above inequalities together with formula~(\ref{eq:quasiC_K}), we have
\begin{align*}
    d(x, \pi(x)) = d(x, g(x_0))  &\le d(x, k(x_0)) + d(k(x_0), g(x_0))
    \le \delta + \epsilon( \epsilon + d(k(v), g(v))) \\
    &\le \delta+ \epsilon ( \epsilon + 2(L \delta + L) + \delta') < C
\end{align*}
Claim~1 is confirmed.

{\bf Step~3: Verifying condition~(\ref{item2:defn:contracting}) in Definition~\ref{defn:contracting}.}

{\bf Claim~2:} Let $C$ be the constant given by Claim~1. Then the projection $\pi \colon X \to C_K$ satisfies  condition~(\ref{item2:defn:contracting}) in Definition~\ref{defn:contracting} with respect to this constant $C$.

Let $x$ and $y$ be two points in $X$ such that $d(\pi(x), \pi(y))  \ge C$. Let $\gamma$ be a special path in $X$ connecting $x$ to $y$. We would like to show that $d(\pi(x), \gamma) \le C$ and $d(\pi(y), \gamma) \le C$.

We recall that $X_{u} = {N}_{D}(Y_u)$ for any vertex $u \in T^0$. Thus we assume, without loss of generality that $x \in Y_{\rho(x)}$ and $y \in Y_{\rho(y)}$.  Recall that $\pi(x) = g(x_0)$ and $\pi(y) = g'(x_0)$ where $g , g' \in K$  such that $\mathcal{R}(\rho(x))$ and $\mathcal{R}(\rho(y))$ are in the balls $B(g(v), \delta')$ and $B(g'(v), \delta')$ respectively.  
We have  
$\epsilon^2 + (20 + 4\delta') \epsilon < C \le d(\pi(x), \pi(y)) = d(g(x_0), g'(x_0)) \le \epsilon (\epsilon + d_{T}(g(v), g'(v))$. Hence $20 + 4\delta' < d_{T}(g(v), g'(v))$. Since $\mathcal{R}(\rho(x)) \in B(g(v), \delta')$ and $\mathcal{R}(\rho(y)) \in B(g'(v), \delta')$, we have $d_{T}(\mathcal{R}(\rho(x)), \mathcal{R}(\rho(y)) ) > 20 + 2\delta'$. 

Choose vertices $\sigma$, $\tau$ in the geodesic $[\mathcal{R}(\rho(x)), \mathcal{R}(\rho(y))]$ such that $d_{T}(\sigma, \mathcal{R}(\rho(x)))$ and $d_{T}(\tau, \mathcal{R}(\rho(y)))$ are the smallest integers bigger than or equal to $3 + \delta'$. Thus $d_{T}(g(v), \sigma) \ge d_{T}(\mathcal{R}(\rho(x)), \sigma) - d_{T}(\mathcal{R}(\rho(x)), g(v)) \ge d_{T}(\mathcal{R}(\rho(x)), \sigma) - \delta' \ge 3$. Similarly, we have $d_{T}(g'(v), \tau) \ge 3$.

Let $\alpha$ be a special path in $X$ connecting $g(x_0) \in X_{g(v)}$ to $g'(x_0) \in X_{g'(v)}$. We remark here that there exist a subpath $\gamma'$ of the special path $\gamma$  and a subpath $\alpha'$ of $\alpha$ such that $\gamma'$ and $\alpha'$ connect some point in $X_{\sigma}$ to some point in $X_{\tau}$. By Remark~\ref{rem:independent}, we have that $\alpha' \cap X_{u} = \gamma' \cap X_{u}$ for any vertex $u$ in the geodesic $[\sigma, \tau]$.



Let $R$ be the constant given by Lemma~\ref{lem:neighborhood}. It follows from Lemma~\ref{lem:neighborhood}( see Case~1 and Case~2 in this lemma) that $\alpha \cap Y_{\sigma} \subset {N}_{R}(C_K \cap Y_{\sigma})$.
Choose $z \in \alpha \cap Y_{\sigma}$ and $z' \in C_K \cap X_{\sigma}$ such that $d(z, z') \le R$. Since $d_{T}(g(v), \sigma) \le d_{T}(g(v), \mathcal{R}(\rho(x))) + d_{T}(\mathcal{R}(\rho(x)), \sigma) \le \delta' + (4 + \delta') = 2\delta' +4$, and $g(x_0) \in C_{K} \cap X_{g(v)}$ and $z' \in C_{K} \cap X_{\sigma}$, it follows that $d(g(x_0), z') \le (5 + 2\delta') \mu$. Hence,
$$
d(\pi(x), \gamma) \le d(\pi(x), z)
    \le d(\pi(x), z') + R = d(g(x_0), z') + R \le (5+2\delta') \mu + R < C
$$
Similarly, we can show that $d(\pi(y), \gamma) < C$. Thus  Claim~2 is established. 

Combining Step~1, Step~2, and Step~3 together, we conclude that $C_K$ is contracting in $(X, \mathcal{PS}(X)$. By Lemma~\ref{lem:contractingtoquasiconvex}, $C_K$ is strongly quasi-convex in $X$. We conclude that $K$ is strongly quasi-convex in $G$. The proposition is proved.
\end{proof}

Proposition~\ref{morseimpliesstronglyqc} has the following corollary that characterize Morse elements and contracting element in the admissible group $G$.
\begin{cor}
\label{cor:Morse}
\begin{enumerate}
    \item \label{cor:Morse:item1} An infinite order element $g$ in $G$ is Morse if and only  if $g$ is not conjugate into any vertex group $G_v$.
    \item \label{cor:Morse:item2} An element of $G$ is contracting with respect to $(X, \mathcal{PS}(X)$ if and only if its acts hyperbolically on the Bass-Serre tree $T$.
\end{enumerate}
\end{cor}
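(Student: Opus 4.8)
The plan is to derive both parts from Lemma~\ref{lem:Moresenotconjugate} and, crucially, from the \emph{proof} of Proposition~\ref{morseimpliesstronglyqc}, combined with the Bass--Serre dictionary: since $G \curvearrowright T$ is without inversions, an infinite order element $g \in G$ fixes a vertex of $T$ if and only if it is conjugate into some vertex group $G_v$, and otherwise $g$ acts hyperbolically on $T$. Thus for infinite order $g$, acting hyperbolically on $T$ is equivalent to not being conjugate into any vertex group. I will use this equivalence throughout without further comment.

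For the forward direction of (\ref{cor:Morse:item1}), if $g$ is Morse then Lemma~\ref{lem:Moresenotconjugate} already gives that $g$ is not conjugate into any vertex group. For the converse, suppose $g$ is not conjugate into any vertex group and put $K=\langle g\rangle\cong \mathbb{Z}$. Then $g$ acts hyperbolically on $T$, and every nonzero power $g^n$ is again hyperbolic on $T$, so no nontrivial element of $K$ is conjugate into a vertex group. Since each $G_v$ surjects onto the non-elementary hyperbolic group $H_v$, the group $G$ is not virtually cyclic and $K$ has infinite index. The only role of the Morse hypothesis in the proof of Proposition~\ref{morseimpliesstronglyqc} is, via Lemma~\ref{lem:Moresenotconjugate}, to guarantee that the nontrivial elements of $K$ are not conjugate into vertex groups and hence that $K$ acts freely on $T$; having verified this directly, the construction of $C_K$ from Remark~\ref{rem:constructC_K} and the proof that $C_K$ is $\mathcal{PS}(X)$--contracting apply verbatim. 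Hence $K$ is strongly quasi-convex and $g$ is Morse.

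Part (\ref{cor:Morse:item2}) runs on the same mechanism. A contracting element has infinite order, since its orbit is unbounded; if $g$ is contracting then the orbit $\langle g\rangle x_0$ (equivalently the subset $C_{\langle g\rangle}$) is a $\mathcal{PS}(X)$--contracting subset, so by Lemma~\ref{lem:contractingtoquasiconvex} it is strongly quasi-convex, whence $g$ is Morse. Applying (\ref{cor:Morse:item1}) shows $g$ is not conjugate into any vertex group, i.e.\ $g$ acts hyperbolically on $T$. Conversely, if $g$ acts hyperbolically on $T$ then, exactly as in the previous paragraph, the proof of Proposition~\ref{morseimpliesstronglyqc} applied to $K=\langle g\rangle$ produces a $\mathcal{PS}(X)$--contracting subset $C_K$; as $C_K$ is the union of the $K$--translates of finitely many compact geodesics it lies at finite Hausdorff distance from the orbit $\langle g\rangle x_0$, so $g$ is contracting.

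The main obstacle to guard against is the logical dependency in (\ref{cor:Morse:item1}): one cannot invoke Proposition~\ref{morseimpliesstronglyqc} as a black box for the converse, because its hypothesis presupposes that the cyclic subgroups are Morse, which is precisely what we are trying to establish. The essential observation is that the Morse hypothesis is used in that proof \emph{only} to obtain freeness of the $K$--action on $T$ through non-conjugacy into vertex groups, so it may be replaced by the tree-theoretic non-conjugacy condition without circularity. The remaining points, that powers of a tree-hyperbolic element stay hyperbolic and that $G$ is not virtually cyclic, are immediate.
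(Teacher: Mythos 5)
Your proof is correct and follows essentially the same route as the paper: forward directions via Lemma~\ref{lem:Moresenotconjugate}, and converses by running $K=\langle g\rangle$ through Proposition~\ref{morseimpliesstronglyqc} together with Lemma~\ref{lem:contractingtoquasiconvex} and the finite-Hausdorff-distance comparison of $C_K$ with the orbit. Your explicit repair of the circularity --- observing that the Morse hypothesis in Proposition~\ref{morseimpliesstronglyqc} is used only, via Lemma~\ref{lem:Moresenotconjugate}, to secure non-conjugacy into vertex groups, so it can be replaced by the tree-theoretic condition --- is a point the paper elides by citing the proposition as a black box, but the underlying argument is identical.
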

\begin{proof}
(\ref{cor:Morse:item1}). By Lemma~\ref{lem:Moresenotconjugate}, if $g$ is Morse in $G$ then it is not conjugate into vertex group of $G$.
Conversely, if $g$ is not conjugate into any vertex group of $G$ then $g$ acts hyperbolically on the Bass-Serre tree $T$. If $g \in G$ acts hyperbolically on the Bass-Serre tree $T$. Let $K$ be the infinite cyclic subgroup generated by $g$. Since $g \in G$ acts hyperbolically on the Bass-Serre tree $T$, it is not conjugate into any vertex subgroup. Fix a point $x_0$ in $X$, by Proposition~\ref{morseimpliesstronglyqc} the orbit space $K(x_0)$ is contracting in $(X, \mathcal{PS}(X))$ (because $C_K$ is contracting in $(X, \mathcal{PS}(X))$). Thus $g$ is a contracting element with respect to $(X, \mathcal{PS}(X))$.  By Lemma~2.8 in \cite{Sis18}, $g$ must be Morse.

(\ref{cor:Morse:item2}). If $g \in G$ acts hyperbolically on the Bass-Serre tree $T$. Let $K$ be the infinite cyclic subgroup generated by $g$. Since $g \in G$ acts hyperbolically on the Bass-Serre tree $T$, it is not conjugate into any vertex subgroup. Fix a point $x_0$ in $X$, by Proposition~\ref{morseimpliesstronglyqc} the orbit space $K \cdot x_0$ is contracting in $(X, \mathcal{PS}(X))$ (because $C_K$ is contracting in $(X, \mathcal{PS}(X))$). Thus $g$ is a contracting element with respect to $(X, \mathcal{PS}(X))$.

Conversely, if $g \in G$ is contracting with respect to $(X, \mathcal{PS}(X))$, then $g$ is Morse. By the assertion~(\ref{cor:Morse:item1}), $g$ is not conjugate into any vertex group. Thus it acts hyperbolically on the Bass-Serre tree $T$.
\end{proof}



We now ready to prove Theorem~\ref{thm:generalization}. 

\begin{proof}[Proof of Theorem~\ref{thm:generalization}]
We are going to prove the following implications: $(\ref{thm1:item1}) \implies (\ref{thm1:item2})$, $(\ref{thm1:item2}) \implies (\ref{thm1:item3})$, $(\ref{thm1:item3}) \implies (\ref{thm1:item1})$, and $(\ref{thm1:item3}) \Longleftrightarrow (\ref{thm1:item4})$.

 $(\ref{thm1:item1}) \implies (\ref{thm1:item2})$:
The implication just follows from 
Theorem~1.2 in \cite{Tra19}. 

$(\ref{thm1:item2}) \implies (\ref{thm1:item3})$: By Lemma~\ref{lem:virtuallyfree}, $K$ has a finite index subgroup $K'$ such that $K'$ is a free group of finite rank.
Let $x$ be an infinite order element in $K'$. By way of contradiction, suppose that $x$ is not Morse in $G$. By Corollary~\ref{cor:Morse},  $x$ is conjugate into a vertex group of $G$. In other words, $x \in gG_{v}g^{-1}$ for some vertex group $G_v$ and for some $g \in G$. Hence $x \in K' \cap gG_{v}g^{-1} \subset K \cap gG_{v}g^{-1}$ that is finite by Proposition~\ref{prop:summarizeNTY19}. So, $x$ has finite order that contradicts to our assumption that $x$ is an infinite order element. Since any infinite order element in $K$ has a power that belongs to $K'$, the implication $(\ref{thm1:item2}) \implies (\ref{thm1:item3})$ is verified.

 $(\ref{thm1:item3}) \implies (\ref{thm1:item1})$:   Let $K'$ be a finite index subgroup of $K$ such that $K'$ is free.
 It follows from Proposition~\ref{morseimpliesstronglyqc} that $K'$ is strongly quasi-convex in $G$. Since $K'$ is a finite index subgroup of $K$, it follows that $K$ is also strongly quasi-convex in $G$.

$(\ref{thm1:item3}) \implies (\ref{thm1:item4})$: Let $K'$ be a finite index subgroup of $K$ such that $K'$ is free. Let $T' \subset T$ be the minimal subtree of $T$ that contains $K'(v)$. Since the map $K' \to T'$ given by $k \to k(v)$ is quasi-isometric and the inclusion map $T' \to T$ is also a quasi-isometric embedding, it follows that the composition $K' \to T' \to T$ is a quasi-isometric embedding. Since $K'$ is a finite index subgroup of $K$, it follows that the map $K \to T$ given by $k \to k(v)$ is a quasi-isometric embedding.

$(\ref{thm1:item4}) \implies (\ref{thm1:item3})$:  By way of contradiction, suppose that there exists an infinite order element $g \in K$ such that $g$ is not Morse in $G$. It follows from Corollary~\ref{cor:Morse} that $g$ is conjugate into a vertex group, hence $g$ fixes a vertex $v$ of $T$. By our assumption, there is a vertex $w$ in $T$ such that the map $K \to T$ given by $k \to k(w)$ is a quasi-isometric embedding. It implies that the map $K \to T$ given by $k \to k(v)$ is also a $(\lambda, \lambda)$--quasi-isometric embedding for some $\lambda > 0$. Choose an integer $n$ large enough such that $|g^n| > \lambda^2$. We have $1/\lambda |g^n| - \lambda \le d(g(v), g^n(v)) = d(v, v) =0$. Hence $|g^n| < \lambda^2$. This contradicts to our choice of $n$.
\end{proof}


\bibliographystyle{alpha}
\bibliography{CKadmissible}
\end{document}